\documentclass{amsart}
\usepackage[latin1]{inputenc}
\usepackage{amssymb}
\usepackage{amsmath}
\usepackage{enumerate}
\usepackage{epsfig,color,xcolor}
\usepackage{longtable}
\allowdisplaybreaks[4]
\usepackage[english]{babel}
\usepackage[all]{xy}
\usepackage{fullpage}
\usepackage{pgf,tikz}
\usepackage{mathrsfs}
\usetikzlibrary{arrows}
\usepackage{hyperref}
\usepackage{graphicx}
\allowdisplaybreaks
\newtheorem{theorem}{Theorem}[section]
\newtheorem{lemma}[theorem]{Lemma}
\newtheorem{prop}[theorem]{Proposition}
\newtheorem{proposition}[theorem]{Proposition}
\newtheorem{corollary}[theorem]{Corollary}

\newtheorem{definition}[theorem]{Definition}
\newtheorem{example}[theorem]{Example}

\newtheorem{rem}[theorem]{Remark}
\newtheorem{remark}[theorem]{Remark}

\numberwithin{equation}{section}
\newcommand{\rk}{{\rm rank}}

\newcommand{\ra}{\rightarrow}

\newcommand{\PP}{ \mathbb{P}}
\newcommand{\C }{ \mathbb{C}}

\newcommand{\Z}{\mathbb{Z}}
\newcommand{\Q}{\mathbb{Q}}

\newcommand{\N}{\mathbb{N}}

\usepackage{imakeidx}

\newtheoremstyle{dico}% name of the style to be used
{\baselineskip}   % ABOVESPACE
{\topsep}   % BELOWSPACE
{}  % BODYFONT
{0pt}       % INDENT (empty value is the same as 0pt)
{} % HEADFONT
{.}         % HEADPUNCT
{5pt plus 1pt minus 1pt} % HEADSPACE
{}          % CUSTOM-HEAD-SPEC
\theoremstyle{dico}

\makeatletter
\def\blfootnote{\xdef\@thefnmark{}\@footnotetext}

\author{Alice Garbagnati}
\address{Dipartimento di Matematica, Univ. Statale di Milano, Milan, Italy}
\email{alice.garbagnati@unimi.it}
\urladdr{ https://sites.google.com/site/alicegarbagnati/}

\title[Specializations of symplectic and van Geemen--Sarti involutions on K3 surfaces]{Specializations of symplectic and van Geemen--Sarti involutions on K3 surfaces}
\begin{document}

	\subjclass[2020]{Primary 14J28, 14J50}
	\keywords{Automorphisms on K3 surfaces, Symplectic automorphisms, Elliptic fibrations, Quotient surfaces, Complex Multiplications}

	\maketitle
	\begin{abstract} Given a symplectic involution $\iota$ on a K3 surface $X$, the desingularization $Y$ of $X/\iota$ is still a K3 surface, which in general has a different N\'eron--Severi group. Nevertheless, if the involution is induced by the translation by a 2-torsion section on an elliptic fibration (i.e. it is a van Geemen--Sarti involution) and the Picard number is minimal, the N\'eron--Severi groups of $X$ and $Y$ are known to be isometric.	We first determine infinitely many codimension 2 subfamilies of projective K3 surfaces with a symplectic involution (not of van Geemen--Sarti type) whose generic members satisfy $NS(X)\simeq NS(Y)$. Then, we describe the cohomological action of a van Geemen--Sarti involution and we characterize specializations of K3 surfaces with a van Geemen--Sarti involution for which it is still true that $NS(X)\simeq NS(Y)$. There is a 5-dimensional family of K3 surfaces with van Geemen--Sarti involution for which $X\simeq Y$. The K3 surfaces in such a family admit complex multiplication, and we describe its cohomological action. 
	
	We briefly discuss similar problems for order 3 symplectic automorphisms induced by a translation by a 3-torsion section on an elliptic fibration.\end{abstract}
	
	\section{Introduction}

	In the study of K3 surfaces, the symplectic automorphisms of finite order play a very important role, since they create a relation between different K3 surfaces. Indeed, if $\alpha$ is a finite order symplectic automorphism on a K3 surface $X$, then the minimal resolution of $X/\alpha$ is another K3 surface, always denoted in the following as $Y$. A priori $X$ and $Y$ are different K3 surfaces and generically also their N\'eron--Severi groups (and transcendental lattices) are not isometric. Nevertheless, there are special choices for $X$ and $\alpha$ which produce K3 surfaces $Y$ for which $NS(X)\simeq NS(Y)$, and even more specific choices for which $X$ and $Y$ are isomorphic. The first phenomenon appears, for example, for generic K3 surfaces admitting an elliptic fibration with an order $n$ torsion section: the translation by the torsion section is a symplectic automorphism of the same order, and if $X$ is generic among the K3 surfaces admitting the required torsion section, then $NS(X)\simeq NS(Y)$ (see \cite[Proposition 4.2]{vGS} for $n=2$ and \cite[Proposition 4.3]{G} for $n>2$). The second appears for certain specific K3 surfaces which admit a 2-torsion section, see \cite[Section 6.1]{vGSc} and Proposition \ref{prop: subfamily isomorphic quotients}. The property that $NS(X)\simeq NS(Y)$ is preserved if $X$ is the general member of some special subfamilies of the K3 surfaces admitting an elliptic fibration with an $n$-torsion section, but it is false for most of the subfamilies.
	
	We now focus on the case of the involutions (even if in the last section we briefly consider the order 3 case): the involution given by the translation by a 2-torsion section on an elliptic fibration is classically known as van Geemen--Sarti involution. The K3 surfaces admitting such an involution are studied in many papers, exactly in view of the isometry existing between the N\'eron--Severi groups of the K3 surface admitting a van Geemen--Sarti involution and the one of the quotient by such an involution, see e.g. \cite{vGS} (the original paper by B. van Geemen and A. Sarti), \cite{CD} (where the name ``van Geemen--Sarti involution" is introduced), \cite{CD2}, \cite{CM1}, \cite{CM2}, \cite{CM3}, \cite{CG}, \cite{vGSc}. Notice that in many of these papers the authors are interested in subfamilies of the family of K3 surfaces admitting a van Geemen--Sarti involution, such that the property $NS(X)\simeq NS(Y)$ is preserved for the generic members of the subfamilies.

	The aim of this paper is twofold: on one hand we provide countable many different families of K3 surfaces with a symplectic involution for which $NS(X)\simeq NS(Y)$ on the generic member, showing that this property is not exclusive of the generic K3 surface admitting a van Geemen--Sarti involution; on the other hand, we are interested in the specializations of the K3 surfaces which admit a van Geemen--Sarti involution for which the property $NS(X)\simeq NS(Y)$ remains true on generic members. We will see that the property is surely not preserved on the  codimension 1 subfamilies, but there exist infinitely many higher codimensional subfamilies for which it remains true. \\
	
	The K3 surfaces admitting a symplectic involution are generically non projective, but the countably many 11-dimensional components of the family of the projective ones are well known and described in \cite[Propositions 2.2 and 2.3]{vGS}. The same holds true for the family of the projective K3 surfaces which are obtained as desingularization of the quotient of another K3 surface by a symplectic involution, see \cite[Corollary 2.3]{GS}. In particular, let $X$ be a K3 surface admitting a symplectic involution $\iota$ and, as above, $Y$ the desingularization of the quotient $X/\iota$, then (see Corollary \ref{rem: NS(X)neq NS(Y) in the generic cases}):
	\begin{itemize} \item $\rho(X)=\rho(Y)\geq 8$ and if $\rho(X)=\rho(Y)=8$ then $NS(X)\not\simeq NS(Y)$;
		\item if $X$ (or equivalently $Y$) is projective, then $\rho(X)=\rho(Y)\geq 9$ and if $\rho(X)=\rho(Y)=9$ then $NS(X)\not\simeq NS(Y)$.
		\end{itemize}
	Therefore, if $X$ is a projective K3 surface with a symplectic involution such that $NS(X)\simeq NS(Y)$, then $\rho(X)=\rho(Y)\geq 10$.  The family of the K3 surfaces admitting a van Geemen--Sarti involution provides an example of K3 surfaces $X$ such that $NS(X)\simeq NS(Y)$ with minimal Picard number, i.e. with  $\rho(X)=\rho(Y)= 10$ (recall that the minimality of the Picard number is equivalent to the maximality of the dimension of the family). In Section \ref{sec: K3 surfaces with involution and their specializations}, we will show that this is not a sporadic example, indeed in Theorem \ref{theo: NS(X)=NS(Y) rank 10}, we prove the following. 	
	\begin{theorem} There are infinitely many 12-dimensional families of projective K3 surfaces whose generic member, $X$, admits a symplectic involution $\iota$ and $NS(Y)\simeq NS(X)$, where $Y$ is the minimal resolution  of $X/\iota$.
	\end{theorem}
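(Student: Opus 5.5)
The plan is lattice-theoretic. Fix the target lattice first, then realize it through the Torelli theorem and surjectivity of the period map, and finally compute $NS(Y)$ from $NS(X)$ using the known description of the cohomological action of a symplectic involution. The ingredients are these. By \cite[Propositions 2.2 and 2.3]{vGS}, a projective $X$ with a symplectic involution $\iota$ has $NS(X)\supseteq \langle 2d\rangle\oplus E_8(-2)$, or an index-2 overlattice of it. Dually, by \cite[Corollary 2.3]{GS}, $NS(Y)\supseteq \langle 2e\rangle\oplus N$, or an overlattice, where $N$ is the Nikulin lattice. The map $\pi_*$ relates $T_X$ and $T_Y$: it multiplies the form by $2$ on part of $T_X$ and identifies the rest.

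\textbf{Step 1: choose the lattices.} For each $d$ in an infinite set (for instance infinitely many $d$ in a fixed congruence class), I would define an even hyperbolic lattice $L_d$. It will contain $\langle 2d\rangle\oplus E_8(-2)$ together with the further classes needed to reach the Picard number of the family. These classes should be placed so that $E_8(-2)$ glues to the extra part. Then $L_d$ also contains a copy of the Nikulin lattice $N$ and of $U$, in the same spirit as the van Geemen--Sarti lattice $U\oplus N\simeq U\oplus E_8(-2)$ used in \cite[Proposition 4.2]{vGS}. I would check three things: that $L_d$ embeds primitively in $\Lambda_{K3}$ (by Nikulin's criterion, via $\rk$ and the length of the discriminant group); that $E_8(-2)$ sits in $L_d$ with orthogonal complement in $\Lambda_{K3}$ having no $(-2)$-vectors in the right position, so that $\iota$ exists; and that $L_d$ has no $U$ with a $2$-torsion section, so that $\iota$ is not of van Geemen--Sarti type. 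Surjectivity of the period map then gives, for each $d$, a family of projective K3 surfaces whose generic member $X$ has $NS(X)\simeq L_d$. The dimension of this family is computed from the rank of the orthogonal complement of $L_d$ in $\Lambda_{K3}$, and I would choose $\rk(L_d)$ so that this computation gives the dimension in the statement. Different $d$ give non-isometric discriminant forms, hence infinitely many distinct families.

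\textbf{Step 2: compute $NS(Y)$.} Use $\pi_*$ and $\pi^*$ on $H^2$. Decompose $H^2(X,\Z)=U^3\oplus E_8\oplus E_8$ with $\iota^*$ exchanging the two $E_8$'s. Then $\pi_*$ sends the invariant sublattice $U(2)^3\oplus E_8(-2)$ to $U^3\oplus E_8(-2)$ (with the form scaled by $2$), and $NS(Y)$ is the saturation of $\pi_*(NS(X)^{\iota})$ together with $N$. Writing the $\iota$-invariant part of $L_d$ explicitly, I would compute this saturation and verify that its discriminant form equals that of $L_d$. Rank and signature coincide because $\rho(X)=\rho(Y)$, so Nikulin's uniqueness theorem (for indefinite lattices with $\rk\geq 3+\ell(A)$) gives $NS(Y)\simeq L_d$. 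Equivalently, I could show $T_Y\simeq T_X$ directly: $T_X\subset U(2)^3\oplus E_8(-2)$ with image under $\pi_*$ equal to $T_X(2)$ or a suitable sublattice, so I need $T_X(2)$ saturated appropriately to be isometric to $T_X$.

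\textbf{Main obstacle.} The main obstacle is Step 2: the gluing between $\pi_*(NS(X)^\iota)$ and $N$ in $NS(Y)$ depends on how $L_d$ meets $E_8(-2)$. An isometry of discriminant forms holds only for a careful choice of the gluing vectors in Step 1. My first attempt is to mimic the van Geemen--Sarti mechanism, arranging $T_X\simeq U\oplus U(2)\oplus M$ with $M$ not scaled. Then $\pi_*$ exchanges $U$ and $U(2)$, up to the natural identification, and fixes $M$, which forces $T_Y\simeq T_X$. For the family to be of non van Geemen--Sarti type, I would then vary $M$ (for example $M=\langle -2d\rangle\oplus\ldots$) and verify that no elliptic fibration on $X$ has $\iota$ as a translation.
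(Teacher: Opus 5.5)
\textbf{Gap: the proposal is a plan, not a proof.} The gap is exactly the step you label the ``main obstacle''. You never write down $L_d$. Step~1 lists requirements (contain $\langle 2d\rangle\oplus E_8(2)$, also contain $U$ and $N$, not be of van Geemen--Sarti type) without showing they can hold together. Step~2, the only step with content, is not carried out. Your fallback mechanism, ``$T_X\simeq U\oplus U(2)\oplus M$, $\pi_*$ swaps $U$ and $U(2)$ and fixes $M$'', is not an argument. All of $T_X$ lies in $H^2(X,\Z)^{\iota}=U^{\oplus 3}\oplus E_8(2)$ (the paper's $E_8(2)$ is your $E_8(-2)$), and $\pi_*$ multiplies the form by $2$ on all of it. Whether a summand comes back unscaled after saturation in $H^2(Y,\Z)$ depends on how it sits relative to $U^{\oplus3}$ and $E_8(2)$; nothing makes $M$ ``fixed''.

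Several inputs are also wrong. $U\oplus N\not\simeq U\oplus E_8(2)$, since their discriminant groups have orders $2^6$ and $2^8$. $\pi_*$ maps $U^{\oplus3}\oplus E_8(2)$ onto $U(2)^{\oplus3}\oplus 2E_8$, not the other way round. You also cannot fix $\rk L_d$ by matching ``12-dimensional'': that means Picard number $8$, which Corollary~\ref{cor: NS(X)neq NS(Y) in the generic cases} excludes. The paper's families have Picard number $10$ (the $12$ is $\rk T_X$). The non-van Geemen--Sarti condition is not needed.

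\textbf{What the paper does.} The proof of Theorem~\ref{theo: NS(X)=NS(Y) rank 10} gives a concrete construction. It takes $NS(X)=(\langle 4d\rangle\oplus E_8(2))'\oplus\langle -2d\rangle$, with the last summand spanned by $V=-u_1^{(2)}+d\,u_2^{(2)}$ in an $\iota$-invariant copy of $U$. Then $\pi_*$ produces a class of square $2d$, the Nikulin lattice, and $D=\pi_*V$ with $D^2=-4d$. It concludes via $\langle 4d\rangle\oplus\langle -2d\rangle\simeq\langle 2d\rangle\oplus\langle -4d\rangle$ and $\langle 4d\rangle\oplus N\simeq(\langle 4d\rangle\oplus E_8(2))'$.

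\textbf{Do not import this as it stands: your obstacle is real there.} The sublattices $N$ and $N^\perp=U(2)^{\oplus3}\oplus E_8$ are both primitive, and the index in Proposition~\ref{prop: pi*} is $2^6$. So $H^2(Y,\Z)$ glues $A_{U(2)^{\oplus3}}$ isomorphically onto $A_N$; compare the classes $(\tau_{11}+n_1+n_2+n_3+n_4)/2$ in Section~\ref{subsec: cohomological action} and the proof of Theorem~\ref{theorem: Y rho 11}. Since $D/2$ is a nonzero element of $A_{U(2)^{\oplus3}}$, some class $(D+\sum_{i\in I}n_i)/2$ with $|I|\in\{2,4\}$ lies in $NS(Y)$. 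For $d=1$, $(D-n_a-n_b)/2$ is the image of the $\iota$-invariant $(-2)$-curve of class $V$, which passes through two fixed points. Hence $NS(Y)$ is an index-$2$ overlattice of $\langle 2d\rangle\oplus N\oplus\langle -4d\rangle$, and
\[
|d(NS(Y))|=2^7d^2\neq 2^9d^2=|d(NS(X))|.
\]

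\textbf{The constraint Step~1 must build in.} Let $P:=NS(X)^{\iota}$ (rank $2$) and $g:=[NS(X):P\oplus E_8(2)]$. Then
\[
|d(NS(X))|=2^8|d(P)|/g^2, \qquad |d(NS(Y))|=2^4|d(P)|.
\]
So you need $g=4$, which means every class of $P$ has even component in $U^{\oplus3}$, as $\langle F,S+T\rangle$ does in the van Geemen--Sarti case. In that case $P\simeq P_0(2)$ with $P_0$ even, and $NS(Y)\simeq P_0\oplus N$. You must still match discriminant forms, not just orders. For example, if $|d(P_0)|$ is even, $A_{NS(X)}$ contains an element of order $2\exp(A_{P_0})$ and $A_{NS(Y)}$ does not.
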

	The proof of this result is purely lattice theoretic and we do no have geometric description of the involution which makes clear the reason for which $X$ and $Y$ have the same N\'eron--Severi group. A geometric example of the surface $X$ and of its quotient for a specific family (the one with a polarization of the lowest possible degree) is given in Subsection \ref{subsection: geometry d=1}.\\

	After this general result, we concentrate on the family $\mathcal{N}$ of the K3 surfaces which admit a van Geemen--Sarti involution, i.e. on the family of the K3 surfaces with an elliptic fibration $\mathcal{E}$ admitting a 2-torsion section $T$. As already observed, the general member of this family satisfies the condition $NS(X)\simeq NS(Y)$ and this can be proved showing that $\mathcal{E}$ induces an elliptic fibration on the quotient surface $Y$ with properties analogous to the ones of $\mathcal{E}$. 
	
	Many special members of the family $\mathcal{N}$, do not satisfy the condition $NS(X)\simeq NS(Y)$, but it always remains true that if $X\in\mathcal{N}$ then $Y\in\mathcal{N}$, even if their N\'eron--Severi group are not the same.	It is natural to ask what is the relation between the N\'eron--Severi groups of $X$ and $Y$ if $X\in\mathcal{N}$ and in particular to ask for conditions such that they are isometric.
	
	The advantage of considering these questions specifically for the family $\mathcal{N}$ (instead of the other families mentioned in the previous theorem), is that both the cohomological and geometric action of a van Geemen--Sarti involution can be explicitly described, as done in Section \ref{sec: family N and vGS inv}, and hence it is relatively easy to study their specializations by considering specializations of the elliptic fibration $\mathcal{E}$.
	
	In Section \ref{sec: K3 with vGS rank 11} we classify all the codimension 1 subfamilies of $\mathcal{N}$, describing their N\'eron--Severi groups (see Theorem \ref{theorem: X rho 11}) and the geometric properties of the elliptic fibration on $X$ and of the elliptic fibration induced on $Y$ by the quotient structure. So, we prove the following (where $MW$ is the Mordell--Weil group of an elliptic fibration and $MWL$ is the Mordell--Weil lattice).
	\begin{theorem}{\rm (see Propositions \ref{prop: specialization elliptic fibrations, 11} and \ref{proposition E11 and F11})}
	Let $X_{11}$ be a K3 surface admitting a van Geemen--Sarti involution $\sigma$ and with $\rho(X_{11})=11$. Let $\mathcal{E}_{11}$ be the elliptic fibration on $X_{11}$ which admits the 2-torsion section inducing $\sigma$, $Y_{11}$ the minimal resolution of $X_{11}/\sigma$ and $\mathcal{F}_{11}$ the elliptic fibration induced by $\mathcal{E}_{11}$ on $Y_{11}$. Then $\mathcal{E}_{11}$ and $\mathcal{F}_{11}$ satisfy exactly one of the following
		\begin{enumerate}
				\item The singular fibers of $\mathcal{E}_{11}$ are $9I_2+6I_1$, $MW(\mathcal{E}_{11})=\Z/2\Z$ and 
				the singular fibers of $\mathcal{F}_{11}$ are $I_4+6I_2+8I_1$, $MW(\mathcal{F}_{11})=\Z/2\Z$;
				 
				\item The singular fibers of $\mathcal{E}_{11}$ are $I_4+6I_2+8I_1$, $MW(\mathcal{E}_{11})=\Z/2\Z$ and
				the singular fibers of $\mathcal{F}_{11}$ are $9I_2+6I_1$, $MW(\mathcal{F}_{11})=\Z/2\Z$;
				
				\item The singular fibers of $\mathcal{E}_{11}$ are $8I_2+8I_1$, $MW(\mathcal{E}_{11})=\Z\times \Z/2\Z$, $MWL(\mathcal{E}_{11})=[2d]$ for $d\in\mathbb{N}_{\geq 0}$ and
				the singular fibers of $\mathcal{F}_{11}$ are $8I_2+8I_1$, $MW(\mathcal{F}_{11})=\Z\times \Z/2\Z$, $MWL(\mathcal{F}_{11})=[d/2]$, $d\in\mathbb{N}_{\geq 0}$.
				\item The singular fibers of $\mathcal{E}_{11}$ are $8I_2+8I_1$, $MW(\mathcal{E}_{11})=\Z\times \Z/2\Z$, $MWL(\mathcal{E}_{11})=[d/2]$ for $d\in\mathbb{N}_{\geq 0}$ and
				the singular fibers of $\mathcal{F}_{11}$ are $8I_2+8I_1$, $MW(\mathcal{F}_{11})=\Z\times\Z/2\Z$, $MWL(\mathcal{F}_{11})=[2d]$, $d\in\mathbb{N}_{\geq 0}$.
		\end{enumerate}
	\end{theorem}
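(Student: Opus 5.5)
The plan is to start from the generic member of $\mathcal{N}$ and classify the possible ways $\rho$ can increase by one. The generic member has $\mathcal{E}$ with singular fibers $8I_2+8I_1$ and $MW(\mathcal{E})=\Z/2\Z$, with $T$ meeting the non-identity component of every $I_2$ fiber. This gives $NS(X)\simeq U\oplus N$, where $N$ is the Nikulin lattice. By the Shioda--Tate formula, $\rho(X_{11})=2+\sum_F(m_F-1)+\rk MW(\mathcal{E}_{11})$. A specialization to Picard number 11 therefore either raises the trivial lattice by one, or raises the Mordell--Weil rank by one.

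\textbf{Trivial lattice case.} The discriminant has degree 24, the fibers are semistable for a generic one-parameter degeneration, and the 2-torsion section constrains the configuration. Writing the fibration as $y^2=x(x^2+a(t)x+b(t))$ with $\deg a=4$ and $\deg b=8$, the discriminant is $b^2(a^2-4b)$. The $I_2$ fibers sit at the zeros of $b$ and the $I_1$ fibers at the zeros of $a^2-4b$. A codimension one condition makes two roots collide. If two roots of $b$ collide, we get an $I_4$ fiber, giving $I_4+6I_2+8I_1$. If a root of $b$ meets a root of $a^2-4b$, then $a$ vanishes there too and we get a fiber of type $III$ or $I_3$. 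I would rule these out, or show they force a further condition or are not semistable; the cleanest filter is that $T$ must meet every reducible fiber compatibly with a symplectic involution having 8 fixed points. If two roots of $a^2-4b$ collide, we get an $I_2$ fiber on which $T$ meets the zero component, giving $9I_2+6I_1$. In both surviving cases I would check that $MW=\Z/2\Z$, using the rank count and the fact that the torsion cannot grow (it would need more reducible fibers). This yields cases (1) and (2).

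\textbf{Quotient fibration.} Next I would compute $\mathcal{F}_{11}$ via the isogenous fibration $y^2=x(x^2-2a x+a^2-4b)$. Its discriminant is $16(a^2-4b)^2\,b$, so $I_n$ fibers where $T$ meets the identity component become $I_{2n}$, and those where $T$ meets the far component become $I_{n/2}$. Applied to the two cases, this immediately exchanges $I_4+6I_2+8I_1$ and $9I_2+6I_1$.

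\textbf{Mordell--Weil rank case.} Here the configuration stays $8I_2+8I_1$ and there is an extra section $P$ of infinite order. The $2$-isogeny $\phi\colon\mathcal{E}\to\mathcal{F}$ and its dual $\hat\phi$ satisfy $\hat\phi\circ\phi=[2]$, and heights scale as $h(\phi(P))=2h(P)$ up to how local corrections change. The main obstacle is computing these local height contributions precisely. In terms of the height pairing on $MW/\text{tors}$, I expect that the generator of $MWL(\mathcal{F})$ is either $\phi(P)$ or a half of it, depending on whether $\phi(P)$ is divisible by $\hat\phi$-preimage considerations. This should yield the dichotomy $[2d]\leftrightarrow[d/2]$, and the two directions give cases (3) and (4).

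To pin down the heights, I would use the possible intersection patterns of $P$ with the $I_2$ components: the number $k$ of $I_2$ fibers where $P$ meets the non-identity component must be compatible with $P$, $P+T$ and the height formula $h(P)=4+2P\cdot O-k/2$. I would then describe $NS$ via its discriminant group and compare with $NS(Y_{11})$ through the known action on the Nikulin lattice. Exhaustiveness follows because these are the only ways to add one to $\rho$.
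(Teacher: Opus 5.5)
Your route differs from the paper's. The paper never uses Weierstrass equations or isogenies for this result. It first classifies $NS(X_{11})$ as $\Lambda_d$, $\Lambda_d^{(a)}$ or $\Lambda_d^{(b)}$ (Theorem \ref{theorem: X rho 11}). It then reads off $\mathcal{E}_{11}$ and an explicit generator of the free part of $MW(\mathcal{E}_{11})$ from the lattice (Proposition \ref{prop: specialization elliptic fibrations, 11}). It obtains $\mathcal{F}_{11}$ only indirectly, by computing $NS(Y_{11})$ through $\pi_*$ (Theorem \ref{theorem: Y rho 11}).

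Your treatment of cases (1) and (2) via the discriminants $b^2(a^2-4b)$ and $b(a^2-4b)^2$ is fine, and more direct than the paper's. One correction is needed there. A common root of $b$ and $a^2-4b$ forces $a=0$ and gives a fiber of type $III$, never $I_3$. Such a collision does not raise $\rho$ at all, since the $A_1$ of the $III$ simply replaces that of the absorbed $I_2$. It must be discarded for that reason, as a non-generic degeneration. Your fixed-point filter does not exclude it, because the configuration still has $8$ fixed points; compare the family $\mathcal{L}'$, which has fibers $2III+6I_2+6I_1$ and $NS=U\oplus N$.

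The genuine gap is the rank-one case, on two counts.

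\textbf{The height relation.} The obstacle you name does not exist, and the fact that actually does the work is missing. Shioda's height pairing is a universal multiple of the N\'eron--Tate pairing on the generic fiber. Hence $\langle\phi P,\phi P\rangle=2\langle P,P\rangle$ exactly, with no local corrections. Now let $P,Q$ generate the free parts of $MW(\mathcal{E}_{11})$ and $MW(\mathcal{F}_{11})$, and use $\hat\phi\circ\phi=[2]$. Modulo torsion, $\phi(P)\equiv mQ$ and $\hat\phi(Q)\equiv nP$ with $mn=2$. So if $MWL(\mathcal{E}_{11})=[h]$, then $MWL(\mathcal{F}_{11})$ is $[2h]$ or $[h/2]$, and exactly one of the two occurs. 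The ratio is therefore $2$, not $4$. The $d$ attached to $\mathcal{F}_{11}$ in (3) and (4) is the index of the family containing $Y_{11}$ (recall $\mathcal{L}_d\leftrightarrow\mathcal{L}^{(a)}_{2d},\mathcal{L}^{(b)}_{2d}$). The true pairs are $([2d],[d])$ and $([d/2],[d])$; a literal $[2d]\leftrightarrow[d/2]$ cannot come out of a degree-$2$ isogeny.

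\textbf{Which alternative occurs.} You give no way to decide which alternative holds, i.e.\ to tie the halving case to $MWL(\mathcal{E}_{11})$ of the form $[2d]$. The height formula in terms of $k$ cannot detect whether $\phi(P)$ is divisible by $2$. The paper gets this from the lattice classification together with the explicit $\pi_*$ computation.

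In your framework, $2$-descent supplies the missing criterion:
\begin{enumerate}
\item $\hat\phi(MW(\mathcal{F}_{11}))$ is the kernel of the map $MW(\mathcal{E}_{11})\to\mathbb{C}(t)^*/\mathbb{C}(t)^{*2}$ given by $(x,y)\mapsto x$ and $T\mapsto b$.
\item $x(P)$ has odd order exactly at the $k$ roots of $b$ where $P$ meets the non-identity component, and even order everywhere else: at contacts with $T$ away from the roots of $b$, and at poles.
\item Hence $P$ or $P+T$ lies in $\hat\phi(MW(\mathcal{F}_{11}))$ if and only if $k\in\{0,8\}$. This yields $[h/2]$ when $k\in\{0,8\}$ (choosing $P$ with $k=0$, $h=4+2P\cdot O$ is even) and $[2h]$ otherwise.
\item The numerator of $x(P)$ has even degree, which forces $k$ to be even, so all heights are integral.
\end{enumerate}
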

	
	One observes that if $X_{11}$ is as in the previous theorem, its N\'eron--Severi group is never isometric to the one of $Y_{11}$.
	
	In Section \ref{sec: K3 with vGS rank 12} we consider more restrictive conditions on $X$, indeed we consider codimension 2 subfamilies of $\mathcal{N}$ and in particular we prove that there are countably many subfamilies which satisfy the condition $NS(X)\simeq NS(Y)$. 
	We also discuss the intersection of these families with the family of K3 surfaces with van Geemen--Sarti involution for which $X\simeq Y$. In particular we prove the following.
	
	\begin{theorem}\label{thm intro: rank 12 vGS}{\rm (See Theorem \ref{theorem: rank 12, equal NS}) }There are infinitely many subfamilies of $\mathcal{N}$ with codimension 2 whose generic member $X_{12}$, admits a van Geemen--Sarti involution $\sigma$ such that $NS(Y_{12})\simeq NS(X_{12})$, where $Y_{12}$ is the minimal desingularization of $X_{12}/\sigma$.\end{theorem}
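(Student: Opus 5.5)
The strategy is to realize the codimension 2 subfamilies through specializations of the elliptic fibration $\mathcal{E}$ with 2-torsion section $T$. In each specialization I will compute both $NS(X_{12})$ and, via the induced fibration $\mathcal{F}_{12}$ on $Y_{12}$, the lattice $NS(Y_{12})$, and then compare them. Codimension 1 never works: by the rank 11 classification (Propositions \ref{prop: specialization elliptic fibrations, 11} and \ref{proposition E11 and F11}), the quotient either merges fibers ($9I_2\leftrightarrow I_4+6I_2$) or rescales the Mordell--Weil lattice ($[2d]\leftrightarrow[d/2]$). The idea is to combine two such one-parameter specializations so that the asymmetries cancel.

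The most promising choice is a fibration on $X_{12}$ with singular fibers $7I_2+\ldots$ and $MW=\Z^2\times\Z/2\Z$. Two independent sections $P_1,P_2$ come from two codimension 1 conditions of type (3) and (4). The generic member of $\mathcal{N}$ has $8I_2+8I_1$ and $MW=\Z/2\Z$, so two extra sections give $\rho=12$.

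First, using the explicit description of the van Geemen--Sarti isogeny $X\to Y$ from Section \ref{sec: family N and vGS inv}, I will track how each section is transported. Under the 2-isogeny $\phi$ and its dual $\hat\phi$, heights satisfy $h(\phi(P))=2h(P)$ up to correction terms from the reducible fibers. This is precisely the mechanism behind $[2d]\leftrightarrow[d/2]$ in rank 11. Concretely, a section $P_1$ meeting the $I_2$ fibers in the pattern of case (3) has image of height $h(P_1)/4$, while a section $P_2$ as in case (4) has image of height $4h(P_2)$.

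Second, I will write the Mordell--Weil lattice of $\mathcal{E}_{12}$ as a binary form $\begin{pmatrix}2a & b\\ b& c/2\end{pmatrix}$, with parameters $a,b,c$ constrained by the height pairing formula and the fiber components met. I will then compute the Mordell--Weil lattice of $\mathcal{F}_{12}$ as the corresponding form $\begin{pmatrix}a/2 & b\\ b& 2c\end{pmatrix}$, up to an integral change of basis.

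Third, I will choose infinitely many parameter values, for instance $a=c$ with suitable $b$, for which these two binary forms are equivalent. Since both fibrations have the same trivial lattice $U\oplus A_1^{7}$ and isometric $MW$, the Shioda--Tate formula and the discriminant-group computation together give $NS(X_{12})\simeq NS(Y_{12})$. As an independent check, I will compare discriminant forms and invoke Nikulin's uniqueness for indefinite lattices of rank 12 with small length.

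Finally, I must show these lattices really occur as $NS$ of codimension 2 subfamilies of $\mathcal{N}$. This requires a primitive embedding in $\Lambda_{K3}$, which holds because the rank is 12 and the length is small, and it requires that the generic member has exactly this $NS$. The latter follows by the surjectivity of the period map.

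The main obstacle I expect is the precise height bookkeeping under the isogeny. One must determine which $I_2$ components each section meets, and how those components map to components of $\mathcal{F}_{12}$, some of which become $I_1$ or merge. Getting the correction terms right is what decides whether the resulting forms are genuinely equivalent. I would first settle it on explicit Weierstrass models $y^2=x(x^2+a(t)x+b(t))$, with $Y$ given by $y^2=x(x^2-2a x+a^2-4b)$.
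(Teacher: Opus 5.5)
\textbf{There is a genuine gap: your height bookkeeping under the isogeny is wrong, so the matching condition $a=c$ is not the right one.} The overall idea is a reasonable alternative to the paper's proof: take two infinite-order sections, one of type (3) and one of type (4), whose roles are exchanged by the isogeny. Done correctly, for $e\ge 2$ this gives exactly the families of Theorem \ref{theorem: rank 12, equal NS}.

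Shioda's height pairing already contains the fibre corrections and is (a fixed multiple of) the N\'eron--Tate height. So $\langle\phi(P),\phi(Q)\rangle=2\langle P,Q\rangle$ holds exactly, with no correction terms. Since $\hat\phi\circ\phi=[2]$, the generator of $MW(\mathcal{F})$ modulo torsion is $\phi(P)$ or $\phi(P)/2$. A rank one Mordell--Weil lattice is therefore rescaled by $2$ or $1/2$, never by $4$ or $1/4$.

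This is what Theorem \ref{theorem: Y rho 11} says:
\begin{itemize}
\item $\mathcal{L}_d$ (lattice $[2d]$) corresponds to $\mathcal{L}_{2d}^{(a)}$ or $\mathcal{L}_{2d}^{(b)}$ (lattice $[d]$);
\item $\mathcal{L}_d^{(a)}$ and $\mathcal{L}_d^{(b)}$ (lattice $[d/2]$) correspond to $\mathcal{L}_{d/2}$ (lattice $[d]$).
\end{itemize}
The two $d$'s in the summary in the introduction are not the same integer.

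Consequently, in the basis $\phi(P_1)/2,\phi(P_2)$ the form on $Y_{12}$ is $\begin{pmatrix}a&b\\ b&c\end{pmatrix}$, not $\begin{pmatrix}a/2&b\\ b&2c\end{pmatrix}$. Your choice $a=c$ fails. For $b=0$ you are exactly in Theorem \ref{theorem: rank 12, equal NS} with $e=a$ and $d=c$, so $a=c$ means $d=e$, which must be even. Then $NS(X_{12})\simeq\Lambda_e^{(a)}\oplus\langle -2e\rangle$ or $\Lambda_e^{(b)}\oplus\langle -2e\rangle$, while $NS(Y_{12})\simeq\Lambda_{2e}^{(b)}\oplus\langle -e\rangle$. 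Their discriminant groups have exponents $2e$ and $4e$, so they are not isometric. The correct condition is $b=0$, $c=2a$, which is the paper's $d=2e$.

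\textbf{Second gap: matching trivial lattice and $MW$ does not give $NS(X_{12})\simeq NS(Y_{12})$, even with the correct Mordell--Weil lattices.} Shioda--Tate only controls the rank and $|d(NS)|$. The lattice $NS$ also depends on which fibre components the generators meet. Already in rank $11$, $\Lambda_4$ and $\Lambda_{16}^{(b)}$ both have fibres $8I_2+8I_1$, $MW=\Z\times\Z/2\Z$ and $MWL=[8]$. Yet their discriminant groups are $(\Z/2\Z)^6\times\Z/8\Z$ and $(\Z/2\Z)^4\times\Z/32\Z$.

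So you must carry the component data through the quotient. By Proposition \ref{proposition E11 and F11}, a generator meeting only trivial components corresponds on $Y$ to one meeting two or four non-trivial components, and vice versa. For $b=0$, $c=2a$, both surfaces then carry an orthogonal pair: a type (3) generator of height $2a$ and a type (4) generator of height $a$, with the isogeny exchanging the types. The number of non-trivial components met also matches, since $d\equiv 2 \bmod 4$ if and only if $e$ is odd.

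You must also prove that $\phi(P_1)/2$ and $\phi(P_2)$ generate $MW(\mathcal{F}_{12})$ modulo torsion. This is exactly the divisibility question that separates cases (3) and (4).

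The paper sidesteps all of this. It takes $W=t_9-et_{10}$ in $T_{X_{11}}$ and pushes $T_{X_{12}}$ forward with Proposition \ref{prop: pi*}. It then saturates the image in $H^2(Y_{12},\Z)$ and reads off $NS(Y_{12})\simeq\Lambda_{2e}^{(a)}\oplus\langle -d\rangle$ or $\Lambda_{2e}^{(b)}\oplus\langle -d\rangle$ from discriminant forms.

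Finally, a bookkeeping slip: with two sections added to $8I_2+8I_1$, the trivial lattice is $U\oplus A_1^{8}$, not $U\oplus A_1^{7}$. The latter would give $\rho=11$, not $12$.
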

	
	In Section \ref{sec:Following specializations of $X$ to a K3 surface of Picard number 20} we show two of our main results. They are: the description of the complex multiplication $\nu$ on the transcendental lattice of the K3 surfaces lying in the 5-dimensional family of K3 surfaces with a van Geemen--Sarti involution such that $X\simeq Y$  and a lattice theoretic sufficient condition that implies that a K3 surface $X$ (possibly with $\rho(X)>10$) with a van Geemen--Sarti involution is such that $NS(X)\simeq NS(Y)$. Both these results are proven by considering a K3 surface $X$ whose N\'eron--Severi group is $U\oplus N$ such that $X\simeq Y$ and by constructing following specializations of $X$ in such a way that the property $X\simeq Y$ (and consequentially also $NS(X)\simeq NS(Y)$) is preserved by each specialization.  Then the (rational) quotient map $\pi:X\dashrightarrow  Y$ induces a self map of $(T_X)_{\Q}$ which restricts to $\gamma,$ a specific abstract action a lattice $\Gamma$, where $\Gamma$ is a prescribed lattice embedded with a finite index in  $U\oplus U\oplus N\simeq T_X$. 
	
	The lattice theoretic criterion that we prove is then the following:
	
	\begin{theorem}\label{theorem intro order 2} A K3 surface $Z$ with a van Geemen--Sarti involution $\sigma$ is such that $NS(Z)\simeq NS(W)$, where $W$ is the minimal resolution of $\widetilde{Z/\sigma}$, if $NS(Z)$ (or equivalently $T_Z$) is preserved by the map $\gamma$ described above. %or, equivalently, if $T_Z$ is preserved by $\gamma$.
	\end{theorem}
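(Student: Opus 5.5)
We argue by specialization from the surface $X$ with $NS(X)\simeq U\oplus N$ and $X\simeq Y$. Write $\pi_*:H^2(X,\Z)\to H^2(Y,\Z)$ for the push-forward and $\pi^*$ for the pull-back induced by the rational quotient map. The identification $X\simeq Y$ fixes an isometry $\psi:H^2(Y,\Z)\to H^2(X,\Z)$ of Hodge structures. The composition $\psi\circ\pi_*$ restricted to $T_X\otimes\Q$ is the rational self-map whose restriction to $\Gamma\subset T_X$ is $\gamma$. Two standard facts will be used:
\[
\pi_*\pi^*=2,\qquad \pi_*(x)\cdot\pi_*(y)=2\,x\cdot y \quad\text{for } x,y\in H^2(X,\Z)^{\sigma}.
\]
The second fact says that on the invariant part $\pi_*$ multiplies the form by $2$. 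The transcendental lattice is invariant, since $\sigma$ is symplectic.

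The plan is to realize $Z$ as a specialization of $X$ inside the family $\mathcal{N}$. The family of K3 surfaces with a van Geemen--Sarti involution is a lattice-polarized family. So a surface $Z$ with a van Geemen--Sarti involution $\sigma$ is obtained from the generic member by choosing a period in $(U\oplus U\oplus N)\otimes\C$. Then $T_Z\subset T_X\simeq U\oplus U\oplus N$ is a primitive sublattice, and $NS(Z)=NS(X)\oplus_{\text{ovl}} T_Z^{\perp}$, with the complement taken inside $T_X$. The quotient construction works in families: the push-forward on $Z$ is the restriction of the push-forward on $X$ under parallel transport. Hence $\pi_*(T_Z)$ is the transcendental part of $W$, and $NS(W)$ is generated by $\pi_*(NS(X))$-type classes together with the image of $T_Z^{\perp_{T_X}}$, up to saturation.

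Next, assume $\gamma$ preserves $NS(Z)$, equivalently $T_Z$. Use $\psi$ to transport $H^2(W,\Z)$ back into $H^2(X,\Z)$, and compare the lattice $\psi\pi_*(T_Z)$, after saturation, with $T_Z$.
\begin{itemize}
\item The hypothesis $\gamma(T_Z\cap\Gamma)\subset T_Z\otimes\Q$ shows that the saturation of $\psi\pi_*(T_Z)$ equals $T_Z$ as a primitive sublattice of $H^2(X,\Z)$.
\item It follows that $T_W\simeq T_Z$ as lattices, via $\psi$ and the saturation. The scaling by $2$ disappears because $\gamma$ has to be an isometry up to the same factor on both sides, as in the rank $10$ case where $\pi_*$ maps $T_X$ onto $T_Y\simeq T_X$.
\item The orthogonal complements of $T_W$ and $T_Z$ in the unimodular lattice $\Lambda_{K3}$ then have isometric discriminant forms and the same signature. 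Since $NS$ is indefinite of rank at least $10$ and the rank is large compared to the length, Nikulin's uniqueness theorem gives $NS(W)\simeq NS(Z)$.
\end{itemize}
Equivalently, one can observe directly that $\psi\pi_*$ maps $NS(Z)$ to $NS(W)$ compatibly with the overlattice data.

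The main obstacle is the integrality and saturation step. We must show that $\pi_*(T_Z)$ is primitive in $H^2(W,\Z)$, or control its index, and that $\gamma$ being defined only on the finite index sublattice $\Gamma$ does not spoil the identification. I would handle this using the explicit cohomological description of the van Geemen--Sarti involution from Section~\ref{sec: family N and vGS inv}. That description gives $\pi_*$ on $U\oplus U\oplus N$ in a concrete basis, and from it one checks that $\gamma$ is integral up to index $2$ exactly on $\Gamma$, which makes the saturation argument go through.
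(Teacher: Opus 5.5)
Your proposal is correct and follows essentially the same route as the paper's proof of Corollary~\ref{cor: NSX=NSY condition}. In both, $\pi_*$ on $(T_X)_{\Q}$ is identified with $\gamma$ via $X\simeq Y$ for $X\in\mathcal{L}$, $Z$ is treated as a marked specialization with $T_Z\subset T_X$, $T_W$ (the saturation of $\pi_*(T_Z)$) is shown to be carried onto $T_Z$, and a Nikulin-type uniqueness argument then gives $NS(W)\simeq NS(Z)$. The integrality issue you single out as the main obstacle is already settled by your first bullet, and the proposed ``integral up to index $2$ on $\Gamma$'' check is not needed: $\gamma$ is injective and preserves $(T_Z)_{\Q}$, and the transported isometry $H^2(W,\Z)\to H^2(Z,\Z)$ preserves saturations, so it maps $T_W$ onto the primitive lattice $T_Z$, and hence $NS(W)$ onto $NS(Z)$, without any primitivity of $\pi_*(T_Z)$ or even the final appeal to Nikulin.
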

	 
    The maps $\nu$ and $\gamma$ are described in Theorem \ref{theorem: action selfmap} and the criterion is given in Corollary \ref{cor: NSX=NSY condition}.
    
	The description of $\gamma$ is due to a careful analysis of the action of the quotient map by a van Geemen--Sarti involution on specific specializations of elliptic fibrations, which are obtained by ``gluing" two fibers of type $I_h$ (for certain $h$) to a fiber of type $I_{2h}$ and by analysing the impact of this gluing on the quotient map. The specializations considered are summarized in Proposition \ref{prop: the five specializations} and shown in Figure \ref{Figure speicalization}.
		
	In Section \ref{sec: order 3}, we consider the analogous problems for K3 surfaces admitting elliptic fibrations with a 3-torsion section and we prove the analogue of Theorems \ref{thm intro: rank 12 vGS} and \ref{theorem intro order 2} for these surfaces, see Theorems \ref{thm order 3 rank 16} and \ref{thm: order 3 "complex multiplication"} and Corollary \ref{cor: order 3}.\\
	
	{\bf Acknowledgements. }{\it\ I warmly thank Bert van Geemen for many useful discussions, for all his suggestions and for reading a preliminary version of this paper.
	
	The author is member of INDAM-GNSAGA.	}
	\section{K3 surfaces with symplectic a involution and their specializations}\label{sec: K3 surfaces with involution and their specializations}
	
The second cohomology group of any K3 surface is isometric to a standard lattice, denoted $\Lambda_{K3}$, which is even, unimodular and of signature $(3,19)$. It is isometric to $U^{\oplus 3}\oplus E_8^{\oplus 2}$.

We fix the following notation: $\Lambda_{K3}$ is generated by $u_i^{(j)}$, and $e_{k}^{(h)}$ with $i=1,2$, $j=1,2,3$, $k=1,\ldots 8$ $h=1,2$ and where $u_1^{(j)}, u_2^{(j)}$ spans the $j$-th copy of $U$ and $e_k^{(h)}$ the $h$-th copy of $E_8$, with $e_k^{(h)}e_{k+1}^{(h)}=1$, $k=1,\ldots 6$ and $e_3^{(h)}e_8^{(h)}=1$.

Let $X$ be a K3 surface admitting a symplectic involution $\iota$, that is an involution which acts trivially on $H^{2,0}(X)$.

 In \cite{Nik}, it is proved that the action of $\iota^*$ on the second cohomology of $X$, i.e. on $H^2(X,\Z)\simeq \Lambda_{K3}$, is essentially unique, which means that its realization on $\Lambda_{K3}$ does not depend on $X$ or on $\iota$, and indeed in \cite{Mo} it is proved that there exists an isometry $\gamma:H^2(X,\Z)\ra \Lambda_{K3}$ such that $ \gamma\circ\iota^*\circ\gamma^{-1}:\Lambda_{K3}\ra\Lambda_{K3}$ switches the  two copies of $E_8\subset\Lambda_{K3}$ and acts as the identity on $U^{\oplus 3}\subset\Lambda_{K3}$, i.e. $\iota^*(e_k^{(1)})=e_k^{(2)}$ and $\iota^*(u_i^{(j)})=u_i^{(j)}$. This implies that the lattice which is invariant by $\iota^*$ is  isometric to $U^{\oplus 3}\oplus E_8(2)$ and its orthogonal complement is isometric to $E_8(2)$. As a consequence one can prove 
that a K3 surface $X$ admits a symplectic involution $\iota$ if and only if the lattice $E_8(2)$ is primitively embedded in $NS(X)$, see \cite{Nik}, \cite{vGS}. 

The quotient surface $X/\iota$ is singular in 8 points and its desingularization is a K3 surface, denoted by $Y$ in the following. In the N\'eron--Severi group of $Y$ there are the 8 classes of the disjoint rational curves which resolve the singularities of $X/\iota$, which are denoted $N_i$, $i=1,\ldots, 8$. We call Nikulin lattice the lattice $N$ generated by $N_i$, $i=1,\ldots, 8$ and by $\hat{N}:=(\sum_{i=1}^8N_i)/2$, where $N_i^2=-2$ and $N_iN_j=0$ if $i\neq j$. It is the minimal primitive sublattice of $NS(Y)$ containing the classes of the curves arising from the resolution of $X/\iota$. It can be proved that a projective K3 surface $Y$ is obtained as a desingularization of the quotient of a K3 surface by a symplectic involution if and only if the lattice $N$ is primitively embedded in $NS(Y)$, see \cite{Nik} and \cite{GS}. 

The quotient map $\pi:X\ra X/\iota$ induces a $2:1$ rational map  $\pi:X\ra Y$ (both the maps are denoted with the same latter by an abuse of notation). Since both $X$ and $Y$ are K3 surfaces, $\pi$ induces the maps $\pi^*:H^2(Y,\Z)\simeq \Lambda_{K3}\ra H^2(X,\Z)\simeq \Lambda_{K3}$ and $\pi_*:H^2(X,\Z)\simeq \Lambda_{K3}\ra H^2(Y,\Z)\simeq \Lambda_{K3}$. 

To work only with smooth surfaces, one can blow up $X$ in the fixed locus of $\iota$, obtaining a non minimal surface $\widetilde{X}$ on which $\iota$ induces an involution $\widetilde{\iota}$. Then $Y\simeq \widetilde{X}/\widetilde{\iota}$. Since the fixed locus of $\iota$ on $X$ consists of 8 points, the second cohomology group of  $\widetilde{X}$ is isometric to $H^2(X,\Z)\oplus \langle -1\rangle^{\oplus 8}\simeq \Lambda_{K3}\oplus \langle -1\rangle^{\oplus 8}$. Again with an abuse of notation we denote $\pi$ also the quotient map $\widetilde{X}\ra\widetilde{X}/\widetilde{\iota}\simeq Y$ and hence we consider $\pi_*$ as map from $ \Lambda_{K3}\oplus \langle -1\rangle^{\oplus 8}$ to $\Lambda_{K3}$. 

\begin{proposition}{\rm(See \cite[Proposition 1.8]{vGS}) }\label{prop: pi*}
Let $X$, $\widetilde{X}$ and $Y$ as above. The map $\pi_*$, defined on $H^2(\widetilde{X},\Z)$, is the following
$$\begin{array}{ccccccccccc}\pi_*:H^2(\widetilde{X},\Z)\simeq &U^{\oplus 3}&\oplus E_8&\oplus E_8&\oplus \langle -1\rangle^{\oplus 8}&\ra &U(2)^{\oplus 3}&\oplus E_8&\oplus N&\hookrightarrow H^2(Y,\Z)\simeq \Lambda_{K3}\\
&(u,&x,&y,&z)&\mapsto& (u,&x+y,&z) .\end{array}$$
In particular, $\pi_*(H^2(\widetilde{X}, \Z))\simeq U(2)^{\oplus 3}\oplus E_8\oplus N$ is a sublattice of index $2^6$ of $H^2(Y,\Z)$ and the map $\pi^*$ defined on this sublattice acts as follow:
$$\begin{array}{ccccccccccc}\pi^*: &U(2)^{\oplus 3}&\oplus E_8&\oplus N&\ra &U^{\oplus 3}&\oplus E_8&\oplus E_8&\simeq H^2(X,\Z)\simeq \Lambda_{K3}\\
&(u,&x,&z)&\mapsto& (2u,&x,&x). \end{array}$$
The map $\pi^*$ on $H^2(Y,\Z)$ is the $\Q$-linear extension of this map. 
\end{proposition}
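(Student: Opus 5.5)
We work on the blow-up $\widetilde{X}$ of $X$ at the eight fixed points and compute $\pi_*$ via the double cover $\pi:\widetilde{X}\to Y$, branched exactly along the eight exceptional curves $N_i$ of $Y$. By Morrison's result recalled above, we may fix a marking of $H^2(X,\Z)$ on which $\iota^*$ swaps the two copies of $E_8$ and fixes $U^{\oplus 3}$. Let $E_1,\dots,E_8\subset\widetilde{X}$ be the exceptional curves. Then $H^2(\widetilde{X},\Z)=H^2(X,\Z)\oplus\bigoplus\Z E_i$, with $E_i^2=-1$, and $\widetilde{\iota}^*$ fixes each $E_i$.

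The argument rests on two standard facts for a finite double cover $\pi$ of smooth surfaces.

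First, $\pi_*\pi^*=2\,\mathrm{id}$ and $\pi^*\pi_*=\mathrm{id}+\widetilde\iota^*$.

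Second, the projection formula $\pi_*a\cdot b=a\cdot\pi^*b$ holds, which gives $\pi_*a\cdot\pi_*b=a\cdot\pi^*\pi_*b=a\cdot b+a\cdot\widetilde\iota^*b$.

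Since $E_i$ lies in the ramification locus, $\pi_*E_i=N_i$ and $\pi^*N_i=2E_i$. These are consistent: $N_i^2=\tfrac12(2E_i)^2=-2$.

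Next we compute $\pi_*$ on the three kinds of classes, using the intersection formula above.

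For $u\in U^{\oplus 3}$, which is $\widetilde\iota^*$-invariant, we get $(\pi_*u)^2=2u^2$. So $\pi_*$ maps $U^{\oplus3}$ isometrically onto a copy of $U(2)^{\oplus3}$.

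For the $E_8$ part, take $x$ in the first copy and $y$ in the second. Writing $\iota^*x=x'$, we have $\pi_*x=\pi_*x'$, so $\pi_*(x,y)$ depends only on $x+y$, read as an element of $E_8$ via the identification of the two copies. Moreover $(\pi_*x)^2=x^2+x\cdot x'=x^2$, because the two copies of $E_8$ are orthogonal. Hence $\pi_*$ maps the first copy of $E_8$ isometrically to a sublattice $E_8\subset H^2(Y,\Z)$.

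The images of $U^{\oplus 3}$, $E_8$ and $\langle E_i\rangle$ are mutually orthogonal, again by the intersection formula, since the summands are $\widetilde\iota^*$-stable and mutually orthogonal. This yields the stated formula $(u,x,y,z)\mapsto(u,x+y,z)$, with image $U(2)^{\oplus3}\oplus E_8\oplus\langle N_i\rangle$.

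To reach the lattice $N$ itself we must show $\hat N$ lies in the image. We expect this to be the main obstacle. Two routes are available. One is the classical fact that the branch divisor of a double cover is $2$-divisible in $\mathrm{Pic}(Y)$; this puts $\hat N\in H^2(Y,\Z)$, and the class $\frac12\sum N_i$ is itself a pushforward of a suitable class, for instance $\pi_*$ of half a combination coming from $E_8$ plus the $E_i$. The other, which we prefer, is a discriminant count. The lattice $U(2)^{\oplus 3}\oplus E_8\oplus\langle -2\rangle^{8}$ has discriminant $2^{14}$, while $\pi_*H^2(\widetilde X,\Z)$ must contain the primitive saturation $N$ of the $N_i$ as computed there. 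We would check directly that $\pi_*$ applied to a class of the form $\sum E_i + (\text{something with }\iota^*$-anti-invariant part$)$ gives $\hat N$ plus an element of the other summands. Once $N$ is in the image, we get discriminant $2^6\cdot 2^6=2^{12}$, so the image has index $2^6$ in the unimodular $\Lambda_{K3}$, as stated.

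Finally, the formula for $\pi^*$ follows from $\pi^*\pi_*=\mathrm{id}+\widetilde\iota^*$. On $U$ this gives $\pi^*(\pi_*u)=2u$. On $E_8$ it gives $\pi^*(\pi_*x)=x+\iota^*x=(x,x)$. On the $N_i$ we have $\pi^*N_i=2E_i$, and these classes are orthogonal to $H^2(X)$ and so vanish after contraction to $X$. Since $\pi_*H^2(\widetilde X,\Z)$ has finite index in $H^2(Y,\Z)$, $\pi^*$ on all of $H^2(Y,\Z)$ is the $\Q$-linear extension of this map.
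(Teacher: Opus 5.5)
Your computations of $\pi_*$ on $U^{\oplus 3}$, on the two copies of $E_8$ and on the $E_i$ are correct. So are the orthogonality of the images and the derivation of $\pi^*$ from $\pi^*\pi_*=\mathrm{id}+\widetilde\iota^*$. This is the standard argument behind \cite[Proposition 1.8]{vGS}, which the paper cites without giving a proof.

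The gap is the step you flag as the main obstacle, and it cannot be closed. The class $\hat N$ is not in $\pi_*(H^2(\widetilde X,\Z))$, even after adding classes orthogonal to the $N_i$. Your own projection formula shows this. For every $c\in H^2(\widetilde X,\Z)$ one has $\pi_*c\cdot N_i=c\cdot\pi^*N_i=2\,(c\cdot E_i)\in 2\Z$, whereas $(\hat N+w)\cdot N_i=-1$ for any $w\perp N_i$. Equivalently, $\pi^*\pi_*c=c+\widetilde\iota^*c$ always has even coefficients on the $E_i$, while $\pi^*\hat N=\sum_i E_i$.

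So the ``half a combination'' of your first route does not exist in $H^2(\widetilde X,\Z)=H^2(X,\Z)\oplus\langle E_1,\dots,E_8\rangle$. In your second route, the claim that the image ``must contain'' the saturation of the $N_i$ is unfounded, since nothing forces the image of $\pi_*$ to be saturated. In fact you had already shown that the image is $U(2)^{\oplus 3}\oplus E_8\oplus\langle N_1,\dots,N_8\rangle$. That lattice has discriminant $2^{14}$, so its index in $H^2(Y,\Z)$ is $2^7$, not $2^6$.

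What is true is the following, and this is how the ``in particular'' clause must be read: the displayed map has $U(2)^{\oplus 3}\oplus E_8\oplus N$ as its target, not its image.
\begin{itemize}
\item $\hat N$ lies in $H^2(Y,\Z)$ because the branch divisor $\sum_i N_i$ of $\widetilde X\to Y$ is $2$-divisible. This is the first half of your first route, and it is all you need.
\item $\hat N$ is orthogonal to $\pi_*H^2(X,\Z)$, since $\pi_*a\cdot N_i=2\,(a\cdot E_i)=0$ for $a\in H^2(X,\Z)$.
\item Hence the lattice generated by the image together with $\hat N$ is $U(2)^{\oplus 3}\oplus E_8\oplus N$. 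It contains the image with index $2$ and has discriminant $2^6\cdot 2^6=2^{12}$, so it has index $2^6$ in the unimodular lattice $H^2(Y,\Z)$.
\item The formula for $\pi^*$ is unaffected, since $\pi^*\hat N=\sum_i E_i$ also goes to $0$ in $H^2(X,\Z)$.
\end{itemize}
Replace the attempt to write $\hat N$ as a pushforward by adjoining it to the image. As written, your argument would prove the false equality $\pi_*(H^2(\widetilde X,\Z))=U(2)^{\oplus3}\oplus E_8\oplus N$.
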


In \cite{vGS} the families of projective K3 surfaces admitting a symplectic involution were studied: they are countably many 11-dimensional families of  K3 surfaces which can be described as families of lattice polarized K3 surfaces. To state the precise results about these families we first have to recall some results and definitions in lattice theory, in the following lemma.
\begin{lemma}
\begin{enumerate}
\item
For any $d\in\mathbb{N}_{>0}$ there exists a unique (up to isometries) primitive embedding of $\langle 2d\rangle \oplus E_8(2)$ in $\Lambda_{K3}$.

\item For any $d\equiv 0\mod 2$, there exists a unique, up to isometries, even finite index overlattice $(\langle 2d\rangle\oplus E_8(2))'$ of $\langle 2d\rangle\oplus E_8(2)$ in which both the direct summands are primitive. The index is necessarily 2.

\item For any  $d\equiv 0\mod 2$, there exists a unique, up to isometries, primitive embedding of $(\langle 2d\rangle\oplus E_8(2))'$ in $\Lambda_{K3}$.

\item For any $d\in\mathbb{N}_{>0}$ there exists a unique (up to isometries) primitive embedding of $\langle 2d\rangle \oplus N$ in $\Lambda_{K3}$.

\item For any $d\equiv 0\mod 2$, there exists a unique, up to isometries, even finite index overlattice $(\langle 2d\rangle\oplus N)'$ of $\langle 2d\rangle\oplus N$ in which both the direct summands are primitive. The index is necessarily 2.

\item For any  $d\equiv 0\mod 2$, there exists a unique, up to isometries, primitive embedding of $(\langle 2d\rangle\oplus N)'$ in $\Lambda_{K3}$.
\end{enumerate}
\end{lemma}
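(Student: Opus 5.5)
The whole lemma is a Nikulin-type discriminant-form computation; it mirrors \cite[Propositions 2.2, 2.3]{vGS} and \cite[Corollary 2.3]{GS}. We first record the discriminant groups of the pieces: $A_{E_8(2)}\simeq(\Z/2\Z)^8$ with an even form (values of $q$ in $\Z/2\Z$); $A_N\simeq(\Z/2\Z)^6$, also even, since $N$ has index $2$ in $\langle-2\rangle^{\oplus 8}$ and $\hat N^2=-4$; and $A_{\langle 2d\rangle}\simeq \Z/2d\Z$ with $q(1/2d)=1/2d$. All statements reduce to existence and uniqueness results for even lattices with given signature and discriminant form.

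For (1) and (4), the lattice $L=\langle 2d\rangle\oplus E_8(2)$ (resp. $\langle 2d\rangle\oplus N$) has rank $9$ and signature $(1,8)$. Its discriminant group needs $l(A_L)=9$ (resp. $7$) generators. We will use Nikulin's criterion \cite[Theorem 1.14.4]{Nik}: a primitive embedding into the unimodular $\Lambda_{K3}$ is unique if $\mathrm{rank}\,\Lambda_{K3}-\mathrm{rank}\,L\geq l(A_L)+2$. Here $22-9=13\geq 11$, so this holds in both cases. Existence follows from the same theorem, or concretely: embed $E_8(2)$ as $\{(x,-x)\}\subset E_8\oplus E_8$, or $N$ in one copy of $E_8$ as in \cite{vGS}. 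Then put a vector of square $2d$ in a copy of $U$, for instance $u_1+du_2$.

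For (2) and (5), an index-$2$ even overlattice in which both summands stay primitive corresponds to an isotropic element of order $2$ in $A_L$ of the form $(a,b)$ with both $a\neq 0$ and $b\neq 0$. Since $A_{\langle 2d\rangle}$ is cyclic, its only element of order $2$ is $a=d/2d=1/2$, with $q(a)=d/2 \bmod 2\Z$. The form on $A_{E_8(2)}$ or $A_N$ takes values in $\Z/2\Z$, so we need $b$ with $q(b)\equiv -d/2 \bmod 2\Z$. This forces $d/2\in\Z$, i.e. $d$ even. Then $q(b)$ must be $0$ if $d\equiv 0 \bmod 4$ and $1$ if $d\equiv 2\bmod 4$, and nonzero elements with either value exist in both discriminant groups. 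The index cannot exceed $2$: any glue element must project to an element of order at most $2$ in $\Z/2d\Z$, because the other group is $2$-elementary, and primitivity of $\langle 2d\rangle$ forces the projection to be injective on the glue group. Uniqueness up to isometry comes from the transitivity of $O(E_8(2))\to O(A_{E_8(2)})$ (surjective onto $O^+_8(2)$ type group), resp. $O(N)\to O(A_N)$, on nonzero vectors of a fixed $q$-value. This transitivity is the step I expect to need the most care. For $E_8(2)$, $O(A)=O(q)$ is the image of the Weyl group $W(E_8)$ reduced mod $2$, which acts transitively on nonzero isotropic and on anisotropic vectors. For $N$ one can use the $\mathfrak{S}_8\ltimes$ sign-change symmetries together with explicit representatives, or cite \cite{GS}.

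For (3) and (6), the overlattice $L'$ has rank $9$ and $l(A_{L'})\leq l(A_L)-2$, i.e. $7$ (resp. $5$). Hence $13\geq l+2$ and Nikulin's theorem gives both existence and uniqueness of the primitive embedding. Existence also needs the orthogonal complement to exist, with signature $(2,11)$ and discriminant form $-q_{L'}$. This holds by \cite[1.10.1]{Nik}, since the rank $13$ exceeds the length, or explicitly via the vGS constructions in \cite{vGS} and \cite{GS}.
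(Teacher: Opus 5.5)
Your proof is correct, and for (1), (3), (4), (6) it is the paper's argument: compute the length of the discriminant group and apply Nikulin's Theorem 1.14.4. That theorem is in \cite{NikInt}; \cite{Nik} is the paper on symplectic automorphisms.

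You differ from the paper in two places.
\begin{itemize}
\item For (2) and (5), the paper gives no argument; it cites \cite[Proposition 2.2]{vGS} and \cite[Proposition 2.2, Corollary 2.1]{GS}. You prove them directly, using the correspondence between overlattices and isotropic subgroups of the discriminant group. You combine this with transitivity of $O(E_8(2))=W(E_8)$, resp.\ $O(N)$, on nonzero elements of $A$ with a fixed $q$-value.
\item For (3) and (6), the paper computes the discriminant forms of the overlattices exactly, as $\Z_{2d}(\frac{1}{2d})\oplus u(2)^3$ and $\Z_{2d}(\frac{1}{2d})\oplus u(2)^2$. It notes that the first coincides with the form of $\langle 2d\rangle\oplus N$, which underlies the isometry $(\langle 4e\rangle\oplus E_8(2))'\simeq\langle 4e\rangle\oplus N$ used later. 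You only bound the length, which is all Nikulin's criterion needs. Even the trivial bound $l(A_{L'})\le l(A_L)$ suffices, since a subquotient cannot have larger length and $22-9=13\geq 9+2$.
\end{itemize}
Your version is self-contained; the paper's gives finer information about $L'$ that it reuses later.

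Some slips should be fixed.
\begin{itemize}
\item $N$ is an index-$2$ overlattice of $\langle -2\rangle^{\oplus 8}$, not a sublattice of it.
\item Your ``concrete'' embedding of $N$ into one copy of $E_8$ is not primitive. A rank-$8$ sublattice of discriminant $2^6$ in the unimodular rank-$8$ lattice $E_8$ has index $8$. Rely on Nikulin's theorem there instead.
\item In the index bound, you have the two primitivity conditions the wrong way round. Primitivity of $\langle 2d\rangle$ makes $H\to A_K$ injective, so $H$ is $2$-elementary. Primitivity of $K=E_8(2)$ or $N$ makes $H\to\Z/2d\Z$ injective. Together they give $|H|\le 2$.
\end{itemize}

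The transitivity on $A_N$ that you flagged is easy to make explicit. The roots of $N$ are exactly $\pm N_i$, so $O(N)$ is the group of signed permutations of the $N_i$. Moreover $A_N\simeq\{a\in\mathbb{F}_2^8:\sum a_i=0\}/\langle(1,\ldots,1)\rangle$ with $q(a)\equiv-\mathrm{wt}(a)/2 \bmod 2\Z$. So the nonzero elements with $q=1$ are the $28$ classes of weight $2$, and those with $q=0$ are the $35$ classes of weight $4$. $\mathfrak{S}_8$ acts transitively on each of these two sets.
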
 
\proof Recall that the lattices $E_8(2)$ and $N$ have discriminant forms isometric to $u(2)^4$ and $u(2)^3$ respectively, as can be explicitly computed. Hence (1) and (4), follows by \cite[Theorem 1.14.4]{NikInt}. Point (2) is proved in \cite[Proposition 2.2]{vGS}, where a basis of the overlattice is computed. From this one computes the discriminant form, which is $\Z_{2d}(\frac{1}{2d})\oplus u(2)^3$ (and in particular it coincides with the one of the lattice $\langle 2d\rangle\oplus N$ considered in the point (4), see also \cite[Proposition 3.5]{CG}). Then, one can apply again \cite[Theorem 1.14.4]{NikInt} to the lattice $(\langle 2d\rangle\oplus E_8(2))'$, proving (3). Point (5) is proved in \cite[Proposition 2.2 and Corollary 2.1]{GS}. Again one computes explicitly  the discriminant form of $(\langle 2d\rangle\oplus N)'$ (which is $\Z_{2d}(\frac{1}{2d})\oplus u(2)^2$) and applies \cite[Theorem 1.14.4]{NikInt} to obtain (6).\endproof
\begin{rem}\label{rem: embedding NS rank 9}{\rm Since we are choosing the basis of $\Lambda_{K3}$ in such a way that the involution $\iota^*$ switches two pairs of $E_8$ in $\Lambda_{K3}$, we are assuming that the embedding of the antiinvariant lattice $E_8(2)$ is generated by the classes $\{e_k^{(1)}-e_k^{(2)}\}_{k=1,\ldots 8}$ in $\Lambda_{K3}$. Then the embedding of the lattices $\langle 2d\rangle \oplus E_8(2)$ and $(\langle 2d\rangle \oplus E_8(2))'$ is determined by the embedding of the sublattice $\langle 2d\rangle\subset NS(X)$ in the orthogonal complement to $E_{8}(2)=\langle e_k^{(1)}-e_k^{(2)},\ k=1,\ldots 8\rangle$ in $\Lambda_{k3}$. For the lattice $\langle 2d\rangle\oplus E_8(2)$ one can assume that the summand $\langle 2d\rangle$ is generated by  $u_1^{(1)}+du_2^{(1)}$; for the lattice $(\langle 2d\rangle\oplus E_8(2))'$, the summand $\langle 2d\rangle$  is generated by $2u_1^{(1)}+\left(\frac{d}{2}+1+\epsilon\right)u_2^{(1)}+(e_1^{(1)}+e_1^{(2)})+\epsilon(e_3^{(1)}+e_3^{(2)})$ where $\epsilon=0$ if $d\equiv 0 \mod 4$ and $\epsilon=1$ if $d\equiv 2 \mod 4$.}\end{rem}

\begin{proposition}{\rm(See \cite{vGS}, \cite{GS})}\label{prop: K3 surfaces with sympl rank 9}
Let $X$ be a projective K3 surface with Picard number 9.

The surface $X$ admits a symplectic involution if and only if there exists $d\in\mathbb{N}_{>0}$ such that $$\mbox{either }NS(X)\simeq \langle 2d\rangle\oplus E_8(2)\mbox{ or }NS(X) \simeq (\langle 2d\rangle\oplus E_8(2))'.$$
Moreover $$NS(X)\simeq  \langle 2d\rangle\oplus E_8(2)\mbox{ if and only if }NS(Y)\simeq  \langle (4d\rangle\oplus N)'$$ and $$NS(X)\simeq ( \langle 2d\rangle\oplus E_8(2))'\mbox{  if and only if  }NS(Y)\simeq  \langle d\rangle\oplus N.$$
\end{proposition}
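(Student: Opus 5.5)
The plan is in two parts: first the characterization of $NS(X)$, then the computation of $NS(Y)$ using the explicit maps of Proposition \ref{prop: pi*}.

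\emph{The ``if and only if'' for $X$.} Recall that $X$ admits a symplectic involution if and only if $E_8(2)$ is primitively embedded in $NS(X)$. Since $\rho(X)=9$ and $X$ is projective, $NS(X)$ is hyperbolic of rank 9. Hence the orthogonal complement of $E_8(2)$ in $NS(X)$ has rank one and is positive, so it equals $\langle 2d\rangle$ for some $d>0$. Therefore $NS(X)$ is an even overlattice of $\langle 2d\rangle\oplus E_8(2)$ in which both summands are primitive. Such an overlattice corresponds to an isotropic subgroup of the discriminant group $\Z_{2d}\oplus(\Z_2)^8$ meeting each summand trivially. Any such subgroup has order at most 2 and must be generated by $(d,\,v)$ with $v\in u(2)^4$ nonzero and $q(v)\equiv -\tfrac{d}{2}$ modulo $2\Z$, which forces $d$ even. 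By point (2) of the Lemma this gives exactly $(\langle 2d\rangle\oplus E_8(2))'$. Conversely, each of the two lattices embeds primitively in $\Lambda_{K3}$ by points (1) and (3), so a K3 surface with that Néron--Severi group exists. Since it contains $E_8(2)$ primitively, it carries a symplectic involution.

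\emph{Computation of $NS(Y)$.} Use the embedding fixed in Remark \ref{rem: embedding NS rank 9}. Then $NS(Y)$ is the primitive closure in $H^2(Y,\Z)$ of $N$ together with $\pi_*$ of the invariant part of $NS(X)$, which is $\langle 2d\rangle=\langle L\rangle$. Indeed $\rho(Y)=\rho(X)=9$ and $\pi_*$ is an isomorphism over $\Q$ on transcendental parts, so this closure has full rank 9 inside $NS(Y)$.

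\begin{itemize}
\item[(a)] Case $NS(X)=\langle 2d\rangle\oplus E_8(2)$, with $L=u_1^{(1)}+du_2^{(1)}$. By Proposition \ref{prop: pi*}, $\pi_*L$ is the element $u_1+du_2$ of $U(2)$, so $(\pi_*L)^2=4d$. Since $\pi_*H^2(\widetilde X,\Z)$ has index $2^6$ in $H^2(Y,\Z)$, I expect $\pi_*L$ together with $N$ to generate a non-primitive lattice. Concretely, the class $(\pi_*L+\text{half of four of the }N_i)/2$ should lie in $H^2(Y,\Z)$. This is read off from how $U(2)^3\oplus N$ glues into $\Lambda_{K3}$: the discriminant groups $u(2)^3$ of $U(2)^3$ and of $N$ are glued to each other. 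The resulting lattice is $(\langle 4d\rangle\oplus N)'$, as described in point (5) of the Lemma.
\item[(b)] Case $NS(X)=(\langle 2d\rangle\oplus E_8(2))'$, with $L=2u_1+(\tfrac d2+1+\epsilon)u_2+(e_1^{(1)}+e_1^{(2)})+\epsilon(e_3^{(1)}+e_3^{(2)})$. By Proposition \ref{prop: pi*}, $\pi_*L=2\cdot(\text{element of }U(2))+2e_1+2\epsilon e_3$, which is divisible by 2 in $H^2(Y,\Z)$. The class $M=\pi_*L/2$ has $M^2=\tfrac{4\cdot 2d}{4}\cdot\tfrac12\cdot 2=$, that is, $M^2=d$ after recomputing with $(\pi_*L)^2=2L^2=4d$. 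Then $\langle M\rangle\oplus N$ should already be primitive, giving $\langle d\rangle\oplus N$.
\end{itemize}

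\emph{The converse directions.} These follow automatically. The two cases for $X$ are exhaustive and mutually exclusive, and the two resulting lattices for $Y$ are non-isometric, since their discriminant groups have orders $4d\cdot 2^6/4$ and $d\cdot 2^6$, distinguished by form type. To make this comparison rigorous one should pair each $d$ with its correct counterpart on the $Y$ side.

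\emph{Main obstacle.} The hardest step is the primitivity check in $H^2(Y,\Z)$: verifying exactly which combinations of $\pi_*L$ and the $N_i$ become divisible by 2. This requires the explicit gluing of $U(2)^{\oplus3}\oplus E_8\oplus N$ into $\Lambda_{K3}$ from \cite{vGS}. As a shortcut, I would first try to avoid explicit gluing by a discriminant-order count. We have $|\det NS(Y)|=|\det T_Y|$, and $T_Y\otimes\Q\simeq T_X(2)\otimes\Q$ with a computable index, since $\pi_*$ restricted to $T_X$ multiplies the form by 2. This count determines the index of $\langle\pi_*L\rangle\oplus N$ in $NS(Y)$ and so pins down which lattice $NS(Y)$ is.
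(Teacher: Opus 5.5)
Your route is the one behind the results the paper cites without proof: the shape of $NS(X)$ as in van Geemen--Sarti, then saturating $\pi_*(NS(X))\oplus N$ in $H^2(Y,\Z)$ via Proposition~\ref{prop: pi*}. The first part, classifying $NS(X)$, is fine.

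\textbf{Gap in case (b).} You only assert that $\Z M\oplus N$ ``should already be primitive''. The determinant shortcut does not replace this. The index $[T_Y:\pi_*(T_X)]$ is controlled by exactly the same gluing data; abstractly you only know $2T_Y\subseteq\pi_*(T_X)\subseteq T_Y$. The missing observation is short:
\begin{itemize}
\item The summand $E_8$ of $U(2)^{\oplus 3}\oplus E_8\oplus N$ is unimodular. Hence it is an orthogonal direct summand of $H^2(Y,\Z)$, and every integral class has integral $E_8$-component.
\item In case (b) the extra class of $NS(X)$ is $(L+e)/2$ with $e\in E_8(2)$ anti-invariant. Its integrality forces $L=(2u',x,x)$ in $U^{\oplus 3}\oplus E_8\oplus E_8$.
\item Since $\pi_*e=0$, you get $M=\pi_*L/2=\pi_*((L+e)/2)=(u',x,0)$. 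Here $x\notin 2E_8$, and $M$ is primitive, both because $L$ is primitive.
\item So no class $(aM+n)/2$ with $a$ odd and $n\in N$ is integral. Since $H^2(Y,\Z)/(U(2)^{\oplus 3}\oplus E_8\oplus N)$ is a $2$-group, there are no odd-index enlargements either.
\item Therefore $NS(Y)=\Z M\oplus N\simeq\langle d\rangle\oplus N$.
\end{itemize}
This argument also avoids the explicit generator of Remark~\ref{rem: embedding NS rank 9}, which you should not rely on. As printed, $\frac d2+1+\epsilon$ is odd for both values of $\epsilon$, so the factor $2$ you claim in the $U(2)$-part of $\pi_*L$ is not visible from that formula. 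The two values of $\epsilon$ appear to be swapped.

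\textbf{Case (a).} We have $q(\pi_*L/2)\equiv d \pmod 2$. So its gluing partner in $A_N$ is $\tfrac12$ of a sum of four $N_i$ only when $d$ is even; when $d$ is odd it is $\tfrac12$ of a sum of two $N_i$. Either way the saturation has index exactly $2$. You should also note that $\pi_*L$ is primitive, because the glue group (a graph between $A_{U(2)^{\oplus3}}$ and $A_N$) meets $A_{U(2)^{\oplus 3}}$ trivially. Then point (5) of the Lemma applies.

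\textbf{The converses.} Your order count does not distinguish the two lattices: $(\langle 4d\rangle\oplus N)'$ and $\langle d\rangle\oplus N$ both have $|\det|=64d$. What separates them is the group structure: $\Z/4d\Z\times(\Z/2\Z)^4$ versus $\Z/d\Z\times(\Z/2\Z)^6$ with $d$ even, of $2$-ranks $5$ and $7$. Within each family, $|\det|=64d$ recovers $d$. With this, the converse implications follow from the forward ones and the exhaustiveness of the two cases for $X$, as you intended.
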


The next corollary immediately follows from the previous proposition.

\begin{corollary}\label{rem: NS(X)neq NS(Y) in the generic cases}\label{cor: NS(X)neq NS(Y) in the generic cases}
	Let $X$ be a generic K3 surface admitting a symplectic involution $\iota$ and $Y$ the minimal resolution of its quotient, then $E_8(2)\simeq NS(X)\not\simeq NS(Y)\simeq N$.
	
	Let $X$ be a generic projective K3 surface admitting a symplectic involution $\iota$ and $Y$ the minimal resolution of its quotient, then $NS(X)\not\simeq NS(Y)$.
	
\end{corollary}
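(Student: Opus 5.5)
The plan is to read both parts of the corollary off from Proposition \ref{prop: K3 surfaces with sympl rank 9} and from the description of $\pi_*$ and $\pi^*$ in Proposition \ref{prop: pi*}. In each case we show that $NS(X)$ and $NS(Y)$ are distinguished by a lattice invariant, and we use the discriminant group as that invariant.

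\emph{The generic (not necessarily projective) case.} Since $X$ admits a symplectic involution, $E_8(2)$ embeds primitively in $NS(X)$. For $X$ generic in this family we take equality, $NS(X)\simeq E_8(2)$, which is the $8$-dimensional family. For $Y$ we argue as follows.
\begin{itemize}
\item The lattice $N$ embeds primitively in $NS(Y)$.
\item By Proposition \ref{prop: pi*}, $\pi_*$ and $\pi^*$ are $\Q$-isomorphisms between the invariant part of $H^2(X,\Q)$ and the orthogonal complement of $N$ in $H^2(Y,\Q)$, and they respect the Hodge structures. Hence $\rho(Y)=\rho(X)=8$, which forces $NS(Y)=N$.
\end{itemize}
Now compare discriminant groups. $E_8(2)$ has discriminant group $(\Z/2\Z)^8$, while $N$ has discriminant group $(\Z/2\Z)^6$: it is an index-$2$ overlattice of $\langle -2\rangle^8$, whose discriminant group has order $2^8$. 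Equivalently, $N$ has discriminant form $u(2)^3$ and $E_8(2)$ has $u(2)^4$, as recalled in the proof of the previous lemma. Different discriminants give $NS(X)\not\simeq NS(Y)$.

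\emph{The generic projective case.} Here $\rho(X)=9$, and Proposition \ref{prop: K3 surfaces with sympl rank 9} gives two possibilities.

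\emph{First possibility:} $NS(X)\simeq\langle 2d\rangle\oplus E_8(2)$ and $NS(Y)\simeq(\langle 4d\rangle\oplus N)'$.
\begin{itemize}
\item The discriminant of $NS(X)$ has absolute value $2d\cdot 2^8$.
\item $\langle 4d\rangle\oplus N$ has discriminant of absolute value $4d\cdot 2^6$, and passing to an index-$2$ overlattice divides this by $4$, giving $d\cdot 2^6$.
\end{itemize}
The two values differ, since $2^9d\neq 2^6d$.

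\emph{Second possibility:} $NS(X)\simeq(\langle 2d\rangle\oplus E_8(2))'$ and $NS(Y)\simeq\langle d\rangle\oplus N$, with $d$ even.
\begin{itemize}
\item The discriminant of $NS(X)$ has absolute value $2d\cdot 2^8/4=d\cdot 2^7$; the lemma's computed form $\Z_{2d}(\frac1{2d})\oplus u(2)^3$ confirms this.
\item The discriminant of $NS(Y)$ has absolute value $d\cdot 2^6$.
\end{itemize}
Again these differ.

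\emph{Main obstacle.} The only delicate point is the first part: justifying that for generic $X$ one has exactly $NS(Y)=N$. This needs the rank equality $\rho(X)=\rho(Y)$ coming from the $\Q$-isomorphism of transcendental parts given by $\pi_*$. The rest is a comparison of discriminants.
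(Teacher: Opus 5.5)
Your argument is correct and follows the paper's route: the paper only says the corollary follows immediately from Proposition \ref{prop: K3 surfaces with sympl rank 9} (together with the rank-$8$ case), and your discriminant comparisons ($2^8$ vs.\ $2^6$, $2^9d$ vs.\ $2^6d$, $2^7d$ vs.\ $2^6d$) spell out the non-isometry that the paper leaves implicit. One minor slip: the family with $NS(X)\simeq E_8(2)$ has Picard rank $8$ but is $12$-dimensional, not $8$-dimensional.
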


So if either $\rho(X)=8$ or $\rho(X)=9$ and $X$ is projective, then $NS(X)\not\simeq NS(Y)$, which of course implies that $X\not\simeq Y$. Nevertheless, there exist K3 surfaces $X$ (with Picard number higher than 9) admitting symplectic involution $\iota$ such that $NS(X)\simeq NS(Y)$ and the most famous example is given by a 10-dimensional family of elliptic K3 surfaces which admit an elliptic fibration with a 2-torsion section (already presented in the introduction and that will be discussed in details in  Section \ref{sec: family N and vGS inv}). For the K3 surfaces in this family, it holds $NS(X)\simeq U\oplus N\simeq NS(Y)$.

The aim of the next theorem is to show that the lattice $U\oplus N$ is not the unique lattice of rank 10, which can appear simultaneously as N\'eron--Severi group of a K3 surface $X$ admitting a symplectic involution $\iota$ and of the minimal resolution of $X/\iota$.

\begin{theorem}\label{theo: NS(X)=NS(Y) rank 10}
Let $X$ be a K3 surface	with $NS(X)=\left(\langle 4d\rangle\oplus E_8(2)\right)'\oplus \langle -2d\rangle$. Then $X$ admits a symplectic involution $\iota$ such that $NS(Y)\simeq NS(X)$, where $Y$ is the minimal resolution of $X/\iota$.\end{theorem}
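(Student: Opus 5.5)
The first step is existence of the involution. Since $(\langle 4d\rangle\oplus E_8(2))'$ is an overlattice of $\langle 4d\rangle\oplus E_8(2)$ in which $E_8(2)$ is primitive, the lattice $E_8(2)$ is primitively embedded in $NS(X)=(\langle 4d\rangle\oplus E_8(2))'\oplus\langle -2d\rangle$. By the criterion recalled above (\cite{Nik}, \cite{vGS}), $X$ therefore admits a symplectic involution $\iota$. We also need $X$ to exist: $NS(X)$ has rank 10 and signature $(1,9)$, and it embeds primitively in $\Lambda_{K3}$. For the embedding, use Remark \ref{rem: embedding NS rank 9} with $2d$ replaced by $4d$. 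This places $(\langle 4d\rangle\oplus E_8(2))'$ using the first copy of $U$ and the classes $e^{(1)}+e^{(2)}$, $e^{(1)}-e^{(2)}$, and we then add the class $u_1^{(2)}-du_2^{(2)}$, which has square $-2d$ and lies in the second copy of $U$. Surjectivity of the period map then gives $X$.

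Next I would compute $NS(Y)$ via Proposition \ref{prop: pi*}. Fix coordinates so that $\iota^*$ swaps the two $E_8$'s and fixes $U^{\oplus 3}$. Then $NS(X)^{\iota}$ is the rank 2 lattice spanned by $h=2u_1^{(1)}+(d+1+\epsilon)u_2^{(1)}+(e_1^{(1)}+e_1^{(2)})+\epsilon(e_3^{(1)}+e_3^{(2)})$ and $v=u_1^{(2)}-du_2^{(2)}$. The anti-invariant part is $E_8(2)$. On the $Y$ side, $NS(Y)$ is the primitive saturation in $H^2(Y,\mathbb{Z})$ of $\pi_*(NS(X)^\iota)\oplus N$, which works since $\pi_*$ kills $E_8(2)$ rationally and $\pi_*(T_X)=T_Y$ up to saturation. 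Applying $\pi_*(u,x,y,z)=(u,x+y,z)$ gives $\pi_*h=2u_1^{(1)}+(d+1+\epsilon)u_2^{(1)}+2e_1+2\epsilon e_3$ inside $U(2)\oplus E_8$. The $U(2)$ part has square $8(d+1+\epsilon)$ and the $E_8$ part has square $-8$ (check both parities of $\epsilon$). Its self-intersection is $2h^2=8d$, and it is divisible by $2$ in $H^2(Y)$, so $\pi_*h/2$ has square $2d$. Also $\pi_*v=v\in U(2)$ has square $-4d$ and is primitive in $H^2(Y,\mathbb{Z})$. Hence $NS(Y)\otimes\mathbb{Q}$ is spanned by a class of square $2d$, a class of square $-4d$ and $N$. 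The remaining question is the exact overlattice: which combinations of $\pi_*h/2$, $v$ and elements of $N^\vee$ lie in $H^2(Y,\mathbb{Z})$. I would settle this by writing the index-$2^6$ inclusion $U(2)^{3}\oplus E_8\oplus N\subset\Lambda_{K3}$ explicitly, as in \cite{vGS}, where the gluing vectors are $(u/2,0,n/2)$ with $u\in U(2)^3$ and $n$ ranging over $\hat N$-type classes.

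Finally, I would show the resulting lattice is isometric to $NS(X)$. The expected answer is $NS(Y)\simeq(\langle 2d\rangle\oplus N)'\oplus\langle -4d\rangle$ or $\langle 2d\rangle\oplus N\oplus\langle-4d\rangle$ glued suitably. I would compare discriminant forms: $NS(X)$ has $q=\mathbb{Z}_{4d}(\tfrac1{4d})\oplus u(2)^3\oplus\mathbb{Z}_{2d}(-\tfrac1{2d})$, using the discriminant computed in the proof of the lemma. Then I would compute the discriminant form of $NS(Y)$ and check that the two forms agree. The main obstacle is that matching discriminant forms does not automatically give an isometry in rank 10 with a large $2$-length. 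Nikulin's uniqueness criterion (\cite[Theorem 1.14.2]{NikInt}, indefinite with rank $\ge$ length$+2$) may fail when the $2$-part has length 8 and the rank is 10. So I would instead construct an explicit isometry. Use the known isomorphism $\langle 4d\rangle\oplus E_8(2)$-type overlattice $\simeq$ $\langle 2d\rangle\oplus N$-type overlattice at the level of discriminants (both have $u(2)^3$, as noted in the proof of the lemma). Then exchange the roles of the $\langle 4d\rangle$ and $\langle -2d\rangle$ summands with $\langle 2d\rangle,\langle -4d\rangle$ via the rank 2 lattice $\langle 4d\rangle\oplus\langle -2d\rangle$ versus $\langle 2d\rangle\oplus\langle -4d\rangle$, which are rescalings of each other by $-1$ and $2$ ($\langle 4d\rangle\oplus\langle-2d\rangle=(\langle-2d\rangle\oplus\langle d\rangle)(-2)$). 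I would look for a basis change inside the full rank 10 lattice realizing this swap, checking it on gluing vectors.
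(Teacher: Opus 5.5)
\textbf{There is a genuine gap.} It is exactly the step you defer, the saturation of $\pi_*(NS(X)^{\iota})\oplus N$ in $H^2(Y,\mathbb{Z})$, and carrying it out refutes the isometry you are aiming for.

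Since $N$ and $N^{\perp}=U(2)^{\oplus 3}\oplus E_8$ are both primitive in the unimodular lattice $H^2(Y,\mathbb{Z})$, the index-$2^6$ extension glues \emph{every} nonzero element of $A_{U(2)^{\oplus 3}}$ to a nonzero element of $A_N$. Your class $v=u_1^{(2)}-du_2^{(2)}$ is primitive in $U$, so $\pi_*v/2$ is nonzero in $A_{U(2)^{\oplus 3}}$. Hence $(\pi_*v+n)/2\in H^2(Y,\mathbb{Z})$ for some $n\in N$, and by primitivity of $NS(Y)$ this class lies in $NS(Y)$. That $\pi_*v$ itself is primitive is beside the point.

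This is the class $\bigl(\tau_{11}-d\tau_{12}+\dots\bigr)/2$ that appears in the proof of Theorem \ref{theorem: Y rho 11}. For $d=1$ it is the strict transform of the image of the $\iota$-invariant $(-2)$-curve, which passes through two fixed points, exactly as in Example \ref{example: quotient I2}.

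By contrast, $\pi_*h/2$ has $E_8$-component $E_1$ or $E_1+E_3$. Since $E_8$ splits off $H^2(Y,\mathbb{Z})$, $\pi_*h/2$ enters no further glue. Therefore $NS(Y)\simeq\langle 2d\rangle\oplus(N\oplus\langle-4d\rangle)'$, where the prime denotes the index-$2$ overlattice with both summands primitive. This gives $|\det NS(Y)|=2d\cdot 2^6\cdot 4d/4=2^7d^2$, whereas $|\det NS(X)|=2^8d\cdot 2d=2^9d^2$. So your planned comparison of discriminant forms fails, and no explicit isometry exists for this involution.

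A minor point: for $(h+x)/2$ with $x\in E_8(2)$ to be integral, and for $\pi_*h/2$ to lie in $H^2(Y,\mathbb{Z})$, the coefficient $d+1+\epsilon$ of $u_2^{(1)}$ must be even. That means $\epsilon=0$ for $d$ odd and $\epsilon=1$ for $d$ even. The two cases are swapped in Remark \ref{rem: embedding NS rank 9}, while the paper's proof uses the correct choice.

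\textbf{The paper's proof does not close this gap.} It asserts that $NS(Y)$ is generated by $L=\pi_*h/2$, $D=\pi_*v$ and $N$. It then concludes via the basis change $\{b_1+b_2,b_1+2b_2\}$, giving $\langle 4d\rangle\oplus\langle-2d\rangle\simeq\langle 2d\rangle\oplus\langle-4d\rangle$, together with $\langle 4e\rangle\oplus N\simeq(\langle 4e\rangle\oplus E_8(2))'$. The generation claim misses exactly the glue of $D$ with $N$.

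\textbf{Another involution does not help either.} Restrict $\pi_*$ to $H^2(X,\mathbb{Q})^{\iota}$. Then $\pi_*^{-1}(N^{\perp})=(H^2(X,\mathbb{Z})^{\iota})^{\vee}$, so $T_Y=\pi_*\bigl((T_X)_{\mathbb{Q}}\cap(H^2(X,\mathbb{Z})^{\iota})^{\vee}\bigr)$. Consequently $|\det T_Y|=2^{\mathrm{rk}\,T_X-8}\,|\det NS(X)^{\iota}|$ for every symplectic involution. This formula reproduces Proposition \ref{prop: K3 surfaces with sympl rank 9} and the $U\oplus N$ case, and here it gives $2^4\cdot 8d^2=2^7d^2$.

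Equality with $2^9d^2$ would require $|\det NS(X)^{\iota}|=32d^2$ and a glue group of order $4$ with $E_8(2)$. That glue group would necessarily be all of $A_{NS(X)^{\iota}}[2]$. Write the $2$-part of $A_{NS(X)^{\iota}}$ as $\mathbb{Z}/2^{\alpha}\oplus\mathbb{Z}/2^{\beta}$ with $\alpha\le\beta$ and $\alpha+\beta=5+2v_2(d)$, where $v_2$ is the $2$-adic valuation. Then an element of order $2^{\beta}\ge 2^{3+v_2(d)}$ survives in $A_{NS(X)}\simeq\mathbb{Z}/4d\oplus\mathbb{Z}/2d\oplus(\mathbb{Z}/2)^{6}$. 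But the $2$-exponent of that group is only $2^{2+v_2(d)}$, a contradiction.

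So the statement as written cannot be proved along these lines. What your computation actually yields is $NS(Y)\simeq\langle 2d\rangle\oplus(N\oplus\langle-4d\rangle)'\not\simeq NS(X)$.
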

\proof Let us assume $d=2k-1$ (the case $d=2k$ is similar). 
There exists a unique embedding of $NS(X)$ in $\Lambda_{K3}$ and hence we can fix it to be the following:  $$2u_1^{(1)}+2ku_2^{(1)}+(e_1^{(1)}+e_1^{(2)}),\ \ \{e_i^{(1)}-e_i^{(2)}\}_{i=1,\ldots,8},\ \ -u_1^{(2)}+(2k-1)u_2^{(2)}$$
where the first vector generates $\langle 4d\rangle$, the following 8 vectors generate $E_8(2)$, the last one generates $\langle -2d\rangle$.

Applying the map $\pi_*$ (see Proposition \ref{prop: pi*}) one finds that the N\'eron--Severi group of $Y$ is generated by $$L:=\pi_*(u_1^{(1)})+k\pi_*(u_2^{(1)})+\pi_*(e_1),\ \ D:=-\pi_*(u_1^{(2)})+(2k-1)\pi_*(u_2^{(2)})$$ and by the classes of the Nikulin lattice $N$. Since:  $\pi_*(u_1^{(j)})$, $\pi_*(u_2^{(j)})$ generate the lattice $U(2)$ and  $\pi_*(e_1)\in E_8$, one obtains that $L^2=4k-2=2d$ and $D^2=-4(2k-1)=-4d$. So $NS(Y)\simeq \langle 2d\rangle\oplus N\oplus \langle -4d\rangle .$

Let $\{b_1,b_2\}$ a basis of a rank 2 lattice on which the form is $\langle 4d\rangle\oplus \langle-2d\rangle$. Then the bilinear form on the basis $\langle b_1+b_2, b_1+2b_2\rangle$ is $\langle 2d\rangle\oplus \langle-4d\rangle$, which shows that $\langle 4d\rangle\oplus \langle-2d\rangle\simeq \langle -4d\rangle\oplus \langle2d\rangle$. %So $NS(Y)\simeq \langle 4d\rangle\oplus N\oplus \langle -2d\rangle$.

In \cite[Theorem 3.9]{CG}, it is observed that $\langle 4e\rangle\oplus N\simeq (\langle 4e\rangle\oplus E_8(2))'$, therefore  $$NS(Y)\simeq \langle 2d\rangle\oplus N\oplus \langle -4d\rangle \simeq \langle 4d\rangle\oplus N\oplus \langle -2d\rangle\simeq (\langle4d\rangle\oplus E_8(2))'\oplus \langle -2d\rangle\simeq NS(X).$$\endproof

 The previous theorem implies that there are infinitely many families of projective K3 surfaces $X$ such that $X$ admits a symplectic involution $\iota$, $\rho(X)=10$ and $NS(X)\simeq NS(Y)$. By Corollary \ref{rem: NS(X)neq NS(Y) in the generic cases}, the minimal Picard number for which this phenomenon can appear is 10. So we obtained maximal components of the family of K3 surfaces $X$ admitting a symplectic involution such that $NS(X)\simeq NS(Y)$. 
 \begin{rem}\label{rem: Tx=Ty, only even rank}{\rm If $X$ is a K3 surface with a symplectic involution $\iota$ such that $NS(X)\simeq NS(Y)$, then $\rho(X)$ is an even number. Indeed, as observed in \cite[Section 2.4]{vGS}, the quotient by $\iota$ induces an isomorphism of Hodge structure between $\left(T_{X}\right)_{\Q}$ and $\left(T_{Y}\right)_{\Q}$. If $NS(X)\simeq NS(Y)$, then $\left(T_X\right)_{\Q}\simeq \left(T_{Y}\right)_{\Q}$ and so the Nikulin involution induces an Hodge isometry of $\left(T_{X}\right)_{\Q}$ in itself. By \cite[Proposition 2.5]{vGS} this implies that the rank of $T_X$ has to be even.}\end{rem}
 
The previous theorem does not imply that there are K3 surfaces $X$ polarized with the lattice $\left(\langle 4d\rangle\oplus E_8(2)\right)'\oplus \langle -2d\rangle$ which are necessarily isomorphic  their quotient $Y$. In the case of the $(U\oplus N)$-polarized K3 surfaces there exist subfamilies where $X$ and $Y$ are isomorphic (see \cite{vGSc} and Proposition \ref{prop: subfamily isomorphic quotients}) and this allows one to define complex (or real) multiplication on members of such subfamilies. We don't know if the analogous phenomenon appears in each of the families described in Theorem \ref{theo: NS(X)=NS(Y) rank 10}.

 \begin{rem}{\rm In the proof of Theorem \ref{theo: NS(X)=NS(Y) rank 10}, we observed that $\langle 2d\rangle\oplus \langle -4d\rangle\simeq \langle 4d\rangle\oplus \langle -2d\rangle$. In the case $d=1$, this has a very well known geometric interpretation. Let $\langle 4\rangle\oplus \langle -2\rangle$ be generated by the nef class $A$ (with $A^4=4$)  and by the class of a rational curve $R$. The map $\varphi_{|A|}$ gives a model as a quartic in $\mathbb{P}^3$ singular in one point (the contraction of $R$). The projection form this point is a double cover of $\mathbb{P}^2$, associated to the complete linear systme of $H=A-R$. The image of the rational curve $R$ is a conic everywhere tangent to the branch locus (whose class is obviously $R$). Posing $V=A-2R$ one observes that $\langle H,V\rangle\simeq \langle 2\rangle\oplus \langle -4\rangle$. So the change of basis $\{A,R\}\mapsto \{H,V\}$ described in a lattice theoretic context in the proof of the theorem, corresponds here to project a quartic from its node. The construction remains analogous if the quartic surface contains other 8 nodes (which form a divisible sets) represented by the Nikulin lattice, and this gives the isometry $\langle 2d\rangle\oplus N\oplus\langle -4d\rangle\simeq \langle 4d\rangle\oplus N\oplus \langle -2d\rangle$.}
 \end{rem}
 
 \begin{rem}{\rm The last step in the proof of Theorem \ref{theo: NS(X)=NS(Y) rank 10} is based on the fact that 
 	$\langle 4d\rangle\oplus N\oplus \langle -2d\rangle\simeq \langle (4d\rangle\oplus E_8(2))'\oplus \langle -2d\rangle$. If one restricts again to the case $d=1$, one finds an explicit description of the isometry  	$\langle 4\rangle\oplus N\simeq (\langle 4\rangle\oplus E_8(2))'$ in \cite[Section 3.4]{CG} and this induces the required one by acting as the identity on the last summand.}\end{rem}
 \subsection{Geometric description of the case $d=1$}\label{subsection: geometry d=1}
 Let us consider a K3 surface $S$ such that $NS(S)\simeq \left(\langle4\rangle\oplus E_8(2)\right)'\oplus \langle -2\rangle$, the lattice given in Theorem \ref{theo: NS(X)=NS(Y) rank 10} if $d=1$. We now describe the geometry of such a surface, of the associated involution and of the quotient. First, we observe that $S$ is a special member of the family of the $\left(\langle4\rangle\oplus E_8(2)\right)'$-polarized K3 surface, studied in \cite[Section 3.5]{vGS}. In particular, this implies that $S$ admits a model as double cover of $\mathbb{P}^1\times \mathbb{P}^1$ branched on a curve $B$ of bidegree $(4,4)$, which is invariant for the following involution  of $\mathbb{P}^1\times \mathbb{P}^1$: $$\alpha:((x_0:x_1),(y_0:y_1))\mapsto ((y_0:y_1),(x_0:x_1)).$$ 
 The involution $\alpha$ lifts to two different involutions on $S$, one of them is symplectic and will be denoted by $\iota$. The involution $\alpha$ on $\mathbb{P}^1\times \mathbb{P}^1$ fixes the diagonal $\Delta$ and hence $\iota$ fixes on $S$ the inverse image of the eight points $\Delta\cap B$. The quotient surface $S/\iota$ is a double cover of $(\mathbb{P}^1\times \mathbb{P}^1)/\alpha\simeq \mathbb{P}^2$ branched on a conic $C_0$ and a quartic curve $B_4$ which are the image under the quotient map $\mathbb{P}^1\times \mathbb{P}^1\ra(\mathbb{P}^1\times \mathbb{P}^1)/\alpha\simeq \mathbb{P}^2$ of the curves $\Delta$ and $B$ respectively.
 
The surface $S$ is a special member of the family since in its N\'eron--Severi group there is also an extra class, with self intersection $-2$ and on which $\iota$ acts as the identity. This implies that there is a smooth rational curve on $S$ (which is not present on generic members of the family) which is contracted by the map $S\ra \mathbb{P}^1\times \mathbb{P}^1$ and which is invariant for the action of $\iota$. The presence of such a curve implies that the branch curve $B$ is singular in a point, which is necessarily contained in $B\cap \Delta$ (otherwise the curve $B$ would have two singularities, switched by $\alpha$ and hence $S$ would have two rational curves switched by $\iota$, in particular it would have a bigger N\'eron--Severi group). 

To construct a smooth model of $S$, one has to consider the blow up of $\mathbb{P}^1\times \mathbb{P}^1$ in the singular point of $B$. Let us denote $\widetilde{\mathbb{P}^1\times \mathbb{P}^1}$ the blow up and observe that $\alpha$ lifts to an involution $\widetilde{\alpha}$ of $\widetilde{\mathbb{P}^1\times \mathbb{P}^1}$ which preserves the exceptional divisor. In particular, it acts as an involution of the exceptional divisor and fixes two points on it: one is the intersection with the strict transform of $\Delta$, the other is an isolated fixed point. So $\widetilde{\mathbb{P}^1\times \mathbb{P}^1}/\widetilde{\alpha}$ is no longer a smooth surface isomorphic to $\mathbb{P}^2$, but it is singular in a point (which is contained in the image of the strict transform neither of $\Delta$ nor of $B$). 

As in the general case $S/\iota$ is birational to a double cover of $\widetilde{\mathbb{P}^1\times \mathbb{P}^1}/\widetilde{\alpha}$ branched on the image of the curve $B\cup \Delta$, but now the described double cover is singular in ten points: two points are switched by the cover involution and are the cover of the singular point of $\widetilde{\mathbb{P}^1\times \mathbb{P}^1}/\widetilde{\alpha}$, the others are due to the singularities of the branch locus, i.e. are the points covering $B\cap \Delta$. The latter correspond to the curves in the Nikulin lattice in $\widetilde{S/\iota}$. To resolve the two singularities which are not related with the Nikulin lattice, one introduces two $(-2)$-curves in $\widetilde{S/\iota}$ switched by the cover involution. This implies that a model of $\widetilde{S/\iota}$ is a double cover of $\mathbb{P}^2$ which is a specialization of the general case(in particular it is branched on a quartic and a conic) for which there also  exists a rational curve (indeed a conic) whose intersection points with the branch locus have even multiplicity; therefore this rational curve in $\mathbb{P}^2$ splits into two rational curves on $\widetilde{S/\iota}$ which are permuted by the cover involution.

One is able to identify this special curve also from a lattice theoretic point of view: the N\'eron--Severi group of $\widetilde{S/\iota}$
is $\langle 2\rangle\oplus N\oplus \langle -4\rangle$, where the class $V$ spanning  $\langle -4\rangle$ is the one due to the fact that $S$ is not general in the family of the $(\langle 4\rangle\oplus E_8(2))'$ and so $\widetilde{S/\iota}$ is non general in the family of the $\langle 2\rangle\oplus N$-polarized K3 surfaces. So the class $V$ is the one which shows that there is a special curve on $\widetilde{S/\iota}$. Indeed, if one considers just the lattice $\langle 2\rangle\oplus \langle -4\rangle$, generated by $H$ and $V$, where $H$ is the pullback on $\widetilde{S/\iota}$ of the class of a line in $\mathbb{P}^2$, one realizes that $H+V$ and $H-V$ are the classes of two rational curves switched by the cover involution of the cover $\varphi_{|H}:\widetilde{S/\iota}\ra \mathbb{P}^2$ and both of them are mapped on the same conic in $\mathbb{P}^2$ (which is necessarily a conic tangent to the branch locus in each intersection point, since $H(H-V)=H(H+V)=2$). 

\section{The $(U\oplus N)$-polarized family and the van Geemen Sarti involution}\label{sec: family N and vGS inv}

From now on, we focus on the family $\mathcal{N}$ of the $(U\oplus N)$-polarized K3 surfaces, which is the family of K3 surfaces $X$ admitting a van Geemen--Sarti involution.  First we recall standard results on such a family in Section \ref{subsec: preliminarie on N}, then we write the cohomological action of a van Geemen--Sarti involution $\sigma$ as a standard one, finding explicitly the two copies of $E_8$ switched by $\sigma^*$ in terms of the generators of the N\'eron--Severi group and of the transcendental lattice of $X$, see Section \ref{subsec: cohomological action}. 

\subsection{Preliminaries on $\mathcal{N}$}\label{subsec: preliminarie on N}
The main result on the $(U\oplus N)$-polarized K3 surfaces, which makes this family so interesting and studied in the last years, are summarized in the following proposition. We refer to \cite{ScSh} for basic definition and properties of elliptic fibrations.  
\begin{proposition}\label{prop: the family UNpolarized}{\rm (See \cite[Section 4.1]{vGS})}
	Let $X$ be a K3 surface admitting an elliptic fibration $\mathcal{E}:X\ra \mathbb{P}^1_{(t:s)}$ with a 2-torsion section $T$. Let $\sigma$ be the van Geemen--Sarti involution which is the translation by $T$.
	Then
	\begin{itemize} \item the Weierstrass equation of $\mathcal{E}$ is $$y^2=x(x^2+a(t:s)x+b(t:s)),\ \ \deg(a(t:s))=4,\ \deg(b(t:s))=8,\ \ T:(t:s)\mapsto (0,0)$$
		and generically, the singular fibers of  $\mathcal{E}$ are $8I_2+8I_1$ and the Mordell--Weil group is $\Z/2\Z$;
		\item the van Geemen--Sarti involution switches the two components and the two singular points of the $I_2$-fibers and acts on the fibers of type $I_1$ preserving the singular point;
		\item the surface $Y$, minimal resolution of $X/\sigma$, admits an elliptic fibration $\mathcal{F}$ with Weierstrass equation  $$y^2=x(x^2+-2a(t:s)x+(a^2(t:s)-4b(t:s)));$$
		generically, the singular fibers of $\mathcal{F}$ are $8I_2+8I_1$ and the Mordell--Weil group is $\Z/2\Z$;
		\item $NS(X)$ primitively contains $U\oplus N$ and generically $NS(X)\simeq U\oplus N$;
		\item $NS(Y)$ primitively contains $U\oplus N$ and generically $NS(Y)\simeq U\oplus N\simeq NS(X)$.
	\end{itemize}
\end{proposition}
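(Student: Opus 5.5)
This proposition gathers classical facts from \cite[Section 4.1]{vGS}. I would prove it by working directly with the Weierstrass model and then translating the geometry into lattice theory.

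First I would establish the Weierstrass form. A 2-torsion section can be moved to $x=0$, so the cubic in $x$ has the root $x=0$. This gives $y^2=x(x^2+ax+b)$, and the K3 condition forces $\deg a=4$ and $\deg b=8$. The discriminant is $\Delta=16\,b^2(a^2-4b)$. For generic $a,b$ the 8 simple zeros of $b$ give $I_2$ fibers, since $\Delta$ vanishes to order 2 and $x^2+ax+b$ then has a double root only at $x=0$ along such fibers. The 8 simple zeros of $a^2-4b$ give $I_1$ fibers.

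Next I would determine the Mordell--Weil group. The Shioda--Tate formula gives $\rho\ge 2+8=10$. Since the family has $8+\dots$ effective parameters (9 coefficients of $b$, 5 of $a$, minus the $GL_2$ action and scaling, giving 10), the generic Picard number is 10. Hence $MW$ is finite, and it is $\Z/2\Z$ because generically there is no further torsion.

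For the action of $\sigma$, translation by $T$ is a fiberwise group automorphism. On an $I_2$ fiber the section $T$ meets the non-identity component, because $(0,0)$ is the singular point of the nodal cubic reduction only where $b=0$. Translation therefore swaps the components, and with them the two nodes. On an $I_1$ fiber, $T$ meets the smooth part and the translation fixes the node.

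For the quotient, I would take the quotient elliptic curve by the 2-isogeny using Vélu's formula. It gives $y^2=x(x^2-2ax+a^2-4b)$. Swapping the roles of $b$ and $a^2-4b$ exchanges the $I_2$ and $I_1$ fibers in the discriminant, so $\mathcal F$ again has $8I_2+8I_1$ with $MW=\Z/2\Z$. The resolution of $X/\sigma$ coincides with the minimal model of this fibration, because the 8 fixed points of $\sigma$ are the nodes on the $I_1$ fibers. These nodes become the $I_2$ fibers of $\mathcal F$.

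For the lattices, the zero section and a fiber span $U$. The non-identity components $C_1,\dots,C_8$ of the $I_2$ fibers, together with $T$, generate an overlattice. In it, $T \equiv O+2F+\frac12\sum C_i$ modulo $U$, by the height formula $0=4-0-8\cdot\frac12$. So $\frac12\sum C_i\in NS(X)$, and $\langle U, C_i, T\rangle = U\oplus N$.

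Primitivity holds because any further overlattice would add torsion or new sections, which the generic fibration lacks. Since the generic rank is 10, we get $NS(X)=U\oplus N$. The same argument applied to $\mathcal F$ gives $NS(Y)\simeq U\oplus N$.

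The main obstacle is the genericity claim $\rho=10$, together with primitivity. I would handle it by the parameter count and by the fact that $U\oplus N$ admits a unique primitive embedding in $\Lambda_{K3}$ (\cite[Theorem 1.14.4]{NikInt}), so the moduli space of such polarized K3 surfaces is 10-dimensional.
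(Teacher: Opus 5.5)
The paper gives no proof of this proposition; it quotes it from \cite[Section 4.1]{vGS}, so your sketch has to stand on its own. For the statements about the generic member it does. The normal form $y^2=x(x^2+ax+b)$, the discriminant $16b^2(a^2-4b)$ and the $2$-isogenous curve $y^2=x(x^2-2ax+a^2-4b)$ are the standard route and correct; note that $(-2a)^2-4(a^2-4b)=16b$, which is what exchanges the $I_2$ and $I_1$ loci. The same holds for identifying the eight fixed points with the nodes of the $I_1$ fibers, for expressing $T$ via the height formula, and for the dimension count forcing $\rho=10$.

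\textbf{The gap is in the lattice bullets.} The proposition asserts that $NS(X)$ and $NS(Y)$ \emph{primitively contain} $U\oplus N$ for every $X$ with an elliptic fibration with a $2$-torsion section. Only the equality with $U\oplus N$ is claimed generically. Your argument builds $N$ from the non-identity components $C_1,\dots,C_8$ of eight $I_2$ fibers. It gets primitivity from the absence of further torsion ``which the generic fibration lacks''. So it only covers the configuration $8I_2+8I_1$ with $MW=\Z/2\Z$.

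On special members there may no longer be eight such components, and the torsion part of the Mordell--Weil group may grow. Examples are $I_4+6I_2+8I_1$ as in Remark \ref{rem: gluing 2 I2}, where $N_1,N_2$ become reducible classes, or members with extra torsion. There neither containment nor primitivity follows from what you wrote. Yet this general primitivity is exactly what the paper invokes later, e.g.\ in the proof of Theorem \ref{theorem: X rho 11}.

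A uniform argument is available.
\begin{itemize}
\item On $Y$ the eight exceptional curves $n_i$ are vertical for $\mathcal F$.
\item They are disjoint from its zero section. Sections meet fibers in smooth points, while the fixed points of $\sigma$ are singular points of fibers, because $T$ is disjoint from $O$ and so translation by $T$ has no fixed point on the smooth locus of a fiber.
\item Hence the $n_i$ lie in $U^\perp$, with $U=\langle F_Y,S_Y\rangle$ unimodular.
\item By Nikulin their saturation in $H^2(Y,\Z)$ is $N$. Since projection onto the unimodular summand $U$ is integral, $U\oplus N$ is primitive in $NS(Y)$.
\item Since $X$ is the minimal resolution of $Y/\sigma_Y$ (Remark \ref{rem: quotient of quotient}), the same holds for $X$.
\end{itemize}
Alternatively, primitivity of the fixed sublattice $\varphi(U\oplus N)\subset H^2(X,\Z)$ is topological. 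It is therefore preserved under the marked specializations the paper works with.

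\textbf{Smaller slips.}
\begin{itemize}
\item The torsion section is $T=O+2F-\frac12\sum C_i$. Your plus sign is not correct modulo $U$, though harmless for the conclusion.
\item The symmetry group has dimension $4$, since the Weierstrass scaling is already contained in $GL_2$. So the count is $14-4=10$.
\item Swapping the components of an $I_2$ fiber does not by itself swap its two nodes. You need that the eight fixed points are already the $I_1$ nodes, or the group law on $I_2$.
\item ``No further torsion'' follows from $MW_{\mathrm{tors}}\hookrightarrow(\Z/2\Z)^8$, together with the fact that a second $2$-torsion section would make $a^2-4b$ a square.
\item A finite-index overlattice of $U\oplus N$ could only come from torsion, never from sections of infinite order.
\end{itemize}
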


By the previous result, if $NS(X)\simeq U\oplus N$, then $NS(X)\simeq NS(Y)$, which implies that $X$ and $Y$ are in the same family of polarized K3 surfaces, but of course they are not necessarily isomorphic. 
\begin{remark}\label{rem: quotient of quotient}{\rm  Since $Y$ admits a van Geemen--Sarti involution, denoted $\sigma_Y$, one can consider the minimal resolution of $Y/\sigma_Y$. This is a K3 surface which is isomorphic to $X$, indeed $X$ can be viewed as an elliptic curve $\mathcal{E}$ over $k(\mathbb{P}^1)$ and the minimal resolution of $Y/\sigma_Y$ is the quotient of the elliptic curve $\mathcal{E}$ by its full 2-torsion subgroup, i.e. $\mathcal{E}/\mathcal{E}[2]$.}\end{remark}

More surprising, there exists a subfamily of the family of the $(U\oplus N)-$polarized K3 surfaces, such that $X$ and $Y$ are isomorphic, as shown in the following result by van Geemen and Sch\"utt.
\begin{proposition}{\rm (\cite[Proposition 6.2 and its proof]{vGSc})} \label{prop: subfamily isomorphic quotients}
	Let $\alpha,\beta$ be two polynomials in one variable with complex coefficients, such that $\deg(\alpha)\leq 2$ and $\deg(\beta)\leq3$.
	The elliptic K3 surfaces with Weierstrass equation \begin{equation}\label{eq: Weierstrass siom quotient}y^2=x(x^2+2\alpha(t^2)x+\frac{1}{2}\alpha^2(t^2)+t\beta(t^2))\end{equation}
	form a 5-dimensional family $\mathcal{L}$ of K3 surfaces. Each K3 surface $X\in\mathcal{L}$ admits a van Geemen--Sarti involution such that $X\simeq Y$. Each member of the family has Complex Multiplication by $\mathbb{Q}(\sqrt{-2})$.
	
	Let $X$ be a very general element of $\mathcal{L}$, then $NS(X)\simeq U\oplus N$ and the singular fibers of the elliptic fibration are $8I_2+8I_1$, so $\mathcal{L}\subset \mathcal{N}$.
\end{proposition}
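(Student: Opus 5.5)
The plan is to verify the three claims in Proposition \ref{prop: subfamily isomorphic quotients} directly from the Weierstrass equation \eqref{eq: Weierstrass siom quotient}: that $X\simeq Y$, that the family is 5-dimensional and contained in $\mathcal{N}$, and that $X$ has CM by $\Q(\sqrt{-2})$.

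First we check that \eqref{eq: Weierstrass siom quotient} has the shape required by Proposition \ref{prop: the family UNpolarized}. We set $a(t)=2\alpha(t^2)$, of degree $\leq 4$, and $b(t)=\frac12\alpha(t^2)^2+t\beta(t^2)$, of degree $\leq 8$ in $t$. So $\mathcal{E}$ is an elliptic K3 surface with the 2-torsion section $(0,0)$. By Proposition \ref{prop: the family UNpolarized}, the quotient $Y$ carries the fibration $\mathcal{F}$ with equation
$$y^2=x(x^2-2a x+a^2-4b).$$
Substituting gives
$$-2a=-4\alpha(t^2),\qquad a^2-4b=4\alpha(t^2)^2-2\alpha(t^2)^2-4t\beta(t^2)=2\alpha(t^2)^2-4t\beta(t^2).$$

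Next we exhibit an isomorphism between $\mathcal{E}$ and $\mathcal{F}$. We rescale $x\mapsto \lambda^2 x$, $y\mapsto \lambda^3 y$, which multiplies the coefficient of $x^2$ by $\lambda^{-2}$ and the constant term by $\lambda^{-4}$.
\begin{itemize}
\item Taking $\lambda^2=-2$ turns $-4\alpha$ into $2\alpha$.
\item It turns $2\alpha^2-4t\beta$ into $\frac12\alpha^2-t\beta$.
\item Composing with the base change $t\mapsto -t$ sends $-t\beta(t^2)$ to $t\beta(t^2)$ and fixes $\alpha(t^2)$.
\end{itemize}
The result is exactly \eqref{eq: Weierstrass siom quotient}, so $\mathcal{F}\simeq\mathcal{E}$ and hence $Y\simeq X$.

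For the dimension count, $\alpha$ and $\beta$ together have $3+4=7$ coefficients. The substitutions $t\mapsto ct$ and $(x,y)\mapsto(\mu^2x,\mu^3y)$ preserve the form of the equation and act with generically finite stabilizers, which leaves $7-2=5$ moduli. We still have to check that no further automorphisms of $\mathbb{P}^1$ preserve the form.

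For the very general member, we compute the discriminant $b^2(a^2-4b)$. It has degree $8$ from $b$ and degree $8$ from $a^2-4b$, with generically simple roots. This gives $8I_1$ fibres at the roots of $a^2-4b$ and $8I_2$ fibres at the roots of $b$, so the fibration is as in Proposition \ref{prop: the family UNpolarized}. Then $NS(X)\supseteq U\oplus N$. For equality we argue by dimension: a 5-dimensional family of K3 surfaces has $\rho\leq 15$, which is not enough by itself. So we use the CM instead: $\rho\geq 10$, and we need a very general member to have $\rho=10$. We would obtain this by exhibiting one member with $\rho=10$ through degeneration and counting, or from the period domain argument below.

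The CM is the main point. The composite $\phi:X\dashrightarrow Y\xrightarrow{\sim}X$ induces on $(T_X)_{\Q}$ an endomorphism $\nu=\psi_*\circ\pi_*$ of Hodge structures. By Proposition \ref{prop: pi*}, $\pi^*\pi_*=2$ on transcendental classes, so $\nu^2$ should be $\pm2$ times a Hodge automorphism. The sign and the exact square have to be computed by tracking how the twist by $\sqrt{-2}$ and $t\mapsto -t$ act on $H^{2,0}$. The holomorphic form is $dt\wedge dx/y$, which picks up a factor $-\lambda^2/\lambda^3\cdot(-1)=\pm\lambda^{-1}$ with $\lambda=\sqrt{-2}$. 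This gives eigenvalue $\pm\sqrt{-2}$ on $H^{2,0}$, so $\nu^2=-2$ and $\Q(\sqrt{-2})$ acts on $T_X$.

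Given this, the period of $X$ is constrained to an eigenspace. The fixed-CM locus inside the $(U\oplus N)$-polarized period domain, with $\mathrm{rk}\,T=12$, is a 5-dimensional ball quotient, with complex dimension $12/2-1=5$. This matches the dimension count and confirms that the very general member has $\rho=10$.

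The hardest step is the precise cohomological sign computation that makes $\nu^2=-2$ rather than $+2$.
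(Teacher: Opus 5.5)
The paper gives no argument of its own here; it quotes van Geemen--Sch\"utt, so your sketch has to stand on its own. Several parts of it are correct. The twist $x\mapsto\lambda^2x$, $y\mapsto\lambda^3y$ with $\lambda^2=-2$, followed by $t\mapsto -t$, does carry $\mathcal{F}$ to $\mathcal{E}$, so $Y\simeq X$. The fibre types $8I_2+8I_1$ also come out right.

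The genuine gap is the CM step, which you yourself call the main point. The factor you compute, $\pm\lambda^{-1}=\pm1/\sqrt{-2}$, is only the effect of the isomorphism $\psi\colon Y\to X$ on the normalized forms $dt\wedge dx/y$. Its square is $-1/2$, so it is not an eigenvalue $\pm\sqrt{-2}$. Indeed, no endomorphism preserving the lattice $T_X$ could have it, since it is not an algebraic integer. What you dropped is the contribution of $\pi_*$ on $H^{2,0}$:
\begin{itemize}
\item For the $2$-isogeny $x'=y^2/x^2=x+a+b/x$, $y'=y(b-x^2)/x^2$, one checks $dx'/y'=-dx/y$.
\item Hence $\pi^*\omega_Y=-\omega_X$, and since $\pi_*\pi^*=2$, also $\pi_*\omega_X=-2\omega_Y$.
\item Combined with $\psi^*\omega_X=-\lambda\,\omega_Y$, this gives $\nu\omega_X=2\lambda^{-1}\omega_X=-\lambda\,\omega_X$, which does square to $-2$.
\end{itemize}
Working with $\pi^*\circ\psi^*$ instead of $\psi_*\circ\pi_*$ avoids the factor $2$ altogether.

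Two further steps need repair.

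First, the sentence ``$\pi^*\pi_*=2$, so $\nu^2$ is $\pm2$ times a Hodge automorphism'' does not follow. The relation $\pi^*\pi_*=2$ only gives $\nu x\cdot\nu y=2\,x\cdot y$. That is equally compatible with real multiplication by $\sqrt2$, which is exactly what happens for $\mathcal{L}'$ in Remark~\ref{rem: real multiplication}, where the twist is by $\lambda^2=+2$. It is also compatible with $\nu=1+i$. What actually forces $\nu^2=-2$ is the eigenvalue on $\omega_X$ together with minimality of $T_X$. Namely, $\ker(\nu^2+2)$ is a $\mathbb{Q}$-subspace of $(T_X)_{\mathbb{Q}}$ whose complexification contains $\omega_X$, so it is all of $(T_X)_{\mathbb{Q}}$. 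This argument applies to every member of $\mathcal{L}$, as the statement requires.

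Second, the ball-quotient argument for $\rho=10$ is the right idea. Evenness of $\rho$ holds, and CM periods with $\operatorname{rk}T_X=2m$ fill a ball of dimension $m-1$. But the argument needs the family to be exactly $5$-dimensional in moduli. Your parameter count only gives $\le 5$, so saying the ball ``matches the dimension count'' proves nothing yet. You must show that the map from $(\alpha,\beta)$, modulo $t\mapsto ct$ and $(x,y)\mapsto(\mu^2x,\mu^3y)$, to moduli is generically finite. One way: a positive-dimensional set of base involutions putting a fixed $X$ in this form would give a positive-dimensional family of isomorphisms $Y\to X$. That is impossible, since $\mathrm{Aut}(X)$ is discrete.

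Once that is in place, local Torelli gives $m-1\ge5$, so $\rho=10$. Then $NS(X)=U\oplus N$, because $U\oplus N$ is primitive in $NS(X)$ and now has the same rank.
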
 
\begin{rem}\label{rem: real multiplication}{\rm In the same Proposition of \cite{vGSc}, a different family, say $\mathcal{L}'$, of elliptic K3 surfaces with a van Geeemen--Sarti involution such that $X\simeq Y$, is described. It is given by the equation $y^2=x(x^2+2t\gamma(t^2)x+\frac{1}{2}t^2\gamma^2(t^2)+t\delta(t^2))$ where $\deg(\gamma(t))\leq 1$ and $\deg\delta(t)\leq 3$ and admits Real Multiplication by $\Q(\sqrt{2})$. The general member of this family has still $U\oplus N$ as N\'eron--Severi group, but the main difference is that the  singular fibers are $2III+6I_1+6I_2$.
In the following we will focus mainly on the family described in Proposition \ref{prop: subfamily isomorphic quotients} since the fibers are the same as the ones of a generic member of $\mathcal{N}$.}\end{rem}

By Proposition \ref{prop: the family UNpolarized}, there is a selfmap $\eta:\mathcal{N}\ra \mathcal{N}$ such that for each $X\in\mathcal{N}$, $\eta(X)=Y$. This map is an involution, since  $\eta(\eta(X))=X$ by Remark \ref{rem: quotient of quotient}.
The family $\mathcal{L}$ described in Proposition \ref{prop: subfamily isomorphic quotients} is contained in the fixed locus of $\eta$. There are also other subfamilies contained in the fixed locus (e.g. the family with real multiplication described in Remark \ref{rem: real multiplication}). 

A priori one can restrict $\eta$ to subfamilies $\mathcal{N}'$ of $\mathcal{N}$, but in general one does not obtain an involution of these subfamilies, since in general $\eta(\mathcal{N}')\neq \mathcal{N}'$. The condition that for a generic $X\in\mathcal{N}'$ it holds $NS(X)\simeq NS(Y)$, is equivalent to the condition that the involution $\eta$ restricts to an involution of $\mathcal{N}'$, i.e. that $\eta(\mathcal{N}')=\mathcal{N}'$. By Remark \ref{rem: Tx=Ty, only even rank} the subfamilies $\mathcal{N}'$ such that $\eta(\mathcal{N}')=\mathcal{N}'$ have an even codimension in $\mathcal{N}$.

\subsection{Cohomological action of the van Geemen--Sarti involution}\label{subsec: cohomological action}
Let $X$ be a K3 surface admitting an elliptic fibration with a 2-torsion section as in Proposition \ref{prop: the family UNpolarized} so that $NS(X)\simeq U\oplus N$ and $T_X\simeq U\oplus U\oplus N$. Let $F$, $F+S$, $N_i$, $i=1,\ldots,7$, $\hat{N}:=\sum_{i=1}^8N_i/2$ be a basis of $NS(X)$, whose Gram matrix is $U\oplus N$. From the point of view of the elliptic fibration, $F$ is the class of the fiber; $S$ of the zero section; $N_i$, $i=1,\ldots ,8$ are the irreducible components of the reducible fibers which do not intersect the section $S$; the class $T=2F+S-\hat{N}$ is the 2-torsion section. We often call  the component of a reducible fiber which intersects the zero section the ``trivial component", and the others ``non trivial components". We denote by $\sigma$ the symplectic involution which is the translation by $T$. It acts as follows on the basis of $NS(X)$:
\begin{equation}\label{eq: action sigma}\sigma^*(F)=F,\ \sigma^*(S)=T,\ \ \sigma^*(N_i)=F-N_i.\end{equation}
	
Up to isometries there exists a unique embedding of $NS(X)\simeq U\oplus N$ in $\Lambda_{K3}=E_8\oplus E_8\oplus U\oplus U\oplus U$
and we now describe an embedding $\varphi:U\oplus N\hookrightarrow \Lambda_{K3}$ such that the van Geemen--Sarti involution switches the two copies of $E_8$. This is equivalent to require that the van Geemen--Sarti involution acts as $-id$ on $E_8(2)=\langle e_k^{(1)}-e_k^{(2)}\rangle$ and as $id$ on its orthogonal complement $\langle u_1^{(j)},u_2^{(j)},
 e_k^{(1)}+e_k^{(2)}\rangle$.  Then $\varphi$ is given by
 {\Small	\begin{longtable}{l}\\$\varphi(F)=\sum_{j=1}^2\left(-4e_1^{(j)}  -7e_2^{(j)} -10e_3^{(j)}  -8e_4^{(j)}  -6e_5^{(j)}  -4e_6^{(j)}  -2e_7^{(j)}  -5e_8^{(j)}\right)+2u_1^{(3)}+2u_2^{(3)}$\\ 
 			$\varphi(S)=   e_1^{(2)}$\\
 			$\varphi(N_1)= -2e_1^{(1)}  -3e_2^{(1)}  -4e_3^{(1)}  -3e_4^{(1)}  -2e_5^{(1)}  -e_6^{(1)}   -2e_8^{(1)}  -2e_1^{(2)}  -4e_2^{(2)}  -6e_3^{(2)}  -5e_4^{(2)}  -4e_5^{(2)}  -3e_6^{(2)}  -2e_7^{(2)}  -3e_8^{(2)} +u_1^{(3)}+   u_2^{(3)}$\\
 			$\varphi(N_2)=-2e_1^{(1)}  -3e_2^{(1)}  -4e_3^{(1)}  -3e_4^{(1)}  -2e_5^{(1)}  -e_6^{(1)}  -e_7^{(1)}  -2e_8^{(1)}  -2e_1^{(2)}  -4e_2^{(2)}  -6e_3^{(2)}  -5e_4^{(2)}  -4e_5^{(2)}  -3e_6^{(2)}  -e_7^{(2)}  -3e_8^{(2)}+u_1^{(3)}+u_2^{(3)}$\\
 			$\varphi(N_3)=-2e_1^{(1)}  -3e_2^{(1)}  -4e_3^{(1)}  -3e_4^{(1)}  -2e_5^{(1)}  -2e_6^{(1)}  -e_7^{(1)}  -2e_8^{(1)}  -2e_1^{(2)}  -4e_2^{(2)}  -6e_3^{(2)}  -5e_4^{(2)}  -4e_5^{(2)}  -2e_6^{(2)}  -e_7^{(2)}  -3e_8^{(2)}+u_1^{(3)}+u_2^{(3)}$\\
 			$\varphi(N_4)=-2e_1^{(1)}  -3e_2^{(1)}  -4e_3^{(1)}  -3e_4^{(1)}  -3e_5^{(1)}  -2e_6^{(1)}  -e_7^{(1)}  -2e_8^{(1)}  -2e_1^{(2)}  -4e_2^{(2)}  -6e_3^{(2)}  -5e_4^{(2)}  -3e_5^{(2)}  -2e_6^{(2)}  -e_7^{(2)}  -3e_8^{(2)}+u_1^{(3)}+u_2^{(3)}$
 			\\$\varphi(N_5)=-2e_1^{(1)}  -3e_2^{(1)}  -4e_3^{(1)}  -4e_4^{(1)}  -3e_5^{(1)}  -2e_6^{(1)}  -e_7^{(1)}  -2e_8^{(1)}  -2e_1^{(2)}  -4e_2^{(2)}  -6e_3^{(2)}  -4e_4^{(2)}  -3e_5^{(2)}  -2e_6^{(2)}  -e_7^{(2)}  -3e_8^{(2)}+u_1^{(3)}+u_2^{(3)}$\\
 			$\varphi(N_6)=-2e_1^{(1)}  -3e_2^{(1)}  -5e_3^{(1)}  -4e_4^{(1)}  -3e_5^{(1)}  -2e_6^{(1)}  -e_7^{(1)}  -2e_8^{(1)}  -2e_1^{(2)}  -4e_2^{(2)}  -5e_3^{(2)}  -4e_4^{(2)}  -3e_5^{(2)}  -2e_6^{(2)}  -e_7^{(2)}  -3e_8^{(2)}+u_1^{(3)}+u_2^{(3)}$\\
 			$\varphi(N_7)=-2e_1^{(1)}  -3e_2^{(1)}  -5e_3^{(1)}  -4e_4^{(1)}  -3e_5^{(1)}  -2e_6^{(1)}  -e_7^{(1)}  -3e_8^{(1)}  -2e_1^{(2)}  -4e_2^{(2)}  -5e_3^{(2)}  -4e_4^{(2)}  -3e_5^{(2)}  -2e_6^{(2)}  -e_7^{(2)}  -2e_8^{(2)}+u_1^{(3)}+u_2^{(3)}$\\
 			$\varphi(\hat{N})=-9e_1^{(1)} -14e_2^{(1)} -20e_3^{(1)} -16e_4^{(1)} -12e_5^{(1)}  -8e_6^{(1)}  -4e_7^{(1)} -10e_8^{(1)}  -7e_1^{(2)} -14e_2^{(2)} -20e_3^{(2)} -16e_4^{(2)} -12e_5^{(2)}  -8e_6^{(2)}  -4e_7^{(2)} -10e_8^{(2)}$\\
 			$+4u_1^{(3)}+4u_2^{(3)}$\\ \caption{The embedding $\varphi$ of $U\oplus N$ in $\Lambda_{K3}$\label{eq: U+Nin Lambda}}
 	\end{longtable}
 }
 
	The transcendental lattice of $X$ is $T_X=N\oplus U\oplus U$ with basis $t_i$, $i=1\ldots, 12$, where $t_i^2=-2$ if $i=1,\ldots, 7$, $t_it_j=0$ if $i,j=1,\ldots 7$ and $i\neq j$ and $t_8^2=-4$, $t_8t_i=-1$, $i=1,\ldots 7$, $\{t_9,t_{10}\}$ and $\{t_{11},t_{12}\}$ are standard basis of a copy of $U$ in $T_X$. We fix an embedding $\phi:T_X\hookrightarrow\Lambda_{K3}$:
	\begin{align}\label{eq: transcendental in Lambda}\begin{array}{c}
	\phi(t_1)=\sum_{j=1}^2(2e_1^{(j)} +  4e_2^{(j)}+   6e_3^{(j)}+   5e_4^{(j)}+   4e_5^{(j)}+   3e_6^{(j)}+   2e_7^{(j)}+   3e_8^{(j)})-u_1^{(3)}-u_2^{(3)}\\	
	\phi(t_2)=\sum_{j=1}^2(2e_1^{(j)}+   4e_2^{(j)}+   6e_3^{(j)}+   5e_4^{(j)}+   4e_5^{(j)}+   3e_6^{(j)}+   e_7^{(j)}+   3e_8^{(j)})-u_1^{(3)}-u_2^{(3)}\\
	\phi(t_3)=\sum_{j=1}^2(2e_1^{(j)}+   4e_2^{(j)}+   6e_3^{(j)}+   5e_4^{(j)}+   4e_5^{(j)}+   2e_6^{(j)}+   e_7^{(j)}+   3e_8^{(j)})-u_1^{(3)}-u_2^{(3)}\\
	\phi(t_4)=\sum_{j=1}^2(2e_1^{(j)}+   4e_2^{(j)}+   6e_3^{(j)}+   5e_4^{(j)}+   3e_5^{(j)}+   2e_6^{(j)}+   e_7^{(j)}+   3e_8^{(j)})-u_1^{(3)}-u_2^{(3)}\\
	\phi(t_5)=\sum_{j=1}^2(2e_1^{(j)}+   4e_2^{(j)}+   6e_3^{(j)}+   4e_4^{(j)}+   3e_5^{(j)}+   2e_6^{(j)}+   e_7^{(j)}+   3e_8^{(j)})-u_1^{(3)}-u_2^{(3)}\\
	\phi(t_6)=\sum_{j=1}^2(2e_1^{(j)}+   4e_2^{(j)}+   5e_3^{(j)}+   4e_4^{(j)}+   3e_5^{(j)}+   2e_6^{(j)}+   e_7^{(j)}+   3e_8^{(j)}) -u_1^{(3)}-u_2^{(3)}\\
	\phi(t_7)=\sum_{j=1}^2(2e_1^{(j)}+   4e_2^{(j)}+   5e_3^{(j)}+   4e_4^{(j)}+   3e_5^{(j)}+   2e_6^{(j)}+   e_7^{(j)}+   2e_8^{(j)})-u_1^{(3)}-u_2^{(3)}\\
	\phi(t_8)=\sum_{j=1}^2(7e_1^{(j)}+  14e_2^{(j)}+  20e_3^{(j)}+  16e_4^{(j)}+  12e_5^{(j)}+   8e_6^{(j)}+   4e_7^{(j)}+  10e_8^{(j)}) -3u_1^{(3)}-4u_2^{(3)}\\
	\phi(t_9)=u_1^{(1)},\ \ 
	\phi(t_{10})=u_2^{(1)},\ \ 
	\phi(t_{11})=u_1^{(2)},\ \ 
	\phi(t_{12})=u_2^{(2)}\end{array}\end{align}
	
	To obtain $H^2(X,\Z)$ as overlattices of $NS(X)\oplus T_X$ where $NS(X)=\langle N_1,\ldots, N_7,\hat{N}, F, S\rangle$ and $T_X=\langle t_1,\ldots t_{12}\rangle$, one adds the classes
	$$(N_1 + N_2 + t_1 + t_2 )/2,\ (N_2 + N_7 +t_2 + t_7)/2,$$ 
	$$(N_3 + N_4+t_3 + t_4)/2,\ 
	(N_1+N_2+N_4 + N_7+t_1+t_2+t_4+t_7)/2,$$ $$(N_5 + N_6+ t_5+t_6)/2,\ (N_6+N_8+2t_8-t_1-t_2-t_3-t_4-t_5-t_7)/2.$$
	
	The lattice $(\Lambda_{K3}^{\sigma^*})^{\perp}\simeq E_8(2)\subset NS(X)$ is generated as follows:
	\begin{align}\label{eq: diagE8(2)} \xymatrix{2F-\hat{N}\ar@{-}[r]&-F+N_6+N_7\ar@{-}[r]&N_5-N_6\ar@{-}[r]\ar@{-}[d]&N_4-N_5\ar@{-}[r]&N_3-N_4\ar@{-}[r]&N_2-N_3\ar@{-}[r]&N_1-N_2\\&&N_6-N_7} 	\end{align}
	which is indeed the antiinvariant lattice on $NS(X)$ for the action described in \eqref{eq: action sigma}. The embedding $\varphi$ (as in Table \ref{eq: U+Nin Lambda}) is obtained observing that the lattice isometric to $E_8(2)$ as in \eqref{eq: diagE8(2)} has to coincide with the classes $e_i^{(1)}-e_i^{(2)}$ and that $\left(e_i^{(1)}-e_i^{(2)}\right)+\left(e_i^{(1)}+e_i^{(2)}\right)=2e_i^{(1)}$ is 2-divisible in $H^2(X,\Z)$. So, for example, $\varphi(N_5-N_6)=e_3^{(1)}-e_3^{(2)}$ and since $(N_5-N_6)+(t_5-t_6)$ is 2-divisible, one obtains $e_3^{(1)}=\left(\varphi(N_5-N_6)+\phi(t_5-t_6)\right)/2$ and $e_3^{(2)}=\left(-\varphi(N_5-N_6)+\phi(t_5-t_6)\right)/2$. This allows to write the basis $\{u_1^{(j)}, u_2^{(j)},e_k^{(h)}\}$ in terms of $\{F,S,N_i, t_j\}$ and by the inverse change of basis one obtains $\varphi$ and $\phi$. 

\begin{rem}\label{rem: varphi}{\rm
The choice of the embeddings $\varphi:NS(X)\hookrightarrow \Lambda_{K3}$ and $\phi:T_X\hookrightarrow \Lambda{K_3}$ are induced by the requirement that $(NS(X)^{\sigma^*})^{\perp}\simeq E_8(2)$ has to be the one described in \eqref{eq: diagE8(2)} and simultaneously generated by $e_j^{(1)}-e_j^{(2)}$ in the basis of $\Lambda_{K3}$. So we identify the K3 surfaces $X$ with a van Geemen--Sarti involution, with the marked K3 surfaces $(X,\Phi)$, where a $\Phi$ is an appropriate $\Q$-linear extension of $(\varphi,\phi):NS(X)\oplus T_X \hookrightarrow \Lambda_{K3}$. This guarantees the existence of a symplectic involution $\sigma$ on $X$ such that $\Phi\circ\sigma^*\circ\Phi^{-1}$ switches two pairs of $E_8$ in $\Lambda_{K3}$ and is the identity on $U^3$ and which acts as the van Geemen--Sarti involution (i.e. as in \eqref{eq: action sigma}) on the elliptic fibration whose class of the fiber is the first generator of $U$ in $U\oplus N\simeq NS(X)$.
}\end{rem}
In the following we often identify the classes in $NS(X)$ (resp. $T_X$, $H^2(X,\Z)$) with the corresponding ones in $\Lambda_{K3}$, omitting the embedding $\varphi$ (resp. $\phi$, $\Phi$).

We apply the map $\pi_*$ described in Proposition \ref{prop: pi*} to find the image $\pi_*(T_X)$ of $T_X$ in $H^2(Y,\Z)$. We observe that $T_Y$ is necessarily an overlattice of finite index of $\pi_*(T_X)$, since $\pi_*(T_X)_{\Q}\simeq (T_Y)_{\Q}$. Now, $H^2(X,\Z)^{\sigma^*}\simeq E_8(2)\oplus U\oplus U\oplus U$, generated by $e_i^{(1)}+e_i^{(2)}$, $i=1,\ldots, 8$, $u_j^{(h)}$, $j=1,2$, $h=1,2,3$.
	By Proposition \ref{prop: pi*}, one obtains that $\pi_*(H^2(X,\Z))\simeq E_8\oplus U(2)\oplus U(2)\oplus U(2)$ is generated by $E_i:=\pi_*(e_i^{(1)})=\pi_*(e_i^{(2)})$, $U_{j}^{(h)}:=\pi_*(u_j^{(h)}),\ j=1,2,\ h=1,2,3$. The invariant classes $e_i^{(1)}+e_i^{(2)}$ are mapped to $2E_i$ and the invariant classes $u_j^{(h)}$ are mapped to $U_j^{(h)}$.\\    
In particular one obtains that $\pi_*(T_X)$ is generated by 
$$\begin{array}{c}\pi_*(t_1)=2(2E_1 +  4E_2+   6E_3+   5E_4+   4E_5+   3E_6+   2E_7+   3E_8)-U_1^{(3)}-U_2^{(3)}\\
\pi_*(t_2)=2(2E_1+   4E_2+   6E_3+   5E_4+   4E_5+   3E_6+   E_7+   3E_8)-U_1^{(3)}-U_2^{(3)}\\

\pi_*(t_3)=2(2E_1+   4E_2+   6E_3+   5E_4+   4E_5+   2E_6+   E_7+   3E_8)-U_1^{(3)}-U_2^{(3)}\\

\pi_*(t_4)=2(2E_1+   4E_2+   6E_3+   5E_4+   3E_5+   2E_6+   E_7+   3E_8)-U_1^{(3)}-U_2^{(3)}\\

\pi_*(t_5)=2(2E_1+   4E_2+   6E_3+   4E_4+   3E_5+   2E_6+   E_7+   3E_8)-U_1^{(3)}-U_2^{(3)}\\

\pi_*(t_6)=2(2E_1+   4E_2+   5E_3+   4E_4+   3E_5+   2E_6+   E_7+   3E_8) -U_1^{(3)}-U_2^{(3)}\\

\pi_*(t_7)=2(2E_1+   4E_2+   5E_3+   4E_4+   3E_5+   2E_6+   E_7+   2E_8)-U_1^{(3)}-U_2^{(3)}\\

\pi_*(t_8)=2(7E_1+  14E_2+  20E_3+  16E_4+  12E_5+   8E_6+   4E_7+  10E_8) -3U_1^{(3)}-4U_2^{(3)}\\

\pi_*(t_9)=U_1^{(1)},\ \ 
\pi_*(t_{10})=U_2^{(1)},\ \ 
\pi_*(t_{11})=U_1^{(2)},\ \ 
\pi_*(t_{12})=U_2^{(2)}\end{array}$$

The transcendental lattice $T_Y$, overlattice of finite index of $\pi_*(T_X)$, is primitively embedded in $H^2(Y,\Z)$, so the classes $\pi_*(t_i)+\pi_*(t_{i+1})$, $i=1,2,3,4,5,6$, which are non primitive, have to be divided by 2 in $H^2(Y,\Z)$. In this way one finds the following  $\Z$-basis of $T_Y$
$$\tau_1:=\pi_*(t_1), \ \tau_i=\left(\pi_*(t_{i-1}+t_{i})\right)/2,\ \ i=2,\ldots, 7, \tau_j=\pi_*(t_j),\ j=8,\ldots, 12.$$
The lattice generated by $\tau_i$, $i=1,\ldots, 8$ has the following intersection matrix
\begin{eqnarray}\label{eq: matrix M}M=\left[\begin{array}{cccccccccc}
-4& -2&  0&  0&  0&  0&  0& -2\\
-2& -2& -1&  0&  0&  0&  0& -2\\ 
0& -1& -2& -1&  0&  0&  0& -2\\
0&  0& -1& -2& -1&  0&  0& -2\\
0&  0&  0& -1& -2& -1&  0& -2\\
0&  0&  0&  0& -1& -2& -1& -2\\
0&  0&  0&  0&  0& -1& -2& -2\\
-2& -2& -2& -2& -2& -2& -2& -8\end{array}\right]\end{eqnarray}

The lattices $T_Y$ is then isometric to $M\oplus U(2)\oplus U(2)$ (the two copies of $U(2)$ are generated by $\tau_i$, $i=9,\ldots,12$).

The discriminant group is generated by $\tau_1/2$ and $\tau_i/2$, $i=8,\ldots , 12$. The discriminant form is the same as the one of $D_4\oplus U(2)\oplus U(2)$, which is isometric to the one of $U(2)^{\oplus 3}$, (see \cite{NikInt}). So $T_Y$ is an even lattice of signature $(2,10)$ and discriminant form $u(2)^3$, which implies that it is isometric to $U\oplus U\oplus N$.

The lattice $\pi_*(NS(X))\simeq U$ is generated by $F_Y=(\pi_*(F))/2$ and $S_Y=\pi_*(S)$, which correspond to the class of the fiber and of the section, respectively, of the elliptic fibration induced on $Y$ by the one of $X$. The surface $Y$ is the minimal resolution of $X/\sigma$ and the resolution introduces eight disjoint rational curves, spanning a Nikulin lattice. These curves will be denoted by $n_i$. Hence $H^2(Y,\Z)$ is an overlattice of finite index of $T_Y\oplus U\oplus N$, where $T_Y$ is generated by the classes $\tau_i$ defined above, the lattice $U$ is generated by $F_Y$ and $S_Y$ and the Nikulin lattice $N$ is generated by the curves $n_i$.
To obtain a $\Z$-basis of $H^2(Y,\Z)$, it suffices to add to the previous generators the classes:
$$\begin{array}{cc}(\tau_{11}+n_1+n_2+n_3+n_4)/2, & (\tau_{12}+n_1+n_2+n_3+n_5)/2\\
(\tau_{9}+n_1+n_2+n_6+n_7)/2,& (\tau_{10}+n_1+n_2+n_6+n_8)/2,\\ 
(\tau_1+n_1+n_2)/2,& (\tau_8+n_2+n_3+n_7+n_8)/2.\end{array}$$

\section{Specializations of $X$ and $Y$ to K3 surfaces with  Picard number 11}\label{sec: K3 with vGS rank 11}
We now classify all the K3 surfaces $X_{11}$ with Picard number 11 and admitting a van Geemen--Sarti involution, by describing their N\'eron--Severi groups and the related specializations of the elliptic fibration $\mathcal{E}$ given in Proposition \ref{prop: the family UNpolarized}. The K3 surfaces $X_{11}$ are contained in countably many irreducible subfamilies of the family $\mathcal{N}$, each of codimension 1. We denote $\mathcal{N}_{11}$ the union of all these components.
We denote $Y_{11}$ the minimal resolution of the quotient of $X_{11}$ by the van Geemen--Sarti involution. Since the K3 surfaces which are quotient by a van Geemen--Sarti involution are contained in $\mathcal{N}$, if $X_{11}\in\mathcal{N}_{11}$ then also $Y_{11}\in\mathcal{N}_{11}$, but in general $X_{11}$ and $Y_{11}$ are contained in different irreducible components. More precisely, we show that $NS(Y_{11})\not\simeq NS(X_{11})$ and in particular, it is not possible that $X_{11}\simeq Y_{11}$. This is already known, see Remark \ref{rem: Tx=Ty, only even rank}, but here we explicitly determine $NS(Y_{11})$ depending on $NS(X_{11})$ and viceversa, obtaining a result analogous to Proposition \ref{prop: K3 surfaces with sympl rank 9}.

 We underline that we are specializing the pair $(X,\sigma)$, where $X$ is a K3 surface generic among the ones in Proposition \ref{prop: the family UNpolarized} and $\sigma$ is its van Geemen--Sarti involution. Hence, by Remark \ref{rem: varphi}, we are considering marked K3 surface such that $\varphi(U\oplus N)$ is as in Table \ref{eq: U+Nin Lambda}, and the action of the specialized van Geemen--Sarti involution is $-1$ on the lattice in \eqref{eq: diagE8(2)} and $+1$ on its orthogonal complement. This implies that the quotient map is the one described in Section \ref{subsec: cohomological action}.
\subsection{The surfaces $X_{11}$}\label{subsec: X11}	
	\begin{theorem}\label{theorem: X rho 11}
	Let $X_{11}$ be a K3 surface admitting a van Geemen--Sarti involution such that $\rho(X_{11})=11$, then one of the following holds:
	\begin{itemize}
		\item $NS(X_{11})=U\oplus N\oplus \langle -2d\rangle$ and $T_{X_{11}}=U\oplus N\oplus \langle 2d\rangle$; 
		\item $d\equiv 0\mod 2$ and $NS(X_{11})$ is the unique, up to isometries, overlattice of index 2 of $U\oplus N\oplus \langle -2d\rangle$ in which both $U\oplus N$ and $\langle -2d\rangle$ are primitively embedded. In this case $T_{X_{11}}\simeq U\oplus D_4\oplus D_4\oplus \langle 2d\rangle$ and $NS(X_{11})\simeq U\oplus D_4\oplus D_4\oplus \langle -2d\rangle.$
	\end{itemize}	
In all these cases there exists a unique (up to isometries) embedding of $NS(X_{11})$ and of $T_{X_{11}}$ in $\Lambda_{K3}$.
\end{theorem}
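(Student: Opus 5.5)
The plan is to reduce the classification to a classification of primitive overlattices. Since $X_{11}$ admits a van Geemen--Sarti involution, $U\oplus N$ embeds primitively in $NS(X_{11})$ by Proposition \ref{prop: the family UNpolarized}. Because $\rho(X_{11})=11$, its orthogonal complement in $NS(X_{11})$ has rank one and is negative definite. We write it as $\langle -2d\rangle$, with $d>0$ because the lattice is even. Then $NS(X_{11})$ is a finite index overlattice of $U\oplus N\oplus\langle -2d\rangle$ in which both summands are primitive.

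Such overlattices correspond to isotropic subgroups of $A_{U\oplus N}\oplus A_{\langle -2d\rangle}$ that meet each summand trivially, i.e. to graphs of anti-isometries between subgroups $H_1\subset A_N\simeq(\Z/2\Z)^6$ (form $u(2)^3$) and $H_2\subset \Z/2d\Z$. The subgroup $H_2$ is cyclic and must be $2$-elementary, since $A_N$ is, so $H_2$ is trivial or the order-$2$ subgroup generated by $d\cdot\frac{1}{2d}$, whose discriminant quadratic value is $-d^2/2d=-d/2 \bmod 2\Z$.
\begin{itemize}
\item If $d$ is odd, this value is not an integer mod $2$. Since $u(2)^3$ takes only values in $\Z/2\Z$, there is no matching element and the overlattice is trivial, giving the first case.
\item If $d$ is even, we need an element $x\in A_N$ with $q(x)\equiv d/2 \bmod 2$, so that the glue vector has $q=0$. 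The form $u(2)^3$ contains nonzero elements of both values $0$ and $1$, so this is always possible.
\end{itemize}
Uniqueness of the index-$2$ overlattice up to isometry then follows because $O(u(2)^3)$ acts transitively on the nonzero elements of a given norm, and these isometries lift to $O(U\oplus N)$ since $U\oplus N$ is indefinite and unique in its genus (Nikulin, Theorem 1.14.2). For the glued lattice the discriminant becomes $u(2)^2\oplus \Z_{2d}(\cdot)$ up to sign, which matches $U\oplus D_4\oplus D_4\oplus\langle -2d\rangle$, since $D_4\oplus D_4$ has discriminant form $u(2)^2$. By Nikulin's uniqueness for indefinite lattices with $\mathrm{rk}\ge \ell(A)+2$, the overlattice is isometric to $U\oplus D_4^2\oplus\langle-2d\rangle$.

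For the transcendental lattice, $T_{X_{11}}$ has signature $(2,9)$ and discriminant form $-q_{NS}$. In the first case, $U\oplus N\oplus\langle 2d\rangle$ has the right signature and the right form, since $u(2)^3$ is its own opposite. In the second case the answer is $U\oplus D_4\oplus D_4\oplus\langle 2d\rangle$. The length of the discriminant group is at most $7$ in the first case, against rank $11$, and at most $5$ in the second, so the rank condition $\ge \ell+2$ holds. Nikulin Theorem 1.14.4 therefore gives uniqueness of these lattices in their genus and uniqueness of the primitive embeddings of $NS$ and of $T$ in $\Lambda_{K3}$.

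Existence of both cases follows from surjectivity of the period map, using the primitive embeddings. One must also check that the index-$2$ case really occurs with $U\oplus N$ still primitive: the glue element must lie in $N^\vee/N$ and not in $U$, which holds by construction. The main obstacle is the second case. It requires the careful compatibility checks on norms (the parity constraint $d\equiv0\bmod 2$) and the verification that the glued discriminant form is exactly that of $U\oplus D_4^2\oplus\langle-2d\rangle$, with consistent signs. I would do this by explicitly computing the glue vector $(x+dh/2d)$ in coordinates, distinguishing $d\equiv0$ and $d\equiv 2\bmod 4$.
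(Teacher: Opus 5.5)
Your proposal is correct and follows essentially the same route as the paper. Both arguments take $U\oplus N$ primitive with rank-one complement $\langle -2d\rangle$ and glue via an order-$2$ isotropic element $x+d\delta_7$; this is impossible for $d$ odd, requires $q(x)\equiv d/2 \bmod 2$ for $d$ even, and is unique by transitivity of $O(u(2)^3)$ on nonzero vectors of fixed norm. Both then compute the glued discriminant form as $u(2)^2\oplus\Z_{2d}(-\frac{1}{2d})$ and conclude with Nikulin's uniqueness results for $T_{X_{11}}$ and for the embeddings. Your ``meets each summand trivially'' is the correct form of the condition the paper loosely calls ``non trivial'', and your appeal to the surjectivity $O(U\oplus N)\to O(q_{U\oplus N})$ simply makes the paper's orbit argument precise.
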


\begin{proof}
If a K3 surface admits a van Geemen--Sarti involution, then $U\oplus N$ is primitively embedded in its N\'eron--Severi group, see \cite{vGS}.
If the Picard number is 11, the orthogonal complement of $U\oplus N$ in $NS(X_{11})$ is one dimensional and it is generated by a vector $V$. By the Hodge index theorem $V^2<0$, so $V^2=-2d$ for a certain $d\in \mathbb{N}_{>0}$ and $NS(X_{11})$ is a finite index overlattice of $U\oplus N\oplus \Z V\simeq U\oplus N\oplus \langle -2d\rangle$ in which both $U\oplus N$ and $\langle -2d\rangle$ are primitively embedded (since $V$ is the generator of the orthogonal complement to $U\oplus N$). To determine such an overlattice, we recall that the finite index overlattices of a direct sum of lattices $L\oplus M$ are in $1:1$ correspondence with the isotropic subgroups of $A_L\oplus A_M$. In our case $L\simeq U\oplus N$ and $M=\langle -2d\rangle$, the discriminant form of $A_L$ is $u(2)^3$ and the one of $A_M$ is $\mathbb{Z}_{2d}(-\frac{1}{2d})$. Moreover, since we are requiring that both $U\oplus N$ and $\langle -2d\rangle$ are primitive in $NS(X)$, one has to chose the isotropic subgroup $H$ in such a way that $H\cap A_L$ and $H\cap A_M$ are both non trivial.

Let us denote by $\delta_1,\delta_2,\delta_3,\delta_4,\delta_5,\delta_6,\delta_7$ a basis of the discriminant group on which the discriminant quadratic form is $u(2)^3\oplus(-\frac{1}{2d})$. Since the discriminant quadratic form on $\delta_i,\ i=1,\ldots,6$ takes only integer values, if $d\equiv 1\mod 2$, then there are no overlattices of $U\oplus N\oplus \langle -2d\rangle$ as required. Otherwise, if $d\equiv 2\mod 4$, the class $\delta_1+\delta_2+d\delta_7$ is a subgroup of order 2 which is isotropic and it determines an overlattice of index 2 of $U\oplus N\oplus \langle -2d\rangle$. Similarly,  if $d\equiv 0\mod 4$, the class $\delta_1+d\delta_7$ is a subgroup of order 2 which is isotropic and it determines an overlattice of index 2 of $U\oplus N\oplus \langle -2d\rangle$. These overlattices are unique because in $u(2)^3$ there is a unique orbit, up to isometries, of vectors with self-intersection 1 (which contains $\delta_1+\delta_2$) and a unique orbit of non trivial vectors with self-intersection 0 (which contains $\delta_1$).

The discriminant form of $U\oplus N\oplus \langle -2d\rangle$ is $u(2)^3\oplus(-\frac{1}{2d})$ and the ones of its index two overlattices (in case $d\equiv 0\mod 2$) is obtained as the orthogonal to the isotropic group $H$ in $A_{U\oplus N\oplus \langle -2d\rangle}$. Hence when $NS(X)$ is an overlattice of $U\oplus N\oplus \langle-2d\rangle$, the discriminant group and form is as follow: 
\begin{itemize}
	\item if $d\equiv 0\mod 4$, $A_{NS(X)}$ is generated by $\delta_1, \delta_2+\delta_7, \delta_3,\delta_4,\delta_5,\delta_6$. Since $\delta_1=-d(\delta_2+\delta_7)+\delta_1+d\delta_7$, a basis for the discriminant group is $\delta_2+\delta_7, \delta_3,\delta_4,\delta_5,\delta_6$ and the discriminant from is $\mathbb{Z}_{2d}(\frac{-1}{2d})\oplus u(2)^2$;
	\item if $d\equiv 2\mod 4$,  $A_{NS(X)}$ is generated by $\delta_1-\delta_2, \delta_1+\delta_7, \delta_3,\delta_4,\delta_5,\delta_6$. Since $\delta_1-\delta_2=-d(\delta_2+\delta_7)+(\delta_1+\delta_2+d\delta_7)+(d-2)\delta_2$, a basis for the discriminant group is $\delta_1+\delta_7, \delta_3,\delta_4,\delta_5,\delta_6$ and the discriminant form is  $\mathbb{Z}_{2d}(\frac{-1}{2d})\oplus u(2)^2$.
\end{itemize}
The discriminant form of the transcendental lattice is the opposite of the one of the N\'eron--Severi group and this determines the transcendental lattice uniquely.

The embedding of $NS(X_{11})$ (resp. $T_{X_{11}}$) in $\Lambda_{K3}$ is unique by \cite[Theorem 1.14.4]{NikInt} .
\end{proof}

Let $X_{11}$ be a K3 surface whose N\'eron--Severi group is as in Theorem \ref{theorem: X rho 11}. We fix explicitly the embeddings of $NS(X_{11})$ and $T_{X_{11}}$ is such a way that they are compatible with the cohomological action of the van Geemen--Sarti involution described in Section \ref{subsec: cohomological action}: a $\Q$-basis of $NS(X_{11})$ is given by $F,S, N_i,\hat{N}$ as in Table \ref{eq: U+Nin Lambda} and 
$$V:=\left\{\begin{array}{ll}t_{11}-dt_{12}&\mbox{ if NS}(X)=U\oplus N\oplus \langle -2d\rangle,\\ \\t_1+t_2+2t_{11}-2kt_{12} &\mbox{ if }d=4k+2\mbox{ and NS}(X)\mbox{ is an index 2 overlattice of }U\oplus N\oplus \langle -2d\rangle,\\
	\\ t_1+t_2+t_3+t_4+2t_{11}-2kt_{12} &\mbox{ if }d=4k+4\mbox{ and NS}(X)\mbox{ is an index 2 overlattice of }U\oplus N\oplus \langle -2d\rangle.\end{array}\right.$$
In the first case, the basis is also a $\Z$-basis. In the remaining two, if $V=t_1+t_2+2t_{11}-2kt_{12}$, then the class $(V+N_1+N_2)/2\in NS(X_{11})$ and if $V=t_1+t_2+t_3+t_4+2t_{11}-2kt_{12}$, then the class $(V+N_1+N_2+N_3+N_4)/2\in NS(X_{11})$. This provides a $\Z$-basis in all the cases.
Moreover, in case $k=0$, one can equivalently consider the class $t_1+t_2$ (resp. $t_1+t_2+t_3+t_4$) as vector $V$ of self intersection $-4$ (resp. $-8$).

To obtain the transcendental lattice of $X_{11}$ it suffices to find the orthogonal to $V$ in $N\oplus U\oplus U$.\\

The following definition fixes the notation for the three lattices introduced before: 
\begin{definition} We define the following lattices:
	\begin{itemize}	\item for any $d\in \mathbb{N}_{>0}$: $\Lambda_d=U\oplus N\oplus \langle -2d\rangle$;
	\item if $d\equiv 2\mod 4$: $\Lambda_d^{(a)}$ is the unique, up to isometries, overlattice of index 2 of  $U\oplus N\oplus \langle -2d\rangle$; 
	\item if $d\equiv 0\mod 4$: $\Lambda_d^{(b)}$ is the unique, up to isometries, overlattice of index 2 of  $U\oplus N\oplus \langle -2d\rangle$. 
\end{itemize}
\end{definition}
A direct consequence of Theorem \ref{theorem: X rho 11} is the following.
\begin{corollary} If $X_{11}$ is a K3 surface admitting a van Geemen--Sarti involution and $\rho(X_{11})=11$, then $X_{11}$ is a general member of exactly one of the following 9-dimensional families of K3 surfaces:
	$$\mathcal{L}_d=\{(\Lambda_d)\mbox{-polarized K3 surfaces}\},\ \ 
	\mathcal{L}_d^{(a)}=\{(\Lambda_d^{(a)})\mbox{-polarized K3 surfaces}\},\ \ \mathcal{L}_d^{(b)}=\{(\Lambda_d^{(b)})\mbox{-polarized K3 surfaces}\}.$$
	In particular $\mathcal{N}_{11}=\bigcup_{d\in \N_{>0}} \mathcal{L}_d\cup \bigcup_{d\equiv 2\mod 4} \mathcal{L}_d^{(a)}\cup\bigcup_{d\equiv 0\mod 4} \mathcal{L}_d^{(b)}$.
\end{corollary}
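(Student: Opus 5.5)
The corollary is a direct repackaging of Theorem \ref{theorem: X rho 11}, so the plan is to translate its lattice classification into a statement about moduli of lattice-polarized K3 surfaces.

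First, let $X_{11}$ admit a van Geemen--Sarti involution with $\rho(X_{11})=11$. By Theorem \ref{theorem: X rho 11}, $NS(X_{11})$ is isometric to exactly one of three lattices:
\begin{itemize}
\item $\Lambda_d$ for some $d\in\N_{>0}$;
\item $\Lambda_d^{(a)}$ with $d\equiv 2\mod 4$;
\item $\Lambda_d^{(b)}$ with $d\equiv 0\mod 4$.
\end{itemize}
The phrase ``exactly one'' needs the observation that these lattices are pairwise non-isometric. To see this, I would compare discriminant groups. $A_{\Lambda_d}\simeq (\Z/2\Z)^6\oplus\Z/2d\Z$ has order $2^6\cdot 2d$, while the index-two overlattices have discriminant of order $2^4\cdot 2d$, namely $\Z_{2d}(-\frac{1}{2d})\oplus u(2)^2$ by the proof of the theorem. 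So the discriminant order separates different values of $d$, and it also separates $\Lambda_d$ from the overlattices. The types $(a)$ and $(b)$ never occur for the same $d$, since they correspond to different residues of $d$ mod $4$.

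Second, each such $X_{11}$ is a \emph{general} member of the corresponding family of lattice-polarized K3 surfaces. The theorem gives a unique primitive embedding of each lattice $L$ into $\Lambda_{K3}$, so the moduli of $L$-polarized K3 surfaces form a single family of dimension $20-\rk L=20-11=9$. Its very general member has Néron--Severi group exactly $L$. Since $NS(X_{11})\simeq L$, the surface $X_{11}$ lies in this family and is a general member of it, with no extra algebraic classes.

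Third, I would verify the reverse inclusion, namely that every member of each family lies in $\mathcal{N}_{11}$. Each of the three lattices primitively contains $U\oplus N$, by construction in Theorem \ref{theorem: X rho 11}, where both summands are primitive in the overlattice. By Proposition \ref{prop: the family UNpolarized} and \cite{vGS}, a primitive embedding $U\oplus N\subset NS$ yields an elliptic fibration with a $2$-torsion section, hence a van Geemen--Sarti involution. So the general member lies in $\mathcal{N}$ with Picard number $11$, and the union formula for $\mathcal{N}_{11}$ follows.

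The main obstacle is this last step, passing from a lattice embedding to an actual elliptic fibration. One must check that $U\subset U\oplus N$ can be taken with $F$ nef after applying reflections in $(-2)$-classes, and that the classes of $N$ become components of reducible fibers together with a torsion section. This is the standard argument in \cite{vGS}, which I would quote rather than reprove.
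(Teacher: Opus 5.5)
Your route is the paper's: there the corollary is simply stated as a direct consequence of Theorem \ref{theorem: X rho 11}. The reverse inclusion into $\mathcal{N}$ is absorbed into the definition of $\mathcal{N}$ as the family of $(U\oplus N)$-polarized K3 surfaces, so quoting \cite{vGS} for it, as you propose, matches what the paper does.

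The one step that fails as written is your justification of ``exactly one''. You claim that the order of the discriminant group separates $\Lambda_d$ from the index-two overlattices. But $|A_{\Lambda_d}|=2^7d$ and $|A_{\Lambda_{d'}^{(b)}}|=2^5d'$, and these coincide whenever $d'=4d$. Since $4d\equiv 0\mod 4$, this $d'$ is always an admissible index for type $(b)$. For example, $\Lambda_1$ and $\Lambda_4^{(b)}$ both have discriminant group of order $2^7$. Against type $(a)$ the order argument does work, because $4d\not\equiv 2\mod 4$.

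The fix is to compare group structure rather than order:
\begin{itemize}
\item $A_{\Lambda_d}\simeq(\Z/2\Z)^6\oplus\Z/2d\Z$ has length $7$ and exponent $2d$.
\item By the proof of the theorem, the overlattices $\Lambda_{d'}^{(a)}$ and $\Lambda_{d'}^{(b)}$ have discriminant group $\Z/2d'\Z\oplus(\Z/2\Z)^4$, of length $5$.
\end{itemize}
Hence no $\Lambda_d$ is isometric to any overlattice; for instance, $(\Z/2\Z)^7$ and $\Z/8\Z\oplus(\Z/2\Z)^4$ are not isomorphic. Within each series the order determines the index, and types $(a)$ and $(b)$ are separated by the residue of $d'$ modulo $4$.

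With this correction the rest of your argument goes through. Uniqueness of the primitive embedding gives a single $9$-dimensional family, and its very general member has N\'eron--Severi group exactly $L$. Since a rank-$11$ lattice primitively embedded in an $NS$ of rank $11$ must equal it, a surface with $\rho=11$ is polarized by exactly one lattice of the list.
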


The specialization from $X$, whose N\'eron--Severi is isometric to $U\oplus N$, to $X_{11}$ preserves the elliptic fibration $\mathcal{E}$ and its torsion section, in particular it can be described as a specialization of the elliptic fibration $\mathcal{E}$. So we are either gluing some singular fibers or introducing some new sections. There are a finite number of possibilities of the first type:  the singular fibers of $X$ are $8I_2+8I_1$ so a priori one can glue either two fibers of type $I_1$ to obtain one of type $I_2$ (case (1) of the following proposition), or two fibers of type $I_2$ to obtain one of type $I_4$ (case (2) in the following proposition), or one fiber of type $I_1$ and one of type $I_2$ to obtain one of type $I_3$. In the latter case one destroys the torsion section, hence this possibility can not appear as specialization of $X$ inside the family $\mathcal{N}$ of the K3 surfaces admitting a van Geemen--Sarti involution. The specializations which leave invariant the configuration of singular fibers and introduce new sections of infinite order are all contained in the family $\mathcal{N}$ and there are infinitely many of them (cases (3) and (4) of the following proposition).

\begin{prop}\label{prop: specialization elliptic fibrations, 11}
	Let $X_{11}$ be a K3 surface as in Theorem \ref{theorem: X rho 11}. Then $X_{11}$ admits an elliptic fibration $\mathcal{E}_{11}$ and a van Geemen--Sarti involution which specializes the one of $X$.
Let $T$ denote the 2-torsion section associated to the van Geemen--Sarti involution, then:
\begin{enumerate}
	\item if $X_{11}\in\mathcal{L}_1$, the singular fibers of $\mathcal{E}_{11}$ are $9I_2+6I_1$, $MW(\mathcal{E}_{11})=\Z/2\Z$ and $T$ meets eight fibers of type $I_2$ in the non trivial component and the other one in the trivial component; 
	\item if $X_{11}\in\mathcal{L}_2^{(a)}$, the singular fibers of $\mathcal{E}_{11}$ are $I_4+6I_2+8I_1$, $MW(\mathcal{E}_{11})=\Z/2\Z$ and $T$ meets all the reducible fibers in a non trivial component (the central one for the $I_4$-fiber); 
	\item if $X_{11}\in\mathcal{L}_d$ with $d>1$, the singular fibers of $\mathcal{E}_{11}$ are $8I_2+8I_1$, $MW(\mathcal{E}_{11})=\Z\times \Z/2\Z$ and the section of infinite order generating the non torsion part of the Mordell--Weil group intersects the zero section $S$ in $d-2$ points, $T$ in $d$ points and all the reducible fibers in the trivial component. The Mordell--Weil lattice is $\left[2d\right]$; 
	\item if $X_{11}\in\mathcal{L}_d^{(a)}$ (resp. $\mathcal{L}_d^{(b)}$) with $d>2$, the singular fibers of $\mathcal{E}_{11}$ are $8I_2+8I_1$, $MW(\mathcal{E}_{11})=\Z\times \Z/2\Z$ and the section of infinite order generating the non torsion part of the Mordell Weil group intersects the zero section $S$ in $(d-6)/4$ (resp. $(d-4)/4$) points, $T$ in $(d-2)/4$  (resp. $(d-4)/4$) points and two (resp. four) of the reducible fibers in the non trivial component. The Mordell--Weil lattice is $\left[d/2\right]$ (resp. $\left[d/2\right]$).
\end{enumerate}\end{prop}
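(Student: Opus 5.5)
The plan is to read off the elliptic fibration $\mathcal{E}_{11}$ from the lattice $NS(X_{11})$ given by Theorem \ref{theorem: X rho 11}. We use the class $F$ of the fiber of $\mathcal{E}$, which stays nef on the specialization. Indeed, we are specializing the marked pair $(X,\sigma)$ as in Remark \ref{rem: varphi}, so $U=\langle F,S\rangle$ remains primitively embedded in $NS(X_{11})$, $F$ still defines an elliptic fibration with zero section $S$, and $T=2F+S-\hat N$ remains a 2-torsion section. Hence the van Geemen--Sarti involution specializes. The structure of $\mathcal{E}_{11}$ then follows from the decomposition $NS(X_{11})=U\oplus W$, with $W=U^\perp$ negative definite of rank $9$. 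Two ingredients from Shioda--Tate theory are used. The trivial lattice is $U\oplus W_{\rm root}$, where $W_{\rm root}$ is the sublattice of $W$ spanned by its $(-2)$-vectors. Moreover $MW\simeq W/W_{\rm root}$, and $MWL$ is computed with the height pairing.

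The key step is the computation of the roots of $W$ in each case. For $\Lambda_d$ we have $W=N\oplus\langle -2d\rangle$. The roots of $N$ are only $\pm N_i$, since $\hat N$-type vectors have square $\le -4$. A vector $x+mV$ with $x\in N$ has square $x^2-2dm^2$, so it is a root with $m\neq 0$ only if $d=1$ and $x=0$. Thus for $d=1$ we get $W_{\rm root}=A_1^9$, and for $d>1$ we get $W_{\rm root}=A_1^8$.

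\textbf{Case $d=1$.} There are nine reducible fibers. Euler characteristic $24$ forces $9I_2+6I_1$, since we are inside the $I_n$ specializations allowed by the discussion before the statement (gluing two $I_1$'s). Then $MW=W/W_{\rm root}$ has order $2$, generated by $\hat N$, i.e.\ by $T$. The new $A_1$ is $V$, and $V$ is orthogonal to $\hat N$. So $T$ meets that fiber in its trivial component and the other eight in the non-trivial one.

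\textbf{Case $d>1$.} The fibers stay $8I_2+8I_1$ and $MW\simeq \Z\oplus\Z/2$. The free generator $P$ corresponds to $V$, namely $P=V+S+aF$ with $P^2=-2$. This gives $a=d-1$, and then $PS=d-2$ and $PT=P(2F+S-\hat N)=d$. The height is $h(P)=4+2PS-0=2d$, since $P$ meets every reducible fiber in the trivial component.

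\textbf{Overlattices $\Lambda_d^{(a)}$ and $\Lambda_d^{(b)}$.} Here $W$ contains $(V+N_1+N_2)/2$, resp.\ $(V+N_1+N_2+N_3+N_4)/2$. The roots of $W$ must now be checked among these glue vectors as well. For $d=2$ in case (a), the vector $(V\pm N_1\pm N_2)/2$ has square $(-4-4)/4=-2$, which gives new roots. The root lattice then becomes $A_1^6\oplus A_3$, hence an $I_4$ fiber, giving $I_4+6I_2+8I_1$. This corresponds to gluing two $I_2$ fibers. Its central component corresponds to the class where $T$ meets, which one checks by computing $T\cdot$components. In case (b) with $d=4$, the analogous square is $(-8-8)/4=-4$, so there are no new roots. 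For $d>2$ in (a), resp.\ $d>4$ in (b), there are no new roots, because the square is $(-2d-4)/4<-2$, resp.\ $(-2d-8)/4<-2$.

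In these overlattice cases the free section is built from the glue vector: $P=(V+N_1+N_2)/2+S+aF$, resp.\ with four $N_i$. Solving $P^2=-2$ gives $a$, and from it we read $PS$, $PT$ and the fibers met in non-trivial components. The height becomes $h(P)=4+2PS-2\cdot\tfrac12$, resp.\ $4+2PS-4\cdot\tfrac12$, and this yields $d/2$.

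The main obstacle is the root analysis in the glue cases: one must confirm that no further $(-2)$-vectors occur, for instance combinations of the form $(V+\dots)/2$ plus $\hat N$-type vectors. One must also pin down which component of the $I_4$ the section $T$ hits. I would handle both by an explicit bound on $x^2$ for $x$ in the coset $N+\tfrac12(\text{glue})$.
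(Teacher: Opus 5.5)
Your route is essentially the paper's: you read the reducible fibres off the roots of $W=\langle F,S\rangle^{\perp}$, take a $(-2)$-class $aF+S+(\text{new vector})$ as generator of the free part, and evaluate the height. Where you differ, it is harmless. $MW\simeq W/W_{\rm root}$ replaces the paper's discriminant formula $|d(NS)|=|d(MWL)|\,|d(Tr)|/|MW_{tors}|^2$. Your norm bound on the glue coset is correct, since the vectors $\tfrac12(\pm V\pm N_1\pm N_2)$ (resp.\ with four $N_i$) are among its shortest elements, and it makes explicit what the paper only asserts. However, three steps are wrong or unsupported as written.

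\emph{Case (3).} $P^2=2a-2-2d=-2$ gives $a=d$, not $d-1$. Your values $PS=d-2$, $PT=d$, $h(P)=2d$ are those for $a=d$, so this is only a slip.

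\emph{Case (4): wrong representative.} The class $P=aF+S+\tfrac12(V+N_1+N_2)$ is not the class of a section, since $P\cdot N_1=\tfrac12N_1^2=-1$. In fact $P=G_X+N_1+N_2$, where $G_X=aF+S+\tfrac12(V-N_1-N_2)$ is the section the paper uses. Reading intersections off your $P$ gives $P\cdot T=a-\hat N\cdot\tfrac12(V+N_1+N_2)=a+1=(d+6)/4$, and $(d+12)/4$ for $\mathcal{L}_d^{(b)}$, instead of the claimed $(d-2)/4$ (resp.\ $(d-4)/4$). A negative intersection with $N_1$ also makes ``the fibres met in non-trivial components'' meaningless. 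Take instead the representative $\tfrac12(V-N_1-N_2)$ (resp.\ $\tfrac12(V-N_1-\dots-N_4)$), which lies in $NS(X_{11})$ as well, and check that it meets each $N_i$ with multiplicity $0$ or $1$. Your height $d/2$ is unaffected, because $S\cdot N_i=0$.

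\emph{Case (2): which component $T$ meets.} You cannot ``compute $T\cdot$components'' before knowing which roots of the $A_3$ are the actual components, and a norm bound on the coset does not tell you that. The tempting choice $N_1,\tfrac12(V-N_1-N_2),N_2$ is false: $T\cdot N_1=T\cdot N_2=1$ would make $T$ meet two components of one fibre. Indeed $N_1$ and $N_2$ become reducible (Remark~\ref{rem: gluing 2 I2}). The paper avoids this with the height formula. Since $T\cdot S=0$, the equation $0=h(T)=4+2\,T\cdot S-6\cdot\tfrac12-c$ forces $c=1$. That is exactly the contribution of the $I_4$-component opposite the zero component. You should also state $MW=\Z/2\Z$ in this case: $W_{\rm root}=A_3\oplus A_1^{6}$ has full rank and index $2$ in $W$.
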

\proof The classes $F$, $S$, $N_i$, $i=1,\ldots, 8$ and $T$ in $NS(X)\hookrightarrow NS(X_{11})$ will be denoted in the same way when they are considered as classes in $NS(X_{11})$. The class $F$ is the class of the fiber of the fibration $\mathcal{E}_{11}$ which specializes $\mathcal{E}$ and similarly $S$ and $T$ remain sections.

If $d=1$, then $NS(X_{11})=U\oplus N\oplus \langle -2\rangle$, and the last summand is generated by $V$, then we can assume that $V$ is the class of a $(-2)$-curve orthogonal to the classes of the fiber and of the zero section. Hence, it is a non trivial component of a reducible fiber. The roots lattice of the orthogonal complement of $\langle F,S\rangle\simeq U$ is isometric to $A_1^{\oplus 9}$, and it is generated by $N_i$, $i=1,\ldots, 8$ and $V$. So, $V$ is a component of a ``new" reducible fiber, which is necessarily of type either $I_2$ or $III$. Generically, under specializations, one obtains semistable fibers, so the elliptic fibration has $9I_2+6I_1$ as singular fibers (one can even check this directly on the equation of the elliptic fibration $\mathcal{E}$ given in Proposition \ref{prop: the family UNpolarized}). The torsion section remains a torsion section and the $(-2)$-curve $V$ is orthogonal to it. So the torsion section meets the ``new" $I_2$-fiber in the trivial component. The rank of the Mordell--Weil group is trivial by Shioda--Tate formula. The involution $\sigma$ switches the components and the singular points of the $I_2$-fibers with components $N_i$ (as $\sigma$ do on $\mathcal{E}$) and preserves the components and the two singular points of the ``new" $I_2$-fiber.

If $d=2$ and $NS(X_{11})$ is an overlattice of index two of $U\oplus N\oplus \langle -4\rangle$, the class $(V-N_1-N_2)/2$ corresponds to a $(-2)$-curve orthogonal to $\langle F,S\rangle$ and which intersects $N_1$ and $N_2$, so $(V-N_1-N_2)/2$, $N_1$ and $N_2$ are contained in the same reducible fiber. The remaining curves $N_i$ (with $i>2$) do not intersect $N_1$, $N_2$ and $(V-N_1-N_2)/2$. So we obtain 7 reducible fibers, six of them are of type $I_2$ and their non trivial components are $N_i$, $i=3,\ldots, 8$ and the seventh is of type $I_4$, indeed the root lattice orthogonal to $\langle F, S\rangle$ is isometric to $A_3\oplus A_1^{\oplus 6}$. The 2-torsion section intersects $N_i$, $i=3,\ldots, 8$ and by the height formula it is forced to intersect the $I_4$-fiber in the component which has trivial intersection with the trivial component, i.e. in the central component of $I_4$.

If $d>1$ and $NS(X_{11})=U\oplus N\oplus \langle -2d\rangle$, there are no $(-2)$-curves orthogonal to the class of the fiber, but the $N_i$'s. So the configuration of the reducible fibers does not change. The class $G_X:=dF+S+V$ is a $(-2)$-class and it is a section of the fibration. We observe $SG_X=d-2$ and $TG_X=(2F+S-\hat{N})G_X=d$. We recall that the Mordell--Weil lattice, denoted by $MWL$, is such that $|d(NS)|=|d(MWL)d(Tr)|/(|MW_{tors}|^2)$ where $Tr$ is the trivial lattice of the fibration (in our context $U\oplus A_1^{8}$) and $|MW_{tors}|$ is the order of the torsion part of the Mordell--Weil group.
Hence $|d(MWL)|=2d$. Computing the height formula on $G_X$ one obtains $2d$, so $G_X$ is a $\Z$-generator of the free part of Mordell Weil group.

If $d>2$, $d\equiv 2\mod 4$ and  $NS(X_{11})$ is an overlattice of index 2 of $U\oplus N\oplus \langle -2d\rangle$, $(V-N_1-N_2)/2\in NS(X)$ and $((V-N_1-N_2)/2)^2=-1-d/2$.
The class $G_X:=\frac{2+d}{4}F+S+\frac{V-N_1-N_2}{2}$ is a $(-2)$-class and it is a section of the fibration. It intersects the non trivial component of two fibers of type $I_2$ and the trivial component of the remaining $I_2$-fibers. Moreover, $SG_X=(d-6)/4$ and $TG_X=(d-2)/4$. 
As above one computes $d(MWL)=\frac{d}{2}$, which is indeed the value of the height formula computed on $G_X$ and this guarantees that $G_X$ is a generator of the free part of the Mordell--Weil group.

If $d>2$ and $d\equiv 0\mod 4$, the proof is similar, posing $G_X:=\frac{4+d}{4}F+S+\frac{V-N_1-N_2-N_3-N_4}{2}$.\endproof

\begin{remark}\label{rem: gluing 2 I2}{\rm In case (2) of the previous proposition, i.e. $d=2$ and $NS(X_{11})$ overlattice of index two of $U\oplus N\oplus \langle -4\rangle$, the specialization consists of gluing two fibers of type $I_2$ to obtain a fiber of type $I_4$.
Indeed, one adds the class $(V-N_1-N_2)/2$ with $V^2=-4$, which has a non trivial intersection with the classes $N_1$ and $N_2$. These three $(-2)$-classes span the lattice  $A_3$, but a priori we don't know if they correspond to irreducible curves. 

Let us denote by $A$, $B$, $C$ and $D$ the irreducible components of the $I_4$-fiber, where we assume that $AB=BC=CD=DA=1$ and that $D$ is the zero component, i.e. $DS=1$.

Since $T$ is a 2-torsion section, the height formula implies that $TB=1$ and the translation by $T$, maps $A$ to $C$ and $B$ to $D$.
The involution $\sigma$ induced by the van Geemen--Sarti involution on $X$, maps $N_i$ to $F-N_i$, and $(V-N_1-N_2)/2$ to $(V-2F+N_1+N_1)/2$. In particular, $N_1$ and $N_2$ can not be irreducible components of the $I_4$-fiber, otherwise the action of $\sigma$ and the one induced by the translation by $T$ do not coincide. This implies that the class $N_1$ (resp. $N_2$) corresponds to a reducible curve, i.e. the specialization of the N\'eron--Severi group given by the imposing that $(V-N_1-N_2)/2$ is an algebraic class, makes the curves $N_1$ and $N_2$ reducible.
More explicitly, to require that the action of $\sigma$ and of the translation by $T$ coincide, implies that $N_1=A+B$, $N_2=B+C$, $B=(V+N_1+N_2)/2$ and $D=F-A-B-C$. So $A=(-V+N_1-N_2)/2$ maps  to $C=(-V-N_1+N_2)/2$ and $B=(V+N_1+N_2)/2$ to $(V+F-N_1+F-N_2)/2=F-A-B-C$, accordingly to the action both of $\sigma$ and of the translation by the torsion section $T$.

The expression of the torsion section $T$ on $X_{11}$ can be written in terms of the irreducible components of the reducible fibers or in terms of the classes $N_i$; this gives the following equality
$$T=2F+S-\frac{1}{2}(N_3+N_4+N_5+N_6+N_7+N_8+A+2B+C)=T=2F+S-\hat{N}$$
which shows that the formal equation of $T$ in terms of $N_i$ is the same both on $X$ and on its specialization $X_{11}$. This guarantees that the van Geemen--Sarti involution on $X_{11}$ is the one induced, via specialization, by the one on $X$.}\end{remark}

\subsection{The surface $Y_{11}$}
We now consider the K3 surface $Y_{11}$ obtained as minimal resolution of $X_{11}/\sigma$. By Proposition \ref{prop: the family UNpolarized}, we know that $Y_{11}$ admits a van Geemen--Sarti involution and by \cite{In} we known that $\rho(Y_{11})=\rho(X_{11})=11$; it follows that $Y_{11}$ is a member of one of the families $\mathcal{L}_d$, $\mathcal{L}_d^{(a)}$, $\mathcal{L}_d^{(b)}$ for a certain $d\in\mathbb{N}_{>0}$.  We relate  $X_{11}$ with $Y_{11}$ first in terms and of their N\'eron--Severi groups and then of their elliptic elliptic fibrations.
\begin{theorem}\label{theorem: Y rho 11} Let $X_{11}$ be a K3 surface endowed with a van Geemen--Sarti involution $\sigma$, $\rho(X_{11})=11$ and $Y_{11}$ the minimal resolution of $X_{11}/\sigma$. Then $$\begin{array}{ccc}X_{11}\in \mathcal{L}_d,\ \ d\equiv 1\mod 2&\mbox{ if and only if }&Y_{11}\in\mathcal{L}_{2d}^{(a)},\\
X_{11}\in \mathcal{L}_d,\ \ d\equiv 0\mod 2&\mbox{ if and only if }&Y_{11}\in \mathcal{L}_{2d}^{(b)},\\
X_{11}\in \mathcal{L}_d^{(a)},\ \ d\equiv 2\mod 4&\mbox{ if and only if }&Y_{11}\in\mathcal{L}_{d/2},\\
X_{11}\in \mathcal{L}_d^{(b)},\ \ d\equiv 0\mod 4&\mbox{ if and only if }&Y_{11}\in \mathcal{L}_{d/2}.\end{array}$$
\end{theorem}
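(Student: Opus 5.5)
Because $\rho(Y_{11})=11$ and $Y_{11}$ admits a van Geemen--Sarti involution, Theorem \ref{theorem: X rho 11} places $Y_{11}$ in exactly one of the families $\mathcal{L}_e$, $\mathcal{L}_e^{(a)}$, $\mathcal{L}_e^{(b)}$. To decide which one, it is enough to compute the discriminant form of $T_{Y_{11}}$, or equivalently the orthogonal complement in $NS(Y_{11})$ of the copy of $U\oplus N$ spanned by $F_Y,S_Y,n_i$. So the plan is to compute $T_{Y_{11}}$ directly with the explicit maps of Section \ref{subsec: cohomological action}.

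The key point is that the saturation of $\pi_*(T_{X_{11}})$ is $T_{Y_{11}}$, and this is the orthogonal complement of $\pi_*(V)$ in $T_Y=\langle\tau_1,\dots,\tau_{12}\rangle$. Equivalently, $NS(Y_{11})$ is the saturation of $\langle F_Y,S_Y,n_i\rangle+\Z\pi_*(V)$ in $H^2(Y,\Z)$. We take $V$ as fixed after Theorem \ref{theorem: X rho 11} and treat the three cases in turn.

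\textbf{Case $V=t_{11}-dt_{12}$.} Here $\pi_*(V)=\tau_{11}-d\tau_{12}$, which has square $-4d$ because it lies in $U(2)$ and is primitive in $T_Y$. Glue appears through the generators $(\tau_{11}+n_1+n_2+n_3+n_4)/2$ and $(\tau_{12}+n_1+n_2+n_3+n_5)/2$ of $H^2(Y,\Z)$. Taking the combination with coefficients $1$ and $-d$ shows that $(\pi_*(V)+\nu)/2\in H^2(Y,\Z)$ for a sum $\nu$ of $n_i$'s.
\begin{itemize}
\item If $d$ is odd, $\nu=(1-d)(n_1+n_2+n_3)+n_4-dn_5\equiv n_4+n_5$ modulo $2N$. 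Hence $NS(Y_{11})$ is the index-two overlattice $\Lambda^{(a)}_{2d}$, glued along two $n_i$'s.
\item If $d$ is even, $\nu\equiv n_1+n_2+n_3+n_4$ modulo $2N$, which gives $\Lambda^{(b)}_{2d}$, glued along four $n_i$'s.
\end{itemize}
In each case one checks that the orthogonal complement of $U\oplus N$ is generated by $\pi_*(V)$ of square $-4d=-2(2d)$. This matches the generators of the overlattices in Theorem \ref{theorem: X rho 11}.

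\textbf{Case $V=t_1+t_2+2t_{11}-2kt_{12}$, $d=4k+2$.} Here $\pi_*(V)=2\tau_2+2\tau_{11}-2k\tau_{12}$, so it is $2$-divisible in $T_Y$. The primitive generator $\tau_2+\tau_{11}-k\tau_{12}$ has square $-2-4k=-d/2\cdot 2$, that is, it equals $-2(d/2)$. One then checks that no half-combination with the $n_i$ lands in $H^2(Y,\Z)$ in a way that would enlarge the lattice: the only gluing vectors involving $\tau_2$ (which arise from $\tau_1$ and $\tau_8$) fail to produce an element of the orthogonal complement. Therefore $NS(Y_{11})=U\oplus N\oplus\langle -d\rangle$, i.e. $Y_{11}\in\mathcal{L}_{d/2}$.

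\textbf{Case $d=4k+4$.} The same argument applies with $\pi_*(t_1+t_2+t_3+t_4)=2(\tau_2+\tau_4)$, giving $Y_{11}\in\mathcal{L}_{d/2}$.

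\textbf{The converse directions.} These follow from Remark \ref{rem: quotient of quotient}: applying the quotient again returns $X_{11}$. The four maps are therefore mutually inverse; for example, $\mathcal{L}_d$ with $d$ odd goes to $\mathcal{L}_{2d}^{(a)}$, which goes back to $\mathcal{L}_d$. Since the families are disjoint by Theorem \ref{theorem: X rho 11}, the "only if" statements follow.

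\textbf{Cross-check.} As a check on the first case, with $d=1$ one can compare geometrically via Proposition \ref{prop: specialization elliptic fibrations, 11}. The new $I_2$ fiber on $X_{11}$ has trivial intersection with $T$, so it is not swapped by $\sigma$ and becomes an $I_4$ on $Y_{11}$. This is consistent with $\mathcal{L}_2^{(a)}$.

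\textbf{Main obstacle.} The delicate step is the primitivity and saturation bookkeeping in $H^2(Y,\Z)$. One must determine exactly which half-sums $(\pi_*(V)+\nu)/2$ lie in $H^2(Y,\Z)$, and this requires the full list of gluing classes together with a check that no further ones exist. For that I would reduce modulo $2$ using the $\Z$-basis of $H^2(Y,\Z)$ given at the end of Section \ref{subsec: cohomological action}.
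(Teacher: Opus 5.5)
Your proposal is correct and is essentially the paper's argument, in its ``compute $NS(Y_{11})$ directly'' form: you saturate $\langle F_Y,S_Y,n_i,\pi_*(V)\rangle$ in $H^2(Y,\Z)$ using the six glue classes, and your treatment of the converse via Remark \ref{rem: quotient of quotient} supplies what the paper leaves implicit. The one justification to repair is the no-gluing claim: for $d\equiv 2\bmod 4$ it is automatic because $d/2$ is odd, and for $d\equiv 0\bmod 4$ it holds because $w=\tau_2+\tau_4+\tau_{11}-k\tau_{12}$ satisfies $w\cdot\tau_5=-1$, so no class $(w+\nu)/2$ with $\nu\in (U\oplus N)\otimes\Q$ pairs integrally with $\tau_5\in H^2(Y,\Z)$ (your stated reason is off, since the glue classes do not involve $\tau_2$ at all).
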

\proof To determine the family of $Y_{11}$ once one knows the one of $X_{11}$, one computes $T_{Y_ {11}}$, which is the minimal primitive sublattice of $H^2(Y_{11},\Z)$ containing $\pi_*(T_{X_{11}})$. Alternatively, one can compute directly $NS(Y_{11})$, which is the minimal primitive sublattice of $H^2(Y_{11},\Z)$ containing $\pi_*(NS(Y_{11}))\oplus N$, where $N$ is the Nikulin lattice generated by $n_i$. We present both the computations, since they are useful in different contexts.

{\bf Cases 1) and 2): $X_{11}\in\mathcal{L}_d$.} Let us assume that $X_{11}\in\mathcal{L}_d$, i.e. $NS(X_{11})\simeq U\oplus N\oplus \langle -2d\rangle$, $V=t_{11}-dt_{12}$
 and $T_{X_{11}}\simeq \langle t_i,\ i=1,\ldots,10,\ t_{11}+dt_{12}\rangle\simeq U\oplus N\oplus \langle2d\rangle$. 
Then $\pi_*(T_{X_{11}})$ is generated by $\pi_*(t_i)$, $i=1,\ldots,10$ and  $\pi_*(t_{11}+dt_{12})=\tau_{11}+d\tau_{12}$. Hence $\tau_i$, $i=1,\ldots,10$ are contained in $T_{Y_{11}}$ and since $\langle \tau_i, i=1,\ldots, 10, \tau_{11}+d\tau_{12}\rangle$ is primitive in $H^2(Y_{11},\Z)$ it coincides with $T_{Y_{11}}$.
The intersection form of $T_{Y_{11}}$ is $M\oplus U(2)\oplus \langle 4d\rangle$, (the matrix $M$ is the one in \eqref{eq: matrix M}), so $A_{T_{Y_{11}}}\simeq (\Z/2\Z)^4\times \Z/4d\Z$ and $T_{Y_{11}}\simeq U(2)\oplus D_8\oplus \langle 4d\rangle$. Since the discriminant form of $T_{Y_{11}}$ is the opposite of the one of $NS(Y_{11})$, we deduce $NS(Y_{11})$ and we obtain that $Y_{11}\in\mathcal{L}_{2d}^{(a)}$ if $d\equiv 1\mod 2$ and  $Y_{11}\in\mathcal{L}_{2d}^{(b)}$ if $d\equiv 0\mod 2$.

Let us compute directly $NS(Y_{11})$: $\pi_*(NS(X_{11}))$ is generated by $F_Y:=(\pi_*(F))/2$, $S_Y:=\pi_*(S)$ and $\pi_*(V)=\tau_{11}-d\tau_{12}$. In particular $\tau_{11}+n_1+n_2+n_3+n_4-d(\tau_{12}+n_1+n_2+n_3+n_5)$ is a class in $NS(Y_{11})$. But $\left(\tau_{11}+n_1+n_2+n_3+n_4-d(\tau_{12}+n_1+n_2+n_3+n_5)\right)/2$ is contained in $H^2(Y_{11},\Z)$ and since $NS(Y_{11})$ is a primitive sublattice of $H^2(Y_{11},\Z)$, one obtains that it is also contained in $NS(Y_{11})$. In particular, if $d$ is even $(\tau_{11}+n_1+n_2+n_3+n_4)/2\in NS(X_{11})$; if $d$ is odd $(\tau_{11}-\tau_{12}-n_4-n_5)/2\in NS(X_{11})$.

{\bf Case 3): $X_{11}\in\mathcal{L}_d^{(a)}$, $d=4k+2$} and   $NS(X_{11})$ is an overlattice of index 2 of $U\oplus N\oplus \langle -2d\rangle$. So $V=t_1+t_2+2t_{11}-2kt_{12}$ and $T_{X_{11}}\simeq \langle t_i, i=3,\ldots,7, t_1-t_2, t_2-t_8, t_1+t_{12}, t_{11}+kt_{12}, t_9, t_{10}\rangle$. Then $\pi_*(T_X)$ is generated over $\Q$ by $\pi_*(t_i), i=3,\ldots,7, \ \pi_*(t_1-t_2), \pi_*(t_2-t_8), \pi_*(t_1+t_{12})$, $\pi_*(t_9)$, $\pi_*(t_{10})$. So $T_{Y_{11}}$ is generated by $$\tau_i,\ i=4,\ldots,10,\ \tau_1-\tau_2,\ \pi_*(t_3)=\tau_1-2\tau_2+2\tau_3,\ \pi_*(t_2-t_8)=\tau_2-\tau_8,\ \pi_*(t_1+t_{12})=\tau_1+\tau_{12},\ \pi_*(t_{11}+kt_{12})=\tau_{11}+k\tau_{12}.$$
Alternatively, one observes that $T_{Y_{11}}$ is the orthogonal to $\pi_*(t_1+t_2-2t_{11}-2kt_{12})$ in $\langle \tau_i\rangle_{i=1,\ldots, 12}$.

To compute directly $NS(Y_{11})$ one observes that $\pi_*(NS(X_{11}))$ is generated by $F_Y:=(\pi_*(F))/2$, $S_Y:=\pi_*(S)$ and $\pi_*((V-N_1-N_2)/2)=\tau_2+\tau_{11}-k\tau_{12}-F$, so $NS(Y_{11})\simeq \pi_*(NS(X_{11}))\oplus N$. 

{\bf Case 4): $X_{11}\in\mathcal{L}_d^{(b)}$, $d=4k+4$.} 
The computation are similar to the previous cases: $T_{X_{11}}\simeq \langle t_i, i=5,6,7, t_1-t_2, t_2-t_3, t_3-t_4, t_3+t_4-t_8,  t_1+t_{12}, t_{11}+kt_{12}, t_9, t_{10}\rangle$. Then $\pi_*(T_{X_{11}})$ is generated by
$$\pi_*(t_5)=2\tau_5-2\tau_4+2\tau_3-2\tau_2+\tau_1,\ \ \ \tau_6,\ \  \tau_7$$
$$\pi_*(t_{i-1}-t_i)/2=-\tau_i+2\left(\sum_{j=1}^{i-2}(-1)^{j+1}\tau_{i-j}\right)+(-1)^{i}\tau_1,\ i=2,3,4,\ \ \pi_*(t_3+t_4-t_8)=2\tau_4-\tau_8,$$ 
$$ \pi_*(t_1+t_{12})=\tau_1+\tau_{12},\ \pi_*(t_{11}+kt_{12})=\tau_{11}+k\tau_{12},\ \  \pi_*(t_j)=\tau_j,\ j=9,10.$$
Its discriminant group is $(\Z/\Z)^6\times \Z/d\Z$ and $T_{Y_{11}}\simeq U(2)\oplus D_4\oplus D_4\oplus \langle d\rangle\simeq U\oplus N\oplus \langle d\rangle$. Since $\pi_*(NS(X_{11}))$ is generated by $F_Y:=(\pi_*(F))/2$, $S_Y:=\pi_*(S)$ and $\pi_*((V-N_1-N_2-N_3-N_4)/2)=\tau_2+\tau_3+\tau_{11}-k\tau_{12}-2F$, $NS(Y_{11})\simeq \pi_*(NS(X_{11}))\oplus N$. \endproof

By the previous theorem and the description of the elliptic fibrations in each of the families $\mathcal{L}_d$, $\mathcal{L}_d^{(a)}$, $\mathcal{L}_d^{(b)}$ given in Proposition \ref{prop: specialization elliptic fibrations, 11}, we obtain the following proposition, where $\mathcal{E}_{11}$ is the elliptic fibration on $X_{11}$ (specialization of $\mathcal{E}$); $\mathcal{F}_{11}$ is the elliptic fibration induced on the quotient surface $Y_{11}$; $G_X$ (resp. $G_Y$) is the generator of the infinite part of $MW(\mathcal{E}_{11})$ (resp. $MW(\mathcal{F}_{11})$), when the Mordell--Weil group is not simply a torsion group.
\begin{proposition}\label{proposition E11 and F11}
The Mordell--Weil groups of $\mathcal{E}_{11}$ and $\mathcal{F}_{11}$ are isomorphic.
	The section $G_X$ intersects any reducible fiber in the trivial component if and only if $G_Y$ intersects some fibers (2 or 4 according to the parity of $d/2$) in the non trivial component and viceversa.

If $G_XS=d-2$, then $G_YS=\frac{d-1}{2}$ if $d$ is odd, and $G_YS=\frac{d-2}{2}$ if $d$ is even, and vice versa.

	 The elliptic fibrations with fibers $9I_2+6I_1$ and $I_4+6I_2+8I_1$ are exchanged by the quotient by the van Geemen--Sarti involution. 	 
\end{proposition}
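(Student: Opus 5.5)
The plan is to combine Theorem \ref{theorem: Y rho 11} with Proposition \ref{prop: specialization elliptic fibrations, 11}, applied once to $X_{11}$ and once to $Y_{11}$. Two facts make this possible. First, $\mathcal{F}_{11}$ is exactly the fibration on $Y_{11}$ induced by $\mathcal{E}_{11}$. Second, $Y_{11}$ carries the van Geemen--Sarti involution $\sigma_Y$, namely translation by the image of the 2-torsion section, and this involution is induced from the quotient construction of Proposition \ref{prop: the family UNpolarized}. Hence Proposition \ref{prop: specialization elliptic fibrations, 11} describes $\mathcal{F}_{11}$ as soon as we know which family $\mathcal{L}_{d'}$, $\mathcal{L}^{(a)}_{d'}$ or $\mathcal{L}^{(b)}_{d'}$ contains $Y_{11}$. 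To justify that the fibration in that proposition really is $\mathcal{F}_{11}$, we note that $F_Y=\pi_*(F)/2$ and $S_Y=\pi_*(S)$ span the copy of $U$ in $U\oplus N\subset NS(Y_{11})$. We then read the proposition with this choice of $U$, exactly as the embedding conventions of Section \ref{subsec: cohomological action} prescribe.

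We then argue case by case.
\begin{enumerate}
\item If $X_{11}\in\mathcal{L}_1$, Theorem \ref{theorem: Y rho 11} gives $Y_{11}\in\mathcal{L}^{(a)}_2$. So $\mathcal{E}_{11}$ has $9I_2+6I_1$ and $\mathcal{F}_{11}$ has $I_4+6I_2+8I_1$. Conversely, $X_{11}\in\mathcal{L}^{(a)}_2$ gives $Y_{11}\in\mathcal{L}_1$. This proves the last claim, and in both cases both Mordell--Weil groups equal $\Z/2\Z$.
\item If $X_{11}\in\mathcal{L}_d$ with $d>1$, then $MW(\mathcal{E}_{11})=\Z\times\Z/2\Z$ with generator $G_X$, where $G_XS=d-2$ and $G_X$ meets only trivial components. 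Moreover $Y_{11}\in\mathcal{L}^{(a)}_{2d}$ for $d$ odd and $Y_{11}\in\mathcal{L}^{(b)}_{2d}$ for $d$ even. Applying case (4) of the proposition with $2d$ in place of $d$, the generator $G_Y$ meets 2 (for $d$ odd) or 4 (for $d$ even) reducible fibers in the non trivial component. It also satisfies $G_YS=(2d-6)/4=(d-3)/2$, respectively $(2d-4)/4=(d-2)/2$.
\item The reverse direction, $X_{11}\in\mathcal{L}^{(a)}_d$ or $\mathcal{L}^{(b)}_d$ with $Y_{11}\in\mathcal{L}_{d/2}$, is symmetric.
\end{enumerate}
In every case the two Mordell--Weil groups are abstractly isomorphic, either $\Z/2\Z$ or $\Z\times\Z/2\Z$, which gives the first claim.

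Two points need care. The intersection number $G_YS$ computed from case (4) comes out as $(d-3)/2$ for $d$ odd, while the statement claims $(d-1)/2$. I would therefore recheck the formula $SG_X=(d-6)/4$ from the proof of Proposition \ref{prop: specialization elliptic fibrations, 11}. The check is that $G_X=\frac{2+d}{4}F+S+\frac{V-N_1-N_2}{2}$ gives $SG_X=\frac{2+d}{4}-2=\frac{d-6}{4}$, and with $2d$ in place of $d$ this becomes $\frac{d-3}{2}$. So either the statement uses a different normalisation, for example counting $G_YT$, which equals $(2d-2)/4=(d-1)/2$, or it has a typo. I would state the precise numbers as computed, noting that $G_YT=(d-1)/2$ for $d$ odd and $G_YS=G_YT=(d-2)/2$ for $d$ even. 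The main obstacle is therefore not logical but bookkeeping. We must make sure that the generator $G_Y$ is taken with respect to the zero section $S_Y$ and not the torsion section, because $G_Y$ and $G_Y+T_Y$ generate the same free part but have swapped intersection numbers with $S_Y$ and $T_Y$. To fix this choice consistently, I would compute $\pi_*(G_X)$ explicitly in the $\tau_i,n_i$ basis of Section \ref{subsec: cohomological action} and compare the result with $G_Y$.
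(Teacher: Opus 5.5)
Your proposal is correct and is the paper's own argument: the proof there consists only of combining Proposition~\ref{prop: specialization elliptic fibrations, 11} with Theorem~\ref{theorem: Y rho 11}, case by case as you do. The discrepancy you flag is genuine, and it is not a normalisation issue. Take the generator $G_Y$ meeting $2$ fibres non-trivially: then $G_YS_Y=\frac{d-3}{2}$ for $d$ odd, which is also what the explicit class $G_Y=\frac{d+1}{2}F_Y+S_Y+\frac{\tau_{11}-d\tau_{12}-n_4-n_5}{2}$ of Example~\ref{example: quotient section infinite} gives. The alternative generator $G_Y\boxplus T_Y$ would meet $6$ fibres non-trivially, contradicting the ``2 or 4'' clause, so the $\frac{d-1}{2}$ in the statement is a slip for $G_Y T_Y$.
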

\proof The proposition follows by Proposition \ref{prop: specialization elliptic fibrations, 11}, where the elliptic fibrations are described in terms of the family of polarized K3 surfaces, and Theorem \ref{theorem: Y rho 11}, where the relation between the N\'eron--Severi groups of $X_{11}$ and of $Y_{11}$ is provided.\endproof

Denoted $\beta:Y_{11}\ra X_{11}/\sigma$, the desingularization of $X_{11}/\sigma$, in the following examples we describe more geometrically the relations between the elliptic fibrations $\mathcal{E}_{11}$ and $\mathcal{F}_{11}$.
\begin{example}\label{example: quotient I2}{\rm
	Let us consider the case $X_{11}\in\mathcal{L}_1$, i.e. $NS(X_{11})\simeq U\oplus N\oplus \langle -2\rangle$ (see case (1), Proposition \ref{prop: specialization elliptic fibrations, 11}). In this case $V=t_{11}-t_{12}$ geometrically corresponds to the non trivial class of the ``new"  $I_2$-fiber (the fiber for which the 2-torsion section intersects the trivial component). The class $\pi_*(V)=\tau_{11}-\tau_{12}$ has self intersection $-4$. Since $V$ is a component of the $I_2$-fiber of $\mathcal{E}_{11}$ where the van Geemen--Sarti involution preserves the components, $\sigma$ fixes two points on $V$, hence $\pi_*(V)$ passes through two singular points of $X_{11}/\sigma$.
 	
	Blowing up these two points, one obtains a fiber of type $I_4$ on $Y_{11}$, whose components are: the strict transform of image of $F-V$, called $D$, the strict transform of the image of $V$, called $B$, two exceptional curves called $A$ and $C$ (these are two of the curves $n_i$ on $Y_{11}$). With this notation $AB=BC=CD=DA=1$ and $D$ is the zero component of an $I_4$-fiber.
	One obtains $\pi_*(V)=2B$, $\beta^*(\pi_*(V))=2B+A+C$, and that the strict transform on $Y_{11}$ of $\pi(V)$ is $(\beta^*(\pi_*(V))-A-C)/2$.
	
	As observed, the curves $A$ and $C$ are exceptional curves in $Y_{11}$, so they are among the curves $n_i$. To be consistent with the notation introduced in the proof of Theorem \ref{theorem: Y rho 11} we have that $\{A,C\}=\{n_4,n_5\}$ and that to generate the N\'eron--Severi group of $Y_{11}$ we need $\pi_*(NS(X))$ and $(\beta^*(\pi_*(V))-n_4-n_5)/2$.

Comparing this construction with the Remark \ref{rem: gluing 2 I2}, one observes that they are consistent, indeed the $-4$ vector added in that Remark to glue two fibers of type $I_2$ to one of type $I_4$ has the same properties (and relations with the $I_4$-fiber components) as the ones of $\pi_*(V)$ described now.}
\end{example}
\begin{example}\label{example: quotient I4}{\rm
		Let us consider the case $X_{11}\in\mathcal{L}_2^{(a)}$, i.e. $NS(X_{11})\simeq (U\oplus N\oplus \langle -4\rangle)'$ (see case (2), Proposition \ref{prop: specialization elliptic fibrations, 11}). In this case $V=t_1+t_2$, which allows one to construct a fiber $I_4$ with components $A$, $B$, $C$, $D$ as in the Remark \ref{rem: gluing 2 I2}. The van Geemen--Sarti involution acts on these curves as follow: $\sigma(A)=C$, $\sigma(B)=D$. There are no points fixed by $\sigma$ in this fiber. In the quotient, the components $B$ and $D$ (resp. $A$ and $C$) are identified, so the quotient fiber has two components and it is an $I_2$-fiber; the image of $B$ and $D$ is the trivial component $M_0$ of the quotient $I_2$-fiber, and the image of $A$ and $C$ is the non trivial component $M_1$ of the same fiber. In particular, $\beta^*\pi_*(A)=\pi_*(A)=\pi_*(C)=\beta^*\pi_*(C)=M_0$ and $\beta^*\pi_*(B)=\pi_*(B)=\pi_*(D)=\beta^*\pi_*(D)=M_1$.}
\end{example}		

\begin{example}\label{example: quotient section infinite}{\rm Let us consider the case $X_{11}\in\mathcal{L}_d$, $d>1$ i.e. $NS(X_{11})\simeq U\oplus N\oplus \langle -2d\rangle$. In this case $V=t_{11}-dt_{12}$. By Proposition \ref{prop: specialization elliptic fibrations, 11}, there is an infinite order section $G_X$, whose class is $dF+S+V$. The class $\pi_*(G_X)=2dF_Y+S_Y+\tau_{11}-d\tau_{12}$ corresponds to a section of the fibration $Y_{11}$. This section splits in the double cover $X_{11}\ra Y_{11}$ and indeed $\pi^*(\pi_*(G_X))=G_X+G_X'$, where $G_X'=dF+T+V=\sigma(G_X)$. Even if $G_X$ is a generator of the Mordell--Weil group of the elliptic fibration on $X$, $\pi_*(G_X)$ is not $G_Y$, i.e. it is not a generator of the Mordell--Weil group. Indeed, the heigh formula computed on $\pi_*(G_X)$ gives $2d-4$, but the Mordell--Weil lattice of $\mathcal{F}_{11}$ is $[d]$. The generator of the Mordell--Weil group is $G_Y:=\frac{d+1}{2}F_Y+S_Y+\frac{\tau_{11}-d\tau_{12}-n_4-n_5}{2}$.
The pull back $\pi^*(G_Y)=\frac{d+1}{2}F+S+T+t_{11}-dt_{12}$ corresponds to a rational curve on $X$ which is a bisection of the elliptic fibration $\mathcal{E}$. This bisection passes through two of the points fixed by $\sigma$ and hence in the quotient it meets two exceptional divisors, $n_4$ and $n_5$.}\end{example}

\section{Some specializations of $X$ and $Y$ to K3 surfaces with  Picard number 12}\label{sec: K3 with vGS rank 12}

We now consider subfamilies of $\mathcal{N}$ of codimension 2, so let $X_{12}$ be a K3 surface with a van Geemen--Sarti involution $\sigma$ and $\rho(X_{12})=12$ and $Y_{12}$ the desingularization of the quotient $X_{12}/\sigma$. As in the previous section we assume that the van Geemen--Sarti involution specializes the one on $X$, where $X$ is a general member of $\mathcal{N}$, i.e. we assume that $X_{12}$ is marked with $\Phi$ such that  $\varphi(U\oplus N)\subset \Phi(NS(X_{12}))$, where $\varphi$ is given in Table \ref{eq: U+Nin Lambda}.

First, we describe the general structure of the Picard group of $X_{12}$. Then, we identify some particular subfamilies which satisfy the property $NS(X_{12})\simeq NS(Y_{12})$, in Theorem \ref{theorem: rank 12, equal NS}. 

\begin{lemma}\label{lemma: lattice Omega and rank 12}
Let $\Omega$ be a rank 2 negative definite even lattice. Then, $\Omega$ admits a primitive embedding in $T_{X}\simeq U\oplus U\oplus N$ and there exists a unique primitive embedding of $U\oplus N\oplus \Omega$ in $\Lambda_{K3}$. 

For each element $\omega\in A_{\Omega}$ such that $\omega$ has order 2 and $\omega^2\in\Z$, there exists a unique overlattice of index 2, $(U\oplus N\oplus \Omega)'$, of $U\oplus N\oplus \Omega$ in which both $U\oplus N$ and $\Omega$ are primitively embedded. Moreover, there exists a unique primitive embedding of $(U\oplus N\oplus \Omega)'$ in $\Lambda_{K3}$.
\end{lemma}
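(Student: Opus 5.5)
The plan is to rely on Nikulin's existence and uniqueness results for primitive embeddings, exactly as in the proof of Theorem \ref{theorem: X rho 11} and of the Lemma in Section \ref{sec: K3 surfaces with involution and their specializations}.

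\textbf{Embedding $\Omega$ in $T_X$.} I would embed $\Omega$ in $U\oplus U\subset T_X\simeq U\oplus U\oplus N$. Write the Gram matrix of $\Omega$ as $\begin{pmatrix}-2a&b\\ b&-2c\end{pmatrix}$. Take the standard bases $\{t_9,t_{10}\}$ and $\{t_{11},t_{12}\}$ and set
$$w_1=t_9-at_{10},\qquad w_2=bt_{10}+t_{11}-ct_{12}.$$
Then $w_1^2=-2a$, $w_2^2=-2c$ and $w_1w_2=b$. The lattice $\langle w_1,w_2\rangle$ is primitive: the coefficient matrix on $(t_9,t_{10},t_{11},t_{12})$ contains an identity $2\times2$ minor in the $t_9,t_{11}$ coordinates. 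A primitive sublattice of $U\oplus U$ stays primitive in $T_X$, since $U\oplus U$ is a direct summand.

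\textbf{Embedding $U\oplus N\oplus\Omega$ in $\Lambda_{K3}$.} Because $NS(X)=U\oplus N$ and $T_X$ are mutually orthogonal primitive sublattices of $\Lambda_{K3}$, the sum $\varphi(U\oplus N)\oplus\phi(\Omega)$ is a sublattice of $\Lambda_{K3}$. To show it is primitive, I would check it is the orthogonal complement of $\Omega^\perp_{T_X}$ in $\Lambda_{K3}$, and orthogonal complements are always primitive. This works because $\Omega$ is primitive in $T_X$, so $(\Omega^\perp_{T_X})^\perp_{\Lambda_{K3}}$ is saturated and contains $NS(X)\oplus\Omega$ with finite index. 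It remains to exclude glue, i.e. elements of the form $(n+w)/m$.

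For uniqueness I would apply \cite[Theorem 1.14.4]{NikInt}. The lattice $L=U\oplus N\oplus\Omega$ has signature $(1,11)$, so its complement has rank $10$. We have $\ell(A_L)\le 6+2=8\le 10$, and the complement is indefinite. Hence the criterion $\mathrm{rk}\,L^\perp\ge \ell(A_L)+2$ holds, and this also gives existence directly.

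\textbf{The index 2 overlattices.} The discriminant group is $A_{U\oplus N}\oplus A_\Omega\simeq u(2)^3\oplus q_\Omega$. An index 2 overlattice in which both summands are primitive corresponds to an isotropic element $\delta+\omega$, with $\delta\in u(2)^3$ and $\omega\in A_\Omega$ both nonzero of order 2.

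For the glue element to be isotropic we need $q(\delta)\equiv -q(\omega)\pmod{2\Z}$, where $q(\delta)\in\{0,1\}$. The hypothesis $\omega^2\in\Z$ is exactly what makes such a $\delta$ exist:
\begin{itemize}
\item if $\omega^2\equiv 0\pmod 2$, choose $\delta$ a nonzero isotropic element, for instance $\delta_1$;
\item if $\omega^2\equiv 1\pmod 2$, choose $\delta=\delta_1+\delta_2$.
\end{itemize}
Uniqueness up to isometry then follows as in the proof of Theorem \ref{theorem: X rho 11}. The orthogonal group of $u(2)^3$ acts transitively on nonzero elements of a given value, and this action lifts to isometries of $U\oplus N$ because the map $O(U\oplus N)\to O(u(2)^3)$ is surjective \cite[Theorem 1.14.2]{NikInt}.

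For the embedding of the overlattice, its discriminant group is $H^\perp/H$, whose length is at most $8$. Since the complement has rank $10$, Nikulin's criterion applies again and gives both existence and uniqueness.

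\textbf{Main obstacle.} I expect the hardest step to be the length bound $\ell(A_L)\le 8$ together with the rank condition: the argument has no slack whenever $A_\Omega$ has a $2$-part of length $2$. The inequality $10\ge 8+2$ still holds, so I would state the bound carefully for each prime $p$, using that the $p$-length of $u(2)^3\oplus A_\Omega$ is at most $8$ for $p=2$ and at most $2$ for odd $p$.
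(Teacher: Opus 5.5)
Your proposal is correct and follows essentially the same route as the paper. You use Nikulin's Theorem 1.14.4, with $\mathrm{rk}\,L^\perp=10\ge \ell(A_L)+2$, for existence and uniqueness of both embeddings. For the index $2$ overlattices you use the two $O(U\oplus N)$-orbits of nonzero elements of $A_N\simeq u(2)^3$, which you justify more carefully than the paper via transitivity on $u(2)^3$ and surjectivity of $O(U\oplus N)\to O(q_{U\oplus N})$. The one real variation is that you embed $\Omega$ in $U\oplus U$ by explicit vectors rather than through the gluing $\Omega\oplus\Omega(-1)\subset U\oplus U$. Your unfinished ``exclude glue'' step is not needed, since Nikulin's theorem already gives existence. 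It also holds anyway, because the unimodular lattice $\langle t_9,\dots,t_{12}\rangle$ is an orthogonal direct summand of $H^2(X,\Z)$.
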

\proof The lattice $\Omega(-1)$ has the same discriminant group as $\Omega$ and opposite discriminant form. It follows that $U\oplus U$ is an overlattice of index $|A_{\Omega}|$ of $\Omega\oplus \Omega(-1)$ and that $\Omega$ admits a primitive embedding in $U\oplus U$. Hence $\Omega$ admits at least one primitive embedding (possibly more than one) in $U\oplus U\oplus N\simeq T_{X}$. We observe that $\rk(U\oplus N\oplus \Omega)=12$ and $\ell(U\oplus N\oplus \Omega)=\ell(N)+\ell(\Omega)\leq 8$. So $U\oplus N\oplus \Omega$ admits a unique primitive embedding in $\Lambda_{K3}$, by \cite{NikInt}.

To construct an overlattice of finite index of $U\oplus N\oplus \Omega$ as required, one has to find an isotropic vector in $A_{U\oplus N\oplus \Omega}\simeq A_{N}\oplus A_{\Omega}$ which has non trivial components both on $A_{N}$ and $A_{\Omega}$. Since $A_{N}\simeq (\Z/2\Z)^6$ contains only vectors of order 2 and with integer self intersection, each element in $A_{N}$ can be glued only with an element $\omega\in A_{\Omega}$ with the same properties. There are just two orbits of non trivial elements in $A_N$ for the action induced by $O(U\oplus N)$ on $A_{N}$: one contains all the non trivial vectors of self intersection 0, the other contains the ones of self intersection 1. So, chosen an element $\omega\in A_{\Omega}$, up to isometries it can be glued with a unique element in $A_N$, obtaining a unique overlattice of $U\oplus N\oplus \Omega$. Since $\ell\left((U\oplus N\oplus \Omega)'\right)<\ell(U\oplus N\oplus \Omega)=\ell(N)+\ell(\Omega)\leq 8$, each lattice $(U\oplus N\oplus \Omega)'$ admits a unique primitive embedding in $\Lambda_{K3}$.\endproof

Let $\mathcal{S}^0_{\Omega}$ (resp. $\mathcal{S}^1_{\Omega}$) be the set of the elements in $A_{\Omega}$ of order 2 and self intersection equal to 0 (resp. 1), considered up to isometries of $\Omega$. By the proof of the previous lemma each element in $\mathcal{S}^0_{\Omega}$ (resp. $\mathcal{S}^1_{\Omega}$) determines an overlattice of index 2 of $U\oplus N\oplus \Omega$, denoted $(U\oplus N\oplus \Omega)'$.
\begin{proposition}\label{prop: rank 12 all the cases}
	Let $\Omega$ be a rank 2 negative definite even lattice. There exists a 10-dimensional family of K3 surfaces $X_{\Omega}$ such that $NS(X_{\Omega})\simeq U\oplus N\oplus \Omega$ and all the K3 surfaces in such a family admits a van Geemen--Sarti involution. Moreover $T_{X_{\Omega}}\simeq \Omega(-1)\oplus N$.
	
	For each element $\omega\in\mathcal{S}_{\Omega}^0$ (resp. $\mathcal{S}_{\Omega}^1$) there exists  a 10-dimensional family of K3 surfaces $X_{\Omega}'$ such that $NS(X_{\Omega}')\simeq \left(U\oplus N\oplus \Omega\right)'$ and all the K3 surfaces in such a family admit a van Geemen--Sarti involution.
\end{proposition}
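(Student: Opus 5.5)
The plan is to combine Lemma \ref{lemma: lattice Omega and rank 12} with the surjectivity of the period map and the criterion recalled in Section \ref{sec: family N and vGS inv}: a K3 surface admits a van Geemen--Sarti involution as soon as $U\oplus N$ is primitively embedded in its N\'eron--Severi group. In both cases (the lattice $U\oplus N\oplus\Omega$ and the overlattices $(U\oplus N\oplus\Omega)'$) the argument has the same structure. The lattice has rank $12$ and signature $(1,11)$, since $\Omega$ is negative definite of rank 2. By Lemma \ref{lemma: lattice Omega and rank 12} it admits a unique primitive embedding in $\Lambda_{K3}$. Let $T$ denote its orthogonal complement, which has signature $(2,8)$.

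By the surjectivity of the period map and the standard theory of lattice polarized K3 surfaces, the periods $\omega\in\PP(T\otimes\C)$ with $\omega^2=0$ and $\omega\bar\omega>0$, taken outside the countable union of hyperplanes orthogonal to vectors of $T$, give K3 surfaces whose N\'eron--Severi group is exactly the given lattice. This yields a family of dimension $\rk T-2=10$. For such a surface, $U\oplus N$ is primitively embedded in $NS$:
\begin{itemize}
\item for $U\oplus N\oplus\Omega$ this is because it is a direct summand;
\item for the overlattice it holds by construction, since Lemma \ref{lemma: lattice Omega and rank 12} builds the overlattice precisely so that both $U\oplus N$ and $\Omega$ stay primitive.
\end{itemize}
Hence, as in Proposition \ref{prop: the family UNpolarized} (\cite{vGS}), the class $U$ gives an elliptic fibration, after applying reflections to make the fiber class nef. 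The Nikulin lattice, consisting of $N_i$ orthogonal to $U$ and $\hat N$, then forces a 2-torsion section $2F+S-\hat N$. Translation by this section is the van Geemen--Sarti involution.

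It remains to compute $T_{X_\Omega}$ in the first case. Since $U\oplus N\oplus\Omega$ has discriminant form $u(2)^3\oplus q_\Omega$, the transcendental lattice has the opposite form $u(2)^3\oplus(-q_\Omega)$, rank $10$ and signature $(2,8)$. The candidate $\Omega(-1)\oplus N$ has signature $(2,0)+(0,8)$ and discriminant form $-q_\Omega\oplus u(2)^3$ (recall that $q_N=u(2)^3$, which equals its opposite). To conclude that they are isometric, I will use Nikulin's uniqueness results \cite[Theorem 1.14.2]{NikInt}, or more simply the uniqueness of the embedding. Since $\Omega\oplus\Omega(-1)\subset U\oplus U$ with finite index, as in the proof of Lemma \ref{lemma: lattice Omega and rank 12}, we can embed $\Omega$ into $U\oplus U$ inside $T_X\simeq U\oplus U\oplus N$. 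Its orthogonal complement there is $\Omega(-1)\oplus N$. By uniqueness of the embedding of $U\oplus N\oplus\Omega$ in $\Lambda_{K3}$, this complement is the transcendental lattice.

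The only delicate point is this last identification. I must make sure that the orthogonal complement of $\Omega$ in $U\oplus U$ is exactly $\Omega(-1)$, rather than merely a lattice in the same genus. To settle it, I take the embedding of $\Omega$ into $U\oplus U$ given by the gluing of $\Omega\oplus\Omega(-1)$, whose complement is $\Omega(-1)$ by construction. Uniqueness then transfers this to any embedding.
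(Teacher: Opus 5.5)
\textbf{The dimension claim is wrong, and so is the statement.} Your own computation gives $\rk T=10$, so the period domain $\{\omega\in\PP(T\otimes\C):\ \omega^2=0,\ \omega\bar\omega>0\}$ has dimension $\rk T-2=8$, not $10$. The equality ``$\rk T-2=10$'' is an arithmetic slip. No argument can produce a $10$-dimensional family here: K3 surfaces with $\rho=12$ move in $20-12=8$ dimensions. These are codimension~$2$ subfamilies of the $10$-dimensional family $\mathcal N$. This agrees with the $(10-2i)$-dimensional families of Proposition~\ref{prop: the five specializations} for $i=1$, whose $X_{12}$ are of type $(U\oplus N\oplus\Omega)'$ with $\Omega=\langle -4\rangle\oplus\langle-2\rangle$. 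The ``10'' in the statement is a slip, and the paper's proof repeats it (``dimension 10 since the Picard number of the general member is 12''). You should correct the statement to $8$, for both $X_\Omega$ and $X'_\Omega$, rather than bend the arithmetic to match it.

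\textbf{The rest is correct but takes a different route.} The paper builds the families inside the transcendental lattice $T_X\simeq U\oplus U\oplus N$ of a generic member of $\mathcal N$, using the fixed marking of Section~\ref{subsec: cohomological action}. For $X_\Omega$ it embeds $\Omega$ into the $U\oplus U$ summand. Then:
\begin{itemize}
\item $T_{X_\Omega}=\Omega^{\perp_{T_X}}=\Omega(-1)\oplus N$ can be read off immediately;
\item $NS=NS(X)\oplus\Omega$, because the unimodular $U\oplus U$ is not glued to $NS(X)$;
\item the van Geemen--Sarti involution comes for free, since the family lies inside $\mathcal N$.
\end{itemize}
For $X'_\Omega$ the paper embeds $\Omega$ into $T_X$ but not into $U\oplus U$, and then invokes Lemma~\ref{lemma: lattice Omega and rank 12} and Nikulin. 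You instead start from the abstract lattice, use uniqueness of its primitive embedding and surjectivity of the period map, get the involution from the criterion ``$U\oplus N$ primitive in $NS$'' (which the paper also uses for $X'_\Omega$), and recover $T$ afterwards. Your route treats both cases uniformly and is cleaner for the overlattices. The paper's route keeps the new families attached to the fixed embedding $\varphi$, which is what the later sections need.

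\textbf{Two small remarks on your last step.}
\begin{itemize}
\item Your ``delicate point'' is unnecessary. $T$ is indefinite with $\rk T=10\ge 2+\ell(A_T)$, so \cite[Theorem 1.14.2]{NikInt} already identifies $T$ with $\Omega(-1)\oplus N$ from signature and discriminant form alone.
\item If you keep the explicit-embedding argument, you must also check that $U\oplus N\oplus\Omega$ is \emph{primitive} in $\Lambda_{K3}$ for that embedding. Otherwise uniqueness of the primitive embedding does not apply. It is primitive, because $U\oplus U$ is unimodular and hence an orthogonal direct summand of $\Lambda_{K3}$; this is exactly the paper's ``no gluing'' remark.
\end{itemize}
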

\proof We already observed that there exists a primitive embedding of $\Omega$ in $U\oplus U\subset T_X$. The orthogonal complement of $\Omega$ in $T_{X}$ is isometric to $\Omega(-1)\oplus N$ and it is the transcendental lattice of a K3 surface $X_{12}$, whose Picard number is 12. Its N\'eron--Severi lattice is $U\oplus N\oplus \Omega\simeq NS(X)\oplus \Omega$, since there are no gluing relation between $NS(X)$ and the two copies of $U$ in $T_{X}$. The family has dimension 10 since the Picard number of the general member is 12 and its members admit a van Geemen--Sarti involution since the family is properly contained in the family $\mathcal{N}$ of the $(U\oplus N)$-polarized K3 surfaces.

It is even possible that $\Omega$ is embedded in $T_{X}$, but not in the direct summands $U\oplus U$ of $T_{X}$. Let us fix an embedding $\varepsilon:\Omega\hookrightarrow T_{X}$. The orthogonal complement of $\varepsilon(\Omega)$ in $T_{X}$ is the transcendental lattice of a K3 surface $X_{\Omega}'$, whose N\'eron--Severi group is by construction an overlattice of finite index (possibly 1) of $U\oplus N\oplus \Omega$. Hence  $NS(X_{\Omega}')\supseteq U\oplus N\oplus \Omega$, and if $NS(X_{\Omega}')\not\simeq U\oplus N\oplus \Omega$, then $NS(X_{\Omega}')$ is an overlattice of index 2 of $U\oplus N\oplus \Omega$ as described in Lemma \ref{lemma: lattice Omega and rank 12}. Since up to isometries there are only two non trivial classes of elements in $A_{U\oplus N}$, once one fixes an element in $\mathcal{S}_{\Omega}^0$ (resp. $\mathcal{S}_{\Omega}^1$) one obtains a unique overlattice $(U\oplus N\oplus \Omega)'$. This determines the discriminant form of both $NS(X_{\Omega}')\simeq (U\oplus N\oplus \Omega)'$ and its orthogonal and by \cite[Theorem 1.14.4]{NikInt}, one has that both are primitively embedded in $\Lambda_{K3}$, hence they determine a unique family of K3 surfaces. By construction $U\oplus N$ is primitively embedded in $NS(X'_{\Omega})$, so $X'_{\Omega}$ admits a van Geemen--Sarti involution. 
\endproof

We now consider certain specializations, the ones in which we add an orthogonal copy of $\langle-2e\rangle$ to the N\'eron--Severi group of type $\Lambda_d^{(a)}$, $\Lambda_d^{(b)}$ of the surfaces of Picard number 11 considered in the previous section.
With the notation of the previous Proposition, we are assuming $\Omega=\langle -2d\rangle\oplus \langle -2e\rangle$ and that $NS(X_{12})$ is a prescribed overlattice of index 2 of $U\oplus N\oplus \Omega$.
\begin{theorem}\label{theorem: rank 12, equal NS}
	Let $X_{12}$ be a K3 surface whose N\'eron--Severi group is either  $NS(X_{12})\simeq \Lambda_d^{(a)}\oplus \langle -2e\rangle$ or $NS(X_{12})\simeq \Lambda_d^{(b)}\oplus \langle -2e\rangle$ and so $T_{X_{12}}\simeq D_4\oplus D_4\oplus \langle 2d\rangle\oplus \langle 2e\rangle$. Then $$NS(Y_{12})\simeq\left\{\begin{array}{r} \langle -d\rangle \oplus (U\oplus N\oplus  \langle -4e\rangle)'=\Lambda_{2e}^{(a)}\oplus \langle-d\rangle\mbox{ if }e\equiv 1\mod 2\\\langle -d\rangle \oplus (U\oplus N\oplus  \langle -4e\rangle)'=\Lambda_{2e}^{(b)}\oplus \langle-d\rangle\mbox{ if }e\equiv 0\mod 2 \end{array}\right.\mbox{ and  }T_{Y_{12}}\simeq D_4\oplus D_4\oplus \langle d\rangle\oplus \langle 4e\rangle.$$ 
	In particular, if $d=2e$, then $T_{Y_{12}}\simeq T_{X_{12}}$ and $NS(Y_{12})\simeq NS(X_{12})$. 
\end{theorem}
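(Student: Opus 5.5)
The plan is to combine the explicit marking of Section \ref{subsec: cohomological action} with the two computations already carried out in Theorem \ref{theorem: Y rho 11}, exploiting the fact that the two extra classes can be placed in orthogonal parts of $T_X$.

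\textbf{Step 1: marking $X_{12}$.} I choose the extra classes of $NS(X_{12})$ inside $T_X=\langle t_1,\ldots,t_{12}\rangle$ so that the van Geemen--Sarti involution specializes the one on $X$, as in Remark \ref{rem: varphi}. For the $\Lambda_d^{(a)}$ or $\Lambda_d^{(b)}$ part I keep
\[
V=t_1+t_2+2t_{11}-2kt_{12} \quad\text{or}\quad V=t_1+t_2+t_3+t_4+2t_{11}-2kt_{12},
\]
exactly as in Section \ref{sec: K3 with vGS rank 11}. For the new summand I want a class $W$ with $W^2=-2e$, orthogonal to $V$ and to $U\oplus N$, with no gluing to $N$. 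The natural choice is $W=t_9-et_{10}$, which lies in the other copy of $U$.

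For $d=4k+2$, the overlattice structure then gives a $\mathbb{Z}$-basis of $NS(X_{12})$: the previous basis together with $W$. By Lemma \ref{lemma: lattice Omega and rank 12} the embedding is unique, so this choice is no restriction. From the basis one reads off $T_{X_{12}}$ as the orthogonal complement of $\langle V,W\rangle$ in $T_X$, namely the rank 11 lattice of Theorem \ref{theorem: Y rho 11} (case 3 or 4) with $t_9,t_{10}$ replaced by $t_9+et_{10}$. Since its discriminant form is the opposite of that of $NS(X_{12})$, this yields $T_{X_{12}}\simeq D_4\oplus D_4\oplus\langle 2d\rangle\oplus\langle 2e\rangle$.

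\textbf{Step 2: computing $T_{Y_{12}}$.} I push forward with $\pi_*$ as in Proposition \ref{prop: pi*}. The computation splits into two independent pieces, because the $d$-part lives in $\langle\tau_1,\ldots,\tau_8,\tau_{11},\tau_{12}\rangle$ and the $e$-part lives in $\langle\tau_9,\tau_{10}\rangle\simeq U(2)$.

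The $d$-part behaves exactly as in cases 3 and 4 of Theorem \ref{theorem: Y rho 11}. There the saturation produces $D_4\oplus D_4\oplus\langle d\rangle$ together with the $U(2)$ summand; the factor $\langle d\rangle$ arises from $\pi_*(t_{11}+kt_{12})$, since the $U(2)$ scaling halves the relevant norm. The $e$-part contributes $\pi_*(t_9+et_{10})=\tau_9+e\tau_{10}$, whose square is $4e$.

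The candidate lattice is therefore the saturation in $H^2(Y_{12},\mathbb{Z})$ of these generators. Using the gluing classes listed at the end of Section \ref{subsec: cohomological action}, I check primitivity, i.e. that no further halves such as $(\tau_9+e\tau_{10}+\cdots)/2$ enter $T_{Y_{12}}$. Once primitivity holds, $T_{Y_{12}}$ has Gram matrix $D_4\oplus D_4\oplus\langle d\rangle\oplus\langle 4e\rangle$, up to the identification $U(2)\oplus D_4\oplus D_4\oplus\langle d\rangle\simeq U\oplus N\oplus\langle d\rangle$ used in case 4.

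\textbf{Step 3: computing $NS(Y_{12})$.} For the $\langle -2e\rangle$ part I argue as in cases 1 and 2 of Theorem \ref{theorem: Y rho 11}, where $V=t_{11}-dt_{12}$, but now with $t_9,t_{10}$. The class $\pi_*(W)=\tau_9-e\tau_{10}$ becomes $2$-divisible after adding nikulin classes, using $(\tau_9+n_1+n_2+n_6+n_7)/2$ and $(\tau_{10}+n_1+n_2+n_6+n_8)/2$. This gives $(U\oplus N\oplus\langle -4e\rangle)'$, which is $\Lambda_{2e}^{(a)}$ if $e$ is odd and $\Lambda_{2e}^{(b)}$ if $e$ is even. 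For the $d$ part, as in cases 3 and 4, $\pi_*$ of the glued class is orthogonal to everything else and has square $-d$, so it contributes an orthogonal $\langle -d\rangle$.

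As a cross-check, the discriminant form of $\Lambda_{2e}^{(\ast)}\oplus\langle -d\rangle$ must be the opposite of that of $D_4\oplus D_4\oplus\langle d\rangle\oplus\langle 4e\rangle$; note that $D_4\oplus D_4$ has discriminant form $u(2)^2$.

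\textbf{Step 4: the case $d=2e$.} Then $T_{Y_{12}}\simeq D_4^2\oplus\langle 2e\rangle\oplus\langle 4e\rangle=D_4^2\oplus\langle d\rangle\oplus\langle 2d\rangle\simeq T_{X_{12}}$. The isometry $NS(Y_{12})\simeq NS(X_{12})$ then follows from uniqueness of the orthogonal complement in $\Lambda_{K3}$ (Lemma \ref{lemma: lattice Omega and rank 12}), or directly from the isomorphic discriminant forms together with Nikulin's uniqueness theorem.

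\textbf{Main obstacle.} The hardest step is the saturation in Step 2. I must verify that mixing the $d$- and $e$-parts creates no extra $2$-divisible classes, which requires tracking the glue classes of $H^2(Y,\mathbb{Z})$ involving $\tau_9,\tau_{10}$ together with $\tau_1,\tau_{11},\tau_{12}$. I would first check it by a discriminant-order count: the index $[T_{Y_{12}}:\pi_*(T_{X_{12}})]$ is forced by comparing $|\det|$ with $|\det NS(Y_{12})|$, the latter computed via Step 3.
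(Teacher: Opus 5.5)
Your proposal is correct and is essentially the paper's proof: you take $W=t_9-et_{10}$, obtain $T_{X_{12}}$ from the rank-11 generators by replacing $t_9,t_{10}$ with $t_9+et_{10}$, then push forward and saturate, which gives the rank-11 generators of $T_{Y_{11}}$ with $\tau_9,\tau_{10}$ replaced by $\tau_9+e\tau_{10}$; your direct computation of $NS(Y_{12})$ through the glue classes $(\tau_9+n_1+n_2+n_6+n_7)/2$ and $(\tau_{10}+n_1+n_2+n_6+n_8)/2$ is a consistent extra check in the style of Theorem \ref{theorem: Y rho 11}. Two small corrections: since $\pi_*$ doubles norms of invariant classes, $(\tau_{11}+k\tau_{12})^2=4k\neq d$, so the summand $\langle d\rangle$ is not produced by that single vector; and your ``main obstacle'' is immediate, because $T_{Y_{12}}=(\pi_*T_{X_{12}})_{\Q}\cap\langle\tau_1,\dots,\tau_{12}\rangle$, where $\langle\tau_1,\dots,\tau_{12}\rangle$ is primitive in $H^2(Y_{12},\Z)$ and has $\langle\tau_9,\tau_{10}\rangle\simeq U(2)$ as an orthogonal direct summand in which $\tau_9+e\tau_{10}$ is primitive, so no glue class involving the $n_i$ can create new halves.
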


\proof
We give a detailed proof in the case $\Lambda_d^{(a)}$, the case $\Lambda_d^{(b)}$ is analogue.
Let $W$ be the generator of the summand $\langle -2e\rangle$ in $NS(X_{12})$. Since we are considering $X_{12}$ as a specialization of $X_{11}$ such that $NS(X_{11})\simeq \Lambda_{d}^{(a)}$, $W$ is contained in $T_{X_{11}}$ and we can chose it to be $W=t_9-et_{10}$.
In this case, recalling that $d=4k+2$, the transcendental lattice of $X_{12}$ is generated by $t_i$, $i=3,\ldots, 7$, $t_1-t_2$, $t_2-t_8$, $t_1+t_{12}$, $t_{11}+kt_{12}$, $t_9+et_{10}$ and it is $D_4\oplus D_4\oplus\langle 2d\rangle\oplus\langle 2e\rangle$. Its discriminant is $2^6de=2^7(2k+1)e$.

The transcendental lattice of $Y_{12}$ is generated by $\tau_i$, $i=4,\ldots, 7$, $\tau_1-\tau_2$, $\tau_1-2\tau_2+2\tau_3$, $\tau_2-\tau_8$, $\tau_1+\tau_{12}$, $\tau_{11}+k\tau_{12}$, $\tau_9+e\tau_{10}$ and its discriminant is $2^7(2k+1)e$.
Since one has a basis of $T_{X_{12}}$ and $T_{Y_{12}}$ one can compute directly the intersection form and the discriminant form and one obtains the statement.
\endproof

\begin{example}\label{ex: I47I2}{\rm  Let us consider $X_{12}$ such that $NS(X_{12})\simeq NS(Y_{12})\simeq \Lambda_2^{(a)}\oplus \langle -2\rangle$, i.e.  $d=2e=2$. By applying twice  Proposition \ref{prop: specialization elliptic fibrations, 11}, one obtains that the elliptic fibration $\mathcal{E}_{12}$ is obtained by specializing $\mathcal{E}$ first gluing two $I_2$-fibers and then gluing two $I_1$-fibers. So the singular fibers of $\mathcal{E}_{12}$ are $I_4+7I_2+6I_1$. The torsion section meets the fiber $I_4$ and six of the fibers $I_2$ non trivially. The quotient of an $I_{2n}$ fiber by a non trivial 2-torsion translation is a fiber $I_n$. So in the quotient one has an $I_2$-fiber (quotient of the $I_4$) and six $I_1$-fibers (quotient of six fibers of type $I_2$). The quotient of the six $I_1$-fibers consists of six singular fibers, whose resolution produces six $I_2$-fibers on $Y_{12}$. On the remaining $I_2$ the 2-torsion section is trivial and so the van Geemen--Sarti involution fixes the two singular points of the fiber. Hence their image is singular in the quotient and so blown up in $Y_{12}$. This produces an $I_4$-fiber on $Y_{12}$ (two components are the exceptional divisors, the other two are the image of the components of the $I_2$-fiber on $X_{12}$, see Example \ref{example: quotient I2}). In particular, $Y_{12}$ has an elliptic fibration whose singular fibers are $7I_2+I_4+6I_1$, i.e. the same configuration of the fibration on $X_{12}$.}
\end{example}

\begin{prop}\label{prop: isom quotients rank 12}{\rm (See also \cite[Paragraph 6.6]{vGSc})}
There is a 4-dimensional subfamily of the family $\mathcal{L}$ introduced in Proposition \ref{prop: subfamily isomorphic quotients} , which corresponds to K3 surfaces $X_{12}$ with an elliptic fibration $\mathcal{E}_{12}$ such that $MW(\mathcal{E}_{12})=\Z/2\Z$ and the singular fibers of $\mathcal{E}_{12}$ are $I_4+7I_2+6I_1$. For each member $X_{12}$ of this family, $X_{12}\simeq Y_{12}$ and it is a K3 surface with complex multiplication by $\mathbb{Q}(\sqrt{-2})$.

The N\'eron--Severi group of the generic member of this family is $\Lambda_2^{(a)}\oplus \langle-2\rangle$.\end{prop}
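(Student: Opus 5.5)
The plan is to work directly with the van Geemen--Sch\"utt family $\mathcal{L}$ of Proposition \ref{prop: subfamily isomorphic quotients}, with Weierstrass equation \eqref{eq: Weierstrass siom quotient}, and to impose one algebraic condition that glues two $I_2$-fibers into an $I_4$-fiber. This should cut out a 4-dimensional subfamily. The isomorphism $X\simeq Y$ and the complex multiplication by $\Q(\sqrt{-2})$ hold for every member of $\mathcal{L}$, so they pass automatically to the subfamily. What remains is to check three things: the fiber configuration, the Mordell--Weil group, and the N\'eron--Severi group of the generic member.

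\textbf{Step 1: the gluing condition.} For $y^2=x(x^2+ax+b)$ the discriminant is proportional to $b^2(a^2-4b)$. The $I_2$-fibers sit at the zeros of $b=\frac12\alpha^2(t^2)+t\beta(t^2)$ and the $I_1$-fibers at the zeros of $a^2-4b=2\alpha^2(t^2)-4t\beta(t^2)$. An $I_4$-fiber at which $T$ meets a non trivial component corresponds to a double root of $b$ at which $a\neq0$. Using the symmetry $t\mapsto -t$ of the family, I would place this double root at $t=\infty$ (or at $t=0$). Concretely, the leading coefficients of $\alpha$ and $\beta$ produce the top-degree terms of $b$. Requiring that $b$ lose two degrees while $\alpha$ keeps its top coefficient is then a single linear condition on the coefficients, namely the vanishing of the leading coefficient of $\beta$. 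This leaves $3+3-1=5$ parameters, and removing scaling gives a 4-dimensional family. Next I would check that no further degeneration occurs generically, that is, that the remaining roots of $b$ and of $a^2-4b$ stay simple. This gives singular fibers $I_4+7I_2+6I_1$, and the Euler numbers check: $4+14+6=24$.

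\textbf{Step 2: Mordell--Weil and N\'eron--Severi.} The trivial lattice is $U\oplus A_3\oplus A_1^{7}$, which has rank $12$. The family is 4-dimensional, and in the $X\simeq Y$ locus the CM structure forces $\rho$ to be even, so generically $\rho=12$. Then Shioda--Tate gives rank $MW=0$. The torsion part is at least $\Z/2\Z$ because of $T$. It cannot contain full 2-torsion, since that would need $a^2-4b$ to be a square; nor $\Z/4\Z$, which is incompatible with the $I_1$-fibers and heights. Hence $MW=\Z/2\Z$. The N\'eron--Severi group is therefore the overlattice of $U\oplus A_3\oplus A_1^{7}$ obtained by adding $T$. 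By Remark \ref{rem: gluing 2 I2} and Example \ref{ex: I47I2}, this lattice is exactly $\Lambda_2^{(a)}\oplus\langle-2\rangle$: the gluing of two $I_2$ gives $\Lambda_2^{(a)}$, and the seventh $I_2$, met trivially by $T$, contributes $\langle -2\rangle$. As a consistency check, Theorem \ref{theorem: rank 12, equal NS} with $d=2e=2$ gives $NS(Y_{12})\simeq NS(X_{12})$.

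\textbf{Expected obstacle.} The main difficulty is the genericity and dimension count. I must show that the imposed condition really produces an $I_4$ met non-trivially by $T$, and not a $III$ fiber or the real-multiplication configuration of Remark \ref{rem: real multiplication}. I must also show that $\rho$ of the generic member is exactly 12 rather than higher. I would settle this by writing an explicit member and computing its discriminant. For $\rho$, I would argue that a 4-dimensional family of K3 surfaces has $\rho\le 16$ generically, and that the CM/period description in \cite{vGSc}, \S6.6 pins $T_X$ down as the rank-10 lattice $D_4\oplus D_4\oplus\langle4\rangle\oplus\langle2\rangle$. Alternatively, I would use a specialization argument: the generic member of $\mathcal{L}$ has $\rho=10$, and one codimension-one condition, which by evenness raises $\rho$ by two, gives exactly $\rho=12$.
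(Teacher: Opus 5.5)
\textbf{There is a genuine gap: Step 1 does not produce the degeneration you need, and it misses the mechanism that yields the configuration $I_4+7I_2+6I_1$.}

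\emph{Your condition does not glue any fibers.} Write $\alpha=\sum_i\alpha_i s^i$ and $\beta=\sum_i\beta_i s^i$. In $b=\frac12\alpha^2(t^2)+t\beta(t^2)$ the coefficient of $t^8$ is $\frac12\alpha_2^2$ and the coefficient of $t^7$ is $\beta_3$. So imposing $\beta_3=0$ while keeping $\alpha_2\neq0$ does not make $b$ vanish at $t=\infty$ at all, and nothing is glued.

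\emph{No condition can put the double root at $t=0$ or $t=\infty$.} These are the fixed points of $t\mapsto -t$, where $\alpha^2(t^2)+2t\beta(t^2)$ and $\alpha^2(t^2)-2t\beta(t^2)$ agree to leading order. Near $t=0$ one has $\alpha^2(t^2)+2t\beta(t^2)=\alpha_0^2+2\beta_0t+O(t^2)$. If $\alpha_0=0$ this becomes $2\beta_0t+2\beta_1t^3+O(t^4)$. Hence it vanishes to order $0$, $1$ or at least $3$, and whenever it vanishes, $a=2\alpha(t^2)$ vanishes too. The fiber there is of type $III$ or $III^*$, never $I_4$. The same holds at $\infty$, with $\alpha_2,\beta_3$ in place of $\alpha_0,\beta_0$. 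Your worry about $III$-fibers was well founded: this normalization is exactly what produces them.

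\emph{The correct condition is non-linear.} The discriminant of $\alpha^2(t^2)+2t\beta(t^2)$ must vanish, i.e.\ it has a double root at some $t_0\neq 0,\infty$ with $\alpha(t_0^2)\neq 0$. The dimension count is $3+4=7$ coefficients, minus $2$ for the two scalings (giving $\dim\mathcal{L}=5$), minus $1$ for the condition.

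\textbf{The missing idea is the root pairing under $t\mapsto -t$, which is the heart of the paper's proof.} The polynomial $\alpha^2(t^2)-2t\beta(t^2)$ is obtained from $\alpha^2(t^2)+2t\beta(t^2)$ by $t\mapsto -t$. So a double root of $b$ at $t_0$ forces a double root of $a^2-4b$ at $-t_0$. The single condition therefore does two things at once: it glues two $I_2$-fibers into an $I_4$ (met non-trivially by $T$), and two $I_1$-fibers into an $I_2$ (met trivially by $T$). That is where $I_4+7I_2+6I_1$ comes from.

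Your claim that the remaining roots of $a^2-4b$ stay simple would instead give $I_4+6I_2+8I_1$. That configuration has Euler number $4+12+8=24$ and a trivial lattice of rank $11$, which your own evenness remark forbids inside $\mathcal{L}$. So your Euler-number check does not match your construction. Likewise, the seventh $I_2$ you use in Step 2 to produce the summand $\langle-2\rangle$ has no source.

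\textbf{Smaller points in Step 2, once Step 1 is fixed.} The outline then works, with two repairs.
\begin{itemize}
\item $\Z/4\Z$ is excluded cleanly by component groups: $2Q=T$ would have to meet the identity component of every $I_2$-fiber.
\item Evenness gives no upper bound on $\rho$. Instead use that a family with CM by $\Q(\sqrt{-2})$ and transcendental lattice of rank $r$ has dimension at most $r/2-1$. A $4$-dimensional such family therefore has $r\geq 10$, i.e.\ $\rho\leq 12$.
\end{itemize}
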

\proof 
In Proposition \ref{prop: subfamily isomorphic quotients}, we recalled that the general member of the family $\mathcal{L}\subsetneq\mathcal{N}$ has Weierstrass equation $y^2=x(x^2+2\alpha(t^2)x+\frac{1}{2}\alpha^2(t^2)+t\beta(t^2))$ and it is isomorphic to the desingularization of its quotient by translation by the two torsion section $(x,y)=(0,0)$. This Weierstrass equation depends on the choice of the polynomials $\alpha$ and $\beta$ of degree 2 and 3 respectively and each specialization of these polynomials gives a subfamily of K3 surfaces which are isomorphic to their quotient by the van Geemen--Sarti involution.

The discriminant of this elliptic fibration is
$$\frac{1}{2}\left(\alpha^2(t^2)+2t\beta(t^2)\right)^2\left(\alpha^2(t^2)-2t\beta(t^2)\right).$$
The singular fibers of type $I_2$ are over the zeros of $\alpha^2(t^2)+2t\beta(t^2)$, the ones of type $I_1$ over the zeros of 
$\alpha^2(t^2)-2t\beta(t^2)$. Notice that if $\overline{t_0}$ is a zeros of one of these two polynomials, then $-\overline{t_0}$ is a zero of the other.
So, if we require that $\alpha^2(t^2)+2t\beta(t^2)$ has a multiple root, the same holds true for $\alpha^2(t^2)-2t\beta(t^2)$, and vice versa. 

To impose a double root to $\alpha^2(t^2)+2t\beta(t^2)$ is equivalent to glue two $I_2$-fibers to an $I_4$ and similarly to impose a double root to $\alpha^2(t^2)-2t\beta(t^2)$ is equivalent to glue two $I_1$-fibers to an $I_2$.

Therefore, if we impose the condition that $\alpha^2(t^2)+2t\beta(t^2)$ has a double root, we obtain a codimension 1 subfamily of $\mathcal{L}$, which consists of K3 surfaces with an elliptic fibration with singular fibers $I_4+7I_2+6I_1$ and which are isomorphic to their quotient by the van Geemen--Sarti involution. Since generically the surfaces with equation \eqref{eq: Weierstrass siom quotient} have no section of infinite order, the same hold true for the generic member of the subfamily we are considering. The N\'eron--Severi can be computed directly by the knowledge of the reducible fibers, see also Example \ref{ex: I47I2}.\endproof
\begin{rem}\label{rem: first specialization real multiplication}{\rm A result similar to the one of the previous proposition holds also for the family $\mathcal{L}'$ with Real Multiplication described in Remark \ref{rem: real multiplication}: the discriminant of the generic member of the family is $t^3\left(2t\gamma^2(t^2)-4\delta(t^2)\right)\left(2t\gamma^2(t^2)+4\delta(t^2)\right)^2$ and to require that the polynomial $\left(2t\gamma^2(t^2)-4\delta(t^2)\right)$ admits a multiple root is equivalent to require that the polynomial $\left(2t\gamma^2(t^2)+4\delta(t^2)\right)$ has a multiple root, so that one obtains a specialization of the K3 surfaces in this family whose singular fibers are $2III+I_4+5I_2+4I_1$.}\end{rem}

In Example \ref{ex: I47I2} and Proposition \ref{prop: isom quotients rank 12}, we considered surfaces $X_{12}$ as in Theorem \ref{theorem: rank 12, equal NS} (i.e. for which $NS(X_{12})\simeq NS(Y_{12})$) with the extra restriction that $d=2e=2$. But the result of Proposition \ref{prop: isom quotients rank 12} holds more in general (without the restriction on that the values of $d=2e$ is necessarily 2). Indeed in \cite[Paragraph 6.6]{vGSc} specializations of the family $\mathcal{L}$ which imply the presence of sections of infinite order are considered. It follows that the family of the K3 surfaces with Picard group $\Lambda_{2e}^{(a)}\oplus\langle -2e\rangle$ (resp. $\Lambda_{2e}^{(b)}\oplus \langle-2e\rangle$) intersects non trivially the aforementioned family, i.e. there are K3 surfaces $X_{12}$ such that $NS(X_{12})\simeq \Lambda_{2e}^{(a)}\oplus\langle -2e\rangle$ (resp. $\Lambda_{2e}^{(b)}\oplus \langle-2e\rangle$)  and $X_{12}\simeq Y_{12}$ for every positive integer $e$.

\subsection{Self maps of the transcendental lattice and specializations}\label{subsectio: map nu}
We use the notation $X$ to indicate a K3 surface whose N\'eron--Severi group is exactly $U\oplus N$ and hence whose transcendental lattice $T_X\simeq U\oplus U\oplus N$. We will consider the specialization of $X\in\mathcal{L}$ to $X_{12}\in\mathcal{L}$, in order to describe a specific map $\gamma:T_X \ra T_X$. 

If $X\in\mathcal{L}$, then $Y\simeq X$. So, the quotient map $\pi:X\ra Y\simeq X$ induces the maps $\pi^*:(T_Y)_{\Q}\simeq (T_X)_{\Q}\ra (T_X)_{\Q}$ and  $\pi_*:(T_X)_{\Q}\ra (T_Y)_{\Q}\simeq (T_X)_{\Q}$, i.e. two self maps of $(T_X)_{\Q}$. More precisely, if $t\in T_X$, then $\pi_*(t)\in T_Y\simeq T_X$ and hence we can describe it not just as a linear combination of the basis $\tau_i$ of $\pi_*(H^2(X,\Z))$, but also as a  linear combination of the basis $t_i$ of $H^2(X,\Z)$ (see Subsection \ref{subsec: cohomological action} for the notation). We denote $\gamma:T_X\ra T_X$ the map $\gamma(t):=\pi_*(t)$, where $t\in T_X$ and $\pi_*(t)$ is considered as an element in $T_X$. 
To describe explicitly this map, one can consider specializations of $X$. Indeed, let us consider, for example, a K3 surface $X_{12}$ as in Proposition \ref{prop: isom quotients rank 12}. It is endowed with a van Geemen--Sarti which specializes the one of $X$ and its  N\'eron--Severi group is obtained by considering two classes, say $a_1$ and $a_2$ in $T_X$ and requiring that these two classes become algebraic (see proof of Theorems \ref{theorem: X rho 11}, \ref{theorem: rank 12, equal NS}). Since the action of the van Geemen--Sarti involution on $X_{12}$ is induced by the one on $X$, $\sigma^*$ acts in the same way on $H^2(X,\Z)$ and on $H^2(X_{12},\Z)$ and it determines the action of the map $\pi_*$. 
The images $\gamma(a_i)$, $i=1,2$, of the classes $a_i$ under $\gamma$ in $(T_X)_{\Q}\simeq (T_{X_{12}}\oplus \langle a_1,a_2\rangle)_{\Q}\subset H^2(X,\Z)_{\Q}$ does not depend on the fact that we are considering $\sigma$ as an involution on $X$ or on $X_{12}$.

If one considers $\gamma$ on $H^2(X_{12},\Z)$, both $a_i$ and $\gamma(a_i)$ are algebraic classes (since the quotient map preserves the Hodge decomposition). Hence it is possible to describe the action of $\gamma$ on the classes $a_i$ as a geometric action of $\sigma$ on certain classes in the N\'eron--Severi group of $X_{12}$, i.e. on certain curves appearing in the elliptic fibration on $X_{12}$. 

We now investigate this phenomenon for the specialization to $X_{12}$ considered in Proposition \ref{prop: isom quotients rank 12}, and, in the next section, we reconsider the problem in a more general setting specializing $X$ to obtain a K3 surface with Picard rank 20.

\begin{example}\label{rem: first specialization and action of complex multiplication}\label{rem: map nu}{\rm
Let us consider the map $\gamma$ for the surface described in the Example \ref{ex: I47I2}, i.e. in the case $NS(X_{12})\simeq \Lambda_d^{(a)}\oplus \langle-2e\rangle$ with $d=2e=2$. The map $\gamma$ acts on $(T_X)_{\Q}$ and restricts to an action on $\left(T_{X_{12}}\right)_{\Q}$.  Since $NS(X_{12})_{\Q}\simeq (NS(X_{10})\oplus \langle -4\rangle\oplus \langle -2\rangle)_{\Q}$, where $\langle -4\rangle\oplus \langle -2\rangle\subset T_{X}$ and $T_{X_{12}}$ is the orthogonal complement to $\langle -4\rangle\oplus \langle -2\rangle$ in $T_{X}$, $\gamma$ restricts to an action on $\langle -4\rangle\oplus \langle -2\rangle$. Denoted $W_1$, $V_1$ the generators of $\langle -4\rangle\oplus \langle -2\rangle$, the action of $\gamma$ is $V_1\ra W_1$, $W_1\ra 2V_1$. Indeed, this is the geometric action of the quotient by the van Geemen--Sarti involution: we recall that $V_1$ is the non trivial component of the $I_2$-fiber where the torsion section is trivial. By Example \ref{example: quotient I2}, in the quotient it is mapped to $\beta^*\pi_*(V_1)=2B+A+C$ (where, with the same notation of the example, $A$, $B$, $C$ are the non trivial components of the fiber of type $I_4$, with the assumption that $B$ does not meet the trivial component). Since we are considering the map onto $NS(X)^{\perp_{NS(X_{12})}}=\langle -4\rangle \oplus \langle -2\rangle$, we consider the projection of $\beta^*\pi_*(V_1)=2B+A+C$ to the subspace of $NS(X_{12})$ which is orthogonal to $NS(X)\simeq \langle F,S,N_i\rangle$: we obtain $A+C$. By Remark \ref{rem: gluing 2 I2}, $-A-C$ is the $-4$ class orthogonal to $NS(X)$ added to obtain a fiber of type $I_4$ (i.e. the class called $V$ in the Remark \ref{rem: gluing 2 I2}). Assuming $W_1$ to be the opposite of the $(-4)$-class considered in Remark \ref{rem: gluing 2 I2}, we obtain that the class $V_1$ is mapped to $W_1$ by $\gamma$.
By Example \ref{example: quotient I4}, the class $W_1=A+C$ is mapped to $2V_1$ since $\pi_*(A)=\pi_*(C)=M_1$ is the non trivial component of the $I_2$-fiber and $V_1$ is exactly the non trivial component $M_1$ of the $I_2$-fiber on which the torsion section is trivial.  }\end{example}
	
\section{The main results and a specialization of $X$ and $Y$ to a K3 surface with  Picard number  20}\label{sec:Following specializations of $X$ to a K3 surface of Picard number 20}
		
		In this section we first discuss K3 surfaces $X\in\mathcal{L}$ contained in the family of K3 surfaces admitting complex multiplication by $\sqrt{-2}$ described in Proposition \ref{prop: subfamily isomorphic quotients}. We describe the action of the quotient map and of the complex multiplication on their transcendental lattice, see Theorem \ref{theorem: action selfmap}.
Then we deduce more general results on K3 surfaces $X\in \mathcal{N}$ contained in the family of K3 surfaces admitting a van Geemen--Sarti involution. More specifically, 
		in order to prove Theorem \ref{theorem: action selfmap} we describe further specializations of K3 surfaces $X\in\mathcal{L}$, by considering their explicit equations. These specializations are obtained by gluing fibers on the elliptic fibration of K3 surfaces in $\mathcal{L}$.  The analogue gluing of fibers on the elliptic fibration of the K3 surfaces $X$ also exists  if $X$ is a generic member of the family $\mathcal{N}\supsetneq\mathcal{L}$; its description from a lattice theoretic point of view is the same if one considers the surface as a member of $\mathcal{L}$ or as a member of $\mathcal{N}$. In Proposition \ref{prop: the five specializations} we state the existence of these specializations for $X\in\mathcal{N}$
		and this leads to Corollary \ref{cor: NSX=NSY condition}, which provides a condition under which the N\'eron--Severi groups of a specialization $Z_{2b}$ (of $X$ in $\mathcal{N}$) and of the minimal resolution of its quotient by a van Geemen--Sarti involution, are isometric.

	\begin{theorem}\label{theorem: action CM}\label{theorem: action selfmap}\label{theorem: action CM}
		Let $X\in\mathcal{L}$ with $\rho(X)=10$ and $\Gamma$ the lattice $\left(\langle -4\rangle\oplus \langle -2\rangle\right)^{\oplus 4}\bigoplus\left(\langle -8\rangle\oplus \langle-4\rangle\right)\bigoplus \left(\langle 8\rangle\oplus \langle4\rangle\right).$
		\begin{enumerate}\item  There exists a basis of $(T_X)_\Q$ on which the bilinear form is isometric to $\Gamma$ and the map $\gamma:\Gamma\ra \Gamma$ induced by $\pi_*$, where $\pi$ is the rational quotient map $X\ra Y=\widetilde{X/\sigma}\simeq X$, acts as  $\left[\begin{array}{rr}0&1\\2&0\end{array}\right]^{\oplus 5}\oplus \gamma_+$ with $\gamma_+:\langle 8\rangle\oplus \langle4\rangle\ra \langle 8\rangle\oplus \langle4\rangle$.
			\item 	The endomorphism $\nu:(T_X)_\Q\ra (T_X)_\Q$ giving the complex multiplication acts on $(\Gamma)_{\Q}\simeq (T_X)_\Q$ 
 via the block matrix $\left[\begin{array}{rr}0&1\\-2&0\end{array}\right]^{\oplus 6}$.
		\end{enumerate}
		Both the results in (1) and in (2) remain true even if $\rho(X)>10$, by substituting for $T_X$ the lattice which is the orthogonal complement to $\varphi(U\oplus N)$ in $H^2(X,\Z)\simeq \Lambda_{K3}$ where $\varphi:U\oplus N\hookrightarrow NS(X)$ is the embedding determined in Table \ref{eq: U+Nin Lambda}.		
	\end{theorem}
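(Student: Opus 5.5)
We will construct the lattice $\Gamma$ inside $(T_X)_\Q$ by a chain of specializations of $X\in\mathcal{L}$ inside $\mathcal{L}$, each adding a rank-2 piece of algebraic classes on which the quotient map can be computed geometrically, as in Example \ref{rem: first specialization and action of complex multiplication}. The starting point is that example: for the specialization $X_{12}$ of Proposition \ref{prop: isom quotients rank 12}, imposing a double root of $\alpha^2(t^2)+2t\beta(t^2)$ (which forces one of $\alpha^2(t^2)-2t\beta(t^2)$) produces classes $V_1,W_1\in T_X$ with $V_1^2=-2$, $W_1^2=-4$, $V_1W_1=0$, and $\gamma(V_1)=W_1$, $\gamma(W_1)=2V_1$. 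Since the polynomial $\alpha^2(t^2)+2t\beta(t^2)$ has degree $8$ in $t$ and its roots are paired with those of $\alpha^2(t^2)-2t\beta(t^2)$ via $t\mapsto -t$, we will impose successively four double roots at distinct points. Each time we glue two further $I_2$-fibers to an $I_4$ and two $I_1$-fibers to an $I_2$, and this stays inside $\mathcal{L}$. This produces four mutually orthogonal blocks $\langle W_i\rangle\oplus\langle V_i\rangle\simeq\langle -4\rangle\oplus\langle -2\rangle$. Orthogonality holds because the new fiber components live in distinct fibers and are projected orthogonally to $\varphi(U\oplus N)$. On each block $\gamma$ acts by $\left[\begin{smallmatrix}0&1\\2&0\end{smallmatrix}\right]$, by Examples \ref{example: quotient I2} and \ref{example: quotient I4}.

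For the fifth block $\langle -8\rangle\oplus\langle -4\rangle$, we will use a different gluing. Once all eight $I_2$-fibers of $\mathcal{E}$ have become four $I_4$'s, the next step is to glue two $I_4$'s into an $I_8$ and, symmetrically, two $I_2$'s (from the $I_1$ side) into an $I_4$. Repeating the computations of Remark \ref{rem: gluing 2 I2} and Examples \ref{example: quotient I2}, \ref{example: quotient I4} one level up, the $I_8$ quotients to an $I_4$ and the $I_4$ with trivially-meeting torsion becomes an $I_8$. The new orthogonal classes then have squares $-8,-4$, and $\gamma$ again swaps them with the factor $2$.

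This uses up $10$ dimensions, and the Picard number reaches $20$. The remaining $\langle 8\rangle\oplus\langle 4\rangle$ is then defined as the orthogonal complement of the ten negative classes in $(T_X)_\Q$. It is $\gamma$-stable because $\gamma$ is, up to the factor $\sqrt2$, a similitude: $\pi_*$ multiplies the form by $2$, so $\gamma$ preserves orthogonal complements. This gives $\gamma_+$ and proves (1). Here we use that the discriminant and rank force the complement to be $\langle 8\rangle\oplus\langle4\rangle$ rationally, which we will check by a discriminant computation. For the statement when $\rho(X)>10$, all classes live in $\varphi(U\oplus N)^\perp$ and the cohomological action of $\sigma^*$ is fixed by the marking (Remark \ref{rem: varphi}), so the same matrices apply.

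For (2), we will take $\nu=\gamma\circ\iota$ for a suitable isometry $\iota$ commuting appropriately with $\gamma$. Equivalently, $\nu$ is the transcendental action of the composition of $\pi$ with the isomorphism $Y\simeq X$ given by $t\mapsto -t$ (plus the $x$-rescaling in \cite{vGSc}). This map exchanges the $I_2$- and $I_1$-clusters, so it swaps the sign on the $V_i$ components. On each block it then acts as $\left[\begin{smallmatrix}0&1\\-2&0\end{smallmatrix}\right]$, which squares to $-2$. On the positive block we will pick a basis $P,\nu(P)$ with $P^2=4$, so $\nu(P)^2=8$; this is consistent with $\nu^2=-2$ and $\nu$ being a similitude of factor $2$ commuting with the Hodge structure.

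The main obstacle is the fifth block and the sign bookkeeping distinguishing $\gamma$ from $\nu$. There one must track explicitly which components of the $I_8$ and $I_4$ fibers the exceptional curves and the torsion section meet, and verify that the new classes lie in $T_X$ and are orthogonal to the earlier blocks. We will first attempt this by writing the components in terms of $N_i$ and the added classes, exactly as in Remark \ref{rem: gluing 2 I2}.
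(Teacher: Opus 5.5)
Your part (1) follows the paper's argument. You specialize along the chain $X\rightsquigarrow X_{12}\rightsquigarrow\cdots\rightsquigarrow X_{20}$ inside $\mathcal{L}$. This gives four blocks $\langle W_i\rangle\oplus\langle V_i\rangle\simeq\langle-4\rangle\oplus\langle-2\rangle$ from gluing pairs of $I_2$- and $I_1$-fibres, and a fifth block $\langle-8\rangle\oplus\langle-4\rangle$ from $2I_4\to I_8$ and $2I_2\to I_4$. You read $\gamma$ off from Examples \ref{example: quotient I2} and \ref{example: quotient I4}, as the paper does. Deducing $\gamma$-stability of the positive complement from the fact that $\pi_*$ doubles the form on $T_X$ is a fine substitute for the paper's remark that $\gamma$ preserves $T_{X_{20}}$. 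One caution: rank and discriminant do not in general determine a binary form over $\mathbb{Q}$. To identify the complement with $\langle 8\rangle\oplus\langle 4\rangle$ you need the explicit generators of $T_{X_{20}}$ (as the paper gives, together with \cite{SZ}) or a Hasse-invariant check.

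The genuine gap is in (2). You obtain $\nu$ by twisting $\gamma$ with an $\iota$ that negates the $V_i$. Your only justification is that the isomorphism $Y\simeq X$ induced by $t\mapsto -t$ ``exchanges the $I_2$- and $I_1$-clusters''. That tells you where fibres go, not that any class changes sign. Whether $\nu$ and $\gamma$ differ by such a twist depends on which identification $T_Y\cong T_X$ is built into $\gamma$, and you never fix it. It is not the geometric one: a Hodge isometry would make $\gamma$ a Hodge endomorphism of $T_X$. For very general $X\in\mathcal{L}$ these form $\mathbb{Q}(\sqrt{-2})$, in which nothing squares to $2$. Nor do you show that $\nu$ preserves the blocks $\langle W_i,V_i\rangle$ or the positive block; all your stability arguments concern $\gamma$.

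The missing step is the paper's determination of $\nu$ independently of $\gamma$:
\begin{itemize}
\item The complex multiplication specializes along the chain inside $\mathcal{L}$, so $\nu$ preserves each $(T_{X_{2j}})_{\mathbb{Q}}$. Being a similitude, it then preserves each $2$-dimensional block.
\item On a block with Gram matrix $\mathrm{diag}(2x,x)$, the eigenvectors of $\nu$ are isotropic, by Lemma \ref{lemma CM2} applied to the component of the period in that block.
\item Lemma \ref{lemma: CM} then forces $\nu=\pm\left[\begin{smallmatrix}0&1\\-2&0\end{smallmatrix}\right]$ on that block. Equivalently, the adjoint of $\nu$ is $\bar\nu=-\nu$, and together with $\nu^2=-2$ this forces the same matrix up to sign.
\end{itemize}
With that argument replacing the sign-twist heuristic, your outline goes through.
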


	Recall that if $X\in \mathcal{L}$ and $\rho(X)=10$, then $T_X$ is isometric to the transcendental lattice of the very general members $S$ of the family $\mathcal{N}$. In particular $(T_S)_{\Q}\simeq (T_X)_{\Q}\simeq (\Gamma)_{\Q}$. 
	The $\Q$-linear extension of the map $\gamma$ acts on $(\Gamma)_{\Q}$, and via the isometry $(\Gamma)_{\Q}\simeq (T_S)_{\mathbb{Q}}$, we obtain that $\gamma$ acts on $(T_S)_{\mathbb{Q}}$. By the previous theorem one knowns that $\gamma$ is induced by the quotient map $\pi$ and we explicitly knows how it acts on $(\Gamma)_{\Q}\simeq (T_S)_{\Q}$. This allows to identify proper subfamilies of $\mathcal{N}$ whose generic member is in the same family of the resolution of its quotient by a van Geemen--Sarti involution: these are the families characterized by N\'eron--Severi groups and transcendental lattices invariant for $\gamma$.

	\begin{corollary}\label{cor: NSX=NSY condition}
		Let $Z_{2b}$ be a K3 surface with $\rho(Z_{2b})=2b$ and $Z_{2b}\in\mathcal{N}$. So it admits a van Geemen--Sarti involution $\sigma_Z$ which specializes the van Geemen--Sarti involution defined on very general members $X\in\mathcal{N}$. If $T_{Z_{2d}}$ is invariant for (the $\Q$-linear extension of) $\gamma$, then $NS(Z_{2d})\simeq NS(\widetilde{Z_{2d}/\sigma_Z})$ and $T_{Z_{2d}}\simeq T_{\widetilde{Z_{2d}/\sigma_Z}}$. \end{corollary}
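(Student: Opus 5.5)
The plan is to compare $T_{Z}$ with $T_W$, where $W$ denotes the minimal resolution $\widetilde{Z_{2b}/\sigma_Z}$, inside the fixed lattice $\Lambda_{K3}$ via the marking of Remark \ref{rem: varphi}. Then $NS$ follows by taking orthogonal complements. Since $\sigma_Z$ specializes the van Geemen--Sarti involution of the very general $X\in\mathcal{N}$, its cohomological action on $H^2(Z_{2b},\Z)\simeq\Lambda_{K3}$ is the same as on $H^2(X,\Z)$. Hence $\pi_*$ is given by the formulas of Proposition \ref{prop: pi*} and Section \ref{subsec: cohomological action}, independently of $Z_{2b}$. Moreover $T_Z\subseteq \varphi(U\oplus N)^\perp=T_X$, and $T_W$ is the minimal primitive sublattice of $H^2(W,\Z)$ containing $\pi_*(T_Z)$, as in the proof of Theorem \ref{theorem: Y rho 11}.

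The first step is to identify $\pi_*(T_X)\otimes\Q$ with $(T_X)_\Q$. For $X\in\mathcal{L}$ this identification is induced by $Y\simeq X$, and the resulting self map is $\gamma$ of Theorem \ref{theorem: action selfmap}. By that theorem's final clause it is defined lattice-theoretically on $\varphi(U\oplus N)^\perp$, so it makes sense for every marked $Z_{2b}\in\mathcal{N}$. Through this identification, $(T_W)_\Q$ corresponds to $\gamma((T_Z)_\Q)$.

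If $T_Z$ is $\gamma$-invariant, then $\gamma((T_Z)_\Q)=(T_Z)_\Q$, because $\gamma$ is invertible: its blocks $\left[\begin{smallmatrix}0&1\\2&0\end{smallmatrix}\right]$ and $\gamma_+$ are nondegenerate. Thus $(T_W)_\Q$ and $(T_Z)_\Q$ coincide as subspaces of $(T_X)_\Q$. Next I use $\pi_*\pi^*=2$ and $\pi^*\pi_*=1+\sigma^*$, which is $2$ on $T$. This shows that $\gamma$ multiplies the form by $2$, since $\pi_*x\cdot\pi_*y=2\,x\cdot y$ on invariant classes. So $T_W\otimes\Q\simeq T_Z(2)\otimes\Q$ rationally, but this alone is not enough to give integral isometry.

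The main obstacle is the integral statement. To get $T_W\simeq T_Z$ integrally, I would argue that the identification $H^2(W,\Z)\supset T_W$ with $T_X$ coming from an isomorphism $Y\simeq X$ (for $X\in\mathcal{L}$) is an integral isometry $\psi:T_Y\to T_X$. Under it, $T_W$ is the primitive saturation in $T_X$ of $\psi(\pi_*(T_Z))=\gamma(T_Z)\subseteq T_Z$. Since $T_Z$ is primitive in $T_X$ and $\gamma(T_Z)\otimes\Q=T_Z\otimes\Q$, the saturation of $\gamma(T_Z)$ is exactly $T_Z$. Hence $T_W\simeq T_Z$ as lattices (the form on $T_W$ is induced from $H^2(W,\Z)$, matching $\psi$).

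The delicate point is ensuring that $\psi$ extends, for all $Z$ specializing $X$, as the same integral isometry of marked lattices. I would justify this by the fact that the markings are locally constant over the family $\mathcal{N}$, as in Remark \ref{rem: varphi}. Finally, $NS(Z)=T_Z^\perp$ and $NS(W)=T_W^\perp$ in $\Lambda_{K3}$. Equal transcendental lattices have opposite discriminant forms equal, and $NS$ is indefinite of rank $2b\geq 10$ with small length. So uniqueness from \cite[Theorem 1.14.4]{NikInt} (or Nikulin's genus uniqueness, \cite[Corollary 1.13.3]{NikInt}) gives $NS(Z_{2b})\simeq NS(W)$.
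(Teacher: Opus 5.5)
Your proposal is correct and follows essentially the same route as the paper. You identify $\pi_*$ with $\gamma$ through the isomorphism $Y\simeq X$ for $X\in\mathcal{L}$, deduce $T_{\widetilde{Z_{2b}/\sigma_Z}}\simeq T_{Z_{2b}}$ from $\gamma$-invariance, and obtain the N\'eron--Severi statement from Nikulin's uniqueness results for the primitive embedding of the transcendental lattice. Your saturation step is a genuine improvement in precision: since $\gamma$ multiplies the form by $2$, $\gamma(T_{Z_{2b}})$ is only a finite-index sublattice of $T_{Z_{2b}}$, and the saturation argument is what makes the paper's equality $T_{Z}=\gamma(T_{Z})=T_{\widetilde{Z/\sigma_Z}}$ correct.
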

	
	To prove Theorem \ref{theorem: action CM} and Corollary \ref{cor: NSX=NSY condition} we consider a sequence of specializations, which are also summarized in the following proposition where we denote (as in the previous section) by $X_{n}$ a K3 surface with Picard number $n$ which is a specialization of $X$ and which  admits a van Geemen--Sarti involution $\sigma$, specialization of the one defined on $X$. In particular on $X_n$ there is an elliptic fibration $\mathcal{E}_n$, with a 2-torsion section, which specializes $\mathcal{E}$. We also denote by $Y_n$ the minimal model of $X_{n}/\sigma$.
	\begin{proposition}\label{prop: the five specializations}
		There exist the following specializations of $X$:
		\begin{itemize}\item for $i=1,2,3,4$, there is a $(10-2i)$-dimensional family of K3 surfaces whose generic members $X_{10+2i}$  have $\rho(X_{10+2i})=10+2i$ and admit a van Geemen--Sarti involution $\sigma$ such that $NS(X_{10+2i})\simeq NS(Y_{10+2i})$; the singular fibers of $\mathcal{E}_{10+2i}$ are $iI_4+(8-i)I_2+(8-2i)I_1$  and $MW(\mathcal{E}_{10+2i})$ is a torsion group. There is a $(5-i)$-dimensional subfamily for which moreover $X_{10+2i}\simeq Y_{10+2i}$.
			\item $X_{20}$ is a K3 surface with  $\rho(X_{20})=20$ which admits a van Geemen--Sarti involution $\sigma$ such that $X_{20}\simeq Y_{20}$; the singular fibers of $\mathcal{E}_{20}$ are $I_8+3I_4$ and  $MW(\mathcal{E}_{20})=\Z/4\Z\times \Z/2\Z$. The transcendental lattice of $X_{20}$ is $\langle 8\rangle\oplus\langle 4\rangle$.
		\end{itemize}
	\end{proposition}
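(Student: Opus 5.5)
The plan is to work entirely inside the van Geemen--Sch\"utt family $\mathcal{L}$ of Proposition \ref{prop: subfamily isomorphic quotients}, whose Weierstrass equation is
$$y^2=x\bigl(x^2+2\alpha(t^2)x+\tfrac12\alpha^2(t^2)+t\beta(t^2)\bigr).$$
Its discriminant is $\frac12(\alpha^2+2t\beta)^2(\alpha^2-2t\beta)$. The $I_2$-fibres sit over the roots of $p_+(t)=\alpha^2(t^2)+2t\beta(t^2)$ and the $I_1$-fibres over the roots of $p_-(t)=p_+(-t)$. The key observation, already used in Proposition \ref{prop: isom quotients rank 12}, is that imposing a double root of $p_+$ at $t_0$ automatically imposes a double root of $p_-$ at $-t_0$. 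So each single condition glues two $I_2$ into an $I_4$ and two $I_1$ into an $I_2$ simultaneously, and every such specialization stays inside $\mathcal{L}$, where $X\simeq Y$.

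For the first item, I iterate this $i$ times ($i=1,\dots,4$), imposing $i$ independent double roots of $p_+$. Since $\mathcal{L}$ is $5$-dimensional, this gives a $(5-i)$-dimensional subfamily with singular fibres $iI_4+(8-i)I_2+(8-2i)I_1$ and $X\simeq Y$. I then check that $MW$ is torsion for the generic member, as in Proposition \ref{prop: isom quotients rank 12}. The Picard number is then $\rho=2+\operatorname{rank}(\text{fibre components})=2+3i+(8-i)=10+2i$ by Shioda--Tate.

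To obtain the larger $(10-2i)$-dimensional family in $\mathcal{N}$, I perform the same gluings lattice-theoretically, iterating Remark \ref{rem: gluing 2 I2} and the $I_1+I_1\to I_2$ gluing of Proposition \ref{prop: specialization elliptic fibrations, 11}. Concretely, I add $i$ pairs of classes $(V_j,W_j)$ of type $\langle -2\rangle\oplus\langle -4\rangle$ (suitably glued with $N$) that are orthogonal and chosen inside mutually orthogonal pieces of $T_X$. The resulting lattice has rank $10+2i$, embeds in $\Lambda_{K3}$, and gives a $(10-2i)$-dimensional family. The equality $NS(X_{10+2i})\simeq NS(Y_{10+2i})$ then follows by the fibre-by-fibre argument of Example \ref{ex: I47I2}:
\begin{itemize}
\item an $I_4$ on which $T$ acts as translation becomes an $I_2$;
\item the $I_2$ on which $T$ is trivial becomes an $I_4$ after blowing up the two fixed nodes;
\item the remaining $I_2$'s become $I_1$'s and the $I_1$'s become $I_2$'s.
\end{itemize}
So $Y$ carries a fibration with the same configuration and torsion $MW$, and a genericity/discriminant comparison identifies the lattices. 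Alternatively, since the generic member of the $\mathcal{N}$-family specializes to the $\mathcal{L}$-subfamily, where $X\simeq Y$, the isometry of $NS$ persists on the whole irreducible family.

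For the second item, I impose the maximum number of gluings in $\mathcal{L}$. Choosing $\alpha,\beta$ so that $p_+$ becomes $c(t-t_1)^4(t-t_2)^2(t-t_3)^2$ should glue four $I_2$ into an $I_8$ and the other four into two further $I_4$'s. In turn, $p_-$ acquires the mirrored roots, producing $I_4+2I_2$ or, after tuning, the configuration $I_8+3I_4$. I expect the main obstacle to be this step: exhibiting explicit $\alpha,\beta$ (up to the scaling and $t\mapsto\lambda t$ symmetries) that realize exactly $I_8+3I_4$. The Euler number check $8+12=24$ is consistent with such a configuration existing. I would try to solve the polynomial identity for $p_+$ directly in the $7$ coefficients of $\alpha$ and $\beta$.

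Once $I_8+3I_4$ is realized, the invariants follow from the trivial lattice. It is $U\oplus A_7\oplus A_3^3$, of rank $18$ and discriminant $8\cdot 4^3$, so $\rho=20$. A $4$-torsion section, whose double is $T$, should be read off from the equation, giving $MW=\Z/4\Z\times\Z/2\Z$. Then $|d(NS)|=8\cdot 64/64=8\cdot 4=32$ divided appropriately, which yields $\det T_{X_{20}}=32$. Checking the discriminant form then gives $T_{X_{20}}\simeq\langle 8\rangle\oplus\langle 4\rangle$, and $X_{20}\simeq Y_{20}$ holds because $X_{20}\in\mathcal{L}$.
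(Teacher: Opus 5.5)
Your route is the paper's. Inside $\mathcal{L}$ you force multiple roots of $p_+=\alpha^2(t^2)+2t\beta(t^2)$, so that each double root glues two $I_2$ into an $I_4$ and, through $p_-(t)=p_+(-t)$, two $I_1$ into an $I_2$, while keeping $X\simeq Y$. In $\mathcal{N}$ the paper realizes the same gluings with explicit classes $V_j,W_j\in T_X$, where $V_j^2=-2$ and $W_j^2=-4$. It also exhibits the half-classes $(W_j+N_a+N_b)/2$ and, at $i=4$, extra divisible classes giving new torsion sections $P,Q$ with $Q\boxplus Q=T$; this is what your ``suitably glued with $N$'' has to mean.

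For $NS(X_{10+2i})\simeq NS(Y_{10+2i})$, only your second justification works; matching fibre types and torsion do not by themselves identify the lattices. It works because each gluing leaves the formal expression $T=2F+S-\hat N$ unchanged. Hence the specialized involution acts on cohomology exactly as on $X$, and $NS(Y)$ depends only on the marked lattice data. The isometry that holds on the $\mathcal{L}$-subfamily therefore holds on the whole family.

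The genuine gap is in the second item. The step you call the main obstacle, tuning $\alpha,\beta$ to obtain exactly $I_8+3I_4$, cannot succeed. The Euler numbers add up to $8+3\cdot 4=20\neq 24$, so no elliptic K3 surface has these singular fibres; your Euler check is an arithmetic slip. The fibre list in the statement is a misprint for $I_8+3I_4+2I_2$. That is what the paper obtains from a root of multiplicity $4$ plus two double roots of $p_+$, and your own choice $p_+=c(t-t_1)^4(t-t_2)^2(t-t_3)^2$ already gives it with no tuning: $I_8$ over $t_1$, $I_4$ over $t_2,t_3,-t_1$, and $I_2$ over $-t_2,-t_3$.

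The slip propagates into your invariants. A trivial lattice $U\oplus A_7\oplus A_3^{\oplus 3}$ of rank $18$ with finite Mordell--Weil group would force $\rho=18$, not $20$. Also, ``$8\cdot 64/64=8\cdot 4=32$'' is not a valid computation. With the correct fibres the trivial lattice is $U\oplus A_7\oplus A_3^{\oplus 3}\oplus A_1^{\oplus 2}$, of rank $20$ and $|\det|=2^{11}$. So $\rho=20$, and $|MW|=8$ gives $|d(NS(X_{20}))|=2^{11}/2^6=32=\det(\langle 8\rangle\oplus\langle 4\rangle)$.

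Even with this fixed, ``a $4$-torsion section should be read off from the equation'' and ``checking the discriminant form'' remain unproved. The paper settles both on the lattice side. It uses the $2$- and $4$-divisible classes already present in $NS(X_{18})$, and it computes $T_{X_{20}}$ as the orthogonal complement in $T_X$ of the added classes $V_i,W_i$. The result agrees with Shimada--Zhang's list of extremal elliptic K3 surfaces.
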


	\subsection{Proof of Theorem \ref{theorem: action selfmap}}\label{subset: proof of theorem}
	\subsubsection{Strategy of the proof}
Let $X\in\mathcal{L}$ be a K3 surface with N\'eron--Severi group isometric to $U\oplus N$.  Then $\gamma$ and $\nu$ act on $(T_X)_{\Q}$. 

Let $X_{2j}$ be a certain specialization of $X$ inside the family $\mathcal{L}$ such that $\rho(X_{2j})=2j\geq 10$. To be consistent with the previous notation we denote $X$ the surface $X_{10}$ and we observe that $X_{2j}$ is a non trivial specialization of $X(=X_{10})$ if  $2j>10$. 
Then $X_{2j}$ admits a van Geemen--Sarti involution which is a specialization of the one on $X$. If $2j>10$, the specialization is obtained by considering a subspace $\Theta_{2j}\subset T_X$ with $rk(\Theta_{2j})=2j-10$ and assuming that $T_{X_{2j}}$ is the orthogonal complement to $\Theta_{2j}$ in $T_X$, while $NS(X_{2j})$ is an overlattice of finite index (possibly $1$) of $NS(X)\oplus \Theta_{2j}$. 

Since the specializations are considered in $\mathcal{L}$ and the involution is obtained specializing the involution $\sigma$ on $X$, $\gamma$ and $\nu$ preserve $(T_{X_{2j}})_{\Q}$, i.e. $\gamma(T_{X_{2j}})=T_{X_{2j}}$ and $\nu(T_{X_{2j}})=T_{X_{2j}}$ (where $T_{X_{2j}}$ is considered as subspace of $T_X$). This equivalently implies that $\gamma(\Theta_{2j})=\Theta_{2j}$ and $\nu(\Theta_{2j})=\Theta_{2j}$.

To completely describe the action of $\gamma$ and $\nu$ on $T_X$ we use the following strategy: we consider specializations $X_{2j}$ of $X$ in $\mathcal{L}$ for which the action of the quotient by van Geemen--Sarti involution can be geometrically described (as in Example \ref{rem: first specialization and action of complex multiplication}). Therefore, we are able to describe the action induced by $\gamma$ on $\Theta_{2j}$ by geometric consideration (Section \ref{subsec: the action of gamma}) whereas $\nu$ is determined by properties of the bilinear forms on it (Section \ref{subsec: the action of nu}). This allows to describe abstractly the action of $\gamma$ and $\nu$ on $\Theta_{2j}\subset T_X$. 
Hence, we look for specializations of $X$ to K3 surfaces $X_{2j}$ such that: $X_{2j}\simeq Y_{2j}\in\mathcal{L}$; $2j$ is as big as possible; we are able to describe the action of $\gamma$ (resp. $\nu$) on the subspace $\Theta_{2j}\subset T_X$. The assumption that $2j$ is as big as possible is equivalent to the condition that the Picard number $2j$ of the K3 surface $X_{2j}$ is 20, i.e. $j=10$. So, we want to specialize $X$ to a surface $X_{20}$. 

Since $\rho(X_{20})=20$, the transcendental lattice $T_{X_{20}}$ has rank 2, and it is positive definite. In this way we determine completely the action of $\gamma$ on the negative part of $(T_{X})_{\Q}$. 
Moreover, $\gamma$ preserves $T_{X_{20}}$, so the positive definite subspace $T_{X_{20}}\subset T_{X}$ is preserved by $\gamma$.

Under the previous assumptions, we are also able to describe the action of $\nu$ on the full transcendental lattice, by apply a technical result on bilinear forms, see Lemmas \ref{lemma: CM} and \ref{lemma CM2}.

In Proposition \ref{prop: isom quotients rank 12} and Example \ref{rem: first specialization and action of complex multiplication} we showed that one can glue two fibers of type $I_2$ and two fibers of type $I_1$ obtaining a K3 surfaces $X_{12}$ in a subfamily of $\mathcal{L}$. In particular $X_{12}$ is such that $X_{12}\simeq Y_{12}$ and $(NS(X_{12}))_{\Q}\simeq NS(X)\oplus \left(\langle -4\rangle\oplus \langle -2\rangle\right)_{\Q}$. We want to specialize $X$ as much as possible, by iterating the previous gluing of fibers.

\subsubsection{Construction of the basis of $(T_X)_{\mathbb{Q}}$}\label{subsubsec: basis}

In Proposition \ref{prop: isom quotients rank 12} we showed how to specializes $X\in\mathcal{L}$ in order to a construct a subfamily of $\mathcal{L}$, whose generic element has Picard number 12: it suffices to require that the discriminant of the elliptic fibration on $X$ given in \eqref{eq: Weierstrass siom quotient} acquires multiple roots. By \eqref{eq: Weierstrass siom quotient}, the discriminant locus of the fibration $\mathcal{E}$ on $X$ is
$$\frac{1}{2}\left(\alpha^2(t^2)+2t\beta(t^2)\right)^2\left(\alpha^2(t^2)-2t\beta(t^2)\right)$$ and  
we saw in proof of Proposition \ref{prop: isom quotients rank 12} that imposing a double root either to $\alpha^2(t^2)+2t\beta(t^2)$ or to $\alpha^2(t^2)-2t\beta(t^2)$, is equivalent to require that two fibers of type $I_2$ glue to a fiber of type $I_4$ and two fibers of type $I_1$ glue a fiber of type $I_2$.    
By Remark \ref{rem: gluing 2 I2} and Example \ref{ex: I47I2}, to require that two fibers of type $I_2$ glues to one of type $I_4$ and two fibers of type $I_1$ glues to one of type $I_2$, is equivalent to add to $NS(X)$ two classes, which span $\langle-4\rangle\oplus \langle -2\rangle$, i.e with the notation on the previous subsection, to chose $\Theta_{12}$ to be $\langle-4\rangle\oplus \langle -2\rangle$. Iterating this process one obtains the following.
\begin{itemize}
	\item If $\left(\alpha^2(t^2)+2t\beta(t^2)\right)$ has one double root, then the associated K3 surface $X_{12}$ has Picard number $2j=12$, the singular fibers of the fibration are $I_4+7I_2+6I_1$ and $\Theta_{12}=\langle -4\rangle\oplus \langle -2\rangle$;
	\item If $\left(\alpha^2(t^2)+2t\beta(t^2)\right)$ has two double roots, then the associated K3 surface $X_{14}$ has Picard number $2j=14$, the singular fibers of the fibration are $2I_4+6I_2+4I_1$  and $\Theta_{14}=\left(\langle -4\rangle\oplus \langle -2\rangle\right)^{\oplus 2}$;
	\item If $\left(\alpha^2(t^2)+2t\beta(t^2)\right)$ has three double roots, then the associated K3 surface $X_{16}$ has Picard number $2j=16$, the singular fibers of the fibration are $3I_4+5I_2+2I_1$  and $\Theta_{16}=\left(\langle -4\rangle\oplus \langle -2\rangle\right)^{\oplus 3}$;
	\item If $\left(\alpha^2(t^2)+2t\beta(t^2)\right)$ has four double roots, then the associated K3 surface $X_{18}$ has Picard number $2j=18$, the singular fibers of the fibration are $4I_4+4I_2$  and $\Theta_{18}=\left(\langle -4\rangle\oplus \langle -2\rangle\right)^{\oplus 4}$;
	\item If $\left(\alpha^2(t^2)+2t\beta(t^2)\right)$ has two double roots and a root with multiplicity 4, then the associated K3 surface $X_{20}$ has Picard number $2j=20$ and the singular fibers of the fibration are $I_8+3I_4+2I_2$. The fiber of type $I_8$ is obtained by gluing two fibers of type $I_4$ and one of the fiber of type $I_4$ is obtained by gluing two fibers of type $I_2$. By Lemma \ref{lemma: gluing An-1}, this implies that $\Theta_{20}=\left(\langle -4\rangle\oplus \langle -2\rangle\right)^{\oplus 4}\oplus \langle -8\rangle\oplus \langle -4\rangle$.
\end{itemize} 
The dimension of the subfamilies of $\mathcal{L}$ corresponding to the previous specializations decreases by one at each multiple root, indeed it is $5-j$, i.e. 4,3,2,1,0 respectively. Notice that $\Theta_{12}\subsetneq \Theta_{14} \subsetneq \Theta_{16}\subsetneq \Theta_{18}\subsetneq \Theta_{20}$.

After all the previous specializations, one obtains a rigid K3 surface $X_{20}$ with an elliptic fibration $\mathcal{E}_{20}$ with singular fibers $I_8+3I_4+2I_2$. It has Mordell--Weil group $MW(\mathcal{E}_{20})=\Z/4\Z\times \Z/2\Z$ and  transcendental lattice  $T_{X_{20}}\simeq \langle 8\rangle\oplus \langle 4\rangle$, see e.g. \cite{SZ}. By construction $(T_X)_{\Q}\simeq(\Theta_{20}\oplus T_{X_{20}})_{\Q}\simeq (\Gamma)_{\Q}$.
\subsubsection{The action of $\gamma$}\label{subsec: the action of gamma}

Let $X\in\mathcal{L}$ be a K3 surface with N\'eron--Severi group isometric to $U\oplus N$. Then $\gamma$ acts on $(T_X)_{\Q}\simeq (\Gamma)_{\Q}$. 

Let $X_{2j}$ be the specializations considered in subsection \ref{subsubsec: basis}. By construction $X_{2j}$ admits a van Geemen--Sarti involution which is a specialization of the one on $X$ and we saw in subsection \ref{subsubsec: basis} that  $\gamma(\Theta_{2j})=\Theta_{2j}$ and $\Theta_{2j}\subset NS(X_{2j})$.
Therefore, we are able to describe the action induced by $\gamma$ on $\Theta_{2j}$ by geometric considerations. 

In the first specialization, the Picard number is 12, the lattice $\Theta_{12}\simeq \langle -4\rangle\oplus \langle -2\rangle$ and the singular fibers are $7I_2+6I_1$. The class $\langle -4\rangle$ is due to the gluing of two fibers of type $I_2$ and the class $\langle -2\rangle$ is due to the gluing of two fibers of type $I_1$. The quotient by $\sigma$ (i.e. the action of $\gamma$) switches these two ``new" fibers as explained in Example \ref{ex: I47I2}, and so the map $\gamma$ acts as $\left[\begin{array}{cc}0&1\\2&0\end{array}\right]$ on $\Theta_{12}\simeq \langle -4\rangle\oplus \langle -2\rangle$. Iterating three more times the same process, one obtains $X_{18}\in\mathcal{L}$, whose Picard number is 18, for which the lattice $\Theta_{18}$ is $\left(\langle -4\rangle\oplus \langle -2\rangle\right)^{\oplus 4}$ and the singular fibers are $4I_4+4I_2$. The action of $\gamma$ is $\left[\begin{array}{cc}0&1\\2&0\end{array}\right]^{\oplus 4}$. The last step is similar, but one glues two fibers of type $I_4$ to one of type $I_8$ and two fibers of type $I_2$ to one of type $I_4$, which corresponds to introducing two vectors with self intersection $-8$ and $-4$ respectively. Reasoning as in the previous case one shows that also in this case $\gamma$ acts as $\left[\begin{array}{cc}0&1\\2&0\end{array}\right]$. So we proved that $\gamma$ acts as $\left[\begin{array}{cc}0&1\\2&0\end{array}\right]^{\oplus 5}$ on the negative part of $(T_X)_{\Q}$, i.e. of the lattice $\Theta_{20}=\left(\langle -4\rangle\oplus \langle -2\rangle\right)^{\oplus 4}\oplus\langle -8\rangle\oplus \langle -4\rangle$.

The proof is based on the action of $\gamma$ on $\Theta_{20}$ in the case $\Theta_{20}$ is completely contained in the N\'eron--Severi group. But the action of $\gamma$ on $\Theta_{20}$ is purely lattice theoretic and does not depend on the rank of the intersection of $\Theta_{20}$ with the N\'eron--Severi group or with the transcendental lattice. So we described the action of $\gamma$ even in the case $\Theta_{20}$ is completely contained in the transcendental lattice, i.e. in the generic case in which $\rho(X)=10$. For this reason the result remains true for every $X\in\mathcal{L}$ such that  $U\oplus N$ is embedded in $NS(X)$ coherently with Table  \ref{eq: U+Nin Lambda}, by observing that $\left(\Theta_{20}\oplus \langle 8\rangle\oplus \langle 4\rangle\right)_{\Q}$ is orthogonal to $\varphi({U\oplus N})$ in $\Lambda_{K3}$.

\subsubsection{The action of $\nu$}\label{subsec: the action of nu}
We are now interested in determining the action of the complex multiplication $\nu$. This action will be determined step by step, considering before the action on $T_{X_{20}}$, when the rank of the transcendental lattice is as small as possible, and then extending the results when the rank of the transcendental lattice increases.  First we consider easy results on the action of certain endomorphism on 2-dimensional $\mathbb{Q}$-vector space, that will be useful in the following.
	\begin{lemma}\label{lemma: CM} Let $V_x^k$ be a 2-dimensional $\mathbb{Q}$-vector space endowed with a bilinear form $q$. Let $\{v_1,v_2\}$ be a basis of $V_x^k$ such that $q$ is represented by $\left[\begin{array}{cc}kx&0\\0&x\end{array}\right]$ with $x\in \mathbb{Z}$, $x\neq 0$ and $k\in\mathbb{N}_{>0}$. Let $A$ be an endomorphism of $V_x$ such that: \begin{itemize}\item $A^2=-kId$\item the isotropic vectors of $V_x\otimes \mathbb{C}$ are eigenvectors for the $\mathbb{C}$ linear extension of $A$.\end{itemize}
		Then the eigenvalues of $A$ are $\pm\sqrt{-k}$ and $A$ is represented with respect to the basis $\{v_1,v_2\}$ by $\pm M_k$ where $M_k=\left[\begin{array}{rr}0&1\\-k&0\end{array}\right]$.\end{lemma}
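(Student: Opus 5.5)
The plan is to compute directly in the basis $\{v_1,v_2\}$. Write the complexification of $q$ as $q(av_1+bv_2)=kx a^2+x b^2$. The isotropic vectors of $V_x^k\otimes\mathbb{C}$ are the solutions of $ka^2+b^2=0$, i.e. $b=\pm\sqrt{-k}\,a$. This gives exactly two isotropic lines, spanned by $w_\pm:=v_1\pm\sqrt{-k}\,v_2$. They are distinct because $k>0$ and $x\neq0$.

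First I identify the eigenvalues. Since $A^2=-k\,Id$, every eigenvalue $\lambda$ of the $\mathbb{C}$-linear extension satisfies $\lambda^2=-k$, so $\lambda=\pm\sqrt{-k}$. By hypothesis $w_+$ and $w_-$ are eigenvectors. Because $A$ is defined over $\mathbb{Q}$ and $w_-=\overline{w_+}$, the eigenvalue on $w_-$ is the complex conjugate of the one on $w_+$. As $\sqrt{-k}\notin\mathbb{R}$, the two eigenvalues are $\sqrt{-k}$ and $-\sqrt{-k}$ in some order. In particular $A\neq\pm\sqrt{-k}\,Id$, which is consistent with $A$ being rational; I will note that if $k$ were a perfect square with a negative sign this would need care, but here $-k<0$ so conjugation does the job.

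Then I recover the matrix. Write $Av_1=\alpha v_1+\beta v_2$ and $Av_2=\gamma v_1+\delta v_2$ with rational entries. Suppose $Aw_+=\epsilon\sqrt{-k}\,w_+$ with $\epsilon=\pm1$. Expanding gives
$(\alpha+\sqrt{-k}\gamma)v_1+(\beta+\sqrt{-k}\delta)v_2=\epsilon\sqrt{-k}\,v_1-\epsilon k\,v_2$.
Comparing rational and irrational parts (valid since $\sqrt{-k}\notin\mathbb{Q}$, indeed not real) yields $\alpha=0$, $\gamma=\epsilon$, $\delta=0$ and $\beta=-\epsilon k$. So $A=\epsilon M_k$, and $M_k^2=-k\,Id$ is consistent with the hypothesis.

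The only delicate point is justifying the separation of real and imaginary parts, together with the claim that the isotropic lines are exactly $w_\pm$. Both are immediate once the diagonal form of $q$ is used.
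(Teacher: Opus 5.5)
Your proposal is correct and follows essentially the paper's route: identify the isotropic lines spanned by $v_1\pm\sqrt{-k}\,v_2$, observe that they are eigenvectors with opposite eigenvalues $\pm\sqrt{-k}$, and translate back to the basis $\{v_1,v_2\}$ to get $\pm M_k$. The differences are minor. The paper gets the eigenvalues from the trace and determinant of the rational matrix and then inverts the change of basis explicitly. You instead use complex conjugation and compare real and imaginary parts, which also shows that a single isotropic eigenvector already determines $A$.
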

	\proof The endomorphism $A$ is represented by a matrix with $\Q$ coefficients and so $A^2=-2Id$ implies that $Tr(A)=0$ and $\det(A)=k$, i.e. the matrix which represents $A$ is of the form $\left[\begin{array}{rr}a&b\\c&-a\end{array}\right]$ with $a^2+bc=-k$. The eigenvalues are hence forced to be $\pm\sqrt{-k}$. The isotropic vectors of $V_x^k\otimes \mathbb{C}$ for the bilinear form $q$ are $\alpha(v_1\pm\sqrt{-k}v_2)$, $\alpha\in \mathbb{C}-\{0\}$. Denoted $w_1=v_1+\sqrt{-k}v_2$, $w_2=v_1-\sqrt{-k}v_2$, one obtains $v_1=(w_1+w_2)/2$ and $v_2=\sqrt{-k}(w_2-w_1)/2k$ and there are two possibilities either  $A(w_1)=\sqrt{-k}w_1$ and $A(w_2)=-\sqrt{-k}w_2$ or $A(w_1)=-\sqrt{-k}w_1$ and $A(w_2)=\sqrt{-k}w_2$. In the first case one obtains $A(v_1)=-kv_2$ and $A(v_2)=v_1$, in the latter $A(v_1)=kv_2$ and $A(v_2)=-v_1$.  \endproof
	
	\begin{lemma}\label{lemma CM2}
		Let $W$ be a $\Q$-vector space with a bilinear form $b$, $A$ an endomorphism of $W$. Let $b_{\mathbb{C}}$ and $A_{\mathbb{C}}$ be the $\mathbb{C}$-linear extension of $A$ and $b$ to $W\otimes \mathbb{C}$. If  $A^2=-kId$ with $k\in\mathbb{N}_{>0}$ and  $b_{\mathbb{C}}(A_{\mathbb{C}}v,w)=b_{\mathbb{C}}(v,\overline{A_{\mathbb{C}}}w)$ for every $v,w\in W\otimes \mathbb{C}$, then the eigenvectors for $A_{\mathbb{C}}$ are isotropic vectors with respect to $b_{\mathbb{C}}$. 
	\end{lemma}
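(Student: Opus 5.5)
The plan is to show that an eigenvector of $A_{\mathbb{C}}$ pairs to zero with itself, using the hypothesis that $A$ is self-adjoint up to complex conjugation. First I record the eigenvalues. Since $A$ has rational entries and $A^2=-k\,Id$, every eigenvalue $\lambda$ of $A_{\mathbb{C}}$ satisfies $\lambda^2=-k$. Hence $\lambda=\pm\sqrt{-k}$, which is purely imaginary and nonzero, and so $\overline{\lambda}=-\lambda$.

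Next I have to fix how $\overline{A_{\mathbb{C}}}$ is read. The natural reading is the operator $w\mapsto\overline{A_{\mathbb{C}}(\overline{w})}$, which equals $A_{\mathbb{C}}$ because $A$ is defined over $\mathbb{Q}$. That reading would make the hypothesis say plainly that $A$ is self-adjoint. The reading that makes the lemma work, and the one I will use, is that $A_{\mathbb{C}}$ acts on an eigenvector $w$ with eigenvalue $\lambda$ via the conjugate scalar, so that $\overline{A_{\mathbb{C}}}w=\overline{\lambda}\,w$. This is exactly the situation of complex multiplication on a Hodge structure.

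With this interpretation I take an eigenvector $v$ with $A_{\mathbb{C}}v=\lambda v$ and put $w=v$ in the hypothesis. The left side is $b_{\mathbb{C}}(A_{\mathbb{C}}v,v)=\lambda\, b_{\mathbb{C}}(v,v)$. The right side is $b_{\mathbb{C}}(v,\overline{A_{\mathbb{C}}}v)=\overline{\lambda}\, b_{\mathbb{C}}(v,v)=-\lambda\, b_{\mathbb{C}}(v,v)$. Subtracting gives $2\lambda\, b_{\mathbb{C}}(v,v)=0$. Since $\lambda\neq 0$, it follows that $b_{\mathbb{C}}(v,v)=0$, so $v$ is isotropic.

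As a cross-check I would also derive the result from the rational data alone. Suppose $A$ is anti-self-adjoint, that is, $b(Av,w)=-b(v,Aw)$, which is the rational form of the condition above. Then for an eigenvector $v$ we get $\lambda b(v,v)=-\lambda b(v,v)$, and the same conclusion follows.

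The main obstacle is only the interpretation of $\overline{A_{\mathbb{C}}}$. Once it is read as acting through the conjugate eigenvalue, or equivalently once $A$ is anti-self-adjoint, the proof reduces to the one-line computation above. Combined with Lemma \ref{lemma: CM}, this result then pins down $\nu$ on each 2-dimensional block.
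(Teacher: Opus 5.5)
Your proof is correct and is essentially the paper's argument. The paper reads the hypothesis the same way: it writes $\overline{A_{\mathbb{C}}}A_{\mathbb{C}}v=kv$, i.e.\ $\overline{A_{\mathbb{C}}}$ acts on an eigenvector through the conjugate eigenvalue, which is equivalent to $A$ being anti-self-adjoint. It then computes $b_{\mathbb{C}}(A_{\mathbb{C}}v,A_{\mathbb{C}}v)$ once via the adjunction, getting $k\,b_{\mathbb{C}}(v,v)$, and once directly, getting $-k\,b_{\mathbb{C}}(v,v)$; your substitution $w=v$ in place of $w=A_{\mathbb{C}}v$ is only a cosmetic variation of that same computation.
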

	\proof Since $A^2=-k Id$ the eigenvalues of $A_{\mathbb{C}}$ are $\pm\sqrt{-k}$. Let $v\in W\otimes \mathbb{C}$ be an eigenvector of the eigenvalue $\pm \sqrt{-k}$, then  $\overline{A_{\mathbb{C}}}A_{\mathbb{C}}(v)=kv$ and so $$b_{\mathbb{C}}(A_{\mathbb{C}}v,A_{\mathbb{C}})=b_{\mathbb{C}}(v,\overline{A_{\mathbb{C}}}A_{\mathbb{C}})=b_{\mathbb{C}}(v,kv)=kb_{\mathbb{C}}(v,v).$$ On the other hand $$b_{\mathbb{C}}(A_{\mathbb{C}},A_{\mathbb{C}})=b_{\mathbb{C}}(\pm\sqrt{-k}v,\pm\sqrt{-k}v)=-kb_{\mathbb{C}}(v,v).$$
	This implies $b_{\mathbb{C}}(v,v)=0$, since $k\neq 0$. \endproof
	\begin{rem}\label{rem: the lemma CM2 applies to CM}{\rm Let us assume that $W=T_X$ is the transcendental lattice of a K3 surface with complex multiplication and $A\in End(T_X,\Q)$ an endomorphism of its Hodge structure. Then $A$ satisfies the hypothesis of the previous lemma, see e.g. \cite[Section 2.1]{vGSc}.}\end{rem}
	
	{\it Complex multiplication on $X_{20}\in\mathcal{L}$.} By Subsection \ref{subsubsec: basis} there exists a K3 surface $X_{20}\in\mathcal{L}$ whose transcendental lattice is $\langle 8\rangle\oplus \langle 4\rangle$. So $(T_{20})_{\Q}$ is a 2-dimensional vector space endowed with a bilinear form $q$ represented by $\left[\begin{array}{cc}kx&0\\0&x\end{array}\right]$  with $k=2$ and $x=4$. Since $X_{20}\in\mathcal{L}$, $X_{20}$ admits complex multiplication by $\sqrt{-2}$. The complex multiplication acts on the period $\omega_{20}$ of $X_{20}$ as the multiplication by $\pm {\sqrt{-2}}$ and on its conjugate $\overline{\omega_{20}}$ as the multiplication by $\mp {\sqrt{-2}}$. We can assume that $\omega_{20}$ is the eigenvectors relative the eigenvalues $\sqrt{-2}$. Observe that $\{\omega_{20},\overline{\omega_{20}}\}$ is a basis of $T_{20}\otimes \mathbb{C}$, given by isotropic vectors. So the complex multiplication acts as the endomorphism $A$ in Lemma \ref{lemma: CM}, once one has identified $V_4^2$ with $(T_{20})_{\Q}$. Therefore $A$ (and so the complex multiplication $\nu$) is represented by $\pm M_2$ on the basis of $T_{X_{20}}$. 
	
	{\it Complex multiplication on $X_{18}\in\mathcal{L}$.}  By Subsection \ref{subsubsec: basis}, there exists a one dimensional family of K3 surfaces $X_{18}\in\mathcal{L}$ whose transcendental lattice $T_{18}$ has a basis over $\mathbb{Q}$ on which the bilinear form is $\langle 8\rangle\oplus \langle 4\rangle\oplus \langle -8\rangle\oplus \langle -4\rangle\simeq T_{20}\oplus \langle -8\rangle\oplus \langle -4\rangle $. So $(T_{18})_{\Q}$ is the direct sum  $\left((T_{20})_{\Q}\right)\oplus P$ where both $(T_{20})_{\Q}$ and $P$ are a 2-dimensional $\mathbb{Q}$-vector space and $P$ is endowed with a bilinear form represented on a chosen basis by the matrix $\left[\begin{array}{cc}kx&0\\0&x\end{array}\right]$ with $k=2$ and $x=-4$, i.e. $P$ can be identified with a copy of $V_{-4}^2$ with the notation of Lemma \ref{lemma: CM}. 
	
	The K3 surface $X_{18}\in\mathcal{L}$ admits complex multiplication which specializes to the one described before on $X_{20}$. In particular, choosing a basis of $(T_{18})_{\Q}$ such that the bilinear form is $(T_{20})_{\Q}\oplus P\simeq V_4^2\oplus V_{-4}^2$, the complex multiplication is represented, with respect to this basis, by a $4\times 4$ block matrix $N$ of the form $N:=\left[\begin{array}{c|c}M&0\\ \hline 0&L\end{array}\right]$, where $M$ is the matrix determined above (which describes the complex multiplication on $X_{20}$) and $L$ is a $2\times 2$ matrix which will be determined now.  
	
	Observe that $N^2=-2Id$, since it represents the complex multiplication by $\sqrt{-2}$ and hence $L^2=-2Id$.
	
	The space $\left(T_{20}\oplus P\right)\otimes \C$ can be splitted into the sum of its eigenspaces $$\left(\left(T_{20}\oplus P\right)\otimes \C\right)_{\sqrt{-2}}\oplus \left(\left(T_{20}\oplus P\right)\otimes \C\right)_{-\sqrt{-2}},$$
	where $\left(\left(T_{20}\oplus P\right)\otimes \C\right)_{\pm\sqrt{-2}}=\left(\left(T_{20}\right)\otimes \C\right)_{\pm\sqrt{-2}}\oplus \left(P\otimes \C\right)_{\pm\sqrt{-2}}$.
	The period $\omega_{18}$ is neither totally contained in $\left(T_{20}\otimes \C\right)_{\sqrt{-2}}$ nor totally contained in $\left(P\otimes \C\right)_{\sqrt{-2}}$. Indeed,  if it were  contained in $\left(T_{20}\otimes \C\right)_{\sqrt{-2}}$, $\left(P\otimes \C\right)_{\sqrt{-2}}$ would be contained in $H^{1,1}$. So there exists $\alpha_1\in \left(T_{20}\otimes \C\right)_{\sqrt{-2}}$ and $\alpha_2\in (P\otimes \C)_{\sqrt{-2}}$ such that $\omega_{18}=\alpha_1+\alpha_2$ and $N$ restricts to $L$ on $P\otimes \C$. By Remark \ref{rem: the lemma CM2 applies to CM}, we can apply Lemma \ref{lemma CM2} to $\alpha_2$, which is an eigenvalue of $L$. Therefore $\alpha_2$ is isotropic and we can apply Lemma \ref{lemma: CM} to the endomorphism of $P$ represented by the matrix $L$, proving that $L=\left[\begin{array}{rr}0&1\\-2&0\end{array}\right]$.

	{\it Complex multiplication on $X_{2j}\in\mathcal{L}$, $2j\geq 10$.} We iterate the previous procedure: by the subsection \ref{subsubsec: basis} there exists $(5-j)$-dimensional family of K3 surfaces  $X_{2j}\in\mathcal{L}$ on which one can identify $\left(T_{2(j-1)}\right)_{\Q}$ with $\left(T_{2j}\right)_{\Q}\oplus V_{x}^2$ for a certain $x$ (which is either $-4$ or $-2$ depending on $j$). The complex multiplication on $\left(T_{2(j-1)}\right)_{\Q}$ specializes to $\left(T_{2j}\right)_{\Q}$ and in particular the action of the complex multiplication on  $\left(T_{2(j-1)}\right)_{\Q}$ is represented by a block matrix whose first block coincides with the matrix describing the complex multiplication on $\left(T_{2j}\right)_{\Q}$ and the last one represents an endomorphism which satisfy the assumption on $A$ in Lemma \ref{lemma: CM}. Hence, the complex multiplication on $X_{2j}\in\mathcal{L}$ acts on $\left(T_{2j}\right)_{\Q}$ as $\left[\begin{array}{rr}0&1\\-2&0\end{array}\right]^{\oplus j-5}$.

\subsection{Proof of Corollary \ref{cor: NSX=NSY condition}}\label{subsect: proof of the corollary assuming the theorem}
By Theorem \ref{theorem: action selfmap}, if $X\simeq Y$ the map $\pi_*:(T_X)_{\Q}\simeq (\Gamma)_{\Q}\ra (T_Y)_{\Q}\simeq (\Gamma)_{\Q}$ is $\gamma:(\Gamma)_{\Q}\ra (\Gamma)_{\Q}$. For every $X\in\mathcal{N}$ such that $\rho(X)=10$, $T_X\simeq T_Y$ (even if $X\not\simeq Y$) and then $(T_X)_{\Q}\simeq(\Gamma)_{\Q} \simeq (T_Y)_{\Q}$. So, identifying both $(T_X)_{\Q}$ and $(T_Y)_{\Q}$ with $(\Gamma)_{\Q}$, one obtains that $\pi_*$ coincides with $\gamma$. 
Let us consider a specialization of $X$. As in the proof of the previous theorem, the specialization $Z_\Theta$ of $X$ depends on a negative definite sublattice $\Theta$ of $T_X$ such that the orthogonal complement of $\Theta$ in $T_X$ is isometric to $T_{Z_{\Theta}}$ and the N\'eron--Severi of $Z_{\Theta}$ is a finite index overlattice of $NS(X)\oplus \Theta$. Notice that $\Theta$ is negative definite and its rank is at most 10 and that $T_{Z_{\Theta}}=\Theta^{\perp_{T_X}}$. Observe that $T_{Z_{\Theta}}\subset T_{X}$ and hence $\gamma$ maps  $T_{Z_{\Theta}}$ to a subset of $T_X$.

Now suppose that $\gamma(T_{Z_{\Theta}})=T_{Z_{\Theta}}$. Since $\gamma(T_X)=T_X$, this is equivalent to require that $\gamma(\Theta)=\Theta$. Since $\gamma$ acts on the negative part of an overlattice of $T_X$ as a matrix with five blocks of the form $\left[\begin{array}{rr}0&1\\2&0\end{array}\right]$, if $\gamma(\Theta)=\Theta$, then the rank of $\Theta$ is even, so that we can assume it has rank $2b-10$, $b=6,\ldots, 10$, which implies that $\rho(Z_\Theta)=2b$ (and $Z_\Theta$ can be identified with $Z_{2b}$ in the statement).
 
To compute the transcendental lattice of $\widetilde{Z_\Theta/\sigma_Z}$ (desingularization of the quotient of $X_\Theta$ for the van Geemen--Sarti involution induced by the one of $X$), we consider $\pi_*(T_{Z_\Theta})$, which is equivalent to apply the linear extension of $\gamma$ to $\Gamma$, so that  
$T_{Z_{\Theta}}=\gamma(T_{Z_{\Theta}})=T_{\widetilde{Z_\Theta/\sigma_Z}}$. The lattice $T_{Z_{\Theta}}$ has rank $12-\rk(\Theta)\geq 10$ and so it admits a unique primitive embedding in  $\Lambda_{K3}$. Moreover, its orthogonal is uniquely determined by the discriminant form of $T_{Z_\Theta}$. It follows that $NS(Z_{\Theta})\simeq NS(\widetilde{Z_\Theta/\sigma})$.\endproof

\subsection{Specializations of $X$ which glue fibers and preserve the van Geemen--Sarti involution}
It remains to construct lattice theoretically the described specializations, showing that they exist even if $X\in\mathcal{N}\setminus\mathcal{L}$ and that also in this case they can be obtained in a way which is compatible with the action of the van Geemen--Sarti involution. To do that we first consider a lattice theoretic lemma.
\begin{lemma}\label{lemma: gluing An-1}
The lattice $A_{2n-1}$ is an overlattice of index $n$ of $A_{n-1}\oplus A_{n-1}\oplus \langle -2n\rangle$.\end{lemma}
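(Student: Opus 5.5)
Realize the claimed overlattice explicitly inside $A_{2n-1}$. Write $A_{2n-1}$ as the root lattice with simple roots $r_1,\dots,r_{2n-1}$, where $r_i^2=-2$, $r_ir_{i+1}=1$ and all other products vanish. Geometrically this is the lattice of non-trivial components of an $I_{2n}$ fiber, obtained by gluing two $I_n$ fibers, so that is the picture to mirror.

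\textbf{Step 1: the sublattice.} The roots $r_1,\dots,r_{n-1}$ span a copy of $A_{n-1}$, and $r_{n+1},\dots,r_{2n-1}$ span a second copy orthogonal to the first. Since $r_n$ is the only simple root meeting both chains, the orthogonal complement of $A_{n-1}\oplus A_{n-1}$ in $A_{2n-1}$ has rank one. I take its generator to be
$$v=\sum_{i=1}^{n-1} i\,r_i+n\,r_n+\sum_{i=1}^{n-1} i\,r_{2n-i},$$
with coefficients rising $1,\dots,n$ and then falling. Checking $v r_j=0$ for $j\neq n$ is the standard computation $-2i+(i-1)+(i+1)=0$. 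At the endpoint $j=n-1$ one gets $-2(n-1)+(n-2)+n=0$, and symmetrically on the other side.

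\textbf{Step 2: the norm.} For $v^2$ I would use $v^2=v\cdot(n r_n)$, valid because $v$ is orthogonal to every $r_i$ with $i\neq n$. This gives
$$v^2=n\bigl(-2n+(n-1)+(n-1)\bigr)=-2n.$$
Hence $L:=\langle r_1,\dots,r_{n-1}\rangle\oplus\langle r_{n+1},\dots,r_{2n-1}\rangle\oplus\mathbb{Z}v\simeq A_{n-1}\oplus A_{n-1}\oplus\langle -2n\rangle$ has full rank in $A_{2n-1}$.

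\textbf{Step 3: the index.} Compare discriminants: $|d(L)|=n\cdot n\cdot 2n=2n^3$ and $|d(A_{2n-1})|=2n$. Therefore $[A_{2n-1}:L]^2=n^2$, and the index is $n$. As a cross-check I would also exhibit the quotient explicitly: it is generated by $r_n$. Indeed, $n r_n=v-(\text{element of } A_{n-1}\oplus A_{n-1})$, and $r_n$ has order exactly $n$ modulo $L$, because the $r_n$-coefficient of any element of $L$ is a multiple of $n$.

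\textbf{Main obstacle.} The only delicate point is making sure the primitive generator of the orthogonal complement really has norm $-2n$, and is not a proper multiple of a primitive vector. Primitivity of $v$ is immediate, since its $r_1$-coefficient is $1$. The overlattice condition itself also follows from the fact that $A_{2n-1}$ is integral and contains $L$ with finite index.
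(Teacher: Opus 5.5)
Your proof is correct, but it runs in the opposite direction from the paper's.

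\textbf{The paper's route.} The paper starts from $A_{n-1}\oplus A_{n-1}\oplus\langle -2n\rangle$ and builds the overlattice by gluing. It takes the discriminant generators $w^{(j)}=\frac1n\sum_i i\,a_i^{(j)}$ and checks that $\varepsilon=w^{(1)}+w^{(2)}+v/n$ spans an isotropic subgroup of order $n$; in fact $\varepsilon^2=-2$. It then identifies the resulting even overlattice as $A_{2n-1}$, on the grounds that it is a root lattice with discriminant form $\bigl[-\frac{2n-1}{2n}\bigr]$.

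\textbf{Your route.} You work inside $A_{2n-1}$, exhibit the sublattice explicitly, compute $v^2=-2n$, and read off the index from $|d(L)|/|d(A_{2n-1})|=n^2$. This is essentially the ``reverse construction'' in the Remark after the lemma. The difference is in the choice of the two $A_{n-1}$'s. The paper uses the interleaved copies $b_{2i-1}+b_{2i}$ and $b_{2i}+b_{2i+1}$, with $v=\sum_i b_{2i-1}$. You use two consecutive blocks of simple roots. In the model $A_{2n-1}\subset\Z^{2n}$, your $v$ is $(1,\dots,1,-1,\dots,-1)$ and the paper's is $(1,-1,\dots,1,-1)$, so the two choices differ by a coordinate permutation, i.e.\ a Weyl group element.

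\textbf{What each approach buys.} Your argument is more elementary and self-contained, because it skips the recognition step. In the paper, that step needs either a classification of root lattices by discriminant form, or the extra check that $\varepsilon$ meets $a^{(1)}_{n-1}$ and $a^{(2)}_{n-1}$ once each, so that the roots form an $A_{2n-1}$ chain. The paper's direction, in turn, produces the explicit glue vector $\varepsilon$. That vector is what gets used later: when a class $W$ of square $-2n$ is added to the N\'eron--Severi group, one needs to know which fractional class becomes algebraic, e.g.\ $(W_1+N_5+N_6)/2$.

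\textbf{On your geometric comment.} The interleaved embedding, not your consecutive one, is the one that matches the geometry. When two $I_n$ fibers merge into an $I_{2n}$ fiber, the old components become sums of adjacent new components, as in $N_1=A+B$ and $N_2=B+C$. They do not become a consecutive block of irreducible components of the new fiber. This does not affect your proof, since the lemma is purely lattice-theoretic.
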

\proof Let us denote $a_i^{(j)}$ a basis of the $j$-th copy of $A_{n-1}$ such that $a_i^{(j)}a_h^{(j)}=1$ if and only if $|i-h|=1$. Then $w^{(j)}:=\left(\sum_{i=1}^{n-1}ia_i^{(j)}\right)/n$ is a generator of the discriminant group $A_{A_{n-1}}$ and $\left(w^{(j)}\right)^2=-(n-1)/n$. Let $v$ be the generator of the orthogonal complement of $A_{n-1}\oplus A_{n-1}$ in $A_{n-1}\oplus A_{n-1}\oplus \langle -2n\rangle$. The subspace $\langle w^{(1)}+w^{(2)}+v/n\rangle$ is an isotropic subspace of the discriminant group of $A_{n-1}\oplus A_{n-1}\oplus \langle -2n\rangle$ and hence it corresponds to an overlattice of index $n$ of $A_{n-1}\oplus A_{n-1}\oplus \langle -2n\rangle$, which in particular is a lattice with discriminant $-2n$. The generators of this overlattice are all the generators of $A_{n-1}\oplus A_{n-1}\oplus \langle -2n\rangle$ and $\varepsilon:=w^{(1)}+w^{(2)}+v/n$. Since $\varepsilon^2=-2$, one obtains a ``new" root in the overlattice and in particular the overlattice is a root lattice with discriminant group $\Z/2n\Z$ and discriminant form $\left[\frac{-(2n-1)}{2n}\right]$, hence it is $A_{2n-1}$.\endproof

\begin{rem}\label{rem: gluing An-1 opposite construction}{\rm One can reverse the previous construction: let $b_i$ be a basis of $A_{2n-1}$ such that $b_ib_j=1$ if $|i-j|=1$; then $v=\sum_{i=1}^nb_{2i-1}$ is such that $v^2=-2n$ and the orthogonal complement of $v$ is isometric to $A_{n-1}\oplus A_{n-1}$. The basis of the first copy of $A_{n-1}$ is $a_i^{(1)}=b_{2i-1}+b_{2i}$, $i=1,\ldots, n-1$, the one of the second is $a_i^{(2)}=b_{2i}+b_{2i+1}$, $i=1,\ldots, n-1$. Of course reversing (over $\mathbb{Q}$) the change of basis one can obtain the basis $b_i$ in terms of $a_i^{(j)}$ and $v$.}\end{rem}

\begin{rem}{\rm  If one compares the construction in the proof Lemma \ref{lemma: gluing An-1} for $n=2$ with the one of the Remark \ref{rem: gluing 2 I2}, one realizes that the geometric gluing of two $I_2$-fibers corresponds to construct $A_3$ as overlattice of $A_1\oplus A_1\oplus \langle -4\rangle$.}\end{rem}

As observed in the previous Remark the lattice theoretic result in Lemma \ref{lemma: gluing An-1} corresponds to specializations of elliptic fibrations obtained by gluing singular fibers. For our purpose we require that the gluing of the fibers is coherent with the action of the van Geemen--Sarti involution and in Remark \ref{rem: gluing 2 I2} we already observed that the gluing of two fibers of type $I_2$ is compatible with the van Geemen--Sarti involution, because the formal equation of the order 2 section remains the same in terms of the generators of the Nikulin lattice.

In the next example we discuss the gluing of two fibers of type $I_4$ with non trivial intersection with a 2-torsion section, since this is the case of interest in the following.
\begin{example}{\rm  Let us consider an elliptic fibration with two fibers of type $I_4$ and let us denote $C_i^{j}$, $i=1,2,3$, $j=1,2$ the non trivial components on the $j$-th fiber of type $I_4$ (with the assumption that $C_i^{(j)}C_h^{(j)}=1$ if and only if $|i-h|=1$). Let $T$ be a 2-torsion section meeting the component $C_2^{(j)}$ of each of the $I_4$-fibers. Then the equation of $T$ is $2F+S-\frac{1}{2}\left(\sum_{j=1}^2\left(C_1^{(j)}+2C_2^{(j)}+C_3^{(j)}\right)+Z\right)$ where $Z$ is a linear integer combination of components of other reducible fibers. The non trivial components of the two $I_4$-fibers generate $A_3\oplus A_3$. In order to glue the $I_4$-fibers to an $I_8$-fiber, one adds a class $W$ with self intersection $-8$. Accordingly to Remark \ref{rem: gluing An-1 opposite construction}, the non trivial components $D_i$ of the $I_8$ fiber can be chosen to be 
\begin{eqnarray}\label{eq: I8 components D and 2I4}
\begin{array}{l}D_1=(3C_1^{(1)}+2C_2^{(1)}+C_3^{(1)}-3C_1^{(2)}-2C_2^{(2)}-C_3^{(2)}+W)/4\\
D_2=(C_1^{(1)}-2C_2^{(1)}-C_3^{(1)}+3C_1^{(2)}+2C_2^{(2)}+C_3^{(2)}-W)/4\\
D_3=(-C_1^{(1)}+2C_2^{(1)}+C_3^{(1)}+C_1^{(2)}-2C_2^{(2)}-C_3^{(2)}+W)/4\\
D_4=(C_1^{(1)}+2C_2^{(1)}-3C_3^{(1)}-C_1^{(2)}+2C_2^{(2)}+C_3^{(2)}-W)/4\\
D_5=(-C_1^{(1)}-2C_2^{(1)}+C_3^{(1)}+C_1^{(2)}+2C_2^{(2)}-C_3^{(2)}+W)/4\\
D_6=(C_1^{(1)}+2C_2^{(1)}+3C_3^{(1)}-C_1^{(2)}-2C_2^{(2)}+C_3^{(2)}-W)/4\\
D_7=(-C_1^{(1)}-2C_2^{(1)}-3C_3^{(1)}+C_1^{(2)}+2C_2^{(2)}+3C_3^{(2)}+W)/4.\end{array}\end{eqnarray}

The formal equation of the 2-torsion section meeting non trivially a fiber of type $I_8$ is $T=2F+S-\frac{1}{2}(D_1+2D_2+3D_3+4D_4+3D_5+2D_6+D_7+Z)$. Substituting in this equation the expression of the $D_i$'s in \eqref{eq: I8 components D and 2I4} one obtains the same expression as before, i.e. $2F+S-\frac{1}{2}\left(\sum_{j=1}^2\left(C_1^{(j)}+2C_2^{(j)}+C_3^{(j)}\right)+Z\right)$ (assuming that the specialization modifies only the two $I_4$-fibers and not the curves in $Z$). This guarantees that the action of the van Geemen--Sarti involution on the specialized surface is cohomologically equal to the one of the original surface.}\end{example}
\subsection{The five specializations: proof of Proposition \ref{prop: the five specializations}}\label{subsec: the five spec}
To prove Proposition \ref{prop: the five specializations} one explicitly exhibits the five specializations, giving the classes of the transcendental lattice of $X$ which becomes algebraic at each step.

As already said, in the first four steps one adds a class $W_i$ of self intersection $-4$ and a class $V_i$ of self intersection $-2$ to the N\'eron--Severi group of the previous surface. The class $W_i$ can be glued with other classes in the N\'eron--everi group and this allows to construct an $I_4$-fiber starting from two $I_2$-fibers. 
In the fourth specialization a new phenomenon appears: the torsion part of the Mordell--Weil group changes since the elliptic fibration acquires a 4-torsion section (whose square is the original 2-torsion section associated to the van Geemen--Sarti involution) and an independent 2-torsion section.
The last specialization consists in adding a class $W_5$ with self intersection $-8$ and a class $V_5$ with self intersection $-4$. Both these classes have gluing relation with their orthogonal complement in the N\'eron--Severi group, and this allows to construct a fiber of type $I_8$ (by gluing two $I_4$'s) and one of type $I_4$ (by gluing two $I_2$'s). To conclude the proof it suffices to give explicitly the classes $V_i$ and $W_i$. A schematic synthesis of the construction is shown in Figure \ref{Figure speicalization}. 

\begin{figure}%[ht!] %!t
\includegraphics{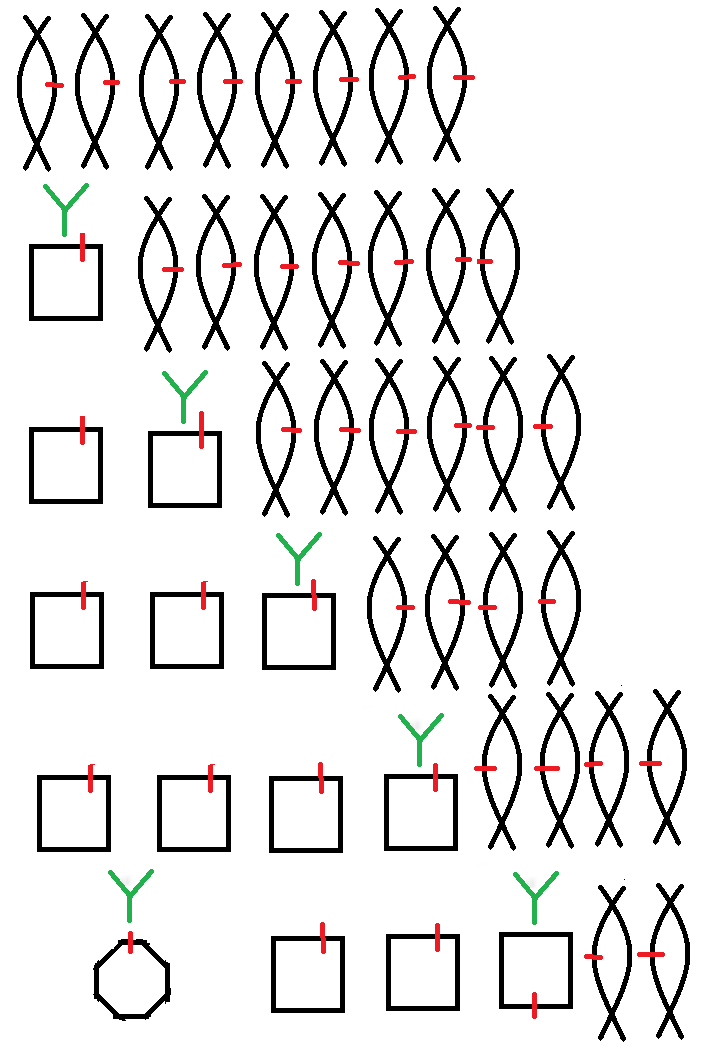}
\caption{Specializations: the 2-torsion section $T$ is red, and we assume that the trivial component of the $I_4$ and $I_8$ fibers is the horizontal lower component and the trivial component of the $I_2$-fibers is the left component.}
\label{Figure speicalization}
\end{figure}
The notation will be the following: the classes of irreducible $I_2$-fibers will be denoted $B_i^{(j)}$, $i=1,0$, and $j$ distinguishes among the different fibers of the same type.
Similarly, the classes of irreducible components of $I_4$-fibers will be denoted $C_i^{(j)}$, $i=0,1,2,3$ and the ones of $I_8$-fibers will be denoted $D_i^{(j)}$, $i=0,\ldots, 7$.

On the original surface $X$, we have $B_1^{(j)}=N_j$, where $N_j$ are the classes of the Nikulin lattice, and $B_0^{(j)}=F-N_j$.

The torsions section $T$ is 
$$T=2F+S-\hat{N}=F+2S-\frac{1}{2}\left(\sum_{i=1}^8 N_i\right).$$

The transcendental lattice is isometric to $N\oplus U\oplus U$ and generated by $t_i$, $i=1,\ldots ,12$ with the notation used in Section \ref{subsec: cohomological action}. 
\subsubsection{The first specialization: $X_{12}$}
We add to $NS(X)$ the following two classes:
$$V_1:=-t_1-t_2-t_3-t_4-t_5-t_6-t_7+2t_8-t_9\mbox{ and }W_1:=-2t_1-2t_3-2t_4-t_5-t_6+2t_8-2t_9-2t_{10}.$$
The class $(W_1+N_5+N_6)/2$ is contained in $H^2(X,\Z)$ and a multiple of it is contained in $NS(X_{12})$, then $(W_1+N_5+N_6)/2\in NS(X_{12})$. The transcendental lattice $T_{X_{12}}$ is $\langle V_1, W_1\rangle^{\perp_{T_{X}}}$ and it is generated by $\langle t_1-t_3, t_2+t_3,t_3-t_4,t_4+t_7,t_5, t_6, t_7-t_9, -t_8-t_9+t_{10}, t_{11}, t_{12}\rangle$. 

Now the singular fibers are  $7I_2+I_4+6I_1$ and the components of the reducible ones are:
$$B_1^{(j)}=N_{j},\mbox{ for }j=1,\ldots,4,\ B_1^{(j)}=N_{j+2},\mbox{ for } j=5,6,\ B_1^{(7)}=V_1,$$ 
$$C_1^{(1)}=(-W_1+N_5-N_6)/2,\ C_2^{(1)}=(W_1+N_5+N_6)/2,\ C_3^{(1)}=(-W_1-N_5+N_6)/2.$$ This implies that the classes $N_5$ and $N_6$ splits, indeed they are: $N_5=C_1^{(1)}+C_2^{(1)}$, $N_6=C_2^{(1)}+C_3^{(1)}$.
The 2-torsion section is trivial on the seventh fiber of type $I_2$ (i.e. on $B_1^{(7)}=V_1$), and non trivial on all the other reducible fibers. 

The formal equation of the class of the torsion section remains the same in terms of the classes $N_i$'s, i.e.
$$T=2F_X+S_X-\hat{N}=2F_X+S_X-\frac{1}{2}\left(\sum_{j=1}^6(B_1^{(j)})+C_1^{(1)}+2C_2^{(1)}+C_3^{(1)}\right).$$
This is the situation described in the Example \ref{ex: I47I2}.
\subsubsection{The second specialization: $X_{14}$}
We add to $NS(X_{12})$ the following two classes:
$$V_2:=t_5\mbox{ and } W_2:=t_1+t_2+t_3+t_4+t_5+t_6-2t_8+2t_{10}.$$

The specialization is similar to the previous one: $(W_2+N_7+N_8)/2\in NS(X_{14})$ and the transcendental lattice $T_{X_{14}}$ is $\langle t_1-t_3, t_2+t_3,t_3-t_4, t_6, t_7-t_9, -4t_4-t_5-4t_7+2t_8+2t_9-2t_{10}, t_{11}, t_{12}\rangle$.

\subsubsection{The third specialization: $X_{16}$} We add to $NS(X_{14})$ the following two classes:
$$V_3:=t_{11}-t_{12}\mbox{ and } W_3:=t_1-t_3.$$

The situation is similar to the previous specialization: $(W_3+N_1+N_3)/2\in NS(X_{16})$ and the transcendental lattice $T_{X_{16}}$ is 
$\langle t_1+t_3-2t_4, t_2+t_4, t_6, t_7-t_9, -4t_4-t_5-4t_7+2t_8+2t_9-2t_{10},-t_8-t_9+t_{10}, t_{11}+t_{12}\rangle$. 
\subsubsection{The fourth specialization}

We add to $NS(X_{16})$ the following two classes:
$$V_4:=-t_6+t_7-t_9-t_{11}-t_{12}\mbox{ and } W_4:=t_2+t_4.$$

Similar to the previous cases, $NS(X_{18})$ is still an overlattice of $NS_{X_{16}}\oplus \langle V_4\rangle\oplus \langle W_4\rangle$, but in this case the index of the overlattice is different: it is $2^2\cdot 4$. Indeed the following three classes are contained in $NS(X_{18})$: 
$$(W_4+N_2+N_4)/2;\ (W_3+W_4+V_1+V_2+V_3+V_4)/2;\  (-W_1+2N_6-W_2+2N_7-W_3+2N_3-W_4+2N_2+2V_3+2V_4)/4.$$ The second and the third divisible classes are due to ``new" 2-torsion and 4-torsion sections, denoted $P$ and $Q$ respectively. 
The transcendental lattice $T_{X_{18}}$ is 
$\langle t_1+t_2+t_3-t_4, t_2-3t_4-t_5-4t_7+2t_8+2t_9-2t_{10}+5t_{11}+5t_{12},t_6+t_7-t_9,t_7-t_9-t_{11}-t_{12}\rangle$.

The K3 surface $X_{18}$ admits an elliptic fibration with $4I_2+4I_4$ as singular fibers and whose Mordell--Weil group is $\Z/4\Z\times\Z/2\Z$. The components of the reducible fibers are$$B_1^{(i)}=V_i, i=1,2,3,4$$ 
$$C_1^{(1)}=(-W_1+N_5-N_6)/2,\ C_2^{(1)}=(W_1+N_5+N_6)/2,\ C_3^{(1)}=(-W_1-N_5+N_6)/2,$$ 
$$C_1^{(2)}=(-W_2+N_7-N_8)/2,\ C_2^{(2)}=(W_2+N_7+N_8)/2,\ C_3^{(2)}=(-W_2-N_7+N_8)/2,$$
$$C_1^{(3)}=(-W_3+N_1-N_3)/2,\ C_2^{(3)}=(W_3+N_1+N_3)/2,\ C_3^{(3)}=(-W_3-N_1+N_3)/2,$$
$$C_1^{(4)}=(-W_4+N_2-N_4)/2,\ C_2^{(3)}=(W_4+N_2+N_4)/2,\ C_3^{(4)}=(-W_4-N_2+N_4)/2.$$
and the expression of the classes of the ``new" torsion sections are
$$P=2F+S-\frac{1}{2}\left(\sum_{i=3}^4\left(C_1^{(i)}+2C_2^{(i)}+C_3^{(i)}\right)+\sum_{j=1}^4B_1^{(j)}\right)=2F+S-\left(W_3+W_4+V_1+V_2+V_3+V_4\right)/2;$$
$$\begin{array}{ll}Q&=2F+S-\frac{1}{4}\left(C_1^{(1)}+2C_2^{(1)}+3C_3^{(1)}+\sum_{i=2}^4\left(3C_1^{(i)}+2C_2^{(i)}+C_3^{(i)}\right)+2\sum_{j=3}^4B_1^{j}\right)=\\&=2F+S-\left(-W_1+2N_6-W_2+2N_7-W_3+2N_3-W_4+2N_2+2V_3+2V_4\right)/4.\end{array}$$

Denoted $\boxplus$ the operation in the Mordell--Weil group, $Q\boxplus Q$ is a 2-torsion section: it is the 2-torsion section specializing the unique 2-torsion section on the surface $X_{16}$ and so we denote it $T$, to be consisten with the previous notation. It is trivial on all the $I_2$-fibers, and non trivial on all $I_4$-fibers. Its formal equation remains the same in terms of the classes $N_i$'s.

\subsubsection{The fifth specialization}
We add to $NS(X_{18})$ the following two classes:
$$V_5:=t_6+t_7-t_9\mbox{ and } W_5:=t_1+t_2+t_3-t_4.$$

The self intersection of these classes is $V_5^2=-4$ and $W_5^2=-8$. Similarly to the previous cases, $NS(X_{20})$ is an overlattice of $NS_{X_{18}}\oplus \langle V_5\rangle\oplus \langle W_5\rangle$ but the index is now $2\cdot 4$, obtained adding the two classes 
$$\frac{V_3+V_4+V_5}{2}\mbox{ and }\frac{N_1+N_2}{2}-\frac{W_3+W_4+W_5}{4}.$$ The first divisible class corresponds to the gluing of two fibers of type $I_2$ to a fiber of type $I_4$, the second one to the gluing of two fibers of type $I_4$ to a fiber of type $I_8$.
 
The transcendental lattice $T_{X_{20}}$ is 
$\langle3t_1+t_2+3t_3+  3t_4+  2t_5  +2t_6 -4t_7 + 4t_8  +4t_9,
t_6,-t_7+t_9+2t_{11}+2t_{12}\rangle$.

The specializations of the 4-torsion and 2-torsion sections of $X_{18}$ give the two sections $Q$ and $P$ on $X_{20}$. The section $Q\boxplus Q$ is the 2-torsion section $T$, whose formal equation remains the same in terms of the classes $N_i$'s.
In particular, the section $T$ is non trivial on the $I_8$-fiber and on the first and second $I_4$-fibers. It is trivial on the third $I_4$-fiber and on the $I_2$-fibers.

The quotient by the translation by the 2-torsion section $T$ exchanges the $I_8$-fiber with the third $I_4$-fiber, and the first and second $I_4$-fibers with the $I_2$-fibers. The K3 surface $X_{20}$ is such that $X_{20}\simeq Y_{20}$ and, as already explained in Section \ref{subset: proof of theorem}, the quotient by the symplectic involution maps $V_i$ to $ W_i$ and $W_i$ to $2V_i$, for $i=1,2,3,4,5$.

\begin{rem}{\rm As already observed, the fifth specializations considered, can be chosen in such a way that $X_{n}\in\mathcal{L}$. But, as in Remark \ref{rem: first specialization real multiplication}, one can also consider a K3 surface $X'$ with N\'eron--Severi group $U\oplus N$ which is contained in the family $\mathcal{L}'$, described in Remark \ref{rem: real multiplication}, and specializes this K3 surface. The first specialization can be done in such a way that $X_{12}'$ is still contained in $\mathcal{L}'$, as shown in Remark \ref{rem: first specialization real multiplication}. Similarly, one can consider the second and the third specializations, obtaining (after these three specializations) a K3 surface $X_{16}'\in\mathcal{L}'$ with an elliptic fibration with singular fibers $2III+3I_4+3I_2$. This is possible since gluing together two fibers of type $I_2$ corresponds to gluing two fibers of type $I_1$ in the family $\mathcal{L}'$ (as explained in Remark \ref{rem: first specialization real multiplication}). The fourth specialization described in the previous section, cannot be applied here, since it would imply that one is gluing 2 fibers of type $I_1$ and one has no more fibers of this type on $X_{16}'$. Nevertheless, one can apply the fifth specialization, gluing two fibers of type $I_4$ to one of type $I_8$ and two fibers of type $I_2$ to one of type $I_4$. So one can perform 4 specializations, obtaining a K3 surface in $\mathcal{L}'$ such that its Picard number is 18, and the specialized elliptic fibration has $2III+I_8+2I_4+I_2$ as singular fibers.}
\end{rem}
\subsection{Back to Theorem \ref{theorem: action selfmap}}
Now we can give explicitly a basis of $T_{X}$ as the one mentioned in the theorem: according to the specializations considered above it is
$$W_1,V_1,W_2,V_2,W_3,V_3,W_4,V_4,W_5,V_5, 3t_1+t_2+3t_3+  3t_4+  2t_5  +2t_6 -4t_7 + 4t_8  +4t_9,
t_6,-t_7+t_9+2t_{11}+2t_{12}$$ where the last two classes are the generators of $T_{X_{20}}$. This gives the specialization described in Proposition \ref{prop: the five specializations}, which can be viewed as specializations both in the family $\mathcal{N}$ and in the family $\mathcal{L}$.

\section{Order 3}\label{sec: order 3}
The order 3 symplectic automorphisms are intensively studied in the last decades, see e.g. \cite{Nik}, \cite{GS1}, \cite{GP}.
In particular, as the symplectic involutions act in a standard way on $\Lambda_{K3}$ permuting two copies of $E_8$, the order 3 symplectic automorphisms permute three copies of $E_6$, see \cite[Theorem A]{GP}. If $\sigma$ is an order 3 symplectic automorphism on a K3 surface $X$, then $(NS(X)^{\sigma^*})^{\perp}$ is a lattice isometric to $K_{12}$ (it is the analogue of $E_8(2)$ in the case of the involution). By \cite[Proposition 3.1]{GP}, this lattice can be described as overlattice of index 3 of a rank 12 negative definite lattice whose intersection form is $\widetilde{K_{12}}=\left[\begin{array}{c|c}E_6(2)&E_6\\\hline E_6&E_6(2)\end{array}\right].$

Let $X$ be a K3 surface admitting an order 3 symplectic automorphism $\sigma$, then $X/\sigma$ has 6 singularities of type $A_2$. Let $Y$ be the minimal resolution of $X/\sigma$, $M_i^{(j)}$, $i=1,2$, $j=1,\ldots, 6$ the curves resolving its six singularities, $\hat{M}:=\sum_{j=1}^6\left(M_1^{(j)}+2M_2^{(j)}\right)/3$ and  $\hat{M'}:=\sum_{j=1}^6\left(2M_1^{(j)}+M_2^{(j)}\right)/3.$ The lattice $M$, generated by $\{M_i^{(j)}, \hat{M}\}$, is the minimal primitive sublattice of $NS(Y)$ which contains the curves $M_i^{(j)}$. It is the analogue of the Nikulin lattice defined in the case of the involution and characterizes the K3 surfaces which are quotient of K3 surfaces by an order 3 automorphism, i.e. a K3 surface is the desingularization of the quotient of a  K3 surface by an order 3 automorphism if and only if $M$ is primitively embedded in its N\'eron--Severi group.

In analogy with the case of order 2, if a K3 surface admits an elliptic fibration with a 3-torsion section, the translation by this section is a symplectic automorphism of order 3 and the following holds.
\begin{proposition}\label{prop: the family UMpolarized}{\rm (See \cite[Sections 3.1.1 and 4.1]{GS1})}
	Let $X$ be a K3 surface admitting an elliptic fibration $\mathcal{E}:X\ra \mathbb{P}^1_{(t:s)}$ with a 3-torsion section. Let $\sigma$ be the translation by the 3-torsion section
	Then
	\begin{itemize} \item the Weierstrass equation of $\mathcal{E}$ is 
		\begin{eqnarray*}%\label{equation 3 torsion}
			y^2=x^3+A(\tau)x+B(\tau),~~\tau\in\PP_1,\mbox{ with }
			A(\tau)=\frac{\textstyle 6d(\tau)c(\tau)+d(\tau)^4}{\textstyle
				3},~ B(\tau)=\frac{\textstyle 27c(\tau)^2-d(\tau)^6}{\textstyle
				3^3},
		\end{eqnarray*}
		$\deg d(\tau)=2$ and $\deg c(\tau)=6$; a section of order
		three is
		$\tau\mapsto \left(\frac{d(\tau)^2}{3},\frac{d(\tau)^3}{3}+c(\tau)\right)$ and, generically, the singular fibers of  $\mathcal{E}$ are $6I_3+6I_1$ and $MW(\mathcal{E})=\Z/3\Z$;
		\item the automorphism $\sigma$ permutes the three components and the three singular points of the fibers $I_3$ and acts on the fiber of type $I_1$ preserving the  singular point;
		\item the surface $Y$, minimal resolution of $X/\sigma$, admits an elliptic fibration $\mathcal{F}$ which generically has  $6I_3+6I_1$ as singular fibers and $MW(\mathcal{F})\simeq \Z/3\Z$;
		\item $NS(X)$ primitively contains $U\oplus M$ and generically $NS(X)\simeq U\oplus M$;
		\item $NS(Y)$ primitively contains $U\oplus M$ and generically $NS(Y)\simeq U\oplus M\simeq NS(X)$.
	\end{itemize}
\end{proposition}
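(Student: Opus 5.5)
This proposition is a citation of \cite{GS1}, so the plan is to reconstruct each item from the order 3 analogue of Section \ref{sec: family N and vGS inv}.

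\emph{Weierstrass form.} Put the 3-torsion section at a flex of the generic fiber. A point $P$ on $y^2=x^3+Ax+B$ has order 3 exactly when the tangent line at $P$ meets the cubic at $P$ with multiplicity 3. We parametrize such pairs by writing the flex tangent as $y=\frac{d}{\cdots}x+\cdots$. Substituting $P=(d^2/3,\,d^3/3+c)$ and imposing the triple contact gives $A=(6dc+d^4)/3$ and $B=(27c^2-d^6)/27$. For $A$ and $B$ to have degrees $8$ and $12$ (so that the surface is K3), the weights force $\deg d=2$ and $\deg c=6$.

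\emph{Generic fibers.} Compute the discriminant $4A^3+27B^2$. It should factor as $c^3\cdot(\text{a sextic})$, up to constants. Hence there are six $I_3$ fibers over $c=0$ and six $I_1$ fibers over the remaining zeros, which is consistent with the Euler number $24$. Shioda--Tate then gives $\rho=2+12=14$, and generically $MW=\Z/3\Z$.

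\emph{Action of $\sigma$.} As in Proposition \ref{prop: the family UNpolarized}, the section meets every $I_3$ fiber in a non trivial component, because the height of a torsion section is $0$. Translation therefore rotates the three components of each $I_3$ cyclically, and with them the three nodes. On an $I_1$ fiber the smooth locus is $\C^*$, and translation fixes the node. So the fixed points of $\sigma$ are the nodes of the six $I_1$ fibers, which is consistent with the six $A_2$ points of $X/\sigma$.

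\emph{The quotient fibration $\mathcal{F}$.} The fibration descends to $Y$. Each $I_3$ fiber maps to an $I_1$ fiber. Each $I_1$ fiber, after its node is blown up as an $A_2$ singularity, becomes an $I_3$ fiber whose non trivial components are the curves $M_1^{(j)},M_2^{(j)}$. Over $k(\mathbb{P}^1)$ this is the 3-isogenous curve $\mathcal{E}/\langle P\rangle$, and the dual isogeny provides a 3-torsion section on $\mathcal{F}$.

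\emph{Lattices.} $NS(X)$ contains the trivial lattice $U\oplus A_2^{6}$ and the 3-torsion section. Writing the section as $2F+S-\hat M$, it is the glue element of $A_2^{6}$, and the resulting overlattice is $U\oplus M$. Primitivity follows because the torsion group is exactly $\Z/3$. For a generic member, $\rho=14$ forces $NS(X)\simeq U\oplus M$. The same argument applied to $\mathcal{F}$ gives $NS(Y)\simeq U\oplus M$.

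The main obstacle is the explicit discriminant factorisation, which is tedious but routine. If it resists direct computation, I would instead exhibit the generic fibers by specialising to a single explicit pair $(c,d)$.
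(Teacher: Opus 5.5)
The paper gives no proof of this proposition; it only cites \cite{GS1}. So I judge your sketch on its own. Your overall route is sound, but the step you call tedious but routine is stated wrongly. With $A=(6dc+d^4)/3$ and $B=(27c^2-d^6)/27$, the discriminant is \emph{not} $c^3$ times a sextic. In fact
\[
4A^3+27B^2=\tfrac{1}{27}\bigl(4d^3(d^3+6c)^3+(27c^2-d^6)^2\bigr)=\tfrac{1}{27}(d^3+3c)^3(5d^3+27c).
\]
At $c=0$ this equals $\tfrac{5}{27}d^{12}\neq 0$, so the fibres over $c=0$ are smooth. The six $I_3$ fibres lie over the zeros of $d^3+3c=3\,y(P)$, and this is forced. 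By the height formula, $P$ meets a non-identity component of each $I_3$ fibre, so in the Weierstrass model it passes through the node, where $y=0$. Conversely, $y(P)=0$ on a smooth fibre would give $P=-P$, which is impossible for a point of order $3$. At these zeros $A=-d^4/3$ and $B=2d^6/27$ are nonzero unless $d$ and $d^3+3c$ share a root. So for simple roots the fibres are multiplicative, of type $I_3$ rather than $IV$. The six $I_1$ fibres lie over $5d^3+27c=0$. The configuration $6I_3+6I_1$ is therefore right, but the locus you predicted is wrong, and this computation has to be redone as above.

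Second, ``primitivity follows because the torsion group is exactly $\Z/3$'' only handles the generic member. The identity $\overline{\mathrm{Triv}}/\mathrm{Triv}\simeq MW_{\rm tors}$ with $\mathrm{Triv}=U\oplus A_2^{6}$ gives $\overline{\mathrm{Triv}}=U\oplus M$ only for the configuration $6I_3+6I_1$ with torsion exactly $\Z/3\Z$. The statement, however, asserts that $U\oplus M$ is primitive in $NS(X)$ and in $NS(Y)$ for \emph{every} member. That includes members where fibres merge or torsion grows, such as the surfaces of Section~\ref{sec: order 3} with fibres $I_6+4I_3+I_2+4I_1$, or $3I_6+3I_2$ with $MW=\Z/6\Z\times\Z/2\Z$.

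For $X$ you can argue by specialization. The data $(d,c)$ form an irreducible family whose very general member has $NS\simeq U\oplus M$, which is primitive in $H^2$. Parallel transport, after a local base change giving a simultaneous resolution, keeps it primitive in $H^2$, hence in $NS$, of every member. For $Y$ it suffices that $M$ is primitive in $NS(Y)$, by Nikulin's description of quotients by order $3$ symplectic automorphisms. It is also orthogonal to the unimodular $U=\langle F_Y,S_Y\rangle$, since the image of the zero section avoids the $A_2$ points.

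Two smaller omissions:
\begin{enumerate}
\item That $\mathcal{E}/\langle P\rangle$ has a rational $3$-torsion \emph{point}, and not just a rational kernel of $\hat\phi$, needs the Weil pairing: $E[3]/\langle P\rangle\simeq\mu_3$, which is trivial over $\mathbb{C}(t)$.
\item Before applying Shioda--Tate to $\mathcal{F}$ you need $\rho(Y)=\rho(X)=14$. This holds because $(T_X)_{\Q}\simeq(T_Y)_{\Q}$.
\end{enumerate}
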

We will denote $\mathcal{M}$ the family of the $(U\oplus M)$-polarized K3 surfaces.
In the following we denote $X$ a generic member of $\mathcal{M}$; $F$ the class of the fiber of its elliptic fibration $\mathcal{E}$; $S$ the zero section; $M_i^{(j)}$, $i=0,1,2$ the irreducible components of the $j$-th fiber of type $I_3$; $T_1$ the 3-torsion section meeting the components $M_1^{(j)}$ and $T_2$ the 3-torsion section meeting the components of $M_2^{(j)}$.
We observe that $T_1=2F+S-\hat{M'}$, $T_2=2F+S-\hat{M}$ and that $S$ meets the component $M_0^{(j)}$ of the $j$-th $I_3$-fiber.

The action of $\sigma$ is the following: $\sigma(F)=F$, $$M_1^{(j)}\ra M_2^{(j)}\ra M_0^{(j)}=F-M_1^{(j)}-M_2^{(j)},\ \ \ S\ra T_1\ra T_2.$$

In the order 2 case, the identification of $E_8(2)$ in terms of classes of curves related with the elliptic fibration allows one to describe the cohomological action of the van Geemen--Sarti involution (see Section \ref{subsec: cohomological action} and \eqref{eq: diagE8(2)}). Analogously, in the order 3 case, to describe the cohomological action of the translation by a 3-torsion section, one needs to identify the lattice $K_{12}$. Since it is a specific overlattice of index 3 of the lattice $\widetilde{K_{12}}$, it suffices to describe the two copies of $E_6(2)$ which spans over $\Q$ the lattice $K_{12}$, which is also the orthogonal complement to the invariant sublattice $NS(X)^{\sigma}$ in $NS(X)$. Since $NS(X)^{\sigma}\simeq \langle F,S+T_1+T_2\rangle$, the two copies of $E_6(2)$  are:
{\small
\begin{align}\label{eq: diagE6(2)} \xymatrix{2F-\hat{M}\ar@{-}[r]&-F+M_1^{(1)}+M_2^{(1)}+M_2^{(2)}\ar@{-}[r]&M_2^{(3)}-M_2^{(2)}\ar@{-}[r]\ar@{-}[d]&M_2^{(4)}-M_2^{(3)}\ar@{-}[r]&M_2^{(5)}-M_2^{(4)}\\&&F-M_1^{(1)}-M_2^{(1)}+M_2^{(2)}\\ 	
-\hat{M}+\hat{M'}\ar@{-}[r]&M_2^{(1)}-M_1^{(2)}\ar@{-}[r]&-M_1^{(3)}+M_1^{(2)}\ar@{-}[r]\ar@{-}[d]&-M_1^{(4)}+M_1^{(3)}\ar@{-}[r]&-M_1^{(5)}+M_1^{(4)}\\&&2F-M_2^{(1)}-M_1^{(2)}} 	\end{align}}

The discriminant group of $NS(X)$ is generated by the following classes 
$$\eta_1:=\frac{\sum_{j=1}^3\left(M_1^{(j)}+2M_2^{(j)}\right)}{3},\ \ \eta_2:=\frac{\sum_{j=2}^4\left(M_1^{(j)}+2M_2^{(j)}\right)}{3}$$
$$\eta_3:=\frac{M_1^{(2)}+2M_2^{(2)}-M_1^{(3)}-2M_2^{(3)}}{3},\ \ \eta_4:=\frac{\sum_{j=1}^4\left(M_1^{(j)}+2M_2^{(j)}\right)+2M_1^{(5)}+M_2^{(5)}}{3}$$
The non trivial intersections among them are $\eta_1\eta_2=\frac{1}{3}$ and $\eta_3^2=\eta_4^2=\frac{2}{3}$.

The transcendental lattice of $X$ is isometric to $U\oplus U(3)\oplus A_2\oplus A_2$ and we denote $v_1,v_2, u_1,u_2, a_1,a_2,b_1,b_2$ the basis on which the intersection form is $U\oplus U(3)\oplus A_2\oplus A_2$.
As a consequence, the four classes $\eta_1+u_1/3$, $\eta_2+u_2/3$, $\eta_3+(a_1+2a_2)/3$, $\eta_4+(b_1+2b_2)/3$ are contained in $H^2(X,\Z)\simeq \Lambda_{K3}$.

The description of the lattice $\widetilde{K_{12}}$ provides an embedding $\varphi:U\oplus M\ra H^2(X,\Z)\simeq\Lambda_{K3}$, similarly to the one constructed in the order 2 case, in Table \ref{eq: U+Nin Lambda}. For example, the class $\left(e_3^{(1)}-e_3^{(2)}\right)+\left(e_3^{(1)}-e_3^{(3)}\right)+\left(e_3^{(1)}+e_3^{(2)}+e_3^{(3)}\right)=3e_i^{(1)}$ is: 3-divisible; it is the sum of two terms in $\widetilde{K_{12}}$; it is a class which is invariant for $\sigma$; it is contained in $T_X$. By \eqref{eq: diagE6(2)}, $\left(e_3^{(1)}-e_3^{(2)}\right)+\left(e_3^{(1)}-e_3^{(3)}\right)=M_2^{(3)}-M_2^{(2)}-M_1^{(3)}+M_1^{(2)}$, and by the description of the classes in $H^2(X,\Z)$ obtained gluing classes in $NS(X)$ with classes in $T_X$, we observe that $M_1^{(2)}-M_2^{(2)}-M_1^{(3)}+M_2^{(3)}+a_1+2a_2$ is 3-divisible, i.e. $e_3^{(1)}=\left(M_1^{(2)}-M_2^{(2)}-M_1^{(3)}+M_2^{(3)}+a_1+2a_2\right)/3$. By arguing in a similar way for all the $e_i^{j}$, one constructs the embedding $\varphi:U\oplus M\rightarrow \Lambda_{K3}$ such that the action of automorphism $\sigma$ induced by the 3-torsion section coincides with the ``standard" action of an order 3 symplectic automorphism (i.e. the permutation of three orthogonal copies of $E_6$ as described in \cite{GP}).

The following theorem describes specializations of $X\in\mathcal{M}$ to codimension 1 subfamilies. It is the analogue to theorems \ref{theorem: X rho 11} and \ref{theorem: Y rho 11}. It can be proved with the same strategy, which now has to be applied to the lattice $M$ instead of $N$ and to the maps $\pi^*$ and $\pi_*$ induced by the quotient map $\pi:X\ra X/\sigma$, where $\sigma$ is the translation by the 3-torsion section. These maps are explicitly described in \cite[Sections 3.3 and 3.6]{GP}.
 \begin{theorem}
	Let $X_{15}\in\mathcal{M}$ with Picard number 15.
	Then $NS(X_{15})$ is one of the following:
	\begin{itemize}
		\item $U\oplus M\oplus \langle -2d\rangle$;
		\item $\left(U\oplus M\oplus \langle -2d\rangle\right)'$ if $d\equiv 0\mod 3$, where $\left(U\oplus M\oplus \langle -2d\rangle\right)'$ is the unique overlattice of index 3 of $U\oplus M\oplus \langle -2d\rangle$ in which all the direct summands are primitively embedded.
	\end{itemize}
	Moreover, denoted $Y_{15}$ the minimal resolution of $X_{15}/\sigma$, where $\sigma$ is the automorphism of order 3 induced by  the 3-torsion section,
	\begin{itemize}
		\item $NS(X_{15})\simeq U\oplus M\oplus \langle -2d\rangle$ if and only if $NY(Y_{15})\simeq \left(U\oplus M\oplus \langle -6d\rangle\right)'$;
		\item $NS(X_{15})\simeq (U\oplus M\oplus \langle -2d\rangle)'$ (with $d\equiv 0\mod 3$) if and only if $NY(Y_{15})\simeq U\oplus M\oplus \langle -2d/3\rangle$.
\end{itemize}\end{theorem}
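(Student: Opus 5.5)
The plan mirrors the proofs of Theorem \ref{theorem: X rho 11} and Theorem \ref{theorem: Y rho 11}, with $N$ replaced by $M$ and $2$ by $3$. First I classify $NS(X_{15})$. Since $U\oplus M$ is primitive in $NS(X_{15})$ (Proposition \ref{prop: the family UMpolarized}), and $\rho(X_{15})=15=\rk(U\oplus M)+1$, the orthogonal complement of $U\oplus M$ in $NS(X_{15})$ is generated by one vector $V$. By the Hodge index theorem $V^2=-2d<0$. Hence $NS(X_{15})$ is an overlattice of $U\oplus M\oplus\langle -2d\rangle$ in which both summands are primitive.

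Such overlattices correspond to isotropic subgroups $H\subset A_M\oplus \Z_{2d}(-\frac{1}{2d})$ with $H\cap A_M=0=H\cap\langle\delta\rangle$. The group $A_M$ is $3$-elementary of length $4$: it is generated by the classes $\eta_1,\dots,\eta_4$ listed above, with $q$ values in $\frac{2}{3}\Z/2\Z$. So a gluing element must pair a nonzero $\mu\in A_M$ with an element of order $3$ in $\Z_{2d}$. Such an element exists only if $3\mid d$, and it has the form $\pm\frac{2d}{3}\cdot\frac{V}{2d}=\pm V/3$, with $q(V/3)=-2d/9$. Isotropy requires $q(\mu)\equiv 2d/9 \pmod{2\Z}$. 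I expect this to be always solvable, since $A_M$ has elements of every value $0,\frac23,\frac43$: $\eta_1$ has value $0$ and $\eta_3$, $\eta_4$ have value $\frac23$. Uniqueness then needs two facts: $O(U\oplus M)\to O(A_M)$ is surjective, and $O(A_M)$ acts transitively on nonzero elements of a fixed value. Both follow from Nikulin's results (\cite{NikInt}, Theorem 1.14.2), because $U\oplus M$ is indefinite with $\rk\ge\ell+2$. The index is $3$, since $H$ has order $3$. Existence of the lattice-polarized families, and uniqueness of the embedding in $\Lambda_{K3}$, come from \cite[Theorem 1.14.4]{NikInt}, since $\rk=15$ and $\ell\le 5$.

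Second, I relate $NS(X_{15})$ to $NS(Y_{15})$ using the explicit embedding $\varphi$ of $U\oplus M$ and the maps $\pi_*,\pi^*$ of \cite{GP}. In the split case I take $V$ inside the $U$-summand of $T_X$, say $V=v_1-dv_2$. Then $\pi_*(V)$ has square $3V^2=-6d$, because on the invariant part $\pi_*$ scales the form by $3$. As in Case 1 of Theorem \ref{theorem: Y rho 11}, one of the generators of $H^2(Y,\Z)$ obtained by gluing $\pi_*(T_X)$ with the curves $M_i^{(j)}$ of $Y$ becomes algebraic and $3$-divisible. This produces the index-$3$ overlattice $(U\oplus M\oplus\langle -6d\rangle)'$. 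Here $6d\equiv 0\pmod 3$ is automatically satisfied, which is consistent with the first step.

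In the glued case I take $V=3w+(\text{element of }M)$ with $w$ in the other summand. Then $\pi_*$ of the glued primitive class $(V\pm\mu)/3$ lands directly in $NS(Y_{15})$, and the $N$-part splits off orthogonally. This gives $U\oplus M\oplus\langle -2d/3\rangle$, in analogy with Cases 3 and 4 of Theorem \ref{theorem: Y rho 11}.

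The ``if and only if'' follows from the first step and an involution argument. The two cases are distinguished by their discriminant groups (the length of the $3$-part), and every $X_{15}$ falls into exactly one case. Moreover the quotient of $Y_{15}$ by the induced order-$3$ translation is again $X_{15}$, because $\mathcal{E}/\mathcal{E}[3]$ is isomorphic to $\mathcal{E}$, the same argument as Remark \ref{rem: quotient of quotient}.

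As a cross-check I would compute $T_{Y_{15}}$ as the primitive closure of $\pi_*(T_{X_{15}})$ and match discriminant forms. The main obstacle is this second step: writing down explicitly the $3$-divisible classes in $H^2(Y,\Z)$, the order-3 analogue of the six $2$-divisible classes listed in Section \ref{subsec: cohomological action}. This requires the explicit $\pi_*$ from \cite[Sections 3.3, 3.6]{GP}. I would first try to avoid it by a discriminant count: $\pi_*(T_{X_{15}})$ has index a power of $3$ in $T_{Y_{15}}$, and $T_{Y_{15}}$ must have the discriminant form opposite to one of the two candidate lattices from the first step. The determinant then decides which candidate occurs.
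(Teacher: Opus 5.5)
Your classification of $NS(X_{15})$ is correct. It is the same discriminant-form argument the paper uses in rank 11, with two small points to fix. First, you also need a class of value $4/3$, e.g.\ $\eta_1-\eta_2$. Second, transitivity of $O(q_M)$ on nonzero vectors of fixed value is Witt's theorem over $\mathbb{F}_3$, not Nikulin's 1.14.2.

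The second half follows the paper's intended route. The paper only says that the order-2 computation carries over, using the explicit $\pi_*,\pi^*$ of \cite{GP}. However, your proposal has a genuine gap exactly at the step you call the main obstacle, and the shortcut you propose cannot close it. The split case rests entirely on the claim that $\pi_*(V)$, with $V=v_1-dv_2$, is glued in $H^2(Y,\Z)$ to a nonzero class of $A_M$ and is not itself $3$-divisible. You assert this only by analogy with the classes $(\tau_{11}+n_1+n_2+n_3+n_4)/2$ of Section~\ref{subsec: cohomological action}.

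A discriminant count is circular here. Generically $T_Y\simeq T_X$, so $[T_Y:\pi_*(T_X)]=3^4$. Since $\pi_*(V)\cdot\pi_*(T_X)=3\,(V\cdot T_X)=3\Z$, the index $[T_{Y_{15}}:\pi_*(T_{X_{15}})]$ is $3^4$ or $3^3$ according as $\pi_*(V)\cdot T_Y$ equals $3\Z$ or $\Z$. That is precisely the gluing question. Accordingly $|\det T_{Y_{15}}|\in\{54d,486d\}$, and neither value decides:
\begin{itemize}
\item $486d$ is the determinant of both $U\oplus M\oplus\langle -6d\rangle$ and $(U\oplus M\oplus\langle -54d\rangle)'$;
\item $54d$ is the determinant of $(U\oplus M\oplus\langle -6d\rangle)'$ and, when $3\mid d$, also of $U\oplus M\oplus\langle -2d/3\rangle$.
\end{itemize}

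What is missing is the position of $T_Y$ with respect to $\pi_*(T_X)$. In the order-2 case the list $\tau_1,\dots,\tau_{12}$ says exactly that $T_Y=\pi_*(T_X^*)$. Its order-3 analogue, $\pi^*(T_Y)=3T_X^*$, gives at once $\pi_*(V)\cdot T_Y=3\,(V\cdot T_X^*)=3\Z$. It also rules out $\pi_*(V)\in 3H^2(Y,\Z)$, since that would force $V=\pi^*(\pi_*(V)/3)\in 3T_X^*\cap U=3U$. This is the input the paper takes from the explicit description of $\pi_*$ and $\pi^*$ in \cite[Sections 3.3 and 3.6]{GP}. A geometric argument is immediate only for $d=1$: there the new $I_2$-fibre of $X_{15}$ becomes an $I_6$-fibre of $Y_{15}$, as in Example~\ref{example: quotient I2}. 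Without this input, the split case is unproved, and so is the fact that in the other case $\langle -2d/3\rangle$ is not further glued to $M$.

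In that glued case, note also that $V\in T_X$ is orthogonal to $M$, so it cannot contain an ``element of $M$''. The correct shape is $V=t_0+3w$, where $t_0/3$ is a nonzero class of $A_{T_X}$ (e.g.\ $t_0=u_1$ or $a_1+2a_2$) and $w\in\langle v_1,v_2\rangle$. Then $(V+m)/3\in NS(X_{15})$ for some $m\in M$. Since every $M_i^{(j)}$ is mapped to $F_Y$, you get $\pi_*(m)/3\in\Z F_Y$, and hence $\pi_*(V)/3\in NS(Y_{15})$.
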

\begin{rem}{\rm If $NS(X_{15})\simeq U\oplus M\oplus \langle -2\rangle$, the elliptic fibration acquires a ``new" reducible fiber, which is generically of type $I_2$; if $NS(X_{15})\simeq (U\oplus M\oplus \langle -6\rangle)'$ the elliptic fibration acquires a ``new" reducible fiber, which is generically of type $I_6$; in all the other cases the elliptic fibration acquires a section of infinite order.}\end{rem}

Similarly we deduce the analogue to Theorem \ref{theorem: rank 12, equal NS}.
\begin{theorem}\label{thm order 3 rank 16}
	Let $X_{16}\in\mathcal{M}$ with Picard number 16. Let $\sigma$ be the automorphism induced by the 3-torsion section and  $Y_{16}$ the minimal resolution of $X_{16}/\sigma$.
	Then  $$NS(X_{16})\simeq \left(U\oplus M\oplus \langle -6d\rangle\right)'\oplus \langle -2e\rangle\mbox{ if and only if } NS(Y_{16})\simeq \left(U\oplus M\oplus \langle -6e\rangle\right)'\oplus \langle -2d\rangle.$$
	In particular if $d=e$ one has $NS(X_{16})\simeq NS(Y_{16})$.
\end{theorem}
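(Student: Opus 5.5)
We mimic the proof of Theorem \ref{theorem: rank 12, equal NS}, replacing the Nikulin lattice $N$ by $M$, $\pi_*$ by the order 3 quotient map of \cite[Sections 3.3 and 3.6]{GP}, and the prime $2$ by $3$. We start from the marked surface $X\in\mathcal{M}$ with $NS(X)=\varphi(U\oplus M)$ and $T_X\simeq U\oplus U(3)\oplus A_2\oplus A_2$, with basis $v_1,v_2,u_1,u_2,a_1,a_2,b_1,b_2$. The first specialization is $X_{15}$ with $NS(X_{15})\simeq (U\oplus M\oplus\langle -6d\rangle)'$. We choose a vector $V\in T_X$ of self-intersection $-6d$ whose class modulo $3T_X^\vee$ glues with an element of $A_M$. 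The natural choice, by analogy with $t_1+t_2+2t_{11}-2kt_{12}$, is $V=a_1+2a_2+3(v_1-kv_2)$ or a similar combination, so that $(V\pm\eta_3)/3\in NS$. We then add a second vector $W=v_1-ev_2$ spanning $\langle -2e\rangle$ in the summand $U$ of $T_X$. We pick $V$ orthogonal to $v_1,v_2$, so $W\perp V$ and there is no gluing between $W$ and the rest. By the uniqueness results (the length is small compared with the rank, so \cite[Theorem 1.14.4]{NikInt} applies), this gives $NS(X_{16})\simeq (U\oplus M\oplus\langle -6d\rangle)'\oplus\langle -2e\rangle$ with a unique embedding. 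Hence it suffices to compute $T_{X_{16}}=\langle V,W\rangle^{\perp}$ explicitly in the basis of $T_X$.

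Next we compute $T_{Y_{16}}$ as the primitive saturation in $H^2(Y,\Z)$ of $\pi_*(T_{X_{16}})$. Using the explicit $\pi_*$ of \cite{GP}, $\pi_*$ multiplies the invariant part $U^{\oplus3}$ by the scaling giving $U(3)$ and sums the three copies of $E_6$. We first determine, once and for all, a basis $\tau_i$ of $T_Y$, which is the saturation of $\pi_*(T_X)$, as was done in Section \ref{subsec: cohomological action}. By the order 3 analogue of Theorem \ref{theorem: Y rho 11}, stated in the previous theorem, the component coming from $V$ behaves like the ``$(\cdot)'$'' case. Thus $\pi_*(V)$ becomes, after saturation, a vector of square $-2d$ with no gluing to $M$. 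The component from $W$, which lies in the untouched $U$ on which $\pi_*$ acts as $U\to U(3)$, becomes $\tau_{v_1}-e\tau_{v_2}$ of square $-6e$. This vector does glue with $M$ through the classes of type $(\tau+\sum\text{curves})/3$ in $H^2(Y,\Z)$, exactly as in Cases 1) and 2) of Theorem \ref{theorem: Y rho 11}. We therefore expect $NS(Y_{16})\simeq (U\oplus M\oplus\langle-6e\rangle)'\oplus\langle -2d\rangle$. To confirm this we compute the discriminant form of the saturated $T_{Y_{16}}$ from an explicit basis and compare it with the negative of that of the claimed $NS$; uniqueness of the embedding then identifies $NS(Y_{16})$.

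For the converse, we use that the construction is symmetric. Applying the quotient map to $Y$, whose resolved quotient is $X$ by the analogue of Remark \ref{rem: quotient of quotient}, exchanges the roles of $d$ and $e$. Since each lattice type determines the family, the ``if and only if'' follows. When $d=e$ the two lattices coincide.

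The main obstacle is the explicit bookkeeping. We need the order 3 embedding $\varphi$ and $\pi_*$ written in coordinates, and we need the index-3 gluings that appear on $Y$ when saturating $\pi_*(T_{X_{16}})$. In particular we must check that $\pi_*(V)/3$ becomes integral while giving no further gluing with $\pi_*(W)$. We would first test the smallest case $d=e=1$, where we expect a new $I_6$ fiber and a new $I_2$ fiber that are exchanged by the quotient, as in Example \ref{ex: I47I2}. We would then verify the discriminant computation in general.
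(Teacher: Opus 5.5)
Your plan follows the paper's own route. The paper proves this theorem only by saying it follows like Theorem~\ref{theorem: rank 12, equal NS}: choose $V,W\in T_X$, compute $T_{X_{16}}$ and the saturation of $\pi_*(T_{X_{16}})$ in $H^2(Y,\Z)$ using the explicit order~$3$ quotient map, then compare discriminant forms. Your symmetry argument via $\mathcal{E}/\mathcal{E}[3]\simeq\mathcal{E}$ is a sensible way to obtain the converse, which the paper does not spell out.

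One slip needs fixing: your ``natural choice'' $V=a_1+2a_2+3(v_1-kv_2)$ fails on two counts.
\begin{itemize}
\item It is not orthogonal to $W=v_1-ev_2$, since $V\cdot W=-3(e+k)$, which contradicts your own requirement.
\item It only realizes $V^2=-6d$ with $d\equiv 1\bmod 3$.
\end{itemize}
Take instead $V=u_1-du_2\in U(3)$.
\begin{itemize}
\item It has $V^2=-6d$ for every $d$.
\item It glues via $V/3+\eta_1-d\eta_2=(\eta_1+u_1/3)-d(\eta_2+u_2/3)\in H^2(X,\Z)$; for $d=1$ this is the paper's $W_1$ in the order~$3$ specializations.
\item It is orthogonal to $v_1,v_2$, so $NS(X_{16})=NS(X_{15})\oplus\langle W\rangle$, as your argument needs.
\end{itemize}
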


In the order 2 case the family $\mathcal{N}$ of the $(U\oplus N)$-polarized K3 surfaces contains a subfamily $\mathcal{L}\subset\mathcal{N}$, such that if $X\in\mathcal{L}$, then $X$ isomorphic to the desingularization of its quotient by the van Geemen--Sarti involution. We used this result to define a map $\gamma$ acting on $(T_X)_{\Q}$, which extends to a self map of $(T_X)_{\Q}$ for every $X\in\mathcal{N}$. To construct this map we consider specializations of $X$ to a K3 surface with a van Geemen--Sarti involution with the maximal possible Picard number.

In the order 3 case, we don't know if there is a family analogous to  $\mathcal{L}$, but we can consider a special member $X_{20}$ of the family $\mathcal{M}$, which has Picard number 20 and admits an elliptic fibration $\mathcal{E}_{20}$ with a 3-torsion section, which specializes the torsion section on $\mathcal{E}$. The quotient by the translation $\sigma$ by this 3-torsion section admits a desingularization $Y_{20}=\widetilde{X_{20}/\sigma}$ which is isomorphic to $X_{20}$, i.e. we are again in the special situation in which $X_{20}\simeq Y_{20}$. In this case the quotient map $\pi_*$ induces a selfmap of $\left(T_{X_{20}}\right)_{\Q}$. This map acts also on the classes in $NS(X_{20})$ which are orthogonal to $\varphi:U\oplus M\hookrightarrow NS(X_{20})$, where $\varphi$ is the embedding constructed above. So, the map $\pi_*$ can be considered as a self map of the orthogonal complement to $\varphi(U\oplus M)$ in $H^2(X,\Z)$ and we denote it as $\gamma$ when considered as abstract map on lattices. Therefore, in analogy with Theorem \ref{theorem: action selfmap} and Corollary \ref{cor: NSX=NSY condition}, we obtain the following. 

\begin{theorem}\label{thm: order 3 "complex multiplication"}
	Let $X\in\mathcal{M}$, $\varphi:U\oplus M\hookrightarrow NS(X_{20})$ the embedding given by \eqref{eq: diagE6(2)} and $\Gamma:=\left(\langle -6\rangle\oplus \langle -2\rangle\right)^{\oplus 2}\bigoplus\left(\langle -12\rangle\oplus \langle-4\rangle\right)\bigoplus \left(\langle 12\rangle\oplus \langle4\rangle\right)$. Then there exists a basis of $(\varphi(U\oplus M)^{\perp_{H^2(X,\Z)}})_{\Q}$ on which the bilinear form is isometric to $\Gamma$
	  and the map $\gamma:\Gamma\ra \Gamma$ induced by $\pi_*$, where $\pi$ is the rational quotient map $X\ra Y=\widetilde{X/\sigma}\simeq X$, acts as  $\left[\begin{array}{rr}0&1\\3&0\end{array}\right]^{\oplus 3}\oplus \gamma_+$ with $\gamma_+:\langle 12\rangle\oplus \langle4\rangle\ra \langle 12\rangle\oplus \langle4\rangle$.
\end{theorem}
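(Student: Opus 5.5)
The plan mirrors the proof of Theorem \ref{theorem: action selfmap}. I will run a chain of specializations of $X\in\mathcal{M}$ inside $\mathcal{M}$, each compatible with the cohomological action of $\sigma$ fixed by the embedding $\varphi$ of \eqref{eq: diagE6(2)}, ending at the rigid surface $X_{20}$ with $X_{20}\simeq Y_{20}$. Along the way I will compute $\gamma$ geometrically on each block of classes that becomes algebraic.

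The orthogonal complement of $\varphi(U\oplus M)$ has rank $22-14=8$. Hence $\Theta_{20}$ has rank $6$ and consists of three negative definite blocks, and $T_{X_{20}}$ has rank $2$ and is positive definite. I expect $T_{X_{20}}\simeq\langle 12\rangle\oplus\langle 4\rangle$ up to $\Q$-isometry, computed from the fibration of $X_{20}$ as in \cite{SZ}-type tables.

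Concretely, the three steps are gluing operations that are the order $3$ analogues of the $I_2+I_2\to I_4$, $I_1+I_1\to I_2$ gluings.

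\textbf{First two steps.} Glue two $I_3$-fibers met non-trivially by $T_1$ into an $I_6$-fiber; by Lemma \ref{lemma: gluing An-1} with $n=3$ this adds a $(-6)$-class $W_i$. Simultaneously glue two $I_1$-fibers into an $I_2$-fiber, which adds a $(-2)$-class $V_i$. Following the remark after the codimension one theorem, a $(-2)$-class gives a new $I_2$-fiber and $(U\oplus M\oplus\langle-6\rangle)'$ gives an $I_6$-fiber. The codimension two result, Theorem \ref{thm order 3 rank 16} with $d=e=1$, shows that these two classes are exchanged by the quotient.

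\textbf{Third step.} Glue two $I_6$-fibers into an $I_{12}$-fiber, adding a $(-12)$-class $W_3$. At the same time glue two $I_2$-fibers on which the torsion acts trivially into an $I_4$-fiber, adding a $(-4)$-class $V_3$, again via Lemma \ref{lemma: gluing An-1}. At each step I will check, as in the $I_8$ example, that the formal expression $T_1=2F+S-\hat{M'}$ is unchanged in terms of the new components, so that $\sigma$ still acts via the standard permutation of three copies of $E_6$.

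\textbf{Action of $\gamma$ on each block.} This is a local analysis, as in Examples \ref{example: quotient I2} and \ref{example: quotient I4}. An $I_{3n}$-fiber with $\sigma$ acting as a non-trivial translation maps to an $I_n$-fiber. On an $I_n$-fiber with trivial torsion, $\sigma$ fixes the $n$ nodes, whose images are $A_2$ singularities; resolving them produces an $I_{3n}$-fiber. Projecting $\pi_*$ of the new component orthogonally to $\varphi(U\oplus M)$ should then give $V_i\mapsto W_i$ and $W_i\mapsto 3V_i$, i.e. the block $\left[\begin{smallmatrix}0&1\\3&0\end{smallmatrix}\right]$. This is compatible with the self-intersections, since $\pi_*$ multiplies the form by $3$: $(-2)\cdot 3=-6$ and $(-4)\cdot 3=-12$.

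Finally, I will conclude as in Subsection \ref{subsec: the action of gamma}. The action on $\Theta_{20}$ is purely lattice theoretic, so it holds for general $X$. The positive part $T_{X_{20}}$ is $\gamma$-stable because $X_{20}\simeq Y_{20}$, and this gives $\gamma_+$. Since $\Theta_{20}\oplus T_{X_{20}}$ has finite index in the orthogonal complement of $\varphi(U\oplus M)$, it provides the required $\Q$-basis.

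\textbf{Main obstacle.} The hardest part is exhibiting the explicit classes $V_i,W_i$ in $T_X\simeq U\oplus U(3)\oplus A_2^2$ so that the required overlattices are realized compatibly with $\varphi$. I also need to verify that a rigid $X_{20}$ with $X_{20}\simeq Y_{20}$ exists. For that I would look for a Picard $20$ surface whose fibration has fibers $I_{12}+I_6+I_4+I_2$ or similar, with a $3$-torsion section, and check its self-isogeny directly from the Weierstrass equation of Proposition \ref{prop: the family UMpolarized}. Failing that, I would use the fact that singular K3 surfaces are determined by their transcendental lattice, together with Theorem \ref{thm order 3 rank 16} iterated, which gives $T_{X_{20}}\simeq T_{Y_{20}}$.
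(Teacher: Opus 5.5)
Your outline matches the paper's strategy. It uses two steps gluing $2I_3\to I_6$ and $2I_1\to I_2$, then a third gluing $2I_6\to I_{12}$ and $2I_2\to I_4$, with the quotient exchanging the two new fibers at each step, and ends with a $\gamma$-stable positive definite $T_{X_{20}}$. Your local analysis $I_{3n}\leftrightarrow I_n$ is correct.

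\textbf{The gap.} The step you set aside as the ``main obstacle'' is exactly what the paper's proof consists of. As written, your proposal does not show that this chain of specializations exists inside $\mathcal{M}$. In order $2$, existence could be read off the Weierstrass family $\mathcal{L}$ by imposing double roots. In order $3$ there is no such family. So one must exhibit classes of $T_X\simeq U\oplus U(3)\oplus A_2^{\oplus 2}$ whose gluing with $NS(X)$ produces the intended fibers, compatibly with $\varphi$.

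This is not automatic. A $(-6)$-class $W$ yields an $I_6$ only if $W/3$ is glued in $H^2(X,\Z)$ to the discriminant class of the two $I_3$-fibers being merged (e.g.\ $\eta_1-\eta_2$ or $\eta_3$). This is where the $U(3)$ and $A_2$ summands are needed. The paper's choices are:
\begin{itemize}
\item $V_1=v_1-v_2$, $W_1=u_1-u_2$, so that $\lambda_1=(u_1-u_2)/3+\eta_1-\eta_2$ is integral and completes an $A_5$;
\item $V_2=b_1$, $W_2=a_1+2a_2$, glued through $\eta_3$;
\item $V_3=v_1+v_2+b_1+2b_2$, $W_3=u_1+u_2+3a_1$, with $\mu_1=(V_1+V_2+V_3)/2$ completing an $A_3$ and a class with denominator $6$ completing an $A_{11}$.
\end{itemize}
The paper then computes the orthogonal complement of these six classes in $T_X$ as $\langle 12\rangle\oplus\langle 4\rangle$, which also gives $MW(\mathcal{E}_{20})=\Z/3\Z$.

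\textbf{Auxiliary claims that need correcting.}
\begin{itemize}
\item The Picard-number-$16$ theorem is only an isometry statement about N\'eron--Severi lattices. It does not show that $\gamma$ exchanges $V_i$ and $W_i$; your fiber analysis is what does that.
\item That theorem also cannot be ``iterated'' to give $T_{X_{20}}\simeq T_{Y_{20}}$ at Picard number $20$.
\item Your own chain ends with $I_{12}+2I_3+I_4+2I_1$, not $I_{12}+I_6+I_4+I_2$.
\end{itemize}

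\textbf{What follows for free once $\Theta_{20}$ is realized.} Two things you leave as expectations are formal consequences:
\begin{itemize}
\item The lattice $\varphi(U\oplus M)^{\perp}$ consists of $\sigma^*$-invariant classes, so $\gamma$ is a similitude of ratio $3$ there. Hence $\gamma(\Theta_{20})=\Theta_{20}$ forces $\gamma(\Theta_{20}^{\perp})\subseteq\Theta_{20}^{\perp}$.
\item Over $\Q$ one has $T_X\otimes\Q\simeq(\langle-6\rangle\oplus\langle-2\rangle)^{\oplus 2}\oplus U_{\Q}^{\oplus 2}$, and $\langle-12\rangle\oplus\langle12\rangle\simeq\langle-4\rangle\oplus\langle4\rangle\simeq U_{\Q}$. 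Witt cancellation then gives $(\Theta_{20}^{\perp})_{\Q}\simeq\langle 12\rangle\oplus\langle 4\rangle$ directly, without consulting tables.
\end{itemize}
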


\begin{corollary}\label{cor: order 3}
	Let $Z_{2b}$ be a K3 surface with $\rho(Z_{2b})=2b$ and $Z_{2b}\in\mathcal{M}$, so that it admits an order 3 automorphism induced by the translation by a 3-torsion section $\sigma_Z$ which specializes $\sigma$ on $X$. If $T_{Z_{2d}}$ is invariant for (the $\Q$-linear extension of) $\gamma$, then $NS(Z_{2d})\simeq NS(\widetilde{Z_{2d}/\sigma_Z})$ and $T_{Z_{2d}}\simeq T_{\widetilde{Z_{2d}/\sigma_Z}}$. \end{corollary}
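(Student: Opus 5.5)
The plan is to copy the proof of Corollary \ref{cor: NSX=NSY condition}, with Theorem \ref{thm: order 3 "complex multiplication"} playing the role of Theorem \ref{theorem: action selfmap}. Let $X\in\mathcal{M}$ be very general, so $NS(X)\simeq U\oplus M$ and $T_X\simeq U\oplus U(3)\oplus A_2\oplus A_2$. The specialization $Z=Z_{2b}$ corresponds to a negative definite sublattice $\Theta\subset T_X$, viewed inside $\varphi(U\oplus M)^{\perp}$. Then $T_Z=\Theta^{\perp_{T_X}}$, and $NS(Z)$ is a finite index overlattice of $NS(X)\oplus\Theta$. Since $\sigma_Z$ specializes $\sigma$, the cohomological action of $\sigma_Z^*$ agrees with that of $\sigma^*$ under the fixed marking. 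Hence $\pi_*$ on $H^2(Z,\Z)$ is the same lattice map as on $H^2(X,\Z)$. On $\varphi(U\oplus M)^{\perp}\otimes\Q$ it is exactly the abstract map $\gamma$ of Theorem \ref{thm: order 3 "complex multiplication"}, once we identify $(T_X)_\Q$ and $(T_Y)_\Q$ with $\Gamma_\Q$. This identification uses $NS(X)\simeq NS(Y)\simeq U\oplus M$ from Proposition \ref{prop: the family UMpolarized}.

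The key steps, in order, are as follows.

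\textbf{Step 1.} We have $(T_{\widetilde{Z/\sigma_Z}})_\Q=\pi_*((T_Z)_\Q)$, because the quotient map induces a rational Hodge isomorphism of transcendental parts. Under the identification above, this equals $\gamma((T_Z)_\Q)$. So invariance of $T_Z$ under $\gamma$ gives $(T_{\widetilde{Z/\sigma_Z}})_\Q=(T_Z)_\Q$ as subspaces of $\Gamma_\Q$.

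\textbf{Step 2.} We upgrade this to an isometry of integral lattices. Since $\gamma$ is invertible and preserves $T_X$, invariance of $T_Z$ is equivalent to $\gamma(\Theta)=\Theta$. By the block form $\left[\begin{array}{rr}0&1\\3&0\end{array}\right]^{\oplus3}\oplus\gamma_+$, this forces $\rk\Theta$ to be even. This is consistent with $\rho(Z)=2b$ and with the analogue of Remark \ref{rem: Tx=Ty, only even rank}. The transcendental lattice of the quotient is the primitive saturation of $\pi_*(T_Z)$ in $H^2(Y,\Z)$. Both $T_{\widetilde{Z/\sigma_Z}}$ and $T_Z$ are then primitive in the same ambient lattice $\varphi(U\oplus M)^{\perp}\simeq T_X\simeq T_Y$ and span the same $\Q$-space, so they coincide. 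Here we use that $T_Y\simeq T_X$ as abstract lattices and that the identification of $(T_Y)_\Q$ with $\Gamma_\Q$ respects the integral structure $T_X$.

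\textbf{Step 3.} We deduce $NS(Z)\simeq NS(\widetilde{Z/\sigma_Z})$. The transcendental lattices have the same discriminant form, hence so do the orthogonal complements in $\Lambda_{K3}$, since $NS$ has discriminant form opposite to that of $T$. Both $NS$ lattices contain $U$, so each of them is unique in its genus and admits a unique primitive embedding, by \cite[Theorem 1.14.4]{NikInt} and \cite[Corollary 1.13.3]{NikInt}. For this we need the rank condition $\rk\geq 3+\ell$. Because $NS\supset U\oplus M$ with $\rk(U\oplus M)=14$ and $\ell\leq\rk T$, this condition is satisfied.

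\textbf{Main obstacle.} I expect the hardest part to be the integral statement in Step 2. Invariance of $T_Z$ under $\gamma$ only holds over $\Q$, while $\pi_*$ multiplies some classes by $3$ in the order 3 case. So one must check that saturating $\pi_*(T_Z)$ in $H^2(Y,\Z)$ gives exactly the image of $T_Z$ under the identification $T_Y\simeq T_X$. I would first try to verify this directly with the explicit gluing classes $\eta_i+\cdots$ and the description of $\pi_*$ from \cite{GP}. If that fails, I would prove only the genus (discriminant form) equality, which suffices for Step 3.
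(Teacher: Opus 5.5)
Your proposal is correct and follows essentially the paper's route. The paper proves this corollary by transplanting the proof of Corollary \ref{cor: NSX=NSY condition}: identify $\pi_*$ with $\gamma$, reduce invariance of $T_Z$ to $\gamma(\Theta)=\Theta$, conclude $T_Z=\gamma(T_Z)=T_{\widetilde{Z/\sigma_Z}}$, and recover $NS$ from the discriminant form via uniqueness of embeddings. Your Step 2 saturation argument makes explicit the integrality that the paper takes for granted. Note only that your fallback (equality of genera) would still give the $NS$ isometry, but not by itself $T_Z\simeq T_{\widetilde{Z/\sigma_Z}}$ when $\rk T_Z$ is small.
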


The proofs are straightforward, once one has the right specializations of the surface $X$, since they are analogue to the ones of the order 2 case. Therefore, it suffices to exhibit the specializations, which depend on the choices of the elements $V_i$ and $W_i$ in $T_X$ which become algebraic at each step. The results on the specializations, which are the analogue to Proposition \ref{prop: the five specializations}, are summarized in the following Proposition, where $X_{k}$ is a K3 surface with a symplectic  automorphism $\sigma$ which specializes the one on $X$ and $Y_{k}$ is the desingularization of the quotient $X_k/\sigma$.

\begin{proposition}\label{prop: the three specializations in order 3}
	There exist the following specializations of $X$:
	\begin{itemize}\item for $i=1,2$, there is a $(6-2i)$-dimensional family of K3 surfaces whose generic members $X_{14+2i}$ is such that $\rho(X_{14+2i})=14+2i$ and $NS(X_{14+2i})\simeq NS(Y_{14+2i})$; the elliptic fibration $\mathcal{E}_{14+2i}$ on it has $iI_6+(6-2i)I_3+iI_2+(6-2i)I_1$ as singular fibers and $\Z/3\Z$ as Mordell--Weil group.
		\item $X_{20}$ is a K3 surface with  $\rho(X_{20})=20$  and $X_{20}\simeq Y_{20}$; the elliptic fibration $\mathcal{E}_{20}$ on it has $I_{12}+2I_3+I_4+2I_1$ as singular fibers and $\Z/3\Z$ as Mordell--Weil group. Its transcendental lattice is $\langle 12\rangle\oplus\langle 4\rangle$.
	\end{itemize}
\end{proposition}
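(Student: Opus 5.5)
We prove the statement by mimicking the order 2 construction of Subsection \ref{subsec: the five spec}: starting from the generic $X\in\mathcal{M}$, with $NS(X)\simeq U\oplus M$ and $T_X\simeq U\oplus U(3)\oplus A_2\oplus A_2$, we exhibit explicit pairs of classes $V_i,W_i\in T_X$ (for $i=1,2,3$) which become algebraic at each step.

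Each pair will be chosen so that $\langle W_i,V_i\rangle$ is orthogonal to the previous ones, with $W_i$ glued to the components of two $I_3$-fibers and $V_i$ glued to nothing in $NS$. The gluing of $W_i$ produces an $I_6$ out of two $I_3$'s: by Lemma \ref{lemma: gluing An-1} with $n=3$, $A_5$ is an index $3$ overlattice of $A_2\oplus A_2\oplus\langle -6\rangle$, so $W_i^2=-6$. The class $V_i$ merges two $I_1$'s into an $I_2$, so $V_i^2=-2$.

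Concretely, the three steps are as follows.

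1. Choose $W_1$ with $W_1^2=-6$ such that $(W_1+\text{suitable combination of }M_h^{(1)},M_h^{(2)})/3\in H^2(X,\Z)$. This is possible using the gluing vectors $\eta_3+(a_1+2a_2)/3$ etc. listed before. Choose also $V_1$ with $V_1^2=-2$ in the $U\oplus U(3)$ part.

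2. Check, as in Remark \ref{rem: gluing 2 I2} and the $I_8$ example, that the formal expression $T_1=2F+S-\hat{M'}$ is unchanged in terms of the new fiber components. This guarantees that the specialized automorphism is cohomologically the specialization of $\sigma$, so $\pi_*$ is still the map of \cite[Sections 3.3 and 3.6]{GP}.

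3. Compute $\pi_*$ on $\langle W_1,V_1\rangle$. Geometrically, the quotient of an $I_6$ by a nontrivial $3$-torsion translation is an $I_2$. Conversely, on the $I_2$ the torsion section is trivial, so the two nodes are fixed; they give $A_2$ singularities, whose resolution turns the image fiber into an $I_6$. So, exactly as in Examples \ref{example: quotient I2}, \ref{example: quotient I4} and \ref{ex: I47I2}, the quotient exchanges the new $I_6$ and $I_2$ fibers, with $\gamma(V_1)=W_1$ and $\gamma(W_1)=3V_1$. Hence $\Theta=\langle W_1,V_1\rangle$ is $\gamma$-stable. By the argument of Subsection \ref{subsect: proof of the corollary assuming the theorem}, $T_{X_{16}}=\Theta^{\perp}$ is $\gamma$-invariant and has rank $\ge 6$. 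Uniqueness of primitive embeddings (\cite[Theorem 1.14.4]{NikInt}, rank $6$ and length $\le 4$ after checking) then gives $NS(X_{16})\simeq NS(Y_{16})$, consistently with Theorem \ref{thm order 3 rank 16} with $d=e=1$.

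The fibers of $X_{16}$ are $I_6+4I_3+I_2+4I_1$, and there is no new section because the discriminant of $NS$ matches the trivial lattice with $MW=\Z/3\Z$. Iterating once more gives $X_{18}$ with $2I_6+2I_3+2I_2+2I_1$. At each step the dimension drops by $2$, so the families have dimensions $4$ and $2$.

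For the last step we add $W_3$ with $W_3^2=-12$, gluing the two $I_6$'s into an $I_{12}$ (Lemma \ref{lemma: gluing An-1}, $n=6$), and $V_3$ with $V_3^2=-4$, gluing the two $I_2$'s into an $I_4$ ($n=2$). This gives fibers $I_{12}+2I_3+I_4+2I_1$, Euler number $12+6+4+2=24$. We then verify the Shioda--Tate discriminant: $\det(A_{11}\oplus A_2^2\oplus A_3)/9=12\cdot 9\cdot 4/9=48$, which matches $\langle 12\rangle\oplus\langle 4\rangle$, the orthogonal complement of $\Theta_{20}$ in $T_X$. As in the order 2 case, $\gamma$ acts on $\langle W_3,V_3\rangle$ by $\left[\begin{smallmatrix}0&1\\3&0\end{smallmatrix}\right]$.

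The main obstacle is the step $X_{20}\simeq Y_{20}$, not just $NS(X_{20})\simeq NS(Y_{20})$. My plan is to use that a rank $20$ K3 surface is determined by its oriented transcendental lattice (Shioda--Inose). We know $\pi_*$ maps $(T_{X_{20}})_{\Q}$ Hodge-isometrically up to scaling by $3$ onto $(T_{Y_{20}})_{\Q}$, and $T_{Y_{20}}$ is the primitive saturation of $\pi_*(T_{X_{20}})$. So one computes $T_{Y_{20}}$ explicitly from the $\pi_*$ formulas of \cite{GP} and checks that it is $\langle 12\rangle\oplus\langle 4\rangle$ again. This form has class number $1$ in its genus (up to orientation), hence $Y_{20}\simeq X_{20}$.

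The genuinely hard part is finding explicit integral $V_i,W_i$ in $T_X$ with the correct $3$-divisibility gluings. I would find them by first writing down the target fiber components via Remark \ref{rem: gluing An-1 opposite construction}, then projecting them to $T_X$ using the generators $\eta_j+\dots$ of $H^2(X,\Z)$.
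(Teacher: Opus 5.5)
Your strategy is the paper's: make explicit classes $V_i,W_i\in T_X$ algebraic, and use Lemma \ref{lemma: gluing An-1} to get $I_1+I_1\to I_2$ and $I_3+I_3\to I_6$ (then $I_2+I_2\to I_4$ and $I_6+I_6\to I_{12}$). The quotient then exchanges the two new fibres, so $\gamma$ acts by $\left[\begin{array}{rr}0&1\\3&0\end{array}\right]$, and $T_{X_{20}}$ is read off as an orthogonal complement in $T_X$.

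\textbf{The gap.} The proposition is an existence statement, and the paper's proof consists exactly of the step you postpone: exhibiting the classes and computing the saturation of $NS(X)\oplus\Theta$ in $H^2(X,\Z)$. Without this, nothing shows that the third step can be realized inside $T_X$, orthogonally to $\Theta_{18}$, with $V_3$ glued to both $I_2$'s and $W_3$ to both $I_6$'s. Nor does anything rule out extra gluing at some step (extra torsion, new sections, other fibre types).

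The paper's choices are:
\begin{itemize}
\item $V_1=v_1-v_2$ and $W_1=u_1-u_2$, glued through $(u_1-u_2)/3+\eta_1-\eta_2\in H^2(X,\Z)$, i.e. to the first and fourth $I_3$;
\item $V_2=b_1$ and $W_2=a_1+2a_2$, glued through $(a_1+2a_2)/3+\eta_3$, i.e. to the second and third $I_3$;
\item $V_3=v_1+v_2+b_1+2b_2$ and $W_3=u_1+u_2+3a_1$. Here $(V_1+V_2+V_3)/2=v_1+b_1+b_2$ is integral and forms an $A_3$ with $-V_1,-V_2$, and an index-$6$ class $\mu_2$ completes the $A_{11}$.
\end{itemize}

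Your final verification also runs backwards. The count $432/9=48$ presupposes $MW(\mathcal{E}_{20})=\Z/3\Z$. Even granting that, determinant $48$ does not determine $T_{X_{20}}$: $\langle 2\rangle\oplus\langle 24\rangle$, $\langle 6\rangle\oplus\langle 8\rangle$ and $\begin{pmatrix}8&4\\4&8\end{pmatrix}$ have the same determinant. The same objection applies to your ``no new section'' claim for $X_{16}$.

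The paper instead computes $\Theta_{20}^{\perp}\subset T_X$, which has $\Z$-basis $3v_1+3v_2+b_1+2b_2$ and $u_1+u_2+a_1$. (The paper prints $b_1$ instead of $a_1$ in the second vector; this is a typo, since $u_1+u_2+b_1$ is not orthogonal to $V_2$.) This gives $\langle 12\rangle\oplus\langle 4\rangle$, and only then is $|MW|=3$ deduced from $432/|MW|^2=48$.

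Your Shioda--Inose argument for $X_{20}\simeq Y_{20}$ is sound and goes beyond what the paper writes. It can be closed without explicit formulas for $\pi_*$:
\begin{itemize}
\item $\pi_*(T_{X_{20}})\simeq T_{X_{20}}(3)=\langle 36\rangle\oplus\langle 12\rangle$, and $T_{Y_{20}}$ is an overlattice of it of $3$-power index.
\item The induced fibration on $Y_{20}$ has the same fibre types and a $3$-torsion section, so $|d(T_{Y_{20}})|\le 48$. This forces the index to be exactly $3$.
\item The only even index-$3$ overlattice of $\langle 36\rangle\oplus\langle 12\rangle$ is $\langle 4\rangle\oplus\langle 12\rangle$, which admits an improper isometry, so the orientation issue in Shioda--Inose disappears.
\end{itemize}
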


{\bf The first specialization.} We require that the classes
$$V_1:=v_1-v_2,\ \ W_1=u_1-u_2$$
become algebraic and we observe that $V_1^2=-2$ and $W_1^2=-6$.

Since $\frac{(u_1-u_2)}{3}+\eta_1-\eta_2$ is contained in $H^2(X,\Z)$ and $u_1-u_2+3\eta_1-3\eta_2\in NS(X_{16})$, it follows that $\frac{(u_1-u_2)}{3}+\eta_1-\eta_2=\left(u_1-u_2+C_1^{(1)}+2C_2^{(1)}-C_1^{(4)}-2C_2^{(4)}\right)/3\in NS(X_{16})$.

The class $\lambda_1:=\frac{(u_1-u_2)}{3}+\eta_1-\eta_2$ has self intersection $-2$ and it is easy to check that $\{-C_1^{(1)}, -C_2^{(1)}, \lambda_1, C_2^{(4)}, C_1^{(4)}\}$ forms a standard basis of $A_5$.

Therefore, by adding these classes we are gluing two fibers of type $I_1$ to obtain a fiber of type $I_2$ and two fibers of type $I_3$ to obtain a fiber of type $I_6$ (as in Lemma \ref{lemma: gluing An-1}). The torsion section $T_1$ is trivial on the ``new" $I_2$-fiber and non trivial on the $I_6$. Hence, the quotient map switches these two ``new" fibers and $\gamma(V_1)=W_1$, $\gamma(W_1)=3V_1$ (the construction is similar to the one of Example \ref{rem: first specialization and action of complex multiplication})

So we constructed $X_{16}$ endowed with an elliptic fibration (which specializes $\mathcal{E}$) with $I_6+4I_3+I_2+4I_1$ as singular fibers and with Mordell--Weil group equals to $\Z/3\Z$.

{\bf The second specialization.}  
Let $X_{18}$ be the K3 surface obtained by $X_{16}$ by requiring that the classes 
$$V_2:=b_1,\ \ W_2:=a_1+2a_2$$
become algebraic classes, where $V_2^2=-2$ and $W_2^2=-6$.
Since $\frac{(a_1+2a_2)}{3}+\eta_3$ is contained in $H^2(X,\Z)$ it follows that $\frac{(a_1+2a_2)}{3}+\eta_3=\left(a_1+2a_2+C_1^{(2)}+2C_2^{(2)}-C_1^{(3)}-2C_2^{(3)}\right)/3\in NS(X_{18})$.
As in the previous case we glue two fibers of type $I_1$ to a fiber of type $I_2$  and two fibers of type $I_3$ to one of type $I_6$.

As a consequence $X_{18}$ admits an elliptic fibration with $2I_6+2I_3+2I_2+2I_1$ as singular fibers and with Mordell--Weil group $\Z/3\Z$.
The action of $\gamma$ is as above.

{\bf The third specialization} is obtained gluing two fibers of type $I_2$ to a fiber of type $I_4$ and two fibers of type $I_6$ to a fiber of type $I_{12}$. Since $T_1$ is trivial on the first ``new" fiber and not on the second, the quotient by $\sigma$ switches these two fibers.

Let $X_{20}$ be the K3 surface obtained by $X_{18}$ by requiring that the classes 
$$V_3:=v_1+v_2+b_1+2b_2,\ \ W_3:=u_1+u_2+3a_1$$
become algebraic classes.
Since $V_3+V_1+V_2=v_1+v_2+b_1+2b_2+v_1-v_2+b_1=2v_1+2b_1+2b_2$, $\mu_1:=(V_1+V_2+V_3)/2=v_1+b_1+b_2\in NS(X_{20})$ and the classes $\{-V_1,\mu_1,-V_2\}$ form a standard basis of $A_3$. 

Similarly the class
$$\mu_2:=\left(-C_1^{(2)}-2C_2^{(2)}+3\lambda_1+4C_2^{(4)}+5C_1^{(4)}+W_3-C_1^{(2)}-2C_2^{(2)}+3\lambda_2+4C_2^{(3)}+5C_1^{(3)}\right)/6\in NS(X_{20})$$ and the classes 
$\{C_1^{(2)}, C_2^{(2)},-\lambda_1, -C_2^{(4)}, -C_1^{(4)},\mu_2, -C_1^{(3)},C_2^{(3)},-\lambda_2, C_1^{(1)}, C_2^{(1)}\}$ form a standard basis of $A_{11}$. 

The transcendental lattice of $X_{20}$ is the orthogonal to $\langle V_i, W_1\rangle_{i=1,2,3}$ in $T_{X}$ and it is generated by $3v_1+3v_2+b_1+2b_2$ and $u_1+u_2+b_1$ so its intersection form is $\langle 12\rangle\oplus \langle 4\rangle.$
This implies that $|d(T_{X_{20}})|=|d(NS(X_{20}))|=2^4\cdot3$  and so the Mordell--Weil group is $\Z/3\Z$.

\begin{remark}
	{\rm One can consider also different specializations, which produce different decomposition in direct summands similar to the one in Theorem \ref{thm: order 3 "complex multiplication"}. One of them, very natural, consists in requiring (instead of the third specialization described above) that the classes 
	$$V_3':=a_1,\ \ W_3':=b_1+2b_2$$
	become algebraic classes. Since $V_3'^2=-2$ and $W_3'^2=-6$, as in the first and second specializations, one is gluing two fibers of type $I_1$ to a fiber of type $I_2$  and two fibers of type $I_3$ to one of type $I_6$, so that one obtains a fibration with $3I_6+3I_2$ as singular fiber. Also in this case one obtains that the K3 surface with this elliptic fibration is isomorphic to the ones which is the desingularization of its quotient. The unique difference with respect to the previous specialization is that in this case the Mordell--Weil group changes a lot, indeed it becomes $\Z/6\Z\times \Z/2\Z$, see e.g. \cite{SZ}.}
\end{remark}

\end{document}